\documentclass[a4paper]{amsart}
\usepackage[utf8]{inputenc}
\usepackage[english]{babel}
\usepackage{fancyhdr}
\usepackage{amsmath}
\usepackage{amsfonts,amstext}
\usepackage{amssymb,amsthm,amscd,amsxtra,wasysym,graphicx}
\usepackage{grffile}
\usepackage{mathrsfs,esint,comment}
\usepackage{tikz-cd}
\usepackage{enumitem,mathtools,dsfont}
\usepackage{palatino,mathpazo}
\usepackage{xcolor}

\def\Bibtex{{\rm B\kern-.05em{\sc i\kern-.025em b}\kern-0.08em T\kern-.1667em\lower.7ex\hbox{E}\kern-.125emX}}
\usepackage[breaklinks=true,bookmarks=false,pagebackref]{hyperref}
\hypersetup{linktocpage=true,
	unicode=false,          
	pdftoolbar=true,       
	pdfmenubar=true,       
	pdffitwindow=false,     
	pdfstartview={FitH}, 
	pdftitle={Monge-Amp\`ere equations},   
	pdfauthor={Quang-Tuan Dang},   
	colorlinks=true,  
	linkcolor=purple,         
	citecolor=blue,        
	filecolor=green,
	urlcolor=blue}          

\usepackage{cleveref}
\usepackage{indentfirst}
\theoremstyle{plain}
\newtheorem{theorem}{Theorem}
\newtheorem{lemma}[theorem]{Lemma}
\newtheorem{corollary}[theorem]{Corollary}

\newtheorem{proposition}[theorem]{Proposition}
\newtheorem{example}[theorem]{Example}

\theoremstyle{definition}
\newtheorem{definition}[theorem]{Definition}

\theoremstyle{plain}
\newtheorem{bigthmm}{Theorem}

\numberwithin{theorem}{section}
\numberwithin{equation}{section}
 
\DeclareMathOperator \PSH {{\rm PSH}}

\def\om{\omega}

\def\f{\varphi}
\def\dc{dd^c}

\begin{document}
	
	\title[Uniform estimates for complex Monge-Amp\`ere equations]{Uniform estimates for complex Monge-Amp\`ere equations II: Hermitian manifolds}
	\author{Quang-Tuan Dang, Lei Zhang and Bin Zhou}
		\address{Yau Mathematical Sciences Center, Tsinghua University, Beijing 100084}
		\email{dangquangtuan10@gmail.com $\&$ dangqt@mail.tsinghua.edu.cn}
        \address{School of Mathematics and Statistics, Jiangsu Normal University, Xuzhou, 221009}
        \email{leizizhang1992@163.com}
        \address{School of Mathematical Sciences, Peking University, Beijing 100871}
        \email{bzhou@pku.edu.cn}
	\date{\today}
	\keywords{Complex Monge-Amp\`ere equations, uniform estimates}
	\subjclass[2020]{32U20, 32W20, 32U05}

	\begin{abstract}
We establish uniform estimates for degenerate complex Monge--Amp\`ere
equations on compact Hermitian manifolds under weak Orlicz integrability
assumptions on the densities. Our estimates apply uniformly to Hermitian
metrics and to semipositive uniformly non-collapsing reference forms.
 For arbitrary big reference forms, we obtain uniform relative
 $L^\infty$-estimates with respect to the extremal function with minimal singularities, and establish continuity of the resulting solutions on the regular locus. 
 As an application, we prove the existence of solutions with minimal singularities to degenerate complex Monge--Amp\`ere equations associated with big forms. 
The proofs combine local Orlicz estimates with
pluripotential-theoretic envelope techniques and uniform control of the
Monge--Amp\`ere masses, overcoming the lack of invariance of the total
Monge--Amp\`ere mass in the Hermitian setting.
	\end{abstract}
	\maketitle
	\tableofcontents
\section{Introduction}

Complex Monge--Amp\`ere equations are among the fundamental nonlinear
equations in complex geometry and pluripotential theory.  On compact
K\"ahler manifolds, their systematic study goes back to Yau's solution
of the Calabi conjecture~\cite{yau1978ricci}.  Besides their central
role in the construction of canonical metrics, these equations have
motivated the development of a powerful pluripotential theory which
allows one to treat degenerate right-hand sides and weak solutions
under very mild regularity assumptions.

Extending this theory beyond the
K\"ahler setting is considerably more delicate.  
Several fundamental features of K\"ahler theory are no longer available. Most notably, the total Monge--Amp\`ere mass may depend on potentials, and integration by parts and
comparison arguments produce additional terms involving the torsion
of the background metric.  Thus even though the complex
Monge--Amp\`ere operator remains locally of the same nature, its
global behavior is considerably more subtle.  A basic problem is
therefore to understand to what extent the a priori estimates and
pluripotential methods developed in the K\"ahler setting continue to
hold on general compact Hermitian manifolds.


The smooth Hermitian equation was first studied by
Cherrier~\cite{cherrier1987equation}, who adapted the
Aubin--Yau method~\cite{aubin1978equation,yau1978ricci}, and then by
Hanani~\cite{hanani1996equation,hanani1996generalisation}.
After estimates of Guan--Li~\cite{guan2010complex} under additional
differential conditions, Tosatti--Weinkove
\cite{tosatti2010complex,tosatti2010estimates} established the required
uniform estimate and solved the equation on arbitrary compact
Hermitian manifolds. For applications of the Chern--Ricci flow, we refer interested readers to
\cite{tosatti2013chern,tosatti2015collapsing,tosatti2015evolution}.

Parallel to these developments, a pluripotential theory for complex
Monge--Amp\`ere equations on compact Hermitian manifolds has gradually
been developed. Fundamental contributions are due to Dinew,
Ko{\l}odziej, Nguyen and others; see
\cite{kolodziej1998complex,dinew2012pluripotential,kolodziej2015weak,
kolodziej2019stability,kolodziej2018holder,nguyen2016complex,kolodziej2021continuous,kolodziej2023weak,kolodziej2023dirichlet}
and surveys
\cite{dinew2016pluripotential,dinew2019lectures}.
These works established Hermitian analogs of several fundamental
tools from K\"ahler pluripotential theory, including capacity
estimates, comparison and domination principles, existence of weak
solutions, stability estimates, and regularity results.

A further development was initiated by Guedj--Lu
\cite{guedj2022quasi2,guedj2021quasi}, who introduced a systematic
envelope approach to degenerate complex Monge--Amp\`ere equations on
compact Hermitian manifolds. In particular, their framework allows
one to deal with semipositive reference forms for which, in contrast
to the K\"ahler case, the total Monge--Amp\`ere mass is not fixed.
More recently, building on the pluripotential theory of big
cohomology classes in the K\"ahler setting
\cite{boucksom2010monge}, Boucksom--Guedj--Lu
\cite{BoucksomGuedjLu2025-volume} developed a theory of
non-pluripolar products on compact Hermitian manifolds and studied
degenerate complex Monge--Amp\`ere equations associated with big
reference forms. For further developments, we refer the reader to~\cite{LWZ23-degenerate,ALM24-degenerate,ALEHYANELuSalouf26-Hermitian,Li26-}.

Despite these advances, uniform estimates become particularly
delicate when both the reference forms and the right-hand sides are
allowed to degenerate. On the one hand, the reference form may
degenerate from a Hermitian metric to a semipositive uniformly
non-collapsing form, or more generally to a big form. On the other
hand, the density of the Monge--Amp\`ere measure may satisfy only
weak integrability assumptions. While the classical $L^p$ theory,
$p>1$, provides powerful a priori estimates, it does not cover the
weaker, nearly borderline integrability regimes considered here.

The main purpose of this paper is to establish uniform
$L^\infty$-type estimates in this setting under suitable Orlicz
integrability assumptions on the density. Our approach combines two
complementary ideas: the auxiliary-function method of
Guo--Phong~\cite{Guo-Phong2024-estimates} and the
pluripotential-theoretic envelope method of
Guedj--Lu~\cite{guedj2022quasi2,guedj2021quasi}.
The first ingredient of our argument is a local uniform estimate for
the complex Monge--Amp\`ere equation under an Orlicz integrability
condition. We then globalize this estimate by means of suitable
auxiliary Monge--Amp\`ere equations, plurisubharmonic envelopes, and
domination arguments. This local-to-global approach is sufficiently
flexible to treat, within the same framework, Hermitian reference
metrics, semipositive uniformly non-collapsing forms, and big
reference forms.

The present work can also be viewed as a Hermitian counterpart of
our previous work~\cite{DangZhangZhou26-estimate-big}. The main new
difficulty in the Hermitian setting is that the Monge--Amp\`ere mass
is no longer preserved. Consequently,
uniform estimates must take into account the possible variation and
even degeneration of the total Monge--Amp\`ere mass.


\vskip 10pt

Before stating our main results, we fix some notation.  Let
$(X,\omega_X)$ be a compact Hermitian manifold of complex dimension
$n$, and let $dV$ be a fixed smooth positive volume form on $X$.   
We let $d$, $d^c$ denote the real differential operators on $X$ defined by $d:=\partial+\Bar{\partial}$, $d^c:=\frac{i}{2}(\Bar{\partial}-\partial)$ so that $\dc=i\partial\Bar{\partial}$.
Given a smooth real $(1,1)$-form $\theta$ on $X$, we denote by
$\PSH(X,\theta)$ the space of $\theta$-plurisubharmonic functions.

\medskip

The first ingredient in our approach is a local uniform estimate for
the Dirichlet problem on a bounded hyperconvex domain
$\Omega\Subset\mathbb C^n$.  More precisely, let
$\mu=f\,dV$ be a positive measure on $\Omega$ with total mass at most
one, and assume that $f\in L^w(\Omega,dV)$, where $w$ satisfies
Condition~\hyperlink{conditionK}{(K)}.  Then the solution of
\[
        (dd^c u)^n=\mu,
        \qquad
        u|_{\partial\Omega}=0,
\]
admits a uniform $L^\infty$ bound depending only on the corresponding
Orlicz data.  The proof combines Cegrell's pluripotential theory
\cite{cegrell1998pluricomplex,cegrell2004general},
plurisubharmonic envelopes, and an auxiliary-function argument.
This local estimate constitutes the main analytic input for the
global estimates developed below.


Our first result is a uniform oscillation estimate for bounded
solutions on an arbitrary compact Hermitian manifold.  It can be
viewed as an Orlicz version of the fundamental $L^\infty$
estimate in the Hermitian pluripotential theory.
\begin{bigthmm}
    Let $(X,\omega_X)$ be an $n$-dimensional compact Hermitian manifold.  Assume that $\mu= fdV$ is a probability measure on $X$, and $0\leq f\in L^w(dV)$ and $f>0$ almost everywhere, where $w$ satisfies \hyperlink{conditionK}{Condition (K)}. Then any solution $\varphi\in \PSH(X,\omega_X)\cap L^\infty(X)$ to  
    \[(\omega_X+\dc\varphi)^n=c\mu,\quad\sup_X\varphi=0 \] where $c>0$, satisfies $Osc(\varphi)\leq C$, for some uniform constant $C>0$ depending only on
$(X,\omega_X)$, $dV$, $n$, $w$, and an upper bound for $\|f\|_{1/n}^{-1}$, $\|f\|_w$.
\end{bigthmm}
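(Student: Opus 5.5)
The plan has two parts. First I obtain two-sided control of the constant $c$. Then I run the Guedj--Lu envelope/domination scheme, feeding in the local Orlicz estimate for the Dirichlet problem stated above.

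\emph{Step 1: bounds on $c$.} For the lower bound I use the mixed Monge--Amp\`ere inequality in the Hermitian setting, as in Guedj--Lu and Ko{\l}odziej--Nguyen. For bounded $\omega_X$-psh $\varphi$ one has $c\,\mu\geq$ (something like) $c_{\omega}>0$ times a fixed mass, which forces $c\geq c_0(X,\omega_X)>0$. Concretely, one compares with a solution $u$ of $(\omega_X+dd^c u)^n=e^{u}dV$, or uses the Hermitian domination principle. A cleaner route to a lower bound is via $\|f\|_{1/n}$. By the pointwise mixed inequality (Ko{\l}odziej--Nguyen), $(\omega_X+dd^c\varphi)\wedge\omega_X^{n-1}\geq (cf)^{1/n}\,\omega_X^n/C$. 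Integrating and using $\int dd^c\varphi\wedge\omega_X^{n-1}\leq C\int|\varphi|\,dV$ (Gauduchon normalization), together with the uniform $L^1$ bound for $\sup$-normalized $\omega_X$-psh functions, gives
\[
c^{1/n}\|f\|_{1/n}^{1/n}\leq C,
\]
that is, an upper bound on $c$ in terms of $\|f\|_{1/n}^{-1}$. For the lower bound on $c$ I use the standard Hermitian fact that $\int_X(\omega_X+dd^c\varphi)^n\geq v_->0$ uniformly for bounded $\omega_X$-psh $\varphi$, where $v_-(\omega_X)>0$ is the Guedj--Lu lower volume of a Hermitian metric. Hence $c\geq v_-$.

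\emph{Step 2: local-to-global.} I cover $X$ by finitely many coordinate balls $B_j\Subset B_j'$ on which $\omega_X\leq dd^c\rho_j$ for strictly psh $\rho_j$ with $-C\le\rho_j\le0$. On each $B_j'$ I solve the Dirichlet problem $(dd^c u_j)^n=\mathbf 1_{B_j'}\,c\,\mu/\|c\mu\|$ with zero boundary values. Since $c$ is bounded above, the density has uniformly bounded $L^w$ norm, and Condition (K) is preserved under scaling by bounded constants. The local estimate then gives $\|u_j\|_\infty\leq C$.

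The main obstacle is gluing, i.e.\ producing a global bounded $\omega_X$-psh subsolution. Following Guedj--Lu's approach to the $L^p$ case, I first solve globally $(\omega_X+dd^c\psi)^n=e^{\psi}c'\mu$, or use envelopes $P(\,\cdot\,)$, to get a bounded $\psi$ with $(\omega_X+dd^c\psi)^n\geq \delta\mu$. The bound on $\psi$ comes from the local estimates: by the domination principle, it suffices to show $\psi$ is controlled on each ball by $u_j+\rho_j$. Concretely, I compare $\varphi$ with $\varepsilon\psi$-type competitors. On $\{\varphi<\psi-s\}$, the modified comparison principle for Hermitian forms (Ko{\l}odziej--Nguyen) yields capacity decay estimates. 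These are driven by the Orlicz integrability via the local estimate, since $\mu(E)\le C\,\mathrm{Cap}(E)\,h(\mathrm{Cap}(E)^{-1})$ with $h$ integrable in the sense of (K). This gives the De Giorgi-type iteration $s\,\mathrm{Cap}(\varphi<-t-s)\leq C\,\mu(\varphi<-t)$, which terminates and bounds $-\inf\varphi$.

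The delicate point I expect is extracting the capacity inequality $\mu(E)\lesssim \mathrm{Cap}(E)\,h(1/\mathrm{Cap}(E))$ from the local Dirichlet estimate. I would do this by applying the local estimate to $\mathbf 1_E f/\mu(E)$ and using the scaling of the Orlicz norm, together with the fact that $f>0$ a.e.\ (needed for $\mu$ to charge no pluripolar-like small sets and for uniqueness in the envelope step).
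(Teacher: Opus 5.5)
Your Step~1 is fine and matches the paper. The mixed Monge--Amp\`ere inequality integrated against $\omega_X^{n-1}$ (or a Gauduchon form) bounds $c$ above in terms of $\|f\|_{1/n}^{-1}$ (Lemma~\ref{lem: nomalizing-constant}), so $cf$ has uniformly bounded $L^w$-norm. The lower bound $c\ge v_-(\omega_X)$ is true but not needed.

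The gap is Step~2, which is the entire content of the theorem and is not carried out.
\begin{itemize}
\item \emph{The subsolution route is circular.} Solving $(\omega_X+dd^c\psi)^n=e^{\psi}c'\mu$ with $\psi$ bounded presupposes a uniform $L^\infty$ bound for Orlicz densities of exactly the kind you are proving; in the paper, Theorem~\ref{thm: exist-hermit} is derived from it. Moreover, a bounded global $\psi$ with $(\omega_X+dd^c\psi)^n\ge\delta\mu$ is precisely the object to be constructed.
\item \emph{The domination step does not apply to local functions.} Controlling ``$\psi$ on each ball by $u_j+\rho_j$ via the domination principle'' fails because the domination principle compares \emph{global} $\omega_X$-psh functions, while $u_j+\rho_j$ lives only on $B_j'$. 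Making it global is exactly the gluing you postpone.
\item \emph{The capacity fallback is incorrect as written.} With $h\nearrow+\infty$, the bound $\mu(E)\le C\,\mathrm{Cap}(E)\,h(\mathrm{Cap}(E)^{-1})$ does not even give $\mu\lesssim\mathrm{Cap}$. A Ko{\l}odziej-type iteration needs $h$ in the denominator, $\mu(E)\le \mathrm{Cap}(E)/h(\mathrm{Cap}(E)^{-1/n})$ with $h$ admissible. The iteration inequality is $s^n\,\mathrm{Cap}(\{\varphi<-t-s\})\le C\mu(\{\varphi<-t\})$ for small $s$, with Ko{\l}odziej--Nguyen's torsion corrections, not linear in $s$.
\item \emph{Uniformity is not addressed.} Deriving such a domination from the local estimate, with constants depending only on an upper bound for $\|f\|_w$, needs a uniform-integrability device. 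A bound on $\int_X w(f)\,dV$ does not make $\|\mathbf 1_E f\|_w$ uniformly small when $|E|$ is small.
\end{itemize}
Also, the hypothesis $f>0$ a.e.\ plays no role here, since $f\,dV$ never charges pluripolar sets.

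The missing ingredient, which lets the paper avoid capacities entirely, is a uniform global subsolution (Lemma~\ref{lem: subsolution}). There is $m=m(X,\omega_X,w)>0$ such that every $0\le g\in L^w$ admits $u\in\PSH(X,\omega_X)$ with $-1\le u\le 0$ and $(\omega_X+dd^cu)^n\ge m\,g\,dV/\|g\|_w$. This comes from Ko{\l}odziej's gluing:
\begin{itemize}
\item Replace the local Dirichlet solution $u_j\le 0$ on $B_j'$ by $\max\{u_j,K(|z|^2-r_j^2)\}$, with $r_j$ between the radii of $B_j$ and $B_j'$ and $K$ large.
\item This function equals $u_j$ on $B_j$ and coincides with the smooth function $K(|z|^2-r_j^2)$ near $\partial B_j'$. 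It therefore extends to a bounded $C\omega_X$-psh function on $X$.
\item Average over $j$ and rescale.
\end{itemize}
With this lemma, write $\omega_\varphi^n:=(\omega_X+dd^c\varphi)^n$; the paper then argues as follows (Theorem~\ref{thm: estimate-hermit}, applied to $cf$).
\begin{itemize}
\item Pass to a strictly weaker weight $\widetilde w$ (Lemma~\ref{lem: weak-weight}), so that $\|\mathbf 1_{\{cf>B\}}cf\|_{\widetilde w}\le 2^{-n-1}m$ for a uniform $B$. This gives $u$ with $\omega_\varphi^n\le 2^{-n-1}(\omega_X+dd^cu)^n+B\,dV$.
\item Use the uniform $L^1$ bound to get $\|\mathbf 1_{\{\varphi<-C\}}\|_{\widetilde w}\le m/(2^{n+1}B)$. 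This gives $v$ with $(\omega_X+dd^cv)^n\ge 2^{n+1}B\,\mathbf 1_{\{\varphi<-C\}}dV$.
\item On $\{\varphi<\frac{u+v}{2}-C\}$ one gets $\omega_\varphi^n\le\frac12\bigl(\omega_X+dd^c\frac{u+v}{2}\bigr)^n$. Proposition~\ref{Domination principle-1} then yields $\varphi\ge-1-C$.
\end{itemize}
Alternatively, your capacity route could be made rigorous by invoking Ko{\l}odziej--Nguyen's Hermitian $L^\infty$ theorem for measures dominated by an admissible function of the capacity. You would then still have to prove that domination for $L^w$ densities, uniformly in $\|f\|_w$.
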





The proof is reduced to the local estimate above by constructing
bounded global subsolutions from solutions of local
Monge--Amp\`ere equations.

\medskip

We next turn to degenerate reference forms.  Here, a phenomenon
specific to the Hermitian setting becomes essential: if $\theta$ is
a smooth semipositive $(1,1)$-form, the total Monge--Amp\`ere mass
\(
        \int_X(\theta+dd^c u)^n
\)
may depend on the potential $u$, and hence is not determined solely
by the $dd^c$-class of $\theta$.  Following
Guedj--Lu~\cite{guedj2022quasi2,guedj2021quasi}, we consider the
lower Monge--Amp\`ere volume
\[
 v_-(\theta)
 :=
 \inf\left\{
      \int_X(\theta+dd^c u)^n\,;\;
      u\in\PSH(X,\theta)\cap L^\infty(X)
     \right\}.
\]
The condition $v_-(\theta)>0$ ensures that the Monge--Amp\`ere
masses of bounded $\theta$-psh potentials are uniformly bounded
away from zero.  It thus provides the quantitative non-collapsing
condition needed to compensate for the lack of 
control of the total mass in the Hermitian setting.

This leads to our second main result.
 \begin{bigthmm}\label{thm: B}
    Let $(X,\omega_X)$ be an $n$-dimensional compact Hermitian manifold.  Assume that $\mu= fdV$ is a probability measure on $X$, and $0\leq f\in L^w(dV)$ and $f>0$ almost everywhere, where $w$ satisfies \hyperlink{conditionK}{Condition (K)}. Let $\theta$ be a semipositive (1,1) form on $X$ such that $v_-(\theta)>0$. Assume that $\theta\le A\omega_X $ for a fixed constant $A>0$.
    Let $\lambda\geq 0$.
    Then there exist $\varphi\in \PSH(X,\theta)\cap L^\infty(X)$ and $c>0$ which solve
    \[(\theta+\dc\varphi)^n=ce^{\lambda\varphi}\mu,\quad\sup_X\varphi=0. \] Moreover, $Osc(\varphi)\leq C$ for some uniform constant $C$ which depends only on $X$, $\omega_X$, $dV$, $n$, $w$, $\theta$, and an upper bound for $v_-(\theta)^{-1}$, $\|f\|_{1/n}^{-1}$, $\|f\|_w$. The constant $C$ can be chosen independently of $\lambda\geq0$.
\end{bigthmm}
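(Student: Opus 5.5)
The plan has two parts. First we prove an a priori oscillation bound that does not depend on $\lambda$. Then we obtain existence by approximating $\theta$ by Hermitian forms and passing to the limit.

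\textbf{Step 1: approximation and existence for Hermitian forms.}
For $\varepsilon\in(0,1]$ set $\theta_\varepsilon:=\theta+\varepsilon\omega_X$; this is a Hermitian metric with $\theta_\varepsilon\le (A+1)\omega_X$. We also smooth the density: choose smooth $f_j>0$ with $f_j\to f$ in $L^1$, with $\|f_j\|_w$ and $\|f_j\|_{1/n}^{-1}$ controlled by those of $f$, normalized so that $\int f_j\,dV=1$. The Tosatti--Weinkove theorem (for $\lambda=0$), or the Cherrier/Aubin--Yau continuity method (for $\lambda>0$), then gives smooth solutions
\[
(\theta_\varepsilon+dd^c\varphi_{\varepsilon,j})^n=c_{\varepsilon,j}\,e^{\lambda\varphi_{\varepsilon,j}}f_j\,dV,\qquad \sup_X\varphi_{\varepsilon,j}=0 .
\]

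\textbf{Step 2: uniform bounds on the constants $c$.}
Integrating the equation gives
\[
c_{\varepsilon,j}\int_X e^{\lambda\varphi}f_j\,dV=\int_X(\theta_\varepsilon+dd^c\varphi)^n .
\]
The right-hand side is bounded below by $v_-(\theta)$, using $\theta_\varepsilon\ge\theta$ and multilinearity. It is bounded above by a constant depending only on $A$ and $(X,\omega_X)$, by the Guedj--Lu upper volume bound for forms dominated by $A\omega_X$. For the lower bound on the integral we use
\[
\int_X e^{\lambda\varphi}f_j\,dV\le 1 .
\]
Hence $c_{\varepsilon,j}\ge v_-(\theta)$. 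An upper bound on $c$ is not needed for the oscillation estimate. We only need $c\,e^{\lambda\varphi}\le c$, which keeps the right-hand side dominated by $c\,f_j\,dV$. Where the argument compares with a subsolution, we keep the factor $e^{\lambda\varphi}$ and use $\varphi\le 0$.

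\textbf{Step 3: the uniform oscillation estimate (main obstacle).}
We adapt the envelope/domination method of Guedj--Lu, with the local Orlicz estimate replacing the $L^p$ input. Cover $X$ by finitely many coordinate balls. On each ball, solve the Dirichlet problem $(dd^c u_k)^n=\mathbf 1_{B_k}\mu/\mu(B_k)$ with zero boundary values; the local Orlicz estimate gives $\|u_k\|_\infty\le C$. Glue these local solutions with cutoffs, exactly as in the construction of global subsolutions used for Theorem~A. The gluing produces a bounded $\theta_\varepsilon$-psh function $\psi$ with
\[
(\theta_\varepsilon+dd^c\psi)^n\ge \delta\,\mu .
\]
The delicate point is that $\theta$ is only semipositive, so the cutoff errors cannot be absorbed by $\theta$ itself. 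We would try to absorb them in two ways: by using the quantitative non-collapsing from $v_-(\theta)>0$, applied via the Guedj--Lu envelope $P_\theta(\cdot)$ to the function $\max(\cdot)$ of scaled local solutions; and by choosing the scaling of the local pieces small enough to be dominated by an $\varepsilon$-independent fraction. If the direct gluing fails, the fallback is the Guedj--Lu domination principle together with a Kolodziej-type capacity decay argument. In that route one shows that $\mathrm{Cap}_\theta(\{\varphi<-s\})$ decays superlinearly in $s$, using the Orlicz bound $\mu(E)\le h(\mathrm{Cap}(E))$ with $\int^\infty h^{-1/n}<\infty$. The bound $\mu(E)\le h(\mathrm{Cap}(E))$ is itself derived from the local estimate by taking relative extremal functions as test functions. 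Either route, combined with the domination principle and $c\ge v_-(\theta)$, gives
\[
\varphi_{\varepsilon,j}\ge \psi-C \quad\text{or}\quad \varphi_{\varepsilon,j}\ge -C,
\]
with $C$ independent of $\varepsilon$, $j$ and $\lambda$. Independence of $\lambda$ holds because $e^{\lambda\varphi}\le 1$ and all comparisons use only this inequality.

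\textbf{Step 4: passing to the limit.}
We have $c_{\varepsilon,j}\in[v_-(\theta),C]$. The upper bound follows from the bounded potentials: for $\lambda>0$ use $e^{\lambda\varphi}\ge e^{-\lambda C}$; for a bound uniform in $\lambda$, use the mass upper bound together with the lower bound on $\int e^{\lambda\varphi}f$, or simply allow $c$ to depend on $\lambda$, since only the oscillation bound must be uniform. By Hartogs compactness, extract $\varphi_{\varepsilon,j}\to\varphi$ in $L^1$ and $c_{\varepsilon,j}\to c$. To identify the limit equation, use stability of Monge--Amp\`ere measures: upgrade to convergence in capacity via the uniform $L^\infty$ bound and Hermitian stability estimates (Kolodziej--Nguyen), and apply Guedj--Lu's continuity of the Monge--Amp\`ere operator along uniformly bounded sequences converging in capacity. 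The limit solves $(\theta+dd^c\varphi)^n=c\,e^{\lambda\varphi}\mu$ with $\sup_X\varphi=0$. The oscillation bound passes to the limit.
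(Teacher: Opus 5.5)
\textbf{Gap 1: the $\varepsilon$-uniform estimate.} The core of the theorem is an oscillation bound that stays uniform as $\theta_\varepsilon=\theta+\varepsilon\omega_X$ degenerates. Your Step~3 does not prove it; it only lists strategies.

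Gluing local Dirichlet solutions cannot give a bounded $\psi$ with $(\theta_\varepsilon+\dc\psi)^n\ge\delta\mu$ and with $\delta$, $\|\psi\|_\infty$ independent of $\varepsilon$. The cutoff errors are of order one. A semipositive $\theta$ with $v_-(\theta)>0$ can vanish identically on an open set (e.g.\ a smooth semipositive representative of a K\"ahler class), so nothing absorbs them. Such a uniform subsolution is essentially what the theorem itself produces.

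The capacity fallback is not closed either. Ko{\l}odziej's iteration needs $\mu(E)\le h(\mathrm{Cap}_\theta(E))$ for the degenerate form. Local estimates only control $\mu$ by the capacity of $\omega_X$, and $\mathrm{Cap}_\theta\le\mathrm{Cap}_{A\omega_X}$ goes the wrong way. You also never say how $v_-(\theta)>0$ controls the Hermitian error terms uniformly.

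The missing idea is to replace subsolutions by \emph{global} auxiliary equations on the Hermitian approximant $\omega$:
\begin{itemize}
\item Solve $(\omega+\dc v_s)^n=c_sA_s^{-1}\mathbf 1_{\{\varphi<-s\}}(-\varphi-s)^rf\,dV$ with $\sup_Xv_s=0$.
\item Monotonicity of $v_-$ gives $c_s\ge v_-(\omega)\ge v_-(\theta)$, uniformly in $\varepsilon$. This is where non-collapsing enters.
\item Compare $v_s$ with the envelope $P_\omega(-\chi(-\varphi-s))$ via the domination principle. This gives $(-\varphi-s)^{(n+r)/n}\lesssim A_s^{1/n}(\Lambda-v_s)$ on $\{\varphi<-s\}$, and a De Giorgi iteration settles the $L^p$ case.
\item Reduce the Orlicz case to an $L^2$ density: solve $(\omega+\dc v)^n=c_1c\,w(f)\,dV$ in $\mathcal E(X,\omega)$, use $-\gamma(-v)$ with $\gamma'\simeq h^{-1/n}$, and handle the remainder $e^{-\alpha v}$ by Skoda--Tian.
\end{itemize}
Also, $\int_X(\theta_\varepsilon+\dc\varphi)^n\ge v_-(\theta)$ follows from Guedj--Lu's monotonicity of $v_-$, not from multilinearity, since $\varphi$ is not $\theta$-psh.

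\textbf{Gap 2: independence of $\lambda$.} Your justification of $\lambda$-independence is wrong. The inequality $(\theta_\varepsilon+\dc\varphi)^n\le c\,f_j\,dV$ alone only gives bounds depending on an upper bound for $c$, since densities bounded by $Kf$ can concentrate as $K\to\infty$. So the claim in Step~2 that no upper bound on $c$ is needed is false. Step~4 then extracts that bound from the oscillation estimate it was supposed to feed, which is circular.

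Nor is $c=c_\lambda$ bounded in $\lambda$. Set $\phi_\lambda:=\varphi+\lambda^{-1}\log c$; then $(\theta+\dc\phi_\lambda)^n=e^{\lambda\phi_\lambda}\mu$ and $c_\lambda=e^{\lambda\sup_X\phi_\lambda}$. Already for $\theta$ K\"ahler one checks that $c_\lambda\to+\infty$ when $\operatorname{ess\,inf}_Xf=0$.

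The paper proves the estimate only for $\lambda=0$. There $c$ is bounded above via $\|f\|_{1/n}^{-1}$, which is itself controlled by $\|f\|_w$ and $\int_Xf\,dV=1$. It then uses the domination principle (Corollary~\ref{coro: domination-semipositive}(2)) to get $\varphi_0+\lambda^{-1}\log c_0\le\phi_\lambda\le\varphi_0-\inf_X\varphi_0+\lambda^{-1}\log c_0$. Hence $\mathrm{Osc}(\phi_\lambda)\le2\,\mathrm{Osc}(\varphi_0)$.

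\textbf{Step 4.} A uniform $L^\infty$ bound plus $L^1$ convergence does not yield convergence in capacity, and Ko{\l}odziej--Nguyen stability is not uniform as $\theta_\varepsilon$ degenerates. The paper instead uses two different arguments. For $\lambda>0$, the solutions are monotone in the approximation parameter, again by domination. For $\lambda=0$, it takes the envelopes $(\sup_{k\ge j}\varphi_k)^*$ and $P_\theta(\inf_{k\ge j}\varphi_k)$ and uses domination to identify the limit.
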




The proof proceeds by approximating $\theta$ by Hermitian forms and
establishing estimates that remain uniform as the approximating
metrics degenerate.  The condition $v_-(\theta)>0$ provides uniform
control of the normalizing constants, while an auxiliary
Monge--Amp\`ere equation, combined with a suitable plurisubharmonic
envelope construction, yields the required uniform oscillation
estimate.

\medskip 
We finally turn to big classes.  Recall that a smooth real
$(1,1)$-form $\theta$ is called \emph{big} if there exists
$\rho_0\in\PSH(X,\theta)$ with analytic singularities such that
\[
                 \theta+dd^c\rho_0
                 \geq\varepsilon_0\omega_X
\]
in the sense of currents, for some $\varepsilon_0>0$. Set $\Omega:=X\setminus\{\rho_0=-\infty\}$. We denote by
\[
        V_\theta
        :=\sup\{u\in\PSH(X,\theta)\,;\ u\leq0\}
\]
the $\theta$-psh function with minimal singularities.


In contrast to the previous settings, solutions associated with a
big reference form need not be globally bounded.  The natural
quantity to estimate is therefore their singularity in relation to
$V_\theta$.  More precisely, one seeks solutions $\varphi$ with minimal
singularities and a uniform bound for
\(
        |\varphi-V_\theta|.
\)

Our third main result establishes such a relative estimate and, at
the same time, yields the existence of solutions to the corresponding
degenerate complex Monge--Amp\`ere equations.
\begin{bigthmm}\label{thm: C}
  Let $(X,\omega_X)$ be an $n$-dimensional compact Hermitian manifold,
and let $\theta$ be a smooth real big $(1,1)$-form satisfying
$\theta\leq A\omega_X$.
Assume that $\mu=f\,dV$ is a probability measure, where
$0\leq f\in L^w(dV)$, and $w$ satisfies
\hyperlink{conditionK}{Condition~(K)} and \hyperlink{conditionI}{Condition~(I)}.
Then, for every $\lambda\geq0$, there exist
$\varphi\in\PSH(X,\theta)$ and $c>0$ such that \(
                         \varphi=V_\theta+O(1) and
\)
\[
        (\theta+dd^c\varphi)^n
        =c e^{\lambda\varphi}\mu,
        \qquad
        \sup_X\varphi=0.
\]
More precisely,
\(
                         |\varphi-V_\theta|\leq C\),
 for some uniform constant $C$ which depends on  $(X,\omega_X)$, {$\theta$}, $n$, $dV$, $w$, $\lambda$, and an upper bound for $\|f\|_{w}$ and $\|f\|_{1/n}^{-1}$.

 Moreover, $\varphi$ is continuous on $\Omega$.
\end{bigthmm}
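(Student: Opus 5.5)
We plan to prove Theorem~\ref{thm: C} by approximation, running the scheme of Theorem~\ref{thm: B} with the bounded normalization replaced by comparison with $V_\theta$.

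\textbf{Step 1: approximation.} For $\varepsilon\in(0,1]$ set $\theta_\varepsilon:=\theta+\varepsilon\omega_X$. This is a Hermitian form, so $v_-(\theta_\varepsilon)>0$. It also satisfies $\theta_\varepsilon\le (A+1)\omega_X$. Theorem~\ref{thm: B}, or Theorem~A when $\lambda=0$, then gives bounded $\varphi_\varepsilon\in\PSH(X,\theta_\varepsilon)$ and $c_\varepsilon>0$ with
\[
(\theta_\varepsilon+\dc\varphi_\varepsilon)^n=c_\varepsilon e^{\lambda\varphi_\varepsilon}\mu,\qquad \sup_X\varphi_\varepsilon=0 .
\]
The oscillation bounds from Theorem~\ref{thm: B} degenerate as $\varepsilon\to0$, so we only use the existence of $\varphi_\varepsilon$. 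The aim is a bound $\varphi_\varepsilon\ge V_{\theta_\varepsilon}-C$ with $C$ independent of $\varepsilon$. Since $V_{\theta_\varepsilon}\ge V_\theta$, this gives $\varphi_\varepsilon\ge V_\theta-C$ uniformly in $\varepsilon$.

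\textbf{Step 2: control of $c_\varepsilon$.} For an upper bound on $c_\varepsilon$, we compare with the test function $\rho_0$: $\theta_\varepsilon+\dc\rho_0\ge\varepsilon_0\omega_X$ on $\Omega$. A domination or mixed Monge--Amp\`ere argument of Guedj--Lu type, applied on $\Omega$ to $\max(\varphi_\varepsilon,\rho_0-C)$, should give $c_\varepsilon^{1/n}f^{1/n}e^{\lambda\varphi_\varepsilon/n}\ge\varepsilon_0$ on a suitable set. Integrating this yields $c_\varepsilon\le C(\|f\|_{1/n}^{-1})$. The $\lambda$-dependence of the final constant enters here, because $e^{\lambda\varphi_\varepsilon}$ must be bounded below using the relative bound. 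We therefore first prove a weak version with $\lambda=0$, or with $\lambda$ fixed, and then bootstrap. For a lower bound on $c_\varepsilon$, the masses satisfy
\[
c_\varepsilon\ge\int_X(\theta_\varepsilon+\dc\varphi_\varepsilon)^n\ge\int_{\Omega}(\theta+\dc V_\theta)^n ,
\]
which should follow from the non-pluripolar mass monotonicity of Boucksom--Guedj--Lu. This is positive because $\theta$ is big.

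\textbf{Step 3: the relative bound (the main obstacle).} We would mimic the global argument of Theorem~\ref{thm: B}. First solve an auxiliary equation
\[
(\theta_\varepsilon+\dc\psi_\varepsilon)^n=c_\varepsilon' \,\mathbf 1_{\{\varphi_\varepsilon<V_{\theta_\varepsilon}-s\}}\, w'(f)\,dV ,
\]
built from Condition~(I) and the Orlicz data. Then consider the envelope $P(\min(\varphi_\varepsilon,\psi_\varepsilon))$ and use the domination principle to obtain a De~Giorgi / Guo--Phong type decay of the volume $\mu(\{\varphi_\varepsilon<V_{\theta_\varepsilon}-s\})$. The decay should follow from the local estimate through Condition~(K), with the lack of mass invariance controlled by Step~2.

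The key difficulty is that the subsolution and envelope comparisons have to be carried out relative to $V_{\theta_\varepsilon}$, whose singularities concentrate near $\{\rho_0=-\infty\}$. We would use $\rho_0$ to localize the argument to $\Omega$: replace $V_{\theta_\varepsilon}$ by $\max(V_{\theta_\varepsilon},\rho_0+\dots)$-type barriers. Condition~(I) is presumably what makes the integrability of $e^{-\rho_0}$-weighted densities work.

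\textbf{Step 4: passage to the limit.} The bounds $V_\theta-C\le\varphi_\varepsilon\le 0$ hold, and $\varphi_\varepsilon\le V_{\theta_\varepsilon}$ with $V_{\theta_\varepsilon}\searrow V_\theta$. Extract an $L^1$ limit $\varphi$ and pass to the limit $c_\varepsilon\to c$. This gives $V_\theta-C\le\varphi\le V_\theta$. The equation passes to the limit by continuity of non-pluripolar products along sequences with minimal singularities, as in Boucksom--Guedj--Lu; we can use monotone envelopes $\big(\sup_{\delta\le\varepsilon}\varphi_\delta\big)^*$ to get monotone convergence. Finally, continuity on $\Omega$ follows from a local argument. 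On compacts of $\Omega$, $\theta+\dc\rho_0\ge\varepsilon_0\omega_X$, so one can compare $\varphi$ with the solutions of local Dirichlet problems with $L^w$ data, using the local uniform estimate together with stability. Alternatively, one can show that $\varphi$ is the uniform limit on compacts of $\Omega$ of continuous solutions with smoothed densities, via a stability estimate $\|\varphi_1-\varphi_2\|\le C\|f_1-f_2\|^\gamma$ derived from the same decay argument.
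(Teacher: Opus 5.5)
Your proposal has a genuine gap at Step 1, and the core estimate in Step 3 is missing its key ingredient.

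\textbf{Step 1 fails.} A big form is not assumed semipositive, so $\theta_\varepsilon=\theta+\varepsilon\omega_X$ is in general neither Hermitian nor semipositive for small $\varepsilon$. Hence neither the first main theorem nor Theorem~\ref{thm: B} applies. Worse, bounded $\theta_\varepsilon$-psh functions need not exist at all. Take $X$ to be the blow-up $\pi:X\to Z$ of a compact K\"ahler manifold $(Z,\omega_Z)$ at a point, with exceptional divisor $E$. A smooth representative $\theta$ of $\{\pi^*\omega_Z\}+\delta\{E\}$ is big. But $\{\theta+\varepsilon\omega_X\}$ is negative on lines in $E$ for small $\varepsilon$, hence not nef. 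So $V_{\theta_\varepsilon}$ is unbounded, and the bounded $\varphi_\varepsilon$ you start from do not exist. (If $\theta\geq0$ then $V_\theta=0$ and the statement is essentially the semipositive theory; the new case is exactly the one your scheme cannot reach.)

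The paper approximates $V_\theta$ rather than $\theta$. Lemma~\ref{lem: approximate} gives $(\theta-4^{-j}\varepsilon_0\omega_X)$-psh functions $\psi_j\geq\rho_0$ with solvable singularities. On a resolution $\pi_j:Y_j\to X$ one has $\pi_j^*(\theta+\dc\psi_j)=\omega_j+[E_j]$, with $\omega_j\geq 4^{-j}\varepsilon_0\pi_j^*\omega_X$ semipositive and big. Only there are Hermitian perturbations $\omega_j+\varepsilon\eta_j$ used, with the twisted density $e^{\psi_j\circ\pi_j}\pi_j^*\mu$. The bounded solution on $Y_j$ descends to $\varphi_j=\psi_j+O(1)$ solving the $\lambda>0$ equation on $X$ (Theorem~\ref{thm: exist-big1}). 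The case $\lambda=0$ is obtained by letting $\lambda=1/j\to0$ (Theorem~\ref{thm: exist-big0}).

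\textbf{Step 3 lacks its mechanism, and the paper uses no De Giorgi iteration there.} What is needed is Lemma~\ref{lem: key-big}: for every $\psi$ with $\rho_0\leq\psi\leq0$ there is $v$ with $\psi-1\leq v\leq\psi$ and $(\theta+\dc v)^n\geq mf\,dV$ on $\Omega$. It is built as follows.
\begin{itemize}
\item Take a bounded $\varepsilon_0\omega_X$-psh subsolution $u$ for the reweighted density $k(2-\rho_0)f$. This density lies in $L^{w_{\widetilde h}}$ by Lemma~\ref{lem: weak-density} and the exponential integrability of $-\rho_0$.
\item Set $v=\psi-\gamma(\psi-\rho_0-u)$ with $\gamma'(t)=c_\gamma k(1+t)^{-1/n}$. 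Condition~(I) is precisely what keeps $\gamma\leq1$.
\end{itemize}
With these subsolutions, the relative bound $\varphi\geq\psi-C$ (Theorem~\ref{thm: estimate-big}) follows directly from the big-form domination principle.

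\textbf{Step 2 and continuity.} The same lemma, combined with domination, gives the lower bound on the normalizing constant. Your Step 2 instead invokes $\int_X(\theta_\varepsilon+\dc\varphi_\varepsilon)^n\geq\int_\Omega(\theta+\dc V_\theta)^n$. This is a cohomological monotonicity that is not available on Hermitian manifolds, where the mass depends on the potential. The paper's upper bound uses a Gauduchon metric: $\int_X(\theta+\dc\varphi)\wedge\omega_G^{n-1}=\int_X\theta\wedge\omega_G^{n-1}$, combined with the mixed Monge--Amp\`ere inequality. For continuity, your sketch needs approximants that are continuous on $\Omega$. The paper uses $P_\theta(h_j)$, whose Monge--Amp\`ere measures have bounded densities by Lemma~\ref{lem: eq-envelope}, so they are continuous on $\Omega$ by an external regularity theorem. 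It then obtains locally uniform convergence by domination against $(1-\lambda)\varphi_j+\lambda(\rho_0+u_{j,\lambda})-B\lambda$.
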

The proof relies on the pluripotential theory of big forms on
compact Hermitian manifolds, together with the uniform estimates
established in the preceding sections.  We approximate $V_\theta$
by quasi-psh functions with solvable singularities and resolve these
singularities by suitable modifications of $X$.  The resulting
Monge--Amp\`ere equations can then be studied on the modifications
with respect to genuine Hermitian reference metrics.  The main point
is to obtain estimates that are uniform along the approximation
process, so that they persist when passing to the limit.

\subsection*{Acknowledgments.} 
Q.-T. Dang is supported by  the Shuimu Scholar program of Tsinghua University. L. Zhang is supported by Postdoctoral Fellowship Program of China  GZC20240867.
B. Zhou is partially supported by  National Key R$\&$D Program of China 2023YFA009900 and NSFC  Grant 12271008.

\subsection*{AI Disclosure.} The authors used ChatGPT solely to assist with English-language editing and to identify potential gaps, and inconsistencies in the proofs. The authors take full responsibility for all mathematical arguments, claims, and any errors that remain.
\section{Preliminaries}

\subsection{Orlicz space}
We recall some background about Orlicz space, which can be found in~\cite{rao2002applications}.
\begin{definition}
    A {\em Young} function is a convex increasing lower semicontinuous function $w:\mathbb{R}^+\rightarrow\mathbb{R}^+$ such that $w(0)=0$, $\displaystyle\lim_{t\to 0}\frac{w(t)}{t}=0$, $\displaystyle\lim_{t\to+\infty}\frac{w(t)}{t}=+\infty$ 
    Its {conjugate function} is the Legendre transform of $w(\cdot)$, i.e.,
    \[ w^*(t)=\sup_{s\geq 0}(st-w(s)).\]
     We say that $w$ satisfies the $\Delta_2$-condition if there is $A>0$ such that \begin{equation}\label{eq: Delta_2}
         w(2t)\leq Aw(t),\forall\, t>t_0.
    \end{equation} 
\end{definition}

\begin{definition}
    Let $\mu$ be a positive measure on $X$ and let $w$ be a Young function. The {\em Orlicz space} $L^w(X,d\mu)$ is the set of all measurable functions $f$ on $X$ such that
    \[\int_X w(\varepsilon |f|)d\mu<+\infty \]
    for some $\varepsilon>0$. The {\em Luxemburg norm} of $f\in L^w(\mu)$ is 
    \[ \|f\|_{w,\mu}:=\inf\left\{r>0:\int_X w(r^{-1}|f|)d\mu\leq 1 \right\}.\]
    When $\mu=dV$ is a smooth volume form on $X$, we simply write $\|f\|_w$ and $\|f\|_p$ if $w(t)=t^p$ for $p\geq 1$. 
\end{definition}

If $w$ satisfies the $\Delta_2$-condition, then the uniform boundedness of the Luxemburg norm is equivalent to the uniform boundedness of $\int_Xw(|f|)d\mu$. 

Assume that $\|f\|_{w,\mu}\leq C$.
For every $\varepsilon>0$, the definition of the Luxemburg norm gives \[ \int_X w\left(\frac{|f|}{C+\varepsilon}\right)d\mu \leq 1. \] The $\Delta_2$-condition implies that, for every fixed $\lambda>1$, there exists $A_\lambda>0$ such that \[ w(\lambda t)\leq A_\lambda w(t), \] up to an inessential additive constant if the $\Delta_2$-condition is assumed only for large $t$. Therefore, \[ \begin{split} \int_X w(|f|)\,d\mu &= \int_X w\left( (C+\varepsilon) \frac{|f|}{C+\varepsilon} \right)d\mu \\ &\leq A_{C+\varepsilon} \int_X w\left(\frac{|f|}{C+\varepsilon}\right)d\mu \leq A_{C+\varepsilon}. \end{split} \] Conversely, assume that \( \int_X w(|f|)\,d\mu\leq C\). 
When $C\leq1$, this follows immediately from the definition of the Luxemburg norm. Otherwise, $C>1$, the convexity of $w$, together with $w(0)=0$, gives \[ w\left(\frac{t}{C}\right) \leq \frac{1}{C}w(t). \] Therefore, \[ \int_X w\left(\frac{|f|}{C}\right)d\mu \leq \frac1C\int_X w(|f|)\,d\mu \leq1, \] and hence $\|f\|_{w,\mu}\leq C$.

\begin{definition} \label{conditionK}
   \hypertarget{conditionK}{We say that a Young function $w$ satisfies {\em Condition (K)} if 
   $w=w_h$ satisfies the $\Delta_2$-condition and
   there exists an increasing function $h:(1,+\infty)\to(1,+\infty)$ such that
    \[w(t)=w_h(t)\asymp_{+\infty}t h(\log(e+t)), \qquad\int_1^{+\infty}h^{-1/n}(t) dt<+\infty.\]}
\end{definition}
\begin{example} We have several examples of weights satisfying \hyperlink{conditionK}{Condition (K)}:
 \begin{itemize}
        \item $w(t)=t^p$, for some $p>1$;
        \item $w(t)=t(\log(e+t))^{p}$, for some $p>n$;
        \item $w(t)=t(\log(e+t))^n(\log(e+\log(1+t)))^p$, for some $p>n$.
    \end{itemize}
\end{example}

\begin{lemma}\label{lem: weak-weight}
    Let $h:[1,+\infty)\to(0,+\infty)$ be an increasing function such
that
\begin{equation}\label{eq:int-Phi}
    \int_1^{+\infty}\frac{dt}{h(t)^{1/n}}<+\infty.
\end{equation}
We assume, moreover, that the weight
\(w_h(t)\asymp_{+\infty}
    t\,h\bigl(\log(e+t)\bigr)\),
satisfies the $\Delta_2$-condition.
    Then there exists an  increasing function $\widetilde h:(0,+\infty)\to(1,+\infty)$ such that
    \[\lim_{t\to+\infty}\frac{\widetilde h(t)}{ h(t)}=0, \qquad\int_1^{+\infty}\widetilde h^{-1/n}(t) dt<+\infty.\]
    Moreover, the associated weight $w_{\widetilde h}$ satisfies the $\Delta_2$-condition. 
\end{lemma}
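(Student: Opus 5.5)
This is a standard "slowly varying" construction in the spirit of the Abel–Dini theorem: a convergent integral of a decreasing function can be made to converge after multiplying by a factor tending to infinity. Set $g(t):=h(t)^{-1/n}$, which is positive, decreasing and integrable on $[1,+\infty)$. The plan is to find an increasing function $\rho\ge 1$ with $\rho(t)\to+\infty$ so slowly that $\int_1^\infty \rho(t)g(t)\,dt<+\infty$. We then define $\widetilde h:=h/\rho^{n}$. This gives $\widetilde h^{-1/n}=\rho g$, which is integrable, and $\widetilde h/h=\rho^{-n}\to0$.

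The delicate points are monotonicity of $\widetilde h$, the lower bound $\widetilde h>1$, and the $\Delta_2$-condition. Both monotonicity and $\Delta_2$ are served by making $\rho$ vary extremely slowly.

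\emph{Construction of $\rho$.} Let $R(t):=\int_t^\infty g(s)\,ds$, which decreases to $0$, and take $\rho(t):=R(t)^{-1/2}$, suitably smoothed. Then
\[
\int_1^\infty \rho g\,dt=\int_1^\infty R^{-1/2}(-R')\,dt=2R(1)^{1/2}<\infty .
\]
However $\rho$ may jump badly, so $h/\rho^n$ need not be increasing. To fix this, we do not divide $h$ itself. Instead we choose a piecewise constant $\rho$:
\begin{itemize}
\item pick $t_k\uparrow\infty$ with $R(t_k)\le 4^{-k}$ and $t_{k+1}\ge 2t_k$, and set $\rho=2^{k/n}$ on $[t_k,t_{k+1})$;
\item integrability follows from $\sum_k 2^{k/n}4^{-k}<\infty$.
\end{itemize}
We then replace $\widetilde h$ by the increasing minorant-type regularization
\[
\widetilde h(t):=\min\Bigl\{h(t)/\rho(t)^n,\ \inf_{s\ge t}h(s)/\rho(s)^n\Bigr\}=\inf_{s\ge t}h(s)\rho(s)^{-n},
\]
which is increasing, satisfies $\widetilde h\le h\rho^{-n}$ so that $\widetilde h/h\to0$, and satisfies $\widetilde h\ge h(t)\rho(t)^{-n}2^{-1}$ up to the jump factor. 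The last bound holds because $h$ increases while $\rho^n$ at most doubles at each $t_k$; on each block $h\rho^{-n}$ is increasing, and at a jump it drops by at most the factor $2$. Hence $\widetilde h^{-1/n}\le 2^{1/n}\rho g$ remains integrable.

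\emph{Lower bound.} If needed we replace $\widetilde h$ by $\max(\widetilde h,\,2)$, which preserves monotonicity. Integrability is also preserved: since $\widetilde h\to\infty$ (as we check below), the max equals $\widetilde h$ for large $t$. The limit $\widetilde h/h\to0$ persists because $h\to\infty$. Indeed $h$ is increasing with $\int h^{-1/n}<\infty$, so $h\to\infty$; and choosing $t_{k+1}$ large enough that $h(t_{k+1})\ge 4^{k}$ makes $\widetilde h\to\infty$.

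\emph{$\Delta_2$-condition.} This is the main obstacle. We need $w_{\widetilde h}(2t)\le A\,w_{\widetilde h}(t)$, i.e. $\widetilde h(\log(e+2t))\le A'\widetilde h(\log(e+t))$, where the arguments differ by at most $\log 2$. The $\Delta_2$ hypothesis for $w_h$ gives $h(s+\log2)\le C h(s)$ for large $s$. Since $\rho$ is increasing, $\widetilde h(s+\log 2)$ is an infimum of $h\rho^{-n}$ over $[s+\log2,\infty)$. This set is contained in $[s,\infty)$, so comparing directly goes the wrong direction; instead we bound
\[
\widetilde h(s+\log 2)\le h(s+\log2)\rho(s+\log 2)^{-n}\le C h(s)\rho(s)^{-n},
\]
using monotonicity of $\rho$. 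Finally, $h(s)\rho(s)^{-n}\le 2\widetilde h(s)$ by the jump estimate above, valid since the $t_k$ are $\log 2$-separated. Together these give $\widetilde h(s+\log2)\le 2C\widetilde h(s)$, which is the $\Delta_2$-condition for $w_{\widetilde h}$. Convexity of $w_{\widetilde h}$ is only needed up to equivalence $\asymp_{+\infty}$, and it can be arranged by taking a convex minorant comparable at infinity. This works because $t\widetilde h(\log(e+t))$ is increasing and superlinear.
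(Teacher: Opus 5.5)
\textbf{The gap.} The step that fails is the pointwise comparison $h\rho^{-n}\le 2\widetilde h$. Both your integrability claim $\widetilde h^{-1/n}\le 2^{1/n}\rho g$ and the last step of your $\Delta_2$ argument rest on it.

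Put $\phi:=h\rho^{-n}$. You correctly observe that $\phi$ increases on each block $[t_k,t_{k+1})$ and loses at most a factor $2$ at each jump. However, $\widetilde h(t)=\inf_{s\ge t}\phi(s)$ feels the losses at \emph{all} later jumps, and these multiply. Nothing in your choice of the $t_k$ forces $h$ to grow between consecutive jumps. Suppose $h$ is constant, $h\equiv H$, on an interval containing $t_k,t_{k+1},\dots,t_{k+j}$. This is compatible with $R(t_i)\le 4^{-i}$, $t_{i+1}\ge 2t_i$, $h(t_{i+1})\ge 4^i$, and with the $\Delta_2$ hypothesis on $h$, once $H$ is large. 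Then $\phi(t_k)=H2^{-k}$, whereas $\widetilde h(t_k)\le\phi(t_{k+j})=H2^{-k-j}$. So $\phi/\widetilde h$ is unbounded in general. The $\log 2$-separation of the $t_k$ plays no role here.

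\textbf{How to repair it.} Your construction can be saved in either of two ways.

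First, you can additionally require $h(t_{k+1})\ge 2h(t_k)$, which is possible since $h\to+\infty$. Then for $t\in[t_k,t_{k+1})$ and $s\in[t_m,t_{m+1})$ with $m>k$, you get $h(s)2^{-m}\ge 2^{m-k-1}h(t_{k+1})2^{-m}\ge h(t)2^{-k-1}$, which is exactly the missing bound.

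Second, you can avoid the pointwise comparison altogether.
\begin{itemize}
\item For $\Delta_2$: for $u\in[s,s+\log 2)$ one has $\phi(u)\ge h(s)\rho(s+\log 2)^{-n}\ge C^{-1}\phi(s+\log 2)\ge C^{-1}\widetilde h(s+\log 2)$. Hence $\widetilde h(s)\ge C^{-1}\widetilde h(s+\log 2)$.
\item For integrability: $\widetilde h(t)^{-1/n}\le\phi(t)^{-1/n}+\sum_{t_m>t}\phi(t_m)^{-1/n}$, and $\sum_m t_m\phi(t_m)^{-1/n}\le\sum_m 2^{1+m/n}R(t_{m-1})<\infty$. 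This uses $R(t_{m-1})\ge (t_m-t_{m-1})h(t_m)^{-1/n}\ge \frac{t_m}{2}h(t_m)^{-1/n}$.
\end{itemize}

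\textbf{Comparison with the paper.} The paper sidesteps all of this by using the opposite regularization. It keeps the continuous increasing factor $g=G^{-1/2}$ (your $R^{-1/2}$), sets $h_0=h/g^n$, and takes the least increasing \emph{majorant} $\widetilde h(t)=\sup_{1\le s\le t}h_0(s)$. Integrability is then free, since $\widetilde h\ge h_0$. The property $\widetilde h=o(h)$ follows from the monotonicity of $h$ via $\widetilde h(t)\le\max\{C_N,h(t)/N\}$. The $\Delta_2$-condition follows from $h_0(s)\le Ch_0(t)$ for $s\in[t,t+\log 2]$. With a minorant, as you chose, $o(h)$ comes for free but integrability becomes the real issue, which is exactly where your argument breaks.
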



\begin{proof}
Set
\[
    G(t):=\int_t^{+\infty}\frac{dt}{h(t)^{1/n}},
\]
so that $G(t)\to0$ as $t\to+\infty$. Then $g(t):=G(t)^{-1/2}$ goes to $+\infty$ as $t\to+\infty$. A computation gives
\[\int_1^{+\infty}\frac{g(t)dt}{h^{1/n}(t)}=-\int_1^{+\infty}G'(t)G(t)^{-1/2}dt=2G(1)^{1/2}<\infty. \]
Define $ h_0(t):=
    \frac{h(t)}{g(t)^n}$.
Then $$ \int_1^{+\infty}\frac{dt}{h_0(t)^{1/n}}
    <+\infty\;\;\text{and}\;\;\frac{h(t)}{h_0(t)}
    =
    g(t)^n
    \xrightarrow{t\to+\infty}+\infty.$$
The function $h_0$ may not be increasing. 
Inspired by~\cite{Sinnamon2007-monotonicity}, we replace $h_0$ by its {\em least increasing majorant}
\begin{equation}\label{eq:Psi-def}
   \widetilde h (t):=
    \sup_{1\leq s\leq t}h_0(s).
\end{equation}
Clearly, $\widetilde h$ is increasing and $\widetilde h\geq h_0$. 
 It follows that
$
    \int_1^{+\infty}\frac{dt}{\widetilde h(t)^{1/n}}
    <+\infty$.
We next show that $\widetilde h$ is asymptotically strictly weaker than
$h$, namely
\begin{equation}\label{eq:Psi-o-Phi}
   \frac{\widetilde h(t)}{h(t)}
    \longrightarrow0
    \qquad\text{as }t\to+\infty.
\end{equation}
Fix $N>1$. Since $g(t)\to+\infty$, as $t\to+\infty$, there exists
$T_N>1$ such that
\(g(t)^n\geq N \), for
    $ t\geq T_N$.
Thus, $h_0(t)\leq\frac{h(t)}{N}$, $t\geq T_N$.
Since $h$ is increasing, for $t\geq T_N$ we obtain
\[
\begin{split}
    \widetilde h(t)
    =
    \sup_{1\leq s\leq t}h_0(s)
    \leq
    \max\left\{
        C_N,\frac{h(t)}{N}
    \right\},
\end{split}
\]
where
\(
    C_N:=\sup_{1\leq t\leq T_N}h_0(t)<+\infty.
\)
Since $h(t)\to+\infty$, for
$t$ sufficiently large,
    $C_N\leq\frac{h(t)}{N}$,
and therefore
   $ \widetilde h(t)\leq\frac{h(t)}{N}.$ Since $N>1$ is arbitrary, \eqref{eq:Psi-o-Phi} follows.

Since $w_h$ satisfies the
$\Delta_2$-condition,
there exists $C>1$ such that, for all sufficiently large $t$,
\begin{equation}\label{eq:Phi-shift}
    h(t+\log 2)\leq Ch(t).
\end{equation}
Since $g$ is increasing, we obtain
\begin{equation}\label{eq:Psi0-shift}
\begin{split}
    h_0(t+\log2)
    =
    \frac{h(t+\log2)}
         {g(t+\log2)^n}
    \leq
    C\frac{h(t)}{g(t)^n}
    =
    Ch_0(t).
\end{split}
\end{equation}
The same estimate holds for the
increasing majorant $\widetilde h
$:
\begin{equation}\label{eq:Psi-shift}
    \widetilde h(t+\log2)\leq C_1\widetilde h(t)
\end{equation}
for all sufficiently large $t$. In particular, the associated weight
\(
    w_{\widetilde h}(t)
\)
also satisfies the $\Delta_2$-condition. 
Iterating this inequality yields
constants $C_2$, $\beta>0$ such that
\begin{equation}\label{eq:Psi-general-shift}
    \widetilde h(t+r)
    \leq C_2 e^{\beta r}\widetilde h(t),
    \qquad r\geq0,
\end{equation}
for all sufficiently large $t$.
\end{proof}

\begin{lemma}\label{lem: weak-density} Let $h,\widetilde h$ be the functions in Lemma \ref{lem: weak-weight}.
    For every $a>0$, there exists a
positive increasing function $k$ such that
\begin{equation}\label{eq:master-k}
    w_{\widetilde h}\bigl(tk(s)\bigr)
    \leq
    C_a\left(1+w_h(t)+e^{as}\right),
    \qquad t\geq0,\quad s\geq2,
\end{equation} where $C_a>0$ depends only on $a>0$.
\end{lemma}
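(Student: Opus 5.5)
We reduce \eqref{eq:master-k} to an inequality between $\widetilde h$ and $h$, and then choose $k$ so that the factor $k(s)$ is absorbed either by the gap between $\widetilde h$ and $h$ or by $e^{as}$. Up to bounded multiplicative constants and an additive constant near $t=0$, we may take
\[
w_h(t)=t\,h(\log(e+t)),\qquad w_{\widetilde h}(t)=t\,\widetilde h(\log(e+t)),
\]
because both weights are comparable to these expressions at infinity and are bounded on compact sets. The constant $1$ on the right of \eqref{eq:master-k} absorbs the compact range. We then fix $a>0$ and $s\geq 2$, set $K=k(s)$, and split into two cases according to the size of $t$ relative to $K$, with the threshold defined through a function $T(s)$ to be chosen below.

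\textbf{Case 1: small $t$.} Suppose $tK\le e^{bs}$, where $b>0$ is small and will be fixed in terms of $a$ and the constant $\beta$ from \eqref{eq:Psi-general-shift}. Then
\[
w_{\widetilde h}(tK)\le e^{bs}\,\widetilde h\bigl(\log(e+e^{bs})\bigr)\le e^{bs}\,\widetilde h(bs+1).
\]
Applying \eqref{eq:Psi-general-shift} with base point a fixed large $t_*$ and shift $r=bs+1-t_*$ gives $\widetilde h(bs+1)\le C e^{\beta bs}$. Hence $w_{\widetilde h}(tK)\le Ce^{(1+\beta)bs}$, and choosing $b=a/(1+\beta)$ makes this at most $C e^{as}$.

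\textbf{Case 2: large $t$.} Suppose $tK>e^{bs}$. If $k$ grows only slowly, so that $\log k(s)\le bs/2$, then $t>e^{bs/2}$, and so $\log(e+tK)\le 2\log(e+t)$. The problem is that \eqref{eq:Psi-general-shift} controls additive shifts of the argument, not doubling it. We therefore use the sharper comparison $\log(e+tK)\le \log(e+t)+\log K$. With $u=\log(e+t)$ and $r=\log K$, this gives
\[
w_{\widetilde h}(tK)\le tK\,\widetilde h(u+r).
\]
We then need
\[
K\,\widetilde h(u+r)\le C\,h(u)\qquad\text{whenever } u\ge bs/2 .
\]
By \eqref{eq:Psi-general-shift}, $K\widetilde h(u+r)\le C_2K^{1+\beta}\widetilde h(u)$. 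Define
\[
\varepsilon(u):=\sup_{v\ge u}\frac{\widetilde h(v)}{h(v)},
\]
which is decreasing and tends to $0$ by Lemma~\ref{lem: weak-weight}. Since $h$ is increasing, it suffices to have $K^{1+\beta}\varepsilon(bs/2)\le 1$. We therefore set
\[
k(s):=\min\Bigl\{\varepsilon(bs/2)^{-1/(1+\beta)},\ e^{bs/2}\Bigr\},
\]
suitably regularized so that it is increasing and at least $1$. This choice is increasing because $\varepsilon$ is decreasing. It also satisfies the constraint $\log k\le bs/2$ used above. With this $k$, Case 2 gives $w_{\widetilde h}(tK)\le C\,t\,h(u)\asymp w_h(t)$.

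\textbf{Expected difficulty.} The main obstacle is the bookkeeping in Case 2. One has to make sure that passing from $\log(e+tK)$ to $\log(e+t)+\log K$ costs only a factor $K^{\beta}$, through the exponential shift bound \eqref{eq:Psi-general-shift}. One must also make sure that this bound is applied only for arguments beyond its validity threshold; this is exactly why $u\ge bs/2$ is forced and why $s\ge 2$ is assumed. Small values of $s$ are absorbed into $C_a$.
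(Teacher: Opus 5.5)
Your proof is correct and follows essentially the paper's argument. Your $\varepsilon(u)^{-1}$ is exactly the paper's greatest increasing minorant $R_*$ of $h/\widetilde h$. The regime where $t$ is large compared with $s$ is handled, as in the paper, by absorbing $k(s)^{1+\beta}$ into the gap $h/\widetilde h$ through the shift bound \eqref{eq:Psi-general-shift}. The complementary regime uses the polynomial growth of $w_{\widetilde h}$ to produce $e^{as}$. The differences are cosmetic: you split according to whether $tk(s)\le e^{bs}$ and take $k(s)=\min\{R_*(bs/2)^{1/(1+\beta)},e^{bs/2}\}$, whereas the paper splits according to whether $s\le K\log(e+t)$ and takes $k(s)=\min\{R_*(s/K)^{\eta},e^{\sqrt{s}}\}$.
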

\begin{proof}
Recall that
\[
    w_h(t)\asymp_\infty
    t\,h\bigl(\log(e+t)\bigr),
    \qquad
    w_{\widetilde h}(t)\asymp_\infty
    t\,\widetilde h\bigl(\log(e+t)\bigr),
\]
and set
\(
    R(r):=\frac{h(r)}{\widetilde h(r)}.
\)
By construction,
\(R(r)\rightarrow+\infty\)
    as  $r\to+\infty$.
We consider the {\em 
greatest increasing minorant} of $R$:
\[
    R_*(r):=\inf_{s\geq r}R(s).
\]
Then $R_*$ is increasing,
    $R_*(r)\leq R(r)$, and $
    R_*(r)\rightarrow+\infty$ as $r\to+\infty$.

{The estimate \eqref{eq:Psi-general-shift} implies, for some $C,\beta>0$,}
\begin{equation}\label{eq:whtilde-polynomial}
    w_{\widetilde h}(r)
    \leq
    C\left(1+r^{1+\beta}\right),
    \qquad r\geq0.
\end{equation}
Choose $\eta>0$ small so that
\begin{equation}\label{eq:eta-master}
    (1+\beta)\eta<1.
\end{equation}
{Fix $K>\max\{1,2(1+\beta)/a\}$ and define}
\begin{equation}\label{eq:k-master}
    k(s):=
    \min\left\{
        R_*\left(\frac{s}{K}\right)^\eta,
        e^{\sqrt{s}}
    \right\}
\end{equation}
for all sufficiently large $s$, extending $k$ to a positive
nondecreasing function on $[1,+\infty)$. Then
\(
    k(s)\rightarrow+\infty,
\) as $s\to+\infty$
and
\begin{equation}\label{eq:k-master-R}
    k(Kr)
    \leq
    R_*(r)^\eta
    \leq
    R(r)^\eta.
\end{equation}
Moreover,
\begin{equation}\label{eq:k-master-subexp}
    k(s)\leq e^{\sqrt{s}},
    \qquad
    \log k(s)=o(s).
\end{equation}
We now prove \eqref{eq:master-k}. Fix $t\geq0$ and $s\geq2$, and set
\(
    L:=\log(e+t).
\)
We distinguish two cases.

\medskip
\noindent
\textbf{Case 1: $s\leq KL$.}
Since $k$ is nondecreasing, \eqref{eq:k-master-R} yields
\[
    k(s)
    \leq
    k(KL)
    \leq
    R(L)^\eta.
\]
Since $t\leq e^L$, we obtain
\[
    \log\bigl(e+tk(s)\bigr)
    \leq
    L+\eta\log R(L)+C.
\]
{Using \eqref{eq:Psi-general-shift}, we get}
\[
\begin{split}
    \widetilde h\bigl(\log(e+tk(s))\bigr)
    \leq
    C\widetilde h
    \bigl(L+\eta\log R(L)+C\bigr)\leq
    C R(L)^{\beta\eta}\widetilde h(L).
\end{split}
\]
Consequently,
\[
\begin{split}
    w_{\widetilde h}\bigl(tk(s)\bigr)
    \lesssim
    tk(s)\,
    \widetilde h\bigl(\log(e+tk(s))\bigr)
    \lesssim
    tR(L)^{(1+\beta)\eta}\widetilde h(L).
\end{split}
\]
Since $(1+\beta)\eta<1$ and $R(L)\geq1$ for $L$ sufficiently large,
\[
    R(L)^{(1+\beta)\eta}
    \leq R(L).
\]
Thus
\begin{equation}\label{eq:master-case1}
    w_{\widetilde h}\bigl(tk(s)\bigr)
    \leq
    C\bigl(1+w_h(t)\bigr)
\end{equation}
whenever $s\leq KL$.

\medskip
\noindent
\textbf{Case 2: $s>KL$.}
{Since $t\leq e^L<e^{\frac{s}{K}}$, \eqref{eq:k-master-subexp} gives
$tk(s)\leq e^{\frac{s}{K}+\sqrt{s}}$. Our choice of $K$ ensures that
$(1+\beta)(\frac{s}{K}+\sqrt{s})\leq as+\frac{(1+\beta)^2}{2a}$.
Thus \eqref{eq:whtilde-polynomial} yields
\begin{equation}\label{eq:master-case2}
\begin{aligned}
    w_{\widetilde h}\bigl(tk(s)\bigr)
    &\leq C\left(1+e^{(1+\beta)(\frac{s}{K}+\sqrt{s})}\right)\\
    &\leq C_a(1+e^{as}).
\end{aligned}
\end{equation}
Combining \eqref{eq:master-case1} and \eqref{eq:master-case2} proves \eqref{eq:master-k}.}
\end{proof}

\noindent\textbf{Condition (I).} \label{conditionI} \hypertarget{conditionI}
We further say that the Young function $w_h$ satisfies \hyperlink{conditionI}{Condition (I)} if the function $k$ in Lemma~\ref{lem: weak-density} satisfies the following integrability condition
\begin{equation*}
    \int_1^\infty\frac{dt}{k(t)^{1/n}}<\infty.
\end{equation*}

\begin{example}
     We give several examples. {Fix $a>0$, as in Lemma~\ref{lem: weak-density}. In the last two examples, the displayed formulas for $w_h$ and $w_{\widetilde h}$ are understood for sufficiently large $t$, with convex modifications near $0$ so that they satisfy Definition~2.1.}   \begin{itemize}
        \item If $h=e^{(p-1) t}$, $w_h(t)=t^{p}$, with $p>1$, then we can pick 
        \[\widetilde{h}=e^{(q-1)t}, \ \ w_{\widetilde h}(t)=t^q,\quad1<q<p. \]
Then $k$ can be chosen so that ${k(t)=e^{bt}}$ with ${0<b\leq \frac{a(p-q)}{pq}}$.
        \item If $h=t^p$, $w_h(t)=t[\log(e+t)]^p$, with $p>2n$, then we can pick 
        \[\widetilde{h}(t)=t^q, \ \ w_{\widetilde h}(t)=t[\log(e+t)]^q,\quad n<q<p-n. \]
Then $k$ can be chosen so that ${k(t)=t^{b}}$ with ${n<b<p-q}$.
    \end{itemize}
\end{example}

\subsection{Cegrell Classes}
We fix a bounded hyperconvex domain $\Omega\subset\mathbb C^n$, i.e.,
there exists a continuous negative psh exhaustion function
$\rho:\Omega\to[-1,0)$ such that
\(
        \{\rho<c\}\Subset\Omega,\; \forall\,c<0\). 
        We let $\PSH(\Omega)$ denote the set of plurisubharmonic (psh for short) functions on $\Omega$.


\begin{lemma}\label{lem: max-princ-local}
    Assume $u\leq v$ are bounded psh functions on $\Omega$. Then \[\mathbf{1}_{\{u=v\}}(\dc u)^n\leq  \mathbf{1}_{\{u=v\}}(\dc v)^n.\]
\end{lemma}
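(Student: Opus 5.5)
The plan is to deduce the statement from the comparison principle for bounded psh functions, using the standard trick of perturbing $u$ so that the contact set becomes the limit of open sets on which maximality-type inequalities hold. The cleanest route is the maximum principle in the form of Demailly's inequality: for bounded psh $u,v$ on $\Omega$,
\[
(\dc \max(u,v))^n\geq \mathbf 1_{\{u\geq v\}}(\dc u)^n+\mathbf 1_{\{u<v\}}(\dc v)^n .
\]
I would first recall or justify this inequality. On the open set $\{u<v\}$, where $v$ is quasi-continuous but not necessarily continuous, one uses the plurifine topology. Bedford--Taylor showed that the Monge--Amp\`ere operator is local in the plurifine topology, and $\{u<v\}$ is plurifine open. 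Hence on $\{u<v\}$ the measure $(\dc\max(u,v))^n$ equals $(\dc v)^n$, and similarly on $\{u>v\}$ it equals $(\dc u)^n$. The contact set $\{u=v\}$ is handled by replacing $u$ with $u_\varepsilon:=u+\varepsilon$. One has $\{u=v\}\subset\{u+\varepsilon>v\}$, so $\mathbf 1_{\{u=v\}}(\dc\max(u+\varepsilon,v))^n=\mathbf 1_{\{u=v\}}(\dc u)^n$ for each $\varepsilon>0$. Then $\max(u+\varepsilon,v)$ decreases to $\max(u,v)$ as $\varepsilon\to0$, and by Bedford--Taylor the Monge--Amp\`ere measures converge weakly.

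In our situation $u\leq v$, so $\max(u,v)=v$. Set $w_\varepsilon:=\max(u+\varepsilon,v)$. Applying plurifine locality on the plurifine open set $\{u+\varepsilon>v\}\supset\{u=v\}$ gives
\[
\mathbf 1_{\{u=v\}}(\dc w_\varepsilon)^n=\mathbf 1_{\{u=v\}}(\dc u)^n .
\]
We need to pass to the limit $\varepsilon\to0$, where $w_\varepsilon\searrow v$. Weak convergence alone does not respect multiplication by the indicator of a Borel set, and this is the main obstacle. To overcome it, I would use the following facts: the functions are uniformly bounded, the convergence is monotone, and the measures put no mass on pluripolar sets. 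These give convergence in capacity, and hence convergence of $h(\dc w_\varepsilon)^n$ for bounded quasi-continuous $h$.

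The cleaner alternative avoids the limit on the contact set. Instead, one shows the inequality $\mathbf 1_{\{u<v+\delta\}}$-type bounds directly: for a continuous compactly supported $\chi\geq0$, compare using $\{u=v\}=\bigcap_j\{v<u+1/j\}$, where these sets are decreasing and quasi-open. On each quasi-open set $\{v<u+1/j\}$, the inequality $\int_{\{v< u+\delta\}}\chi(\dc v)^n\geq\dots$ follows by monotone limits.

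Concretely, I would carry out the proof as follows. Write $v_\varepsilon=\max(u+\varepsilon, v)$ restricted near a point. Then
\[
\mathbf 1_{\{u+\varepsilon>v\}}(\dc v_\varepsilon)^n=\mathbf 1_{\{u+\varepsilon>v\}}(\dc u)^n\geq \mathbf 1_{\{u=v\}}(\dc u)^n .
\]
Since $v_\varepsilon\to v$ in capacity, semicontinuity on the quasi-closed sets $\{u+\varepsilon\geq v\}$ gives the limit. These sets decrease to $\{u\ge v\}=\{u=v\}$, because $u\leq v$. Combining these, we obtain
\[
\mathbf 1_{\{u=v\}}(\dc v)^n\geq\limsup \mathbf 1_{\{u+\varepsilon\ge v\}}(\dc v_\varepsilon)^n\ge \mathbf 1_{\{u=v\}}(\dc u)^n .
\]
The delicate point is this upper semicontinuity for quasi-closed sets under convergence in capacity. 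It is the step I expect to require care, and I would cite the standard Bedford--Taylor/Guedj--Zeriahi lemma for it.
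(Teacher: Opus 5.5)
Your argument is correct, and its core is the standard proof behind the reference the paper cites; the paper's own proof is only a pointer to Guedj--Zeriahi, Corollary 3.28. Your $w_\varepsilon=\max(u+\varepsilon,v)$ equals $\max(u,v-\varepsilon)+\varepsilon$, and the key step there is the same plurifine locality on the plurifine open set $\{u+\varepsilon>v\}\supset\{u=v\}$.

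You diverge only at the limit $\varepsilon\to0$, and the obstacle you identify there is not real: you never need to multiply by an indicator before passing to the limit. On all of $\Omega$ one has
\[
(\dc w_\varepsilon)^n\geq \mathbf{1}_{\{u+\varepsilon>v\}}(\dc w_\varepsilon)^n=\mathbf{1}_{\{u+\varepsilon>v\}}(\dc u)^n\geq \mathbf{1}_{\{u=v\}}(\dc u)^n,
\]
and the right-hand side does not depend on $\varepsilon$. Since $v\leq w_\varepsilon\leq v+\varepsilon$, the $w_\varepsilon$ decrease (even uniformly) to $v$. By Bedford--Taylor, $(\dc w_\varepsilon)^n\to(\dc v)^n$ weakly. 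Testing against $0\leq\chi\in C_c(\Omega)$ gives $(\dc v)^n\geq\mathbf{1}_{\{u=v\}}(\dc u)^n$, and multiplying by $\mathbf{1}_{\{u=v\}}$ finishes the proof.

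Your route through upper semicontinuity on quasi-closed sets is also valid. It follows from weak convergence together with the uniform domination $\int_E(\dc w_\varepsilon)^n\leq(2M)^n\,\mathrm{Cap}(E,\Omega)$ when $|w_\varepsilon|\leq M$. You apply this after removing an open set of small capacity off which $u-v$ is continuous. However, the sets $\{u+\varepsilon\geq v\}$ move with $\varepsilon$, so you must first freeze $\varepsilon_0$. Then use $\{u+\varepsilon\geq v\}\subset\{u+\varepsilon_0\geq v\}$ for $\varepsilon<\varepsilon_0$, and only afterwards let $\varepsilon_0\to0$. You only hint at this step, and the whole machinery buys nothing here.

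You can discard the ``cleaner alternative'' paragraph, which does not amount to an argument. You can also drop the remark about convergence of $h(\dc w_\varepsilon)^n$ for quasi-continuous $h$, since it cannot be applied to $\mathbf{1}_{\{u=v\}}$: that function is only quasi-upper semicontinuous, not quasi-continuous.
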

\begin{proof}
    See~\cite[Corollary 3.28]{guedj2017degenerate}.
\end{proof}

We recall some essential concepts of the Cegrell classes 
\cite{cegrell1998pluricomplex,cegrell2004general}. Let $\mathcal{E}_0(\Omega)$ denote the set of bounded plurisubharmonic functions $u$ in $\Omega$ such that $\lim_{z\to\zeta}u(z)=0$ for every $\zeta\in\partial\Omega$, and $\int_\Omega(\dc u)^n<+\infty$.

Let $\mathcal{E}(\Omega)$ denote the set of psh functions $u$ such that for all $z_0\in \Omega$, there exists a neighborhood $V_{z_0}$ of $z_0$ and a decreasing sequence $u_j\in\mathcal{E}_0(\Omega)$ which converges to $u$ in $V_{z_0}$ and
satisfies $\sup_j\int_\Omega(\dc u_j)^n<+\infty$. 

A function $u$ belongs to $\mathcal{F}(\Omega)$ iff there exists a sequence of functions $u_j\in\mathcal{E}_0(\Omega)$ decreasing to $u$ in all
of $\Omega$, which satisfies $\sup_j\int_\Omega (\dc u_j)^n<+\infty$. Let $\mathcal{F}^a(\Omega)$ be the subclass of function $u\in\mathcal{F}(\Omega)$ such that $(\dc u)^n$ vanishes on all pluripolar sets.

The class $\mathcal{E}^p(\Omega)$ (respectively $\mathcal{F}^p(\Omega)$), for $p>0$, is the set of psh functions $u$ for which there
exists a sequence of functions $u_j\in\mathcal{E}_0(\Omega)$ decreasing towards $u$ in all of $\Omega$, and
so that \[\sup_j\int_\Omega(-u_j)^p(\dc u_j)^n<+\infty \]
(respectively, $\sup_j\int_\Omega[1+(-u_j)^p](\dc u_j)^n<+\infty$).

As demonstrated by Cegrell \cite{cegrell1998pluricomplex,cegrell2004general}, the complex Monge–Amp\`ere operator can be
defined for functions belonging to the class $\mathcal{F}(\Omega)$ even when these functions are unbounded.

The Dirichlet problem in $\mathcal{F}^a(\Omega)$ was solved in~\cite[Lemma 5.14]{cegrell2004general}.

\begin{theorem}\label{thm: cegrell}
    Let $\mu$ be a positive measure on $\Omega$. If $\mu(\Omega)<+\infty$ and $\mu$ vanishes
on all pluripolar sets, then there exists a function $u\in\mathcal{F}^a(\Omega)$ such that $u|_{\partial\Omega}=0$ and $(\dc u)^n=\mu$.
\end{theorem}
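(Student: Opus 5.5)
The plan is to construct $u$ as the decreasing limit of solutions of Dirichlet problems for truncated measures. The truncations should be ones for which bounded solutions are available, and the total mass $\mu(\Omega)$ should give uniform control.

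\textbf{Step 1: decompose $\mu$.} Since $\mu$ is finite and vanishes on pluripolar sets, Cegrell's decomposition theorem applies: there exist $\psi\in\mathcal{E}_0(\Omega)$ and $0\le g\in L^1_{loc}((\dc\psi)^n)$ with
\[
\mu=g\,(\dc\psi)^n .
\]
This is the step I expect to be the main obstacle. It rests on a Radon--Nikodym argument against the measures $(\dc\psi)^n$, $\psi\in\mathcal E_0$, which combines two facts: measures not charging pluripolar sets are dominated by capacity in a suitable sense, and $\mathcal{E}_0$ is stable under maxima and convex combinations. I would first try to quote it from \cite{cegrell2004general}.

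\textbf{Step 2: truncate and solve.} Set $\mu_j:=\min(g,j)(\dc\psi)^n$. Then $\mu_j\le (\dc (j^{1/n}\psi))^n$, and $j^{1/n}\psi\in\mathcal E_0(\Omega)$. By Kołodziej's subsolution theorem (or Cegrell's $\mathcal E_0$ version of it), there is a unique $u_j\in\mathcal{E}_0(\Omega)$ with
\[
(\dc u_j)^n=\mu_j,\qquad u_j\ge j^{1/n}\psi .
\]

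\textbf{Step 3: monotonicity and the limit.} The measures $\mu_j$ increase in $j$, so the comparison principle for bounded psh functions with zero boundary values gives $u_{j+1}\le u_j$. Moreover,
\[
\int_\Omega(\dc u_j)^n=\mu_j(\Omega)\le\mu(\Omega)<\infty .
\]
Set $u:=\lim u_j$. Then $u\in\mathcal F(\Omega)$ by the very definition of $\mathcal F$, and in particular $u\not\equiv-\infty$, since the uniform mass bound controls the capacity of the sublevel sets. Cegrell's continuity of the Monge--Ampère operator along decreasing sequences in $\mathcal F$ then gives
\[
(\dc u)^n=\lim_j \mu_j=\mu
\]
by monotone convergence. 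Because $\mu$ puts no mass on pluripolar sets, $u\in\mathcal{F}^a(\Omega)$. Finally, $u\le u_j$ and each $u_j$ tends to $0$ at $\partial\Omega$, so $u$ has boundary values $0$ in Cegrell's sense.
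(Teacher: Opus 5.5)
Your argument is correct, and it is essentially Cegrell's own proof of \cite[Lemma 5.14]{cegrell2004general}, which the paper simply quotes rather than proves: decompose $\mu=g\,(\dc\psi)^n$ with $\psi\in\mathcal{E}_0(\Omega)$, solve for the truncated measures $\min(g,j)(\dc\psi)^n\le(\dc(j^{1/n}\psi))^n$ via Ko{\l}odziej's subsolution theorem, and pass to the decreasing limit, which has uniformly bounded masses, lies in $\mathcal{F}(\Omega)$, and converges by Cegrell's continuity theorem. The one point to phrase more carefully is the boundary condition: $u\le u_j$ only gives $u\le 0$, so ``$u|_{\partial\Omega}=0$'' should be read as the statement $u\in\mathcal{F}(\Omega)$, which you have already established.
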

In \cite[Theorem 5.15]{cegrell2004general}, the comparison principle is established as in the following.
\begin{theorem}\label{thm: local-comparison}
    If $u\in\mathcal{F}^a(\Omega)$ and $v\in\mathcal{E}(\Omega)$ with $(\dc v)^n\geq (\dc u)^n$ then $u\geq v$.
\end{theorem}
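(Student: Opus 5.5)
The plan is to deduce the statement from Cegrell's integral comparison principle in the class $\mathcal F(\Omega)$. The hypothesis on $(\dc v)^n$ versus $(\dc u)^n$ gives no strict inequality by itself, so we first perturb $v$ by a small multiple of an auxiliary function to create a strictly positive excess of mass.

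\textbf{Step 1 (reduction to $\mathcal F$).} Functions in $\mathcal E(\Omega)$ are negative, and $\mathcal F(\Omega)$ is stable under taking maxima with negative psh functions that dominate a member of $\mathcal F$. Hence $\tilde v:=\max(u,v)\in\mathcal F(\Omega)$. By plurifine locality of the Cegrell Monge--Amp\`ere operator, $(\dc \tilde v)^n=(\dc v)^n$ on the plurifine open set $\{u<v\}$.

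\textbf{Step 2 (integral comparison in $\mathcal F$).} For $u\in\mathcal F^a(\Omega)$ and $\phi\in\mathcal F(\Omega)$, I would use
\[
\int_{\{u<\phi\}}(\dc \phi)^n\leq\int_{\{u<\phi\}}(\dc u)^n .
\]
This is proved by approximating by decreasing sequences in $\mathcal E_0(\Omega)$ and applying the classical Bedford--Taylor comparison principle. The assumption that $(\dc u)^n$ puts no mass on pluripolar sets is exactly what makes the right-hand side pass to the limit, since the boundaries of the sets $\{u_j<\phi_j\}$ can concentrate mass only on pluripolar sets.

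\textbf{Step 3 (strict perturbation).} By Theorem~\ref{thm: cegrell}, or by classical Bedford--Taylor theory, choose $\psi\in\mathcal E_0(\Omega)$ with $(\dc\psi)^n=dV$. For $\varepsilon>0$ set $v_\varepsilon:=v+\varepsilon\psi\in\mathcal E(\Omega)$. Multilinearity and positivity of mixed Monge--Amp\`ere measures give
\[
(\dc v_\varepsilon)^n\geq(\dc v)^n+\varepsilon^n dV\geq(\dc u)^n+\varepsilon^n dV .
\]
Apply Steps 1--2 to $\phi=\max(u,v_\varepsilon)$. On $\{u<v_\varepsilon\}$ we have $(\dc\phi)^n=(\dc v_\varepsilon)^n$, so
\[
\int_{\{u<v_\varepsilon\}}(\dc u)^n+\varepsilon^n\,\mathrm{Vol}(\{u<v_\varepsilon\})\leq\int_{\{u<v_\varepsilon\}}(\dc u)^n .
\]
This inequality requires the left integral to be finite, which holds because $u\in\mathcal F$ has finite total mass. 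Hence $\{u<v_\varepsilon\}$ has Lebesgue measure zero, so $u\geq v+\varepsilon\psi$ almost everywhere. Since both sides are psh, the inequality holds everywhere. Letting $\varepsilon\to0$ and using that $\psi$ is bounded gives $u\geq v$.

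\textbf{Main obstacle.} The hardest step is Step 2 for unbounded functions. One must justify the limit passage on the sets $\{u_j<\phi_j\}$ and handle plurifine locality for Cegrell's operator. Both points rely on the fact that $(\dc u)^n$ charges no pluripolar set, which is where $u\in\mathcal F^a$, rather than merely $u\in\mathcal F$, is essential.
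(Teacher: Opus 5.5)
Your proof is correct, but it does not follow the paper, which gives no proof of this statement and simply quotes \cite[Theorem 5.15]{cegrell2004general}. You instead derive it from the integral comparison principle in $\mathcal F(\Omega)$ together with the strict perturbation $v_\varepsilon=v+\varepsilon\psi$. This is the standard route through the Khue--Hiep comparison principle in Cegrell's classes. Its advantage is that it uses only basic facts about these classes: upward closedness of $\mathcal F$, monotonicity of total mass, multilinearity, and plurifine locality. It also makes clear that $u\in\mathcal F^a$ is needed only to kill mass on a pluripolar set. Steps 1 and 3 are fine. The function $\max(u,v_\varepsilon)\geq u$ lies in $\mathcal F$, locality applies on $\{u<v_\varepsilon\}$ because $v_\varepsilon>-\infty$ there, and finiteness of $\int_\Omega(\dc u)^n$ justifies the cancellation.

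The justification you sketch for Step 2 is not the right mechanism, so as written it is not a proof. Weak convergence of $(\dc u_j)^n$ gives no control on the moving sets $\{u_j<\phi_j\}$. Moreover, the defect you must rule out lives on the contact set $\{u=\phi\}$, which is not pluripolar in general. Here is a clean proof. For $\delta>0$ put $w_\delta:=\max(u,\phi-\delta)$. Then $w_\delta\in\mathcal F$ and $w_\delta\geq u$, hence $\int_\Omega(\dc w_\delta)^n\leq\int_\Omega(\dc u)^n$. Locality on $\{u<\phi-\delta\}$ and on $\{u>\phi-\delta\}$ then gives
\[
\int_{\{u<\phi-\delta\}}(\dc\phi)^n\leq\int_\Omega(\dc u)^n-\int_{\{u>\phi-\delta\}}(\dc u)^n=\int_{\{u\leq\phi-\delta\}}(\dc u)^n .
\]
As $\delta\searrow0$, the sets on the left increase to $\{u<\phi\}$. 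The sets on the right increase to $\{u<\phi\}\cup\{u=\phi=-\infty\}$. The extra set is pluripolar, hence $(\dc u)^n$-null for $u\in\mathcal F^a$. With this argument, or a citation of the Khue--Hiep comparison principle, your proof is complete.
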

\subsection{Plurisubharmonic envelopes}
For a measurable function $f$, the envelope $P(f)$ is defined by
\[P(f)=(\sup\{ u\in\PSH(\Omega):u\leq f\})^*. \]
The study of this object has attracted the interest of several authors over the last
decade (see~\cite{bedford1982new,guedj2019plurisubharmonic}, and the references therein for more
information).
\begin{theorem}\label{thm: envelope-local}
If $h$ is bounded and quasi lower semicontinous, then $P(h)$ is a bounded plurisubharmonic function whose Monge-Amp\`ere measure $(\dc P(h))^n$ is carried
on the contact set  $\{P(h)=h\}$.   
\end{theorem}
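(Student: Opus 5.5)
The plan is to derive this from the Bedford--Taylor theory on bounded functions, working in $\Omega$ with the obstacle $h$ bounded and quasi lower semicontinuous.

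\emph{Boundedness and plurisubharmonicity.} Let $m:=\inf h>-\infty$. The constant $m$ is psh and lies below $h$, so the class is nonempty and $P(h)\geq m$. Every competitor satisfies $u\leq h\leq \sup h$, so the upper regularized supremum is bounded above by $\sup h$. By Choquet's lemma and the standard properties of upper envelopes, $P(h)$ is psh. Hence $m\leq P(h)\leq \sup h$.

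\emph{Two facts about $P(h)$.} The negligible set $\{P(h)>\sup_u u\}$ is pluripolar by Bedford--Taylor. Since $h$ is quasi lower semicontinuous, one first gets $P(h)\leq h$ quasi-everywhere, and in particular off a pluripolar set. To see this, approximate $h$ from below by lower semicontinuous functions outside open sets of small capacity. Then use that, for lower semicontinuous $g$, the envelope already satisfies $P(g)\leq g$, because the upper semicontinuous regularization of functions lying below a lower semicontinuous function stays below it. Pluripolar sets carry no Monge--Amp\`ere mass of bounded psh functions, so they are invisible for $(\dc P(h))^n$.

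\emph{Vanishing of the measure off the contact set.} This is the main step, handled by the classical balayage argument. First treat the case where $h$ is continuous. Fix $x_0$ with $P(h)(x_0)<h(x_0)$. By continuity of $h$ and upper semicontinuity of $P(h)$, there is a small ball $B\ni x_0$ and $\varepsilon>0$ with $P(h)+\varepsilon<h$ on $\bar B$. Solve the Dirichlet problem on $B$ (Bedford--Taylor) to get the maximal psh function $\hat u$ equal to $P(h)$ on $\partial B$, and glue it with $P(h)$ outside $B$. The glued function is psh, dominates $P(h)$, and stays $\leq h$ on $B$ for $B$ small. Here one uses that $\hat u\leq \sup_{\partial B}P(h)\leq \inf_{B}h$ after shrinking, which is possible since the oscillation of $h$ is small on small balls while $P(h)+\varepsilon<h$. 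By maximality of the envelope the glued function equals $P(h)$, so $(\dc P(h))^n=(\dc\hat u)^n=0$ on $B$.

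For general quasi-lsc $h$, I would try two routes. The first is to take quasi-continuous approximants $h_j$ of $h$: continuous functions with $h_j\searrow h$ or $h_j\to h$ off open sets $G_j$ of capacity $<1/j$. Then $P(h_j)\to P(h)$ by monotonicity and capacity, and continuity of the Monge--Amp\`ere operator along monotone or capacity-convergent sequences passes $\mathbf 1_{\{P(h_j)<h_j\}}(\dc P(h_j))^n=0$ to the limit. This needs a lower-semicontinuity argument on the quasi-open set $\{P(h)<h\}$. The alternative is to cite directly~\cite{guedj2019plurisubharmonic}, where exactly this statement is proved via the above local balayage combined with the quasi-Lindelöf property.

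The expected obstacle is the approximation step in the quasi-lsc case: the set $\{P(h)<h\}$ is only quasi-open, and controlling the masses on the small open sets $G_j$ requires the uniform boundedness of $P(h_j)$ together with the capacity estimate $\int_{G_j}(\dc P(h_j))^n\leq C\,\mathrm{Cap}(G_j)$.
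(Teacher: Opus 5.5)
\paragraph{Overall.} Your boundedness argument and the balayage for a \emph{continuous} obstacle are correct and standard. In the gluing step you use $P(h)\le h$ everywhere, which holds for continuous $h$ because $U\le h$ implies $U^*\le h^*=h$. But the content of the statement is the passage to quasi-lsc obstacles, and there your proposal gives no argument. Route~2 is a citation, which is exactly what the paper does: it simply quotes \cite[Theorem 2.19]{DL25-singularCY}. Route~1 does not work as written.

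\paragraph{The quasi-everywhere inequality.} Your justification of $P(h)\le h$ quasi-everywhere contains a false claim. For lsc $g$ one does \emph{not} have $P(g)\le g$ everywhere, since passing from $U\le g$ to $U^*\le g$ needs $g$ usc, not lsc. For example, take $0\in\Omega$, $g=\mathbf{1}_{\Omega\setminus\{0\}}$ and $R=\sup_\Omega|z|$. The competitors $1+\delta\log(|z|/R)$ give $P(g)\equiv1>g(0)$. In fact no hypothesis on $h$ is needed here: $U\le h$ pointwise and $\{U<U^*\}$ is pluripolar, which is your own first remark. Approximating $h$ \emph{from below} by $g$ only gives $P(g)\le P(h)$, which bounds nothing.

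\paragraph{Route 1.} Continuous $h_j\searrow h$ forces $h$ to be usc. The phrase ``continuous off open sets of small capacity'' describes quasi-continuity, which is strictly stronger than quasi-lsc: for a ball $B\Subset\Omega$, $\mathbf{1}_B$ is lsc but not quasi-continuous. Moreover, along decreasing approximants, weak convergence does not let you pass to the limit in $\mathbf{1}_{\{P(h_j)<h_j\}}(\dc P(h_j))^n=0$. The function $h-P(h_j)$ has no semicontinuity, so you get no open subsets of $\{P(h)<h\}$ lying in $\{P(h_j)<h_j\}$ for all large $j$.

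\paragraph{A route that does work.}
\begin{enumerate}
\item[(i)] Your balayage already handles \emph{lsc} $h$. Then $\{P(h)<h\}$ is open, and upper semicontinuity of $P(h)$ together with lower semicontinuity of $h$ at $x_0$ give $\sup_{\bar B}P(h)<\inf_{\bar B}h$ on a small ball, with no oscillation bound needed. The glued function is $\le h$ off a pluripolar set $N$, hence $\le P(h)$: add $\delta\phi$ with $\phi$ psh and $\phi|_N\equiv-\infty$, then let $\delta\to0$.
\item[(ii)] For quasi-lsc $h$, pick open sets $G_k$ with $\mathrm{Cap}(G_k)\to0$ such that $h|_{\Omega\setminus G_k}$ is lsc. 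Set $g_k:=h$ off $G_k$ and $g_k:=\sup h$ on $G_k$; this $g_k$ is lsc. Using the relative extremal function $u_{G_k}$ one gets $P(h)\le P(g_k)\le P(h)-(\sup h-\inf h)\,u^*_{G_k}$. Hence $P(g_k)\to P(h)$ in capacity.
\item[(iii)] Pass to the limit in $\mathbf{1}_{\{P(g_k)<g_k\}}(\dc P(g_k))^n=0$, which holds by (i), on the sets $\{P(h)+\delta<h\}$. Control the defect sets by $(\dc u)^n(G)\le(\operatorname{osc}u)^n\,\mathrm{Cap}(G)$ and use the quasi-continuity of $P(h)$.
\end{enumerate}
This is the argument your proposal, like the paper, outsources to the literature.
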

\begin{proof}
    See~\cite[Theorem 2.19]{DL25-singularCY}.
\end{proof}
\subsection{Global pluripotential theory}
In this section, we recall some basic results from global
pluripotential theory on compact Hermitian manifolds; see~\cite{dinew2019lectures,BoucksomGuedjLu2025-volume,DL25-singularCY}.

Let $X$ denote a compact Hermitian manifold of complex dimension $n$, equipped with a Hermitian form $\omega_X$. 
	Let \(dV:={\omega_X^n}/n! \)
	denote the volume form associated with $\omega_X$. For any $p\geq 1$, we simply write $\|f\|_p$ for the $L^p(X,dV)$-norm of $f$. Let $\theta$ be a real smooth (1,1) form on $X$. 
\subsubsection{Quasi-plurisubharmonic functions}
	We recall that a function $u\in L^1(X)$ is quasi-plurisubharmonic (quasi-psh for short) if it is locally given as the sum of a smooth function and a plurisubharmonic (psh for short) function.
	\begin{definition}
		A quasi-psh function $\f:X\to[-\infty,+\infty)$ is called {\em $\theta$-plurisubharmonic} ({\em $\theta$-psh} for short) if it satisfies $\theta_\f:=\theta+\dc\f\geq 0$ in the weak sense of currents.
		We let $\PSH(X,\theta)$ denote the set of all $\theta$-psh functions that are not identically $-\infty$.
	\end{definition} The set $\PSH(X,\theta)$ is endowed with the $L^1(X)$-topology. By Hartogs' lemma, the map $\f \mapsto \sup_X \f$ is continuous with respect to this topology. 
The standard compactness theorem for normalized
quasi-psh functions therefore implies that
\[\PSH_0(X,\theta)=
 \{u\in\PSH(X,\theta):\sup_Xu=0\}
\]
is compact in $L^1(X)$.
	We refer the reader to~\cite{demaillycomplex,guedj2017degenerate} for basic properties of $\theta$-psh functions. 

     We denote by $$V_\theta=\sup\{u\in\PSH(X,\theta):u\leq 0 \}$$ a $\theta$-psh function with minimal singularities.

	\begin{definition}
		A function $\rho$ is said to have \begin{itemize}
		    \item 
        {\em divisorial singularities} {if locally on $X$,
\[
\rho=\sum_{\ell=1}^N a_\ell\log|s_\ell|+g,
\]
where $a_\ell>0$, the $s_\ell$ are holomorphic functions that are not identically zero, and $g$ is smooth.}
        \item {\em solvable singularities} if there exists a modification $\pi: Y\to X$ (i.e., a proper
bimeromorphic morphism with $Y$ smooth) such that $\pi^*\rho $ has divisorial singularities.
        \end{itemize}
	\end{definition}
    
	

\begin{theorem}[{\cite[Theorem 3.2]{demailly2004numerical}}]\label{thm: demailly-regularization} Let $u\in\PSH(X,\theta)$. Then there exists a decreasing sequence $u_j\in\PSH(X,\theta+\varepsilon_j\omega_X)$ with solvable singularities such that $u_j\searrow u$, where $\varepsilon_j\searrow 0$ as $j\to+\infty$.  
\end{theorem}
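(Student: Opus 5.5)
The statement is Demailly's regularization theorem, quoted from \cite[Theorem 3.2]{demailly2004numerical}. I would reproduce the standard proof, which has three stages: a local Bergman-kernel approximation, a gluing step, and a resolution of the resulting singularities.

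\textbf{Step 1 (local approximation).} Cover $X$ by finitely many coordinate balls $B_k$. On each ball write $\theta=dd^c g_k$ with $g_k$ smooth, so that $\psi_k:=u+g_k$ is psh on $B_k$. For each $j$, consider the Hilbert space $\mathcal H_{k,j}$ of holomorphic functions $f$ on $B_k$ with
\[\int_{B_k}|f|^2e^{-2j\psi_k}\,dV<+\infty,\]
and set
\[\psi_{k,j}:=\frac{1}{2j}\log\sum_\ell|\sigma_{k,\ell}|^2,\]
where $(\sigma_{k,\ell})$ is an orthonormal basis of $\mathcal H_{k,j}$. The Ohsawa--Takegoshi extension theorem gives the lower bound $\psi_{k,j}\geq\psi_k-C/j$. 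The mean value inequality gives the upper bound $\psi_{k,j}(z)\leq\sup_{B(z,r)}\psi_k+\frac1j\log(C/r^n)$. Together these yield $\psi_{k,j}\to\psi_k$ pointwise and in $L^1_{loc}$. Each $\psi_{k,j}$ has analytic singularities, of the form $\frac1{2j}\log\sum|\sigma|^2$; by Noetherianity the sum can be taken finite on relatively compact subsets.

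\textbf{Step 2 (gluing).} Choose smaller balls $B_k'\Subset B_k$ that still cover $X$, and cutoff-type corrections $\chi_k\leq0$ with $\chi_k=0$ on $B_k'$ and $\chi_k\to-\infty$ (or very negative) near $\partial B_k$. Define
\[u_j:=\max_k\bigl(\psi_{k,j}-g_k+\chi_k\bigr).\]
The two-sided estimates from Step 1 show that on overlaps the local pieces differ by $O(\log j/j)+O(\text{modulus of continuity of }g_k)$. Hence near $\partial B_k$ the $k$-th term is never the maximum, and $u_j$ is well defined.

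Taking the max of the pieces only costs an error $\varepsilon_j\omega_X$ in positivity. This error comes from $dd^c\chi_k$ and from the mismatch $g_k-g_l$, and it tends to $0$. Thus $u_j\in\PSH(X,\theta+\varepsilon_j\omega_X)$.

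A max of functions with analytic singularities is not in general analytic. Demailly instead uses a regularized max, or sums of the form $\log\sum_k e^{2j(\cdots)}$ built from holomorphic sections. This keeps $u_j$ locally of the form $\frac{c}{2}\log\sum|f_\ell|^2+O(1)$, with a smooth remainder after a slight modification.

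\textbf{Step 3 (monotonicity and solvability).} The sequence from Steps 1--2 is not automatically decreasing. I would enforce monotonicity by passing to the subsequence $j=2^m$ and using the subadditivity $\psi_{k,2j}\leq\psi_{k,j}+C/j$ of Bergman kernels, which follows from the Cauchy--Schwarz/product estimate for $\mathcal H_{k,2j}$. After adding a small convergent series of constants, the sequence becomes decreasing.

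Solvable singularities then follow from Hironaka. The singular part of $u_j$ is locally $\frac1{2j}\log\sum_\ell|f_\ell|^2$, where the $f_\ell$ generate a coherent ideal sheaf $\mathcal J_j$, and these local ideals glue. Take a log resolution $\pi:Y\to X$ principalizing $\mathcal J_j$. Then $\pi^*u_j=\frac1{2j}\log|s_D|^2+\text{smooth}$, which is a divisorial singularity.

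\textbf{Main obstacle.} The delicate point is Step 2. One must glue the local approximations so that three things hold simultaneously: the loss of positivity is only $\varepsilon_j\omega_X$ with $\varepsilon_j\to0$, the singularities remain generated by a global coherent ideal (so that Hironaka applies), and monotonicity survives. For this I would rely on the uniform two-sided Bergman kernel estimates, together with the multiplier-ideal interpretation $\mathcal J_j=\mathcal I(j u)$ of the singularities. Since the statement is quoted from \cite{demailly2004numerical}, in the paper itself we simply cite it.
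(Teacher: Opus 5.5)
You correctly note that the paper gives no proof of this statement and simply quotes Demailly--P\u aun. Your outline (local Bergman kernels, gluing, subadditivity along $j=2^m$, Hironaka applied to the multiplier ideals $\mathcal I(ju)$) follows the architecture of their argument. As a reconstruction of the statement in this paper's generality, however, it has two genuine gaps.

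\textbf{First gap: Step~1 assumes $\theta$ is closed.} You write $\theta=dd^c g_k$ on $B_k$, but a real $(1,1)$-form is locally $dd^c$-exact only if it is $d$-closed. Here $\theta$ is an arbitrary smooth real $(1,1)$-form on a Hermitian manifold. For instance, in Lemma~\ref{lem: approximate} the theorem is applied to $V_\theta$ for a big form $\theta$ that need not be closed (think of $\theta=\omega_X$ non-K\"ahler). So your weights $\psi_k=u+g_k$ do not exist. This is why Demailly--P\u aun only assume $dd^c u\geq\gamma$ with $\gamma$ continuous (here $\gamma=-\theta$). The repair changes the structure of the argument:
\begin{itemize}
\item for each $\varepsilon>0$, cover $X$ by balls of radius $r(\varepsilon)$ carrying quadratic polynomials $q_k$ with $\theta\leq dd^c q_k\leq\theta+\varepsilon\omega_X$ on $B_k$;
\item take Bergman kernels $K_{k,j}$ for the psh weights $u+q_k$, and local pieces $\frac{1}{2j}\log K_{k,j}-q_k$;
\item let $\varepsilon_\nu\to0$ and $j_\nu\to\infty$ in a coupled way.
\end{itemize}
Part of the loss $\varepsilon_j\omega_X$ comes from this approximation, not from gluing. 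Also, when $\theta$ is closed, $g_k-g_l$ is pluriharmonic, so the ``mismatch'' you blame for the loss of positivity contributes nothing to it.

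\textbf{Second gap: the overlap estimate in Step~2.} You rightly call this the crux, but you derive it incorrectly. Combining your upper bound for $\psi_{k,j}$ with the lower bound for $\psi_{l,j}$ gives, at $z\in B_k\cap B_l$,
\[
\bigl(\psi_{k,j}-g_k\bigr)(z)-\bigl(\psi_{l,j}-g_l\bigr)(z)\leq \sup_{B(z,r)}u-u(z)+\operatorname{osc}_{B(z,r)}g_k+\frac{n\log(1/r)+C}{j}.
\]
Your bound $O(\log j/j)+O(\text{modulus of continuity})$ silently drops the term $\sup_{B(z,r)}u-u(z)$. That term is uniformly small only when $u$ is continuous, and it is unbounded near the poles of $u$. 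Hence the $k$-th piece may realize the maximum near $\partial B_k$, and the max need not be quasi-psh across $\partial B_k$. Cutoffs $\chi_k\to-\infty$ do not help, since they cannot have small Hessian. Bounded-depth cutoffs with small Hessian help only once a genuine overlap bound is known.

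What is missing is a direct multiplicative comparison of $K_{k,j}(z)$ and $K_{l,j}(z)$ at a common point. Cut off an extremal function from one ball, then correct it by H\"ormander's $L^2$-estimate with a weight containing $2n\log|\cdot-z|$; this preserves the value at $z$. The discrepancy between the weights is absorbed by a holomorphic factor $e^{jH}$ with $\mathrm{Re}\,H$ approximating $q_k-q_l$. This comparison is exactly what controls the $d\theta_k$-error terms in Demailly--P\u aun's gluing $\frac{1}{2j}\log\sum_k\theta_k^2e^{-2jq_k}K_{k,j}$, where $\sum_k\theta_k^2=1$.

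\textbf{A smaller point on Step~3.} The subadditivity $\psi_{k,2j}\leq\psi_{k,j}+C/j$ is not a Cauchy--Schwarz fact. It comes from Ohsawa--Takegoshi extension from the diagonal of $B_k\times B_k$, which gives $K_{2j}\leq CK_j^2$. It then has to be propagated through the gluing and through the coverings, which change with $\varepsilon$.
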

We see that $T=\theta+\dc u$ has solvable singularities iff there exists a modification $\pi: Y\to X$ such that \[\pi^*T=\beta+[E], \] 
where $\beta$ is a real smooth (1,1)-form and $E$ is an effective $\mathbb{R}$-divisor.

\subsubsection{Monge--Amp\`ere measures}
	
	The complex Monge--Amp\`ere measure $(\theta+\dc u)^n$ is well-defined for any $\theta$-psh function $u$, which is locally bounded in an open set $U\subset X$, as follows from the Bedford--Taylor theory: if $\beta=\dc\rho$ is a K\"ahler form such that $\beta>\theta$  in a local open chart $V\subset X$, then $u$ is $\beta$-psh and the positive currents $(\beta+\dc u)^j$ are well-defined for $1\leq j\leq n$. Thus, the {\em complex Monge--Amp\`ere measure}
	\begin{equation*}
	(\theta+\dc u)^n:=\sum_{j=0}^n \binom{n}{j}(\beta+\dc u)^j\wedge (\theta-\beta)^{n-j}
	\end{equation*} is a signed measure on $U$.

    This definition does not depend on the choice of $\beta$  using the same arguments; see, e.g.,~\cite[Definition 2.21]{DL25-singularCY}.
The same expansion also gives continuity when the background varies:
if smooth real $(1,1)$-forms $\omega_j\to\theta$ in $\mathcal{C}^0$ and
uniformly bounded $u_j\in\PSH(X,\omega_j)$ decrease to
$u\in\PSH(X,\theta)$, then
\begin{equation}\label{eq:varying-background-convergence}
 (\omega_j+\dc u_j)^n\longrightarrow(\theta+\dc u)^n
 \quad\text{weakly}.
\end{equation}
Indeed, on each relatively compact coordinate chart, choose a fixed
K\"ahler form $\beta$ that dominates all $\omega_j$ and $\theta$.
The Bedford--Taylor convergence theorem along bounded decreasing functions applies 
 to each $(\beta+\dc u_j)^\ell$, while the remaining smooth factors converge. 

In particular, for any bounded $u\in\PSH(X,\theta)$, taking the convolution with a smoothing kernel, we can locally find a sequence of smooth functions $u_j$ decreasing to $u$ such that $\theta+2^{-j}\omega_X+\dc u_j\geq 0$. Then the convergence result  yields
that \[(\theta+ 2^{-j}\omega_X+\dc u_j)^n\longrightarrow(\theta+\dc u)^n
 \quad\text{weakly}. \]
This shows that the limit $(\theta+\dc u)^n$ is also a positive current.

	We refer to~\cite{dinew2012pluripotential,BoucksomGuedjLu2025-volume,DL25-singularCY} for an adaptation of~\cite{bedford1976dirichlet, bedford1982new} to the Hermitian context.
	

	\subsubsection{Envelopes}
	Recall that a  Borel set $E\subset X$ is {\em (locally) pluripolar} if for each $x\in X$, there exists an open neighborhood $U$ of $x$ and a psh function $u$ on $U$ such that $E\cap U\subset \{u=-\infty\}$. 
	As follows from \cite[Theorem 1.1]{vu2019pluripolar} or \cite[Lemma 2.6]{guedj2022quasi2}, the set $E$ is globally pluripolar, i.e., there exists  $u\in \PSH(X,\omega_X)$ such that $E\subset\{u=-\infty \}$. 

	\begin{definition}
		Given a measurable function $h:X\to\mathbb{R}$, we define the {\em $\theta$-psh envelope} of $h$ by
		\begin{equation*}
		P_\theta(h):=(\sup\{u\in\PSH(X,\theta): u\leq h\;\text{on}\, X \})^*,
		\end{equation*} where the star means that we take the upper semi-continuous regularization.
	\end{definition}
 We note that this definition is equivalent to the one given in \cite[Definition 2.2]{guedj2022quasi2}; cf. \cite[Corollary 2.7]{guedj2022quasi2}.

	We have the following result, established in~\cite[Theorem 2.3]{guedj2022quasi2}.
	\begin{theorem}\label{thm: envelope} Let $\theta$ be a smooth semipositive $(1,1)$ form.
		If $h$ is bounded from below, quasi-lower-semi-continuous, and  $P_\theta(h)<+\infty$, then
		\begin{enumerate}
			\item $P_\theta(h)$ is a bounded $\theta$-psh function;
			\item $P_\theta(h)\leq h$ in $X\setminus P$, for some pluripolar set $P$;
			\item $(\theta+\dc P_\theta(h))^n$ is supported on the contact set $\{P_\theta(h)=h\}$.
		\end{enumerate}
	\end{theorem}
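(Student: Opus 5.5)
The statement has three parts, and I would prove them in order: boundedness, the inequality $P_\theta(h)\le h$ off a pluripolar set, and the support property of the Monge--Amp\`ere measure.

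\textbf{Boundedness.} Since $\theta\ge 0$, constants are $\theta$-psh. As $h\ge -M$, the constant $-M$ belongs to the defining family, so $P_\theta(h)\ge -M$. For the upper bound I would use the standard inequality $\sup_X u\le \int_X u\,dV+C_\theta$ for $u\in\PSH(X,\theta)$, which follows from compactness of $\PSH_0(X,\theta)$. Every member $u$ of the family satisfies $u\le P_\theta(h)$. The hypothesis $P_\theta(h)<+\infty$ means the usc regularization is a genuine $\theta$-psh function, hence integrable. Therefore $\sup_X u\le \int_X P_\theta(h)\,dV+C_\theta$ uniformly in $u$, and so $P_\theta(h)$ is bounded above.

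\textbf{The pluripolar exceptional set.} By Choquet's lemma there is a countable subfamily $u_j$ with $(\sup_j u_j)^*=P_\theta(h)$. Each $u_j$ satisfies $u_j\le h$ everywhere, so $\sup_j u_j\le h$. The set $\{(\sup_j u_j)^*>\sup_j u_j\}$ is negligible. By Bedford--Taylor it is locally pluripolar: locally $u_j+g$ is psh for a smooth local potential $g$ with $dd^c g\ge\theta$. By the global pluripolarity result recalled above (Vu, Guedj--Lu), this set is globally pluripolar, which gives (2).

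\textbf{The support property, continuous $h$.} I would first treat $h$ continuous by a local balayage argument. Fix $x_0$ with $P_\theta(h)(x_0)<h(x_0)$. Since $P_\theta(h)$ is usc and $h$ is continuous, there are a small coordinate ball $B\ni x_0$ and $\delta>0$ with $\max_{\bar B}P_\theta(h)\le \min_{\bar B}h-\delta$.

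On $B$, solve the Dirichlet problem $(\theta+dd^c v)^n=0$ with boundary values $P_\theta(h)|_{\partial B}$. I would do this by approximating the boundary data from above by continuous functions and invoking the Hermitian local Dirichlet theory (Ko{\l}odziej--Nguyen). The local comparison principle gives $v\ge P_\theta(h)$ on $B$. The maximum principle for $\theta$-sub/supersolutions, with $\theta\ge0$ so that constants are supersolutions, gives $v\le \max_{\partial B}P_\theta(h)\le h-\delta$ on $B$.

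The glued function, equal to $\max(v,P_\theta(h))=v$ on $B$ and to $P_\theta(h)$ outside $B$, is $\theta$-psh and lies below $h$. Hence it is $\le P_\theta(h)$, so $v=P_\theta(h)$ on $B$ and $(\theta+dd^cP_\theta(h))^n=0$ on $B$. Since $x_0$ was an arbitrary point of the open set $\{P_\theta(h)<h\}$, this proves (3) for continuous $h$.

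\textbf{The support property, quasi-lsc $h$.} This is where I expect the main obstacle. I would use quasi-lower-semicontinuity to write $h$ as the increasing limit of continuous functions $h_j$, up to sets $G_j$ of capacity at most $2^{-j}$; this may require modifying $h_j$ near $G_j$. Then $P_\theta(h_j)$ increases almost everywhere to $P_\theta(h)$, using (2) to identify the limit. Bedford--Taylor convergence along increasing uniformly bounded sequences, via the expansion defining $(\theta+dd^c u)^n$, gives weak convergence of the measures.

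The delicate points are the following:
\begin{itemize}
\item Passing the contact-set information to the limit: for each $j$, $\int (h_j-P_\theta(h_j))\,(\theta+dd^cP_\theta(h_j))^n=0$, and I would pass to $\int (h-P_\theta(h))\,(\theta+dd^cP_\theta(h))^n=0$ using quasi-continuity and uniform capacity control of the measures for uniformly bounded potentials.
\item Controlling the small-capacity sets $G_j$ in the Hermitian setting, where the total masses are not fixed.
\end{itemize}
Combined with (2), the vanishing of this integral yields that the measure is carried by $\{P_\theta(h)=h\}$.
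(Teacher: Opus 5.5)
The paper does not prove this statement; it quotes it from Guedj--Lu [Theorem 2.3]. Your parts (1) and (2) are fine (Choquet's lemma, Bedford--Taylor's theorem on negligible sets, then global pluripolarity). The genuine gap is the quasi-l.s.c.\ case of (3), which is the real content of the statement. You list the difficulties but do not resolve them, and the scheme you sketch cannot work as stated.

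\begin{itemize}
\item If continuous $h_j$ satisfy $h_j\le h_{j+1}\le h$ everywhere, then $\sup_j h_j$ is l.s.c., so it can equal $h$ only when $h$ is already l.s.c. Worse, minorants can miss $P_\theta(h)$ entirely. Take $h=-\mathbf{1}_N$ with $N$ countable and dense; $N$ is pluripolar, so $h$ is even quasi-continuous. Every l.s.c.\ $g\le h$ satisfies $g\le -1$, hence $P_\theta(g)\le -1$, whereas $P_\theta(h)=0$ for Hermitian $\theta$.
\item If instead you allow $h_j>h$ on $G_j$, you lose both $P_\theta(h_j)\le P_\theta(h)$ and the monotonicity of $j\mapsto P_\theta(h_j)$.
\item Part (2) is only the one-sided inequality $P_\theta(h)\le h$ off a pluripolar set, so it cannot identify $\lim_j P_\theta(h_j)$.
\end{itemize}
The passage of $\int_X (h_j-P_\theta(h_j))(\theta+dd^cP_\theta(h_j))^n=0$ to the limit is therefore exactly the missing argument.

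\textbf{A route that works.} Insert an l.s.c.\ step, then approximate from above.

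\emph{L.s.c.\ obstacles.} Take continuous $h_k\nearrow h$. For a competitor $u$ and $\varepsilon>0$, the closed sets $\{u\ge h_k+\varepsilon\}$ decrease to $\emptyset$, so $P_\theta(h_k)\nearrow P_\theta(h)$ off a pluripolar set. For $j\ge k$ you have $h_k-P_\theta(h)\le h_j-P_\theta(h_j)$, the left side is l.s.c., and the continuous case gives $\int(h_j-P_\theta(h_j))(\theta+dd^cP_\theta(h_j))^n=0$. Weak convergence of the measures, monotone convergence in $k$, and (2) then give the support property.

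\emph{Quasi-l.s.c.\ obstacles.} Truncate harmlessly to $h\le C_1$. Take open sets $G_j$ decreasing, with $\mathrm{Cap}(G_j)\to0$ and $h|_{X\setminus G_j}$ l.s.c. Define $\tilde h_j:=h$ on $X\setminus G_j$ and $\tilde h_j:=C_1$ on $G_j$; this is l.s.c., and $\varphi:=P_\theta(h)\le\varphi_j:=P_\theta(\tilde h_j)$. By Lemma~\ref{lem: max-princ} and the l.s.c.\ case, $(\theta+dd^c\varphi)^n$ does not charge $(\{\varphi=\varphi_j\}\cap\{\varphi<h\})\setminus G_j$. Hence $(\theta+dd^c\varphi)^n(\{\varphi<h\})\le(\theta+dd^c\varphi)^n(G_j)+(\theta+dd^c\varphi)^n(\{\varphi<\varphi_j\})$, and everything reduces to showing $\varphi_j\searrow\varphi$.

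The limit of the $\varphi_j$ is $\le h$ only off a pluripolar set $Z$. With the everywhere-definition of $P_\theta$ used here, you therefore need $Z\subset\{\rho=-\infty\}$ for some $\rho\in\mathrm{PSH}(X,\theta)$ with $\rho\le 0$. This holds for Hermitian or big $\theta$: use $(1-\varepsilon)\lim_j\varphi_j+\varepsilon\rho$ after shifting so that $h\ge0$. For degenerate $\theta$ the envelope genuinely sees pluripolar sets (for $\theta=0$, $P_\theta(h)=\inf_X h$). That case needs a separate argument, which your sketch does not address.

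\textbf{Two smaller points in your continuous case.}
\begin{itemize}
\item Constants are $\theta$-subsolutions, not supersolutions, and a $\theta$-psh function can exceed its boundary maximum: $r^2-|z|^2$ for $\theta=dd^c|z|^2$ on the ball of radius $r$. The correct bound is $v\le\max_{\partial B}P_\theta(h)+Ar^2$ when $\theta\le A\,dd^c|z|^2$ on $B$. This still suffices after shrinking $B$ with $\delta$ fixed.
\item The Ko{\l}odziej--Nguyen Dirichlet theory you cite concerns Hermitian forms and continuous boundary data. For semipositive $\theta$ and u.s.c.\ data you need an approximation argument. The inequality $v\ge P_\theta(h)$ should come from a Perron-envelope description of $v$, since the classical Bedford--Taylor comparison principle is not available for non-closed $\theta$.
\end{itemize}
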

\subsubsection{Uniformly non-collapsing forms}
For our purposes, we also recall a stronger positivity property of the form $\theta$ in the sense of~\cite{guedj2022quasi2,guedj2021quasi}. 
	
    First, we recall the following definition.
\begin{definition}\label{def: mass} Let $\theta$ be a real smooth semipositive $(1,1)$ form. 
 We say $\theta$ is {\em non-collapsing} if there is no bounded $\theta$-plurisubharmonic function $u$ such that $(\theta+\dc u)^n\equiv 0$.
         We say that $\theta$ is {\em uniformly non-collapsing} if $v_-(\theta)>0$, where
		\[v_{-}(\theta):=\inf\left\{\int_X(\theta+\dc\f)^n: \f\in\PSH(X,\theta)\cap L^\infty(X) \right\}.\]
	\end{definition}
We emphasize that when $\theta$ is Hermitian, the infimum in the
definition of $v_-(\theta)$ can be taken over
$\PSH(X,\theta)\cap C^\infty(X)$ due to Demailly's regularization theorem and Bedford--Taylor's convergence results. We refer the interested reader to~\cite[Section 3]{angella2023plurisigned} for several examples of uniformly non-collapsing Hermitian forms.

We have the monotonicity property.
\begin{lemma}[{\cite[Proposition 3.2]{guedj2022quasi2}}]\label{lem: monotonicity-volume}
    Let $\theta_1,\theta_2$ be real smooth semipositive (1,1)-forms such that $\theta_1\leq \theta_2$. Then $v_-(\theta_1)\leq v_-(\theta_2)$. 
\end{lemma}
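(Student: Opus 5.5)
The statement is Lemma~\ref{lem: monotonicity-volume}: if $\theta_1\le\theta_2$ are smooth semipositive forms, then $v_-(\theta_1)\le v_-(\theta_2)$. The natural plan is to show that every bounded $\theta_2$-psh potential can be compared with a bounded $\theta_1$-psh potential whose mass is no larger. Fix $u\in\PSH(X,\theta_2)\cap L^\infty(X)$. The goal is to produce $v\in\PSH(X,\theta_1)\cap L^\infty(X)$ with
\[
\int_X(\theta_1+\dc v)^n\le\int_X(\theta_2+\dc u)^n .
\]
Taking the infimum over $u$ then gives the claim.

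\textbf{Construction.} I would take the envelope $v:=P_{\theta_1}(u)$. This needs two checks.
\begin{itemize}
\item \emph{$v$ is well defined and bounded.} Since $u$ is bounded, any constant $\le\inf u$ is a $\theta_1$-psh competitor, so $v$ is bounded below. It is bounded above by $\sup u$ (Hartogs), so $P_{\theta_1}(u)<+\infty$.
\item \emph{Theorem~\ref{thm: envelope} applies.} The obstacle $u$ is quasi-psh and hence only upper semicontinuous, not quasi-lsc. To fix this, I would approximate $u$ from above by smooth functions $u_j\searrow u$. Globally on $X$ one can use Demailly's regularization (Theorem~\ref{thm: demailly-regularization}), which for bounded $u$ gives smooth $u_j\in\PSH(X,\theta_2+\varepsilon_j\omega_X)$, and hence continuous obstacles. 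Then $v_j:=P_{\theta_1}(u_j)$ is bounded, $(\theta_1+\dc v_j)^n$ is carried by $\{v_j=u_j\}$, and $v_j\searrow P_{\theta_1}(u)$.
\end{itemize}

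\textbf{Mass comparison on the contact set.} On $D_j=\{v_j=u_j\}$ we have $v_j\le u_j$, and $u_j$ is $(\theta_2+\varepsilon_j\omega_X)$-psh. Since $\theta_1\le\theta_2+\varepsilon_j\omega_X$, the function $v_j$ is also $(\theta_2+\varepsilon_j\omega_X)$-psh. A global version of the maximum principle, Lemma~\ref{lem: max-princ-local}, applied locally with a Kähler $\beta$ dominating the forms, should give
\[
\mathbf 1_{D_j}(\theta_1+\dc v_j)^n\le \mathbf 1_{D_j}(\theta_2+\varepsilon_j\omega_X+\dc v_j)^n\le \mathbf 1_{D_j}(\theta_2+\varepsilon_j\omega_X+\dc u_j)^n .
\]
The first inequality uses that $\theta_1+\dc v_j\ge0$ and $\theta_2+\varepsilon_j\omega_X-\theta_1\ge0$, together with expansion of the wedge power. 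Integrating, I get
\[
\int_X(\theta_1+\dc v_j)^n\le\int_X(\theta_2+\varepsilon_j\omega_X+\dc u_j)^n .
\]

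\textbf{Passing to the limit and the main obstacle.} Since $X$ is compact, weak convergence of measures gives convergence of total masses. By \eqref{eq:varying-background-convergence}, the right-hand side tends to $\int_X(\theta_2+\dc u)^n$, and the left-hand side tends to $\int_X(\theta_1+\dc v)^n$. Letting $j\to\infty$ yields the desired inequality, so
\[
v_-(\theta_1)\le\int_X(\theta_1+\dc v)^n\le\int_X(\theta_2+\dc u)^n .
\]
The main obstacle is the maximum-principle step in the Hermitian, varying-background setting. Lemma~\ref{lem: max-princ-local} is stated for genuine psh functions, so it has to be localized: write $\theta=\dc g$ plus a remainder, or use the mixed expansion against a local Kähler form, and check that the signed correction terms behave. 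A cleaner route I would try first is this: on a chart, choose a local potential $g$ with $\dc g\ge\theta_2+\varepsilon_j\omega_X$. Both $g+v_j$ and $g+u_j$ are then psh with $g+v_j\le g+u_j$, so the inequality on the contact set follows from Lemma~\ref{lem: max-princ-local}. One must then compare $(\dc(g+v_j))^n$ with the $\theta_1$-Monge--Ampère measure of $v_j$, which is where positivity of the difference forms is used.
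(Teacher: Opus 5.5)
Your argument is correct and is essentially the Guedj--Lu proof that the paper cites without reproducing. You pass to $v=P_{\theta_1}(u)$, use that $(\theta_1+\dc v)^n$ is carried by the contact set, and compare there via $(\theta_1+\dc v)^n\le(\theta_2+\dc v)^n$ and the maximum principle. Two corrections are worth making.

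First, the step you single out as the main obstacle is already Lemma~\ref{lem: max-princ}. That lemma is the global maximum principle for bounded $\theta$-psh functions on a compact Hermitian manifold, and it applies verbatim to $v_j\le u_j$ with $\theta=\theta_2+\varepsilon_j\omega_X$. Your proposed localization via a Kähler potential $g$ would not prove it. Positivity of $\dc g-(\theta_2+\varepsilon_j\omega_X)$ gives $(\theta_2+\varepsilon_j\omega_X+\dc u_j)^n\le(\dc(g+u_j))^n$, which is the wrong direction on the right-hand side. Moreover, the expansion of $(\theta+\dc u)^n$ in powers of $\dc(g+u)$ has terms of both signs, so the psh maximum principle for the top-degree term does not transfer. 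The Hermitian statement is proved by running the plurifine-locality argument directly for the operator $(\theta+\dc\,\cdot\,)^n$, not by comparison with a local psh function.

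Second, the regularization detour is unnecessary, and its motivation is mistaken. Bounded quasi-psh functions are quasi-continuous, hence quasi-l.s.c., so Theorem~\ref{thm: envelope} applies to $h=u$ itself. Since $u$ is u.s.c., you also get $v=P_{\theta_1}(u)\le u$ everywhere. As $v$ is also $\theta_2$-psh, Lemma~\ref{lem: max-princ} with $\theta=\theta_2$ gives
\[
\int_X(\theta_1+\dc v)^n=\int_{\{v=u\}}(\theta_1+\dc v)^n\le\int_{\{v=u\}}(\theta_2+\dc v)^n\le\int_X(\theta_2+\dc u)^n,
\]
with no limiting argument at all. Your approximation scheme still works, but it is superfluous.
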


 We recall the following maximum principle.
	\begin{lemma}[{\cite[Lemma 1.2]{guedj2022quasi2}}]\label{lem: max-princ}
		Let $u,v$ be bounded $\theta$-psh functions such that $u\leq v$. Then
		\begin{equation*}
		\mathbf{1}_{\{u=v\}}(\theta+\dc u)^n\leq\mathbf{1}_{\{u=v\}}(\theta+\dc v)^n.
		\end{equation*}
	\end{lemma}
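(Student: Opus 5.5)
The plan is to reduce to the local statement in Lemma~\ref{lem: max-princ-local} by working in coordinate charts, with the torsion-free reference form handled by a local potential. Since the assertion is local in nature and the measure $\mathbf{1}_{\{u=v\}}(\theta+\dc u)^n$ is determined by its restriction to small open sets, it suffices to prove the inequality on a small coordinate ball $B\Subset X$. On $B$ I will choose a smooth strictly psh function $\rho$ with $\beta:=\dc\rho\geq\theta$ (possibly after adding a large multiple of $|z|^2$). Then $u,v$ are $\beta$-psh, so $U:=u+\rho$ and $V:=v+\rho$ are bounded psh functions on $B$ with $U\leq V$ and $\{U=V\}=\{u=v\}$.

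Next I expand both Monge--Amp\`ere measures via the defining formula
\[
(\theta+\dc u)^n=\sum_{j=0}^n\binom{n}{j}(\dc U)^j\wedge(\theta-\beta)^{n-j},
\]
and likewise for $v$. The local maximum principle in Lemma~\ref{lem: max-princ-local} handles only the top-degree term $j=n$. For the mixed terms one needs the analogous statement for currents $\mathbf{1}_{\{U=V\}}(\dc U)^j\wedge\gamma$ with $\gamma$ smooth, and these are not signed positively. Rather than compare term by term, I will therefore prove the plurifine-locality statement
\[
\mathbf{1}_{\{U=V\}}(\dc U)^j\wedge\gamma=\mathbf{1}_{\{U=V\}}(\dc \max(U,V))^j\wedge\gamma .
\]
This identity is immediate because $\max(U,V)=V$. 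The step that actually needs proof is the comparison on the contact set, and I will obtain it through a perturbation.

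For $\varepsilon>0$, set $U_\varepsilon:=\max(U,V-\varepsilon)$. On the plurifine open set $\{U>V-\varepsilon\}$ we have $U_\varepsilon=U$, so by Bedford--Taylor plurifine locality all mixed currents of $U_\varepsilon$ and $U$ agree there, and this set contains $\{U=V\}$. It follows that
\[
\mathbf{1}_{\{U=V\}}(\theta+\dc u)^n=\mathbf{1}_{\{U=V\}}\sum_j\tbinom{n}{j}(\dc U_\varepsilon)^j\wedge(\theta-\beta)^{n-j}.
\]
Now $U_\varepsilon\nearrow V$ as $\varepsilon\to0$, though not everywhere monotonically in a way that suits direct use. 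Here the approach is to use the semipositivity of the total $\theta$-measure rather than its individual terms. The function $\phi_\varepsilon:=U_\varepsilon-\rho=\max(u,v-\varepsilon)$ is $\theta$-psh, so $(\theta+\dc\phi_\varepsilon)^n\geq0$. Then $\phi_\varepsilon$ converges to $v$ in capacity, and the measures converge weakly to $(\theta+\dc v)^n$ by~\eqref{eq:varying-background-convergence}, applied in the increasing or capacity version.

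The main obstacle is passing from weak convergence to an inequality on the non-open set $\{u=v\}$. I would first try the standard trick of the Guedj--Lu proof, namely to replace $\mathbf{1}_{\{u=v\}}$ by $\mathbf{1}_{\{u>v-\delta\}}$, which is quasi-open. Using convergence in capacity together with uniform boundedness, one gets $\liminf$ lower semicontinuity of the masses on quasi-open sets. After that, $\delta\to0$ gives the desired inequality.

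Alternatively, the inequality can be read off directly from Lemma~\ref{lem: max-princ} as cited from \cite[Lemma 1.2]{guedj2022quasi2}, whose argument is exactly this local reduction plus plurifine locality.
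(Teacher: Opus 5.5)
You have the right ingredients, and this is the standard route behind the result; the paper gives no proof and only cites Guedj--Lu. The correct pieces are: the local potential $\rho$, the perturbation $\phi_\varepsilon=\max(u,v-\varepsilon)$, and plurifine locality on $\{u>v-\varepsilon\}\supset\{u=v\}$, which gives $\mathbf{1}_{\{u=v\}}\theta_u^n=\mathbf{1}_{\{u=v\}}\theta_{\phi_\varepsilon}^n$. You also correctly rely on positivity of the total measure $\theta_{\phi_\varepsilon}^n$ rather than of the individual mixed terms. But two steps are wrong as written.

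First, the displayed ``identity'' $\mathbf{1}_{\{U=V\}}(\dc U)^j\wedge\gamma=\mathbf{1}_{\{U=V\}}(\dc\max(U,V))^j\wedge\gamma$ is not immediate; it is false. Since $\max(U,V)=V$, it asserts that the two measures coincide on the contact set, which is not plurifine open. For $n=1$, $u=0\le v=\max(0,\mathrm{Re}\,z)$: the measure $\dc v$ charges the line $\{\mathrm{Re}\,z=0\}\subset\{u=v\}$, while $\dc u=0$. That paragraph should be dropped. The closing ``alternatively'' sentence cites the statement itself, so it is not an argument.

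Second, and more seriously, the concluding limit goes in the wrong direction. Lower semicontinuity $\liminf_\varepsilon\theta_{\phi_\varepsilon}^n(O)\geq\theta_v^n(O)$ on the quasi-open set $O=\{u>v-\delta\}$ is a \emph{lower} bound for $\theta_{\phi_\varepsilon}^n(O)$. So is $\mathbf{1}_O\theta_{\phi_\varepsilon}^n\geq\mathbf{1}_{\{u=v\}}\theta_u^n$. Two lower bounds for the same quantity cannot be chained into $\mathbf{1}_{\{u=v\}}\theta_u^n\leq\theta_v^n$. Upper semicontinuity on the quasi-closed sets $\{u\geq v-\delta\}$ would work, but it needs its own capacity argument.

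In fact no semicontinuity on non-open sets is needed. The measure $\nu:=\mathbf{1}_{\{u=v\}}\theta_u^n$ does not depend on $\varepsilon$, and by positivity $\nu=\mathbf{1}_{\{u=v\}}\theta_{\phi_\varepsilon}^n\leq\theta_{\phi_\varepsilon}^n$. Since $v-\varepsilon\leq\phi_\varepsilon\leq v$, the convergence $\phi_\varepsilon\to v$ is increasing and even uniform, contrary to your remark. Hence for every continuous $\chi\geq0$,
\[
\int\chi\,d\nu\;\leq\;\int\chi\,\theta_{\phi_\varepsilon}^n\;\longrightarrow\;\int\chi\,\theta_v^n .
\]
Thus $\nu\leq\theta_v^n$ as Radon measures. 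Restricting to the Borel set $\{u=v\}$, on which $\nu$ is concentrated, gives $\nu\leq\mathbf{1}_{\{u=v\}}\theta_v^n$, which is the lemma.
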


    \begin{proposition}[{\cite[Lemma 1.9]{guedj2021quasi}}]\label{lem: GL23-19} Fix $\lambda\geq 0$. Let $\theta$ be a real smooth semipositive $(1,1)$ form.
    Fix two smooth
semipositive (1, 1)-forms $\omega_1,\omega_2$ such that $\omega_1\geq \theta$, $\omega_2\geq \theta$.
    \begin{enumerate}
        \item Assume $u,v\in\PSH(X,\theta)\cap L^\infty(X)$. If $\theta_u^n\geq e^{\lambda u}fdV$ and $\theta_v^n\geq e^{\lambda v}gdV$ then
        \[\theta_{\max(u,v)}^n\geq e^{\lambda\max(u,v)}\min(f,g)dV. \]
        \item Assume $u\in\PSH(X,\omega_1)\cap L^\infty(X)$, $v\in\PSH(X,\omega_2)\cap L^\infty(X)$. If
        \[(\omega_1+\dc u)^n\leq e^{\lambda u}fdV\;\text{and}\;(\omega_2+\dc v)^n\leq e^{\lambda v}gdV\] then
        \[\theta_{P_\theta(\min(u,v))}^n\leq e^{\lambda P_\theta(\min(u,v))}\max(f,g)dV. \]
    \end{enumerate}
        
    \end{proposition}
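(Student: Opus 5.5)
We prove the two parts separately. Both use only the maximum principle of Lemma~\ref{lem: max-princ}, applied with the appropriate semipositive reference form, together with Theorem~\ref{thm: envelope} for part~(2).

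\emph{Part (1).} Set $\phi:=\max(u,v)$. It is a bounded $\theta$-psh function with $u\le\phi$, $v\le\phi$.
\begin{itemize}
\item Lemma~\ref{lem: max-princ} applied to the pair $u\le\phi$ gives $\mathbf 1_{\{u=\phi\}}\theta_\phi^n\ge \mathbf 1_{\{u=\phi\}}\theta_u^n$. Since $\{u=\phi\}=\{u\ge v\}$, this reads $\mathbf 1_{\{u\ge v\}}\theta_\phi^n\ge \mathbf 1_{\{u\ge v\}}e^{\lambda u}f\,dV$.
\item By symmetry, $\mathbf 1_{\{v\ge u\}}\theta_\phi^n\ge \mathbf 1_{\{v\ge u\}}e^{\lambda v}g\,dV$.
\end{itemize}
The sets $\{u\ge v\}$ and $\{u<v\}$ partition $X$, and $\theta_\phi^n$ is a positive measure. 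Hence
\[
\theta_\phi^n\ \ge\ \mathbf 1_{\{u\ge v\}}e^{\lambda u}f\,dV+\mathbf 1_{\{u<v\}}e^{\lambda v}g\,dV .
\]
On $\{u\ge v\}$ we have $e^{\lambda u}=e^{\lambda\phi}$, and on $\{u<v\}$ we have $e^{\lambda v}=e^{\lambda\phi}$. Bounding $f$ and $g$ below by $\min(f,g)$ gives $\theta_\phi^n\ge e^{\lambda\phi}\min(f,g)\,dV$. No approximation is needed here.

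\emph{Part (2).} Set $h:=\min(u,v)$ and $\varphi:=P_\theta(h)$.
\begin{itemize}
\item Since $u$ and $v$ are bounded quasi-psh functions, $h$ is bounded and quasi-continuous, hence quasi-lower-semicontinuous.
\item Since $h\le \sup_X h<+\infty$, we also have $P_\theta(h)<+\infty$.
\end{itemize}
Theorem~\ref{thm: envelope} then gives three facts: $\varphi$ is a bounded $\theta$-psh function; $\varphi\le h$ off a pluripolar set, and hence everywhere, since both sides are quasi-psh or minima of quasi-psh functions (the sub-mean value property lets one pass from ``off a pluripolar set'' to ``everywhere''); and $\theta_\varphi^n$ is carried by the contact set $D=\{\varphi=h\}\subset\{\varphi=u\}\cup\{\varphi=v\}$.

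Next, $\varphi$ is $\omega_1$-psh because $\omega_1\ge\theta$, and $\varphi\le u$. Lemma~\ref{lem: max-princ} with reference form $\omega_1$ therefore gives
\[
\mathbf 1_{\{\varphi=u\}}(\omega_1+\dc\varphi)^n\le \mathbf 1_{\{\varphi=u\}}(\omega_1+\dc u)^n\le \mathbf 1_{\{\varphi=u\}}e^{\lambda\varphi}f\,dV .
\]
To reach $\theta_\varphi^n$, we compare it with $(\omega_1+\dc\varphi)^n$. Write $\omega_1+\dc\varphi=\theta_\varphi+(\omega_1-\theta)$ and expand binomially. The difference $(\omega_1+\dc\varphi)^n-\theta_\varphi^n$ is the sum of the terms $\binom nk(\omega_1-\theta)^k\wedge\theta_\varphi^{n-k}$ with $k\ge1$. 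Each term is a positive measure, because $\omega_1-\theta$ is a smooth semipositive form and $\theta_\varphi\ge0$. We check this positivity locally through the Bedford--Taylor construction of these wedge products, approximating $\varphi$ by smooth functions with $\theta+\varepsilon\omega_X+\dc\varphi_j\ge0$ and using \eqref{eq:varying-background-convergence}. It follows that $\mathbf 1_{\{\varphi=u\}}\theta_\varphi^n\le \mathbf 1_{\{\varphi=u\}}e^{\lambda\varphi}f\,dV$. The same argument with $\omega_2$ and $v$ gives $\mathbf 1_{\{\varphi=v\}}\theta_\varphi^n\le \mathbf 1_{\{\varphi=v\}}e^{\lambda\varphi}g\,dV$.

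Finally, $\theta_\varphi^n$ is carried by $D$, which is covered by $\{\varphi=u\}\cup\{\varphi=v\}$. Adding the two bounds, and not double counting on the overlap since both are $\le e^{\lambda\varphi}\max(f,g)$ there, yields $\theta_\varphi^n\le e^{\lambda\varphi}\max(f,g)\,dV$.

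The step I expect to need the most care is the comparison $\theta_\varphi^n\le(\omega_1+\dc\varphi)^n$. Here $\theta_\varphi$ is not closed, so positivity of the mixed products $(\omega_1-\theta)^k\wedge\theta_\varphi^{n-k}$ must be justified by the local Bedford--Taylor approximation rather than by any cohomological argument. A second point to check is that Lemma~\ref{lem: max-princ} applies with $\omega_1$ as reference form, which holds because $\omega_1$ is semipositive and both $\varphi$ and $u$ are bounded $\omega_1$-psh functions.
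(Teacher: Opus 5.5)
Your proposal is correct. The paper gives no proof of its own here and only cites \cite[Lemma 1.9]{guedj2021quasi}; your argument is the standard one behind that lemma. For (1) it uses the maximum principle for $u\le\max(u,v)$ and $v\le\max(u,v)$. For (2) it uses three facts: $\theta_\varphi^n$ is carried by the contact set, the maximum principle holds with reference form $\omega_i$, and $\theta_\varphi^n\le(\omega_i+\dc\varphi)^n$. (A minor simplification: $\varphi\le\min(u,v)$ holds everywhere directly, because every competitor in the envelope is $\le u$ and $u$ is upper semicontinuous, so the pluripolar-set and mean-value step is unnecessary.)
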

    \begin{proposition}[{\cite[Proposition 2.8]{guedj2022quasi2}}]\label{Domination principle-1} Let $\theta$ be a real smooth semipositive $(1,1)$ form.
    Assume $\theta$ is non-collapsing.
 Suppose $u,v\in\operatorname{PSH}(X,\theta)\cap L^\infty(X)$, and 
\[
\mathbf 1_{\{u<v\}}\theta_u^n\le c\mathbf 1_{\{u<v\}}\theta_v^n
\]
for some $0\leq c<1$. Then
$u\ge v$. \end{proposition}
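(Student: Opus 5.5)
The plan is to reduce the statement to the case $c=0$ by an envelope construction, and then to prove the case $c=0$ using the non-collapsing assumption. I expect the case $c=0$ to be the main obstacle.

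\textbf{Step 1 (reduction to $c=0$).} Fix $b>1$ and set
\[
 h_b:=bu-(b-1)v,\qquad u_b:=P_\theta(h_b).
\]
The function $h_b$ is bounded and quasi-continuous, and $u_b\ge bu-(b-1)\sup_X v+b\inf_X u$, which is bounded below. So Theorem~\ref{thm: envelope} applies: $u_b$ is a bounded $\theta$-psh function, $u_b\le h_b$ off a pluripolar set, and $\theta_{u_b}^n$ is carried by the contact set $D:=\{u_b=h_b\}$. The function
\[
 w:=b^{-1}u_b+(1-b^{-1})v
\]
is $\theta$-psh, satisfies $w\le u$ quasi-everywhere, and $w=u$ on $D$. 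The maximum principle (Lemma~\ref{lem: max-princ}) gives $\mathbf 1_D\theta_w^n\le\mathbf 1_D\theta_u^n$. Expanding $\theta_w^n$ multilinearly, with all mixed terms nonnegative since $\theta\ge0$, yields
\[
 \mathbf 1_D\Bigl(b^{-n}\theta_{u_b}^n+(1-b^{-1})^n\theta_v^n\Bigr)\le \mathbf 1_D\,\theta_u^n .
\]
On $D\cap\{u<v\}$ the hypothesis bounds the right-hand side by $c\,\theta_v^n$. Choosing $b$ large so that $(1-b^{-1})^n>c$, we get $\theta_v^n=0$ there, hence $\theta_u^n=0$, hence $\theta_{u_b}^n=0$ on $D\cap\{u<v\}$.

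Now observe that $u_b\le h_b<v$ on $\{u<v\}$, and that on $D$ we have $\{u_b<v\}=\{h_b<v\}=\{u<v\}$. Since $\theta_{u_b}^n$ lives on $D$, this gives
\[
 \mathbf 1_{\{u_b<v\}}\theta_{u_b}^n=0 .
\]
Granting the case $c=0$, we obtain $u_b\ge v$. Then $bu-(b-1)v\ge v$ quasi-everywhere, i.e.\ $u\ge v$ off a pluripolar set, and therefore everywhere, since both are quasi-psh.

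\textbf{Step 2 (the case $c=0$).} Assume $\mathbf 1_{\{u<v\}}\theta_u^n=0$ and, for contradiction, that $M:=\sup_X(v-u)>0$. For $0<s<M$ replace $v$ by $v_s:=v-s$. The set $\{u<v_s\}$ is nonempty with positive capacity, and the hypothesis still holds on it. I would run the same envelope trick with $h_b^s:=bu-(b-1)v_s$ and let $b\to\infty$. Step 1 shows that the measure $\theta_{u_b^s}^n$ is concentrated on $D\cap\{u\ge v_s\}$, where $u_b^s=h_b^s\ge u$.

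On the other hand, $u_b^s\le h_b^s\le u-(b-1)(M-s)+o(1)$ near points where $v_s-u$ is almost maximal. I would normalize, setting $\psi_b:=u_b^s-\sup_X u_b^s$, and combine two facts. First, the envelope is pushed down to $-\infty$ on $\{u<v_s\}$ as $b\to\infty$. Second, its Monge--Amp\`ere mass sits only on $\{u\ge v_s\}$. From these I would extract a bounded $\theta$-psh limit (or a suitable $\max$-construction as in Proposition~\ref{lem: GL23-19}) whose Monge--Amp\`ere measure vanishes identically, contradicting non-collapsing.

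What I would try first is the following. Take $\varphi:=\max(u,v_s)$ for $s$ close to $M$. By Lemma~\ref{lem: max-princ} together with the vanishing hypothesis, I would show that $\theta_\varphi^n\le\theta_u^n$ and that this measure has no mass on $\{u<v_s\}$. I would then iterate via the envelopes $P_\theta(bu-(b-1)\varphi)$ to produce a bounded $\theta$-psh function with $\theta^n\equiv0$. Making this limiting argument uniform, keeping the functions bounded while their masses vanish, is the delicate point of the whole proof.
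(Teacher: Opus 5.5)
Your Step~1 is sound. The envelope $u_b=P_\theta(bu-(b-1)v)$, the maximum principle and the multilinear expansion do show that, once $(1-b^{-1})^n>c$, the measure $\theta_{u_b}^n$ is carried by $D\cap\{u\ge v\}$ and $\mathbf 1_{\{u_b<v\}}\theta_{u_b}^n=0$. (The lower bound you wrote for $u_b$ is garbled; the correct one is $u_b\ge b\inf_X u-(b-1)\sup_X v$.)

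The genuine gap is Step~2. You correctly identify the case $c=0$ as the heart of the matter, but you do not prove it, and both routes you sketch point the wrong way. There is no bounded $\theta$-psh limit of the normalized envelopes $\psi_b$ to extract with vanishing Monge--Amp\`ere measure, precisely because the $\psi_b$ are pushed to $-\infty$ on a set of positive measure. Your fallback claim, that $\theta_{\max(u,v_s)}^n\le\theta_u^n$ with no mass on $\{u<v_s\}$, is unjustified. On the plurifine open set $\{u<v_s\}$ one has $\theta_{\max(u,v_s)}^n=\theta_v^n$, and the hypothesis gives no control on $\theta_v^n$ there. Bounding $\theta_v^n(\{u<v_s\})$ by $\theta_u^n(\{u<v_s\})$ is the K\"ahler comparison principle, which is exactly what is unavailable here because the Monge--Amp\`ere mass is not preserved.

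The missing idea is to apply non-collapsing to each bounded function $u_b$ for fixed $b$, and to get the contradiction from $L^1$-compactness rather than from a degenerate limit. The argument runs as follows.
\begin{itemize}
\item Since $\theta_{u_b}^n\not\equiv0$ and it is carried by $D\cap\{u\ge v\}$, this set is nonempty. At any of its points $u_b=u+(b-1)(u-v)\ge u\ge\inf_X u$, so $\sup_X u_b\ge\inf_X u$ for every $b$.
\item If $\{u<v\}\neq\emptyset$, it has positive Lebesgue measure (otherwise $u\ge v$ a.e., hence everywhere). So $E_\delta:=\{u<v-\delta\}$ has positive measure for some $\delta>0$.
\item Almost everywhere on $E_\delta$ we have $u_b\le h_b\le\sup_X u-(b-1)\delta$.
\end{itemize}
Consequently
\[
\int_X\Bigl(\sup_X u_b-u_b\Bigr)\,dV\;\ge\;|E_\delta|\Bigl((b-1)\delta-\sup_X u+\inf_X u\Bigr)\xrightarrow{\;b\to+\infty\;}+\infty,
\]
which contradicts the uniform $L^1$ bound on $\PSH_0(X,\theta)$.

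This works directly for any $0\le c<1$ once $(1-b^{-1})^n>c$. Your Step~1 computation therefore already contains everything needed, and no separate reduction to $c=0$ is required. This is essentially the Guedj--Lu envelope argument behind the cited proposition.
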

\begin{corollary}[{\cite[Corollary 2.9]{guedj2022quasi2}}]
\label{coro: domination-semipositive}
Let $\theta$ be a real smooth semipositive $(1,1)$ form.
    Assume $\theta$ is non-collapsing and
let $u,v\in\PSH(X,\theta)\cap L^\infty(X)$. \begin{enumerate}
    \item If $\theta_u^n\leq c\theta_v^n$, then $c\geq 1$.
    \item If $\theta_u^n\leq e^{\lambda(u-v)}\theta_v^n$ for some $\lambda>0$, then $u\geq v$. 
\end{enumerate}
    
\end{corollary}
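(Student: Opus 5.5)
Both parts reduce to the domination principle, Proposition~\ref{Domination principle-1}. The idea is to translate $v$ by a constant, which does not change its Monge--Amp\`ere measure, so as to create a factor strictly less than one on the relevant sublevel set. Throughout we use that $\theta_{v+t}=\theta_v$ for every constant $t\in\mathbb R$, and that $v+t$ is still a bounded $\theta$-psh function.

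For (1), argue by contradiction and suppose $0\le c<1$. Fix any constant $t\in\mathbb R$ and apply Proposition~\ref{Domination principle-1} to the pair $u$, $v+t$. Restricting the global inequality $\theta_u^n\le c\,\theta_v^n$ to the set $\{u<v+t\}$ gives
\[
\mathbf 1_{\{u<v+t\}}\theta_u^n\le c\,\mathbf 1_{\{u<v+t\}}\theta_{v+t}^n .
\]
Since $\theta$ is non-collapsing and $c<1$, Proposition~\ref{Domination principle-1} yields $u\ge v+t$ on $X$. This holds for every $t$, but $u$ and $v$ are bounded. Taking $t>\sup_X u-\inf_X v$ therefore gives a contradiction, so $c\ge1$.

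For (2), fix $\varepsilon>0$ and compare $u$ with $v-\varepsilon$. On the set $\{u<v-\varepsilon\}$ we have $e^{\lambda(u-v)}\le e^{-\lambda\varepsilon}$, because $\lambda>0$. Hence
\[
\mathbf 1_{\{u<v-\varepsilon\}}\theta_u^n\le e^{-\lambda\varepsilon}\,\mathbf 1_{\{u<v-\varepsilon\}}\theta_{v-\varepsilon}^n ,
\]
with constant $e^{-\lambda\varepsilon}<1$. Proposition~\ref{Domination principle-1} then gives $u\ge v-\varepsilon$, and letting $\varepsilon\to0$ yields $u\ge v$.

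There is no real obstacle here; all the analytic content sits in the domination principle. The only point needing care is that the inequality between measures can be localized to Borel sets. This is legitimate because both sides are positive Radon measures: boundedness of $u,v$ makes $\theta_u^n,\theta_v^n$ well defined and positive by Bedford--Taylor theory, as recalled above. A second point is that translating $v$ by a constant preserves both boundedness and the Monge--Amp\`ere measure, which is what the whole argument uses.
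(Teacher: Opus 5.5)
Your argument is correct and is the standard proof from Guedj--Lu, which the paper cites without reproducing: shift $v$ by constants (this leaves $\theta_v^n$ unchanged) and apply the domination principle, taking $t$ large in (1) and letting $\varepsilon\to 0$ in (2). The only omission is the case $c<0$ in (1), which reduces to $c=0$ because then $\theta_u^n\le c\,\theta_v^n\le 0\cdot\theta_v^n$.
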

\begin{definition}\label{def: nomalizing-constant} Let $\theta$ be a real smooth semipositive (1,1) form. Let $f$ be a positive measurable  function. Whenever there exists   $u\in\PSH(X,\theta)\cap L^\infty(X)$ such that
\[ (\theta+\dc u)^n=cfdV.\]
We denote by $c(\theta,f)$ the corresponding constant. 
\end{definition}
\begin{lemma}\label{lem: nomalizing-constant} The constant $c(\theta,f)$ is unique.
If $\int_Xf^{1/n}dV>0$ then 
     $c(\theta,f)\leq C$,  where $C$ depends on $X$, $A\omega_X$, $n$ and a positive lower bound for $\int_Xf^{1/n}dV$.
\end{lemma}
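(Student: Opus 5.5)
Uniqueness follows from the domination principle. Suppose $u_1,u_2\in\PSH(X,\theta)\cap L^\infty(X)$ satisfy $(\theta+\dc u_i)^n=c_i f\,dV$ with, say, $c_1<c_2$. Then
\[
\theta_{u_1}^n=\frac{c_1}{c_2}\,\theta_{u_2}^n .
\]
Part (1) of Corollary~\ref{coro: domination-semipositive}, applied with $u=u_1$, $v=u_2$ and $c=c_1/c_2<1$, gives a contradiction. This uses that $\theta$ is non-collapsing. That hypothesis is implicit whenever such a solution exists with $f>0$ on a set of positive measure: otherwise $c=0$ would be admissible, so I will assume it, or treat the case $c=0$ separately.

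For the upper bound I will use the mixed Monge--Amp\`ere inequality at the level of densities, comparing $\theta_u$ with $\omega_X$. Since $\theta\le A\omega_X$, the function $u$ is $A\omega_X$-psh. Let $\gamma:=A\omega_X+\dc u\ge 0$ and write $\theta_u^n=cf\,dV$. On the absolutely continuous part, the pointwise arithmetic--geometric mean inequality gives
\[
\gamma\wedge\omega_X^{n-1}\ \ge\ \frac{1}{n}\left(\frac{\gamma^n}{\omega_X^n}\right)^{1/n}\omega_X^n\ \ge\ \frac1n\left(\frac{\theta_u^n}{\omega_X^n}\right)^{1/n}\omega_X^n .
\]
For bounded potentials this holds as an inequality of measures (Dinew / Ko{\l}odziej's mixed inequality, valid since the densities are comparable). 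It uses $\gamma\ge\theta_u\ge0$, which I would justify through $\gamma^n\ge\theta_u^n$, a consequence of the expansion in the definition of the Monge--Amp\`ere measure with $A\omega_X-\theta\ge0$. Hence
\[
c^{1/n}\int_X f^{1/n}\,dV\ \le\ C_n\int_X (A\omega_X+\dc u)\wedge\omega_X^{n-1}.
\]

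It remains to bound the right-hand side uniformly. Integrating by parts,
\[
\int_X \dc u\wedge\omega_X^{n-1}=\int_X u\,\dc\omega_X^{n-1}\le C\int_X|u|\,dV ,
\]
and this is the main obstacle, since $u$ is not normalized. I will first normalize by $\sup_X u=0$, since adding constants does not change the equation. Then $u\in\PSH(X,A\omega_X)$ with $\sup_X u=0$, and compactness of $\PSH_0(X,A\omega_X)$ in $L^1$ gives $\int_X|u|\,dV\le C(X,A\omega_X)$. Therefore
\[
\int_X(A\omega_X+\dc u)\wedge\omega_X^{n-1}\le C,
\]
depending only on $X$, $A\omega_X$ and $n$. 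Dividing by the lower bound for $\int_X f^{1/n}\,dV$ yields $c(\theta,f)\le C$.

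If the pointwise mixed inequality for bounded, non-smooth $u$ is delicate, I would instead approximate. Take smooth $u_j$ decreasing to $u$ with $A\omega_X+2^{-j}\omega_X+\dc u_j\ge0$, and pass to the limit using \eqref{eq:varying-background-convergence} together with the lower semicontinuity of $\int f_j^{1/n}$-type functionals. Alternatively, cite the standard inequality $(\gamma\wedge\omega_X^{n-1})\ge g^{1/n}\omega_X^n$ whenever $\gamma^n\ge g\,\omega_X^n$.
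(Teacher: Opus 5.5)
Your proposal is correct. For uniqueness it coincides with the paper's argument: Corollary~\ref{coro: domination-semipositive}(1), with non-collapsing of $\theta$ assumed implicitly in both.

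For the upper bound the routes differ. The paper first proves, again by the domination principle, the monotonicity $\theta\le\theta'\Rightarrow c(\theta,f)\le c(\theta',f)$. This reduces the problem to $c(A\omega_X,f)$, and the paper then cites Ko{\l}odziej--Nguyen \cite[Lemma 5.9]{kolodziej2015weak} for the Hermitian form $A\omega_X$.

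You instead work directly with the $\theta$-solution. The comparison $A\omega_X+\dc u\ge\theta_u$ replaces the monotonicity step; made rigorous through the binomial expansion, it gives $(A\omega_X+\dc u)^n\ge cf\,dV$. You then combine three tools:
\begin{itemize}
\item the mixed Monge--Amp\`ere inequality;
\item Stokes' formula $\int_X\dc u\wedge\omega_X^{n-1}=\int_X u\,\dc\omega_X^{n-1}$;
\item the uniform $L^1$ bound on $\PSH_0(X,A\omega_X)$.
\end{itemize}
This is essentially the proof of the cited lemma, applied to $\theta$ itself.

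What your version buys is a genuine a priori bound. It never requires a solution of $(A\omega_X+\dc u')^n=c'f\,dV$ to exist, which the paper's chain $c(\theta,f)\le c(A\omega_X,f)$ tacitly needs. It also does not use the domination principle for the upper bound at all. The paper's version is shorter given the citation. As in the proof of Theorem~\ref{thm: exist-big1}, you could also pair against a Gauduchon metric instead of $\omega_X$, which kills the $\dc u$ term without the $L^1$ bound.

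One caution: your fallback via smooth approximation would not work as sketched. Weak convergence of $\bigl((A+2^{-j})\omega_X+\dc u_j\bigr)^n$ to $(A\omega_X+\dc u)^n$ gives no pointwise control of densities. Moreover, the functional $\mu\mapsto\int_X (d\mu_{ac}/dV)^{1/n}\,dV$ is weakly upper semicontinuous, not lower semicontinuous, so the density lower bound cannot be passed to the limit in the direction you need. The mixed inequality for bounded potentials is a genuine theorem (Ko{\l}odziej, Dinew; in the Hermitian setting \cite[Lemma 1.9]{nguyen2016complex}, which the paper itself uses) and should simply be cited, as in your primary route.
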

\begin{proof}
    The uniqueness follows from the domination principle; Corollary \ref{coro: domination-semipositive} (1). The domination principle again shows the monotonicity property:
    \[\theta\leq\theta'\Longrightarrow c(\theta,f)\leq c(\theta',f). \]
    If $\theta\leq A\omega_X$ for $A>0$ then $c(\theta,f)\leq c(A\omega_X,f)$. It follows from \cite[Lemma 5.9]{kolodziej2015weak} (see also~\cite[Corollary 3.3]{guedj2021quasi}) that $c(A\omega_X,f)\leq C$, where $C$ depends on $X$, $\omega_X$, $n$ and a positive lower bound for $\int_Xf^{1/n}dV$.
\end{proof}
\subsubsection{Big forms}
\begin{definition}\label{def: big}
		We say that $\theta$ is {\em big} if there exist
$\rho_0\in\PSH(X,\theta)$ with solvable singularities and
$\varepsilon_0>0$ such that $\theta+\dc \rho_0\geq \varepsilon_0\omega_X$ for some $\varepsilon_0>0$. We let $\Omega$ denote the open Zariski set in which $\rho_0$ is smooth.
	\end{definition}

    Thanks to Demailly--Pa\u un's regularization theorem (Theorem~\ref{thm: demailly-regularization}), since $\theta$ is big there always exists a positive current $T=\theta+\dc \Phi$ with solvable singularities. As shown in~\cite[Proposition 3.12]{BoucksomGuedjLu2025-volume}, we can pick a modification $\pi:Y\to X$ such that $\pi^*T$ has divisorial singularities and admits a
decomposition
    \begin{equation}\label{eq: decomposition-current}\pi^*T=\omega_Y+[E],
    \end{equation} where $\omega_Y$ is a Hermitian form on $Y$ and $E$ is an effective $\mathbb{R}$-divisor and $[E]$ is the current of integration along $E$.

\medskip
We observe that $V_\theta\geq \rho_0$, in particular $V_\theta$ is locally bounded in $\Omega$. 
 As in~\cite[Remark 2.10]{BoucksomGuedjLu2025-volume}, for any $u\in\PSH(X,\theta)\cap L^\infty_{\rm loc}(\Omega)$, the Monge-Amp\`ere measure $\theta_u^n=(\theta+\dc u)^n$ can be understood as in the classical sense of Bedford–Taylor in $\Omega$, and can thus be viewed as a
nonpluripolar measure on $X$.   

\medskip
The following is due to~\cite[Theorem 2.9]{BoucksomGuedjLu2025-volume}.
    \begin{theorem}\label{thm: envelope-big} Let $\theta$ be a big real smooth $(1,1)$ form.
		If $h$ is quasi-continuous,\ and  $P_\theta(h)\in\PSH(X,\theta)$, then $(\theta+\dc P_\theta(h))^n$ is supported on the contact set $\{P_\theta(h)=h\}$.
	\end{theorem}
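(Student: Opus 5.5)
The plan is to follow the classical balayage strategy, carried out inside the ample locus $\Omega$. Here $V_\theta\geq\rho_0$ is locally bounded, and $\theta_u^n$ is a genuine Bedford--Taylor measure there, extended by zero to a non-pluripolar measure on $X$. Set $\varphi:=P_\theta(h)$. Since $\varphi\in\PSH(X,\theta)$ and $\varphi$ has minimal singularities up to a bounded term, $\varphi$ is locally bounded on $\Omega$. The measure $\theta_\varphi^n$ puts no mass on the pluripolar set $X\setminus\Omega$. It therefore suffices to show that $\theta_\varphi^n$ vanishes on $\{\varphi<h\}\cap\Omega$.

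\textbf{Step 1: $h$ continuous.} In this case $\{\varphi<h\}$ is open, because $\varphi$ is upper semicontinuous. Fix $x_0\in\Omega$ with $\varphi(x_0)<h(x_0)$, and a small coordinate ball $B\Subset\Omega$ around $x_0$ on which $\theta=\dc g$ with $g$ smooth.
\begin{itemize}
\item Shrink $B$ so that $\sup_{\bar B}\varphi<\inf_{\bar B}h-\delta$ for some $\delta>0$. This is possible by upper semicontinuity of $\varphi$ and continuity of $h$.
\item Let $u$ be the Bedford--Taylor solution of the Dirichlet problem on $B$: $u$ is maximal psh with $u=g+\varphi$ on $\partial B$, realized as the Perron envelope of psh functions on $B$ lying below $g+\varphi$ on $\partial B$. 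Then $(\dc u)^n=0$ on $B$ and $u\geq g+\varphi$.
\item Define $\psi:=u-g$ on $B$ and $\psi:=\varphi$ off $B$. The usual gluing lemma shows $\psi\in\PSH(X,\theta)$.
\item By the maximum principle on $B$ we get $u-g\leq\sup_{\partial B}\varphi+\mathrm{osc}_{\bar B}g$. After shrinking $B$, the oscillation of $g$ is less than $\delta$, so $\psi\leq h$ on $B$.
\end{itemize}
Thus $\psi$ is a competitor in the definition of $\varphi$, which gives $\psi\leq\varphi$, hence $\psi=\varphi$ on $B$. Consequently $\theta_\varphi^n=(\dc u)^n=0$ on $B$.

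\textbf{Step 2: $h$ quasi-continuous.} By quasi-continuity, for each $j$ there is an open set $G_j$ with $\mathrm{Cap}_{\omega_X}(G_j)<2^{-j}$ and a continuous function $h_j$ on $X$ with $h_j=h$ on $X\setminus G_j$. The first attempt is to choose the $h_j$ so that they decrease to $h$ quasi-everywhere. This can be arranged by taking $h_j:=\sup_{k\geq j}$ of continuous majorants, then regularizing. Set $\varphi_j:=P_\theta(h_j)$. Then $\varphi_j$ decreases to some $\theta$-psh function $\tilde\varphi\geq\varphi$ with $\tilde\varphi\leq h$ quasi-everywhere. Since pluripolar sets are negligible for envelopes (cf.\ the remark after the definition of $P_\theta$), this gives $\tilde\varphi=\varphi$.

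Step 1 gives $\int(h_j-\varphi_j)\,\theta_{\varphi_j}^n=0$ after truncation, that is, $\int\min(h_j-\varphi_j,1)\,\theta_{\varphi_j}^n=0$. On $\Omega$ the sequence $\varphi_j$ is locally uniformly bounded and decreasing, so Bedford--Taylor gives $\theta_{\varphi_j}^n\to\theta_\varphi^n$ on $\Omega$. The functions $\min(h_j-\varphi_j,1)$ are quasi-continuous and converge to $\min(h-\varphi,1)$ off the sets $G_j$.

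\textbf{Main obstacle.} The delicate point is passing to the limit in $\int\min(h_j-\varphi_j,1)\,\theta_{\varphi_j}^n=0$. Two things must be controlled:
\begin{itemize}
\item the mass $\theta_{\varphi_j}^n$ places near $\partial\Omega$;
\item the mass it places on the small-capacity sets $G_j$.
\end{itemize}
Without a global uniform bound this does not follow from Bedford--Taylor alone. I would handle it by working on compact sets $K\Subset\Omega$, where the $\varphi_j$ are uniformly bounded. There the local Chern--Levine--Nirenberg estimate gives $\theta_{\varphi_j}^n(G_j\cap K)\leq C_K\,\mathrm{Cap}(G_j)\to0$. Combining this with the convergence theorem for quasi-continuous integrands along decreasing sequences yields
\[
\int_K\min(h-\varphi,1)\,\theta_\varphi^n=0
\]
for every $K\Subset\Omega$. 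Letting $K$ exhaust $\Omega$ and using that $\theta_\varphi^n$ does not charge $X\setminus\Omega$ gives the claim.
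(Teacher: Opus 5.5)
The paper gives no proof of its own here; it quotes Theorem~2.9 of Boucksom--Guedj--Lu. Judged on its own terms, your proposal has a genuine gap in Step~2.

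\textbf{Step 2.} You claim the continuous $h_j$ can be chosen to decrease to $h$ quasi-everywhere (``sup of continuous majorants, then regularize''). This is false for general quasi-continuous $h$. A quasi-everywhere decreasing limit of continuous functions agrees quasi-everywhere with the usc function $\inf_j h_j$, and quasi-continuous functions need not. For a counterexample, take $h=-u$ on a coordinate ball, where $u=\max\bigl(\varepsilon\sum_k2^{-k}\log|z_1-a_k|,-1\bigr)$ with $(a_k)$ dense. Then $u$ is bounded psh, $u\equiv-1$ on a dense open set, and $u>-1$ on a set of positive measure. A usc $v$ equal to $-u$ off a pluripolar set would equal $1$ on a dense set, hence be $\geq1$ everywhere. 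That forces $u\le-1$ quasi-everywhere, a contradiction.

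A sup of continuous functions is only lsc, and smoothing it to a continuous function destroys both $h_j\geq h$ and $h_j=h$ off $G_j$; your Step~2 uses both properties at once. Without monotonicity nothing relates $P_\theta(h_j)$ to $P_\theta(h)$, because the envelope is nonlocal. Agreement of $h_j$ with $h$ off a set of small capacity gives neither $\varphi_j\to\varphi$ nor the convergence $\theta_{\varphi_j}^n\to\theta_\varphi^n$ you invoke. The CLN bound on $G_j\cap K$ does not address this; the obstacle you flagged is not the real one.

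The fix is to use lsc rather than continuous majorants. Make $G_j$ decreasing by replacing it with $\bigcup_{k\geq j}G_k$, assume $h$ bounded, and put $\tilde h_j:=h$ off $G_j$ and $\tilde h_j:=\sup_Xh$ on $G_j$. These functions are lsc, decreasing, and $\geq h$. The argument then runs as follows.
\begin{enumerate}
\item Your balayage works verbatim for an lsc obstacle $g$: the set $\{P_\theta(g)<g\}$ is still open, and competitors need only lie below $g$ quasi-everywhere.
\item $\varphi_j:=P_\theta(\tilde h_j)\searrow P_\theta(h)$, because the limit lies below $h$ off the zero-capacity, hence pluripolar, set $\bigcap_jG_j$.
\item For $j\geq k$ one has $\min(\tilde h_j-\varphi_j,1)_+\geq\min(h-\varphi_k,1)_+$. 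The right-hand side is a fixed function, bounded and quasi-continuous on compact subsets of $\Omega$.
\item Local Bedford--Taylor convergence against such integrands, with cut-offs in $\Omega$, followed by monotone convergence in $k$, gives $\theta_\varphi^n(\{\varphi<h\})=0$.
\end{enumerate}

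\textbf{Step 1.} There is also a Hermitian-specific error. A ball on which $\theta=dd^cg$ exists only if $d\theta=0$ there, which is exactly what fails in this setting (e.g.\ $\theta=\omega_X$ non-K\"ahler). The maximal modification on $B$ must be built for the operator $(\theta+dd^c\cdot)^n$ itself. For instance, use the Hermitian Dirichlet problem of Ko{\l}odziej--Nguyen on $B$ with boundary data $\varphi$, approximated from above by continuous data. The upper bound $u\le\sup_{\partial B}\varphi+Cr^2$ then follows from the maximum principle applied to the psh function $u+C|z-x_0|^2$, where $\theta\le C\,dd^c|z|^2$ on $B$.

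\textbf{Boundedness.} Finally, $\varphi=P_\theta(h)$ has minimal singularities, and so is locally bounded on $\Omega$, only when $h$ is bounded below. The statement does not assume this. In general one needs the non-pluripolar product and a truncation argument.
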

{
\begin{lemma}\label{lem: eq-envelope} Let $\theta$ be a big real smooth $(1,1)$ form. Let $h\in\mathcal{C}^{\infty}(X,\mathbb{R})$ be a quasi-psh such that $P_\theta(h)\in\PSH(X,\theta)$. Then \[(\theta+\dc P_\theta(h))^n= \mathbf{1}_{\{P_\theta(h)=h \}}(\theta+\dc h)_+^n. \]
\end{lemma}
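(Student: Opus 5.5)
The identity will be proved in two parts: first on the contact set $D:=\{P_\theta(h)=h\}$, then on its complement. Write $\varphi:=P_\theta(h)$. By Theorem~\ref{thm: envelope-big}, applied to the smooth (hence quasi-continuous) function $h$, the measure $\theta_\varphi^n$ is carried by $D$. So it suffices to show
\[
\mathbf 1_D\,\theta_\varphi^n=\mathbf 1_D(\theta+\dc h)_+^n .
\]
Here $(\theta+\dc h)_+^n$ is understood pointwise: it equals $(\theta+\dc h)^n$ at points where the smooth form $\theta+\dc h$ is semipositive, and $0$ elsewhere. Both sides are nonpluripolar measures. Since $\varphi\le h$ and $\varphi\ge V_\theta+\inf_X h$, the function $\varphi$ has minimal singularities and is locally bounded on $\Omega$. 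The Monge--Amp\`ere measures are therefore computed by Bedford--Taylor theory on $\Omega$, and the set $X\setminus\Omega$ can be ignored.

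The first step is to show that $D$ is contained, up to a null set, in the region where $\theta+\dc h\ge0$. At a point $x\in D\cap\Omega$, the function $h-\varphi\ge0$ attains its minimum $0$. Pick local coordinates and a local potential $g$ with $\dc g=\theta$. Then $g+\varphi$ is psh and is touched from above at $x$ by the smooth function $g+h$.

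A psh function touched from above at $x$ by a smooth function forces the Levi form of the smooth function to be semipositive at $x$, at least in the viscosity sense. To see this, suppose $\dc(g+h)(x)$ had a negative direction. Restrict to a complex line through $x$ in that direction. The restriction of $g+h$ would be strictly superharmonic near $x$, while it lies above the subharmonic restriction of $g+\varphi$, with equality at $x$. This contradicts the sub-mean value property.

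Hence on $D\cap\Omega$ the form $\theta+\dc h$ is semipositive, and $(\theta+\dc h)_+^n=(\theta+\dc h)^n$ there.

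The second step compares the two measures on $D$. For the inequality $\le$, it is natural to use the local maximum principle, Lemma~\ref{lem: max-princ-local}, applied to $u=g+\varphi\le v=g+h$. However, $v$ need not be psh near $D$, so the lemma does not apply directly. I expect this to be the main obstacle.

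To get around it, I will work on the open set $U_\varepsilon:=\{\theta+\dc h>-\varepsilon\omega_X\}\cap\Omega$, which contains $D\cap\Omega$ by the first step. On $U_\varepsilon$ the function $h$ is $(\theta+\varepsilon\omega_X)$-psh. Apply the maximum principle with the background $\theta+\varepsilon\omega_X$:
\[
\mathbf 1_D(\theta+\varepsilon\omega_X+\dc\varphi)^n\le \mathbf 1_D(\theta+\varepsilon\omega_X+\dc h)^n .
\]
Then expand $(\theta+\varepsilon\omega_X+\dc\varphi)^n$ binomially and let $\varepsilon\to0$. The mixed terms tend to $0$ since the currents $(\theta+\dc\varphi)^j$ have locally finite mass on $\Omega$.

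For the reverse inequality $\ge$, use the standard Hessian comparison at contact points. On $D$ we have $\varphi\le h$ with equality. A classical fact then gives $\mathbf 1_D\,\theta_\varphi^n\ge \mathbf 1_D(\theta+\dc h)^n$ whenever $\theta+\dc h\ge0$ there. One way to obtain it is via the Bedford--Taylor identity $\mathbf 1_{\{u=v\}}(\dc\max(u,v))^n\ge\ldots$; a more robust route is the argument of Di Nezza--Trapani and Guedj--Lu--Zeriahi, which approximates $\varphi$ by the envelopes $P_\theta(\min(h,V_\theta+C))$, or by $\max(\varphi, h-\delta)$-type constructions, and passes to the limit.

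Combining the two inequalities with the fact that $\theta_\varphi^n$ vanishes outside $D$ gives the identity in the statement.
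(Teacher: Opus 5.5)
Your reduction to the contact set $D$ via Theorem~\ref{thm: envelope-big} is sound. So is the touching argument showing $\theta+\dc h\geq 0$ at every point of $D$, and so is the $\varepsilon$-perturbed maximum principle giving $\mathbf 1_D\theta_\varphi^n\leq\mathbf 1_D(\theta+\dc h)^n$. There is one slip: here $\theta$ need not be closed, so there is no local potential $g$ with $\dc g=\theta$. The touching argument still works if you take $g$ quadratic with $\dc g(x)=\theta(x)$ and add $\delta|z|^2$ to absorb $\theta-\dc g=o(1)$ near $x$. Note also that the direction you single out as the main obstacle is the easy one.

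\textbf{The gap: the lower bound on the contact set.} The step that fails is the reverse inequality $\mathbf 1_D\theta_\varphi^n\geq\mathbf 1_D(\theta+\dc h)^n$. It is not a classical consequence of $\varphi\leq h$ with equality on $D$, and the tools you list only give upper bounds on contact sets.
\begin{itemize}
\item For $u\leq v$ we have $\max(u,v)=v$. The inequality $(\dc\max(u,v))^n\geq\mathbf 1_{\{u\geq v\}}(\dc u)^n+\mathbf 1_{\{u<v\}}(\dc v)^n$ then reduces to $\mathbf 1_{\{u=v\}}(\dc u)^n\leq\mathbf 1_{\{u=v\}}(\dc v)^n$, which is the maximum principle again.
\item Constructions of the type $\max(\varphi,h-\delta)$ lead to the same inequality in the limit.
\item $P_\theta(\min(h,V_\theta+C))=P_\theta(h)$ as soon as $C\geq\sup_X h$, so that approximation is vacuous.
\end{itemize}

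A lower bound on the contact set is a genuinely second-order statement. You need that $\theta_\varphi^n$ is given by the pointwise determinant of $\theta+\dc\varphi$, and that the complex Hessians of $\varphi$ and $h$ agree a.e.\ on $D$. This requires regularity of the envelope, e.g.\ $\dc\varphi\in L^\infty_{\rm loc}(\Omega)$. The arguments in the literature you allude to obtain equality exactly through such a priori regularity.

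This is also the input the paper uses. The regularity of envelopes from \cite[Theorem 6.1]{ChuMcCleerey21-fully}, combined with \cite[Theorem 2.3, Step 1]{darvas2025relative}, gives the equality on the contact set for the Hermitian references $\theta+t\omega_X$ with $t>A$. The case $t=0$ then follows because the difference of the two sides, integrated against any test function, is a polynomial in $t$ vanishing for $t>A$.

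As it stands, your argument proves only $\theta_\varphi^n\leq\mathbf 1_D(\theta+\dc h)_+^n$, not the stated identity. That inequality is, incidentally, all the paper uses later, to get an $L^\infty$ density in the continuity theorem.
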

\begin{proof}
    When $\theta $ is Hermitian, the result follows from ~\cite{ChuZhou19regularity,ChuMcCleerey21-fully}. Set $\phi=P_\theta(h)$. Using ~\cite[Theorem 6.1]{ChuMcCleerey21-fully} and arguments in ~\cite[Theorem 2.3, Step 1]{darvas2025relative} we obtain
    \begin{align}\label{eq: envelope}
        \mathbf{1}_{\{ \phi=h\} }(\omega+\dc\phi)^n=\mathbf{1}_{\{ \phi=h\} }(\omega+\dc h)^n_+,
    \end{align} where $\omega$ is a Hermitian form such that $\omega>\theta$ and $h$ is $\omega$-psh.
     Let $A>0$ such that $\theta+A\omega_X$ is Hermitian and $h$ is $(\theta+A\omega_X)$-psh. 
    Let $\chi\in\mathcal{C}^0(X,\mathbb R)$. 
    We consider for $t\geq 0$,
    \[Q(t)=\int_{ \{\phi=h\} }\chi(\theta+t\omega_X+\dc P_\theta(h))^n- \int_{ \{\phi=h\}}\chi(\theta+t\omega_X+\dc h)^n.\]
Then $Q(t)$ is a polynomial
in $t$. By\eqref{eq: envelope}, we have for $t>A$
\[ \mathbf{1}_{\{ \phi =h\}}(\theta+t\omega_X+\dc \phi)^n=\mathbf{1}_{\{ \phi =h\}}(\theta+t\omega_X+\dc h)^n.\]
It follows that the polynomial $Q(t)$ is identically zero for $t>A$. Hence, $Q(t)\equiv 0$, so $Q(0)=0$. Since $\chi$ is taken arbitrarily, we
have the desired equality between measures. 
\end{proof}
}
    We recall the following maximum principle.
	\begin{lemma}[{\cite[Proposition 2.11]{BoucksomGuedjLu2025-volume}}]\label{lem: max-princ-big} Let $\theta$ be a big (1,1) form.
		Let $u,v$ be $\theta$-psh functions such that $\rho_0-C\leq u\leq v$. Then
		\begin{equation*}
		\mathbf{1}_{\{u=v\}}(\theta+\dc u)^n\leq\mathbf{1}_{\{u=v\}}(\theta+\dc v)^n.
		\end{equation*}
	\end{lemma}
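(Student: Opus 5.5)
The plan is to reduce the statement to a local Bedford--Taylor argument on the Zariski open set $\Omega$, where both functions are locally bounded. Since $\rho_0$ is smooth on $\Omega$ and $\rho_0-C\le u\le v$, the functions $u,v$ are locally bounded on $\Omega$. As recalled before the statement, $\theta_u^n$ and $\theta_v^n$ are then defined in the Bedford--Taylor sense on $\Omega$ and extended by zero as non-pluripolar measures on $X$. The complement $X\setminus\Omega$ is a proper analytic subset, hence pluripolar, and it carries no mass for either measure. So it suffices to prove
\[
\int_\Omega \chi\,\mathbf 1_{\{u=v\}}\theta_u^n\le\int_\Omega\chi\,\mathbf 1_{\{u=v\}}\theta_v^n
\]
for every continuous $\chi\ge0$ with compact support in a small coordinate ball $B\Subset\Omega$. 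On $B$ both functions are bounded, and the local expansion with a Kähler form $\beta\ge\theta$ makes all Bedford--Taylor tools available, exactly as in the proof of Lemma~\ref{lem: max-princ}.

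For $\delta>0$ set $w_\delta:=\max(u,v-\delta)$, which is $\theta$-psh and locally bounded on $\Omega$. Since $u\le v$, the family $w_\delta$ increases to $v$ as $\delta\searrow0$. The set $\{u>v-\delta\}$ is plurifine open and $w_\delta=u$ on it, so plurifine locality of the Monge--Amp\`ere operator gives $\mathbf 1_{\{u>v-\delta\}}\theta_u^n=\mathbf 1_{\{u>v-\delta\}}\theta_{w_\delta}^n$. (The locality applies to each term $(\beta+\dc\cdot)^j\wedge(\theta-\beta)^{n-j}$ of the expansion.) Next introduce the bounded quasi-continuous cutoffs
\[
g_\delta:=\min\Bigl(1,\tfrac{(u-v+\delta)^+}{\delta}\Bigr).
\]
Each $g_\delta$ takes values in $[0,1]$, is supported in $\{u>v-\delta\}$, and equals $1$ on $\{u=v\}$. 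Moreover $g_\delta\le g_\eta$ for $\delta\le\eta$, and $g_\eta\searrow\mathbf 1_{\{u=v\}}$ as $\eta\to0$. Then, for $\delta\le\eta$,
\[
\int\chi\mathbf 1_{\{u=v\}}\theta_u^n\le\int\chi g_\delta\,\theta_u^n=\int\chi g_\delta\,\theta_{w_\delta}^n\le\int\chi g_\eta\,\theta_{w_\delta}^n .
\]
The last inequality uses the positivity of $\theta_{w_\delta}^n$.

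Fix $\eta$ and let $\delta\to0$. Since $w_\delta\nearrow v$ almost everywhere, with uniform bounds on $B$, the measures $\theta_{w_\delta}^n$ converge to $\theta_v^n$ when tested against bounded quasi-continuous functions. This is the Bedford--Taylor monotone convergence theorem, in its quasi-continuous form via capacity, applied term by term in the local expansion. Hence the right-hand side tends to $\int\chi g_\eta\,\theta_v^n$. Letting $\eta\to0$, monotone convergence yields $\int\chi\mathbf 1_{\{u=v\}}\theta_v^n$, which proves the claim.

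The main obstacle is this convergence step. The set $\{u=v\}$ is neither open nor closed, so weak convergence cannot be applied to its indicator directly. The two-parameter cutoffs $g_\delta\le g_\eta$ are introduced precisely to replace the indicator by quasi-continuous test functions. The one point to verify carefully is that convergence against quasi-continuous functions survives the non-closed Hermitian setting. I expect it does, because in the expansion each factor $(\beta+\dc w_\delta)^j$ is a genuine Bedford--Taylor product, while $(\theta-\beta)^{n-j}$ is a fixed smooth form.
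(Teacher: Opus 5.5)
Your proof is correct. The hypothesis $\rho_0-C\le u\le v$ makes both functions locally bounded on $\Omega$, so everything reduces to Bedford--Taylor products on small balls, and your plurifine-locality argument with the nested cutoffs $g_\delta\le g_\eta$ is a sound proof of the local maximum principle. The point you flagged is harmless: $-C\beta^{n-j}\le(\theta-\beta)^{n-j}\le C\beta^{n-j}$ in the strongly positive sense, which gives the uniform capacity domination needed for convergence against bounded quasi-continuous test functions.

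The paper gives no argument of its own and simply cites \cite[Proposition 2.11]{BoucksomGuedjLu2025-volume}. Your route, localizing to $\Omega$ and then proving the bounded maximum principle as in Lemma~\ref{lem: max-princ}, is a standard argument for a result of this type.
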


     \begin{proposition}[{\cite[Corollary 2.13]{BoucksomGuedjLu2025-volume}}]\label{lem: BGL25-213} Let $\theta$ be a big (1,1) form. Fix $\lambda\geq 0$. Let $u,v\in\PSH(X,\theta)$ such that $\min (u,v)\geq \rho_0-O(1)$. 
    \begin{enumerate}
        \item If $\theta_u^n\geq e^{\lambda u}fdV$ and $\theta_v^n\geq e^{\lambda v}gdV$ then
        \[\theta_{\max(u,v)}^n\geq e^{\lambda\max(u,v)}\min(f,g)dV. \]
        \item If $\theta_u^n\leq e^{\lambda u}fdV$ and $\theta_v^n\leq e^{\lambda v}gdV$ then
        \[\theta_{P_\theta(\min(u,v))}^n\leq e^{\lambda P_\theta(\min(u,v))}\max(f,g)dV. \]
    \end{enumerate}
        
    \end{proposition}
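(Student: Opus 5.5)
For (1), the plan is to cover $X$, up to a pluripolar set, by two contact sets and apply the maximum principle Lemma~\ref{lem: max-princ-big} on each. Set $\psi:=\max(u,v)\in\PSH(X,\theta)$. Since $\min(u,v)\geq\rho_0-C$, we have $\rho_0-C\le u\le \psi$ and $\rho_0-C\le v\le\psi$. All measures involved are non-pluripolar Monge--Amp\`ere measures, defined in the Bedford--Taylor sense on $\Omega$ and charging no pluripolar set. It therefore suffices to compare them on $\Omega$.

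First apply Lemma~\ref{lem: max-princ-big} to the pair $u\le\psi$. The contact set $\{u=\psi\}$ equals $\{u\ge v\}$, so
\[
\mathbf 1_{\{u\ge v\}}\theta_\psi^n\;\ge\;\mathbf 1_{\{u\ge v\}}\theta_u^n\;\ge\;\mathbf 1_{\{u\ge v\}}e^{\lambda u}f\,dV=\mathbf 1_{\{u\ge v\}}e^{\lambda\psi}f\,dV .
\]
Symmetrically, on $\{v\ge u\}$ we get $\theta_\psi^n\ge e^{\lambda\psi}g\,dV$. The two sets cover $X$, so summing (and discarding the overlap, where either bound works) gives $\theta_\psi^n\ge e^{\lambda\psi}\min(f,g)\,dV$. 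No plurifine locality is needed: the overlap $\{u=v\}$ is handled by either inequality.

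For (2), set $h:=\min(u,v)$ and $\phi:=P_\theta(h)$. First, $h\ge\rho_0-C$ and $\rho_0-C$ is a $\theta$-psh candidate, so $\phi\ge\rho_0-C$ and $\phi\in\PSH(X,\theta)$. Next, $h$ is a minimum of two quasi-psh functions, hence quasi-continuous. Theorem~\ref{thm: envelope-big} then shows that $\theta_\phi^n$ is carried by the contact set $\{\phi=h\}$.

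I also need $\phi\le h$ outside a pluripolar set, so that $\phi\le u$ and $\phi\le v$ there; this is the standard negligible-set statement. The point to check is that it is not destroyed by the upper regularization. I would argue that $h$ is upper semicontinuous, since it is a minimum of usc functions, so every candidate $w\le h$ has $w^*\le h$ and hence $\phi\le h$ everywhere. Thus $\rho_0-C\le\phi\le u$ and $\phi\le v$ hold pointwise, and Lemma~\ref{lem: max-princ-big} applies to both pairs.

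Now split the contact set as $\{\phi=h\}=\{\phi=u\}\cup\{\phi=v\}$. On $\{\phi=u\}$, Lemma~\ref{lem: max-princ-big} gives
\[
\mathbf 1_{\{\phi=u\}}\theta_\phi^n\le\mathbf 1_{\{\phi=u\}}\theta_u^n\le \mathbf 1_{\{\phi=u\}}e^{\lambda u}f\,dV=\mathbf 1_{\{\phi=u\}}e^{\lambda\phi}f\,dV .
\]
The same argument on $\{\phi=v\}$ gives the bound with $g$. Adding the two and using the support property yields $\theta_\phi^n\le e^{\lambda\phi}\max(f,g)\,dV$.

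The main obstacle I expect is justifying the support statement in the big, unbounded setting. This requires checking that $h$ meets the hypotheses of Theorem~\ref{thm: envelope-big} (quasi-continuity and $P_\theta(h)\not\equiv-\infty$). It also requires checking that the non-pluripolar measures do not charge the locus $X\setminus\Omega$, where the functions are only bounded below by $\rho_0-C$. Both points should follow from $h\ge\rho_0-C$ and from the non-pluripolar nature of the products defined on $\Omega$.
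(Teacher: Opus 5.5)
Your proof is correct. The paper gives no argument of its own and only cites \cite[Corollary 2.13]{BoucksomGuedjLu2025-volume}, and your route is the standard one for this statement: (1) follows from Lemma~\ref{lem: max-princ-big} applied to $u\le\max(u,v)$ and $v\le\max(u,v)$, and (2) follows from Theorem~\ref{thm: envelope-big} combined with that maximum principle on $\{\phi=u\}$ and $\{\phi=v\}$, where your remark that $\phi\le\min(u,v)$ everywhere (since $\min(u,v)$ is upper semicontinuous) is what makes the lemma applicable. The only slip is cosmetic: in (2), literally adding the two bounds gives $e^{\lambda\phi}(f+g)$ on $\{\phi=u=v\}$, so split the contact set disjointly as $\{\phi=u\}\sqcup(\{\phi=v\}\setminus\{\phi=u\})$, as you already did in (1).
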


    \begin{proposition}[{\cite[Lemma 4.2]{BoucksomGuedjLu2025-volume}}]\label{Domination principle-big} Let $\theta$ be a big (1,1) form.
 Suppose $u,v\in\operatorname{PSH}(X,\theta)$ such that $O(1)+\min(u,v)\geq \rho_0$ and $v\leq  u+O(1)$. If
\[
\mathbf 1_{\{u<v\}}\theta_u^n\le c\mathbf 1_{\{u<v\}}\theta_v^n
\]
for some $0\leq c<1$, then
$u\geq v$.
\end{proposition}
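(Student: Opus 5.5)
The plan is to argue by contradiction, following the Guedj--Lu strategy for non-collapsing forms, but using the strictly positive current $\theta+\dc\rho_0\ge\varepsilon_0\omega_X$ as a source of positivity. The extra volume it provides will absorb the torsion errors that replace the Kähler comparison principle. Throughout, both $u,v$ have minimal-type singularities $\ge\rho_0-O(1)$, so all Monge--Amp\`ere measures are Bedford--Taylor measures on $\Omega$ and put no mass on pluripolar sets.

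\textbf{Step 1 (perturbation).} For $s\in(0,1)$ small, set $\varphi_s:=(1-s)v+s\rho_0$. Then $\varphi_s\in\PSH(X,\theta)$ and
\[
\theta_{\varphi_s}\ \ge\ (1-s)\theta_v+s\varepsilon_0\omega_X,
\]
which gives $\theta_{\varphi_s}^n\ge(1-s)^n\theta_v^n$ and $\theta_{\varphi_s}^n\ge s^n\varepsilon_0^n\omega_X^n$ on $\Omega$. Since $v\le u+O(1)$ and $\rho_0\le u+O(1)$ (the latter because $\rho_0$ is, up to a constant, more singular than $u\ge\rho_0-O(1)$), the quantity $m_s:=\sup_X(\varphi_s-u)$ is finite. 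If $m_s\le 0$ for a sequence $s\to0$, then letting $s\to0$ gives $v\le u$ off the pluripolar set $\{\rho_0=-\infty\}$, hence everywhere. So it suffices to rule out $m_s>0$ for small $s$.

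\textbf{Step 2 (sublevel sets near the maximum).} Assume $m:=m_s>0$ and, for $0<\delta<m$, consider
\[
D_\delta:=\{u<\varphi_s-m+\delta\}.
\]
This set has positive Lebesgue measure, since a quasi-psh inequality holding a.e. holds everywhere. It is contained in $\{u<v\}$: indeed $\varphi_s-m+\delta<\varphi_s\le v$ wherever $\rho_0\le v$, and we may normalize $\rho_0\le v$ by subtracting a constant from $\rho_0$. On $D_\delta$ the hypothesis therefore gives
\[
\theta_u^n\ \le\ c\,\theta_v^n\ \le\ c(1-s)^{-n}\theta_{\varphi_s}^n ,
\]
and $c(1-s)^{-n}<1$ once $s$ is small.

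\textbf{Step 3 (Hermitian comparison with error).} The key step is a comparison inequality on the small sets $D_\delta$:
\[
\int_{D_\delta}\theta_{\varphi_s}^n\ \le\ \int_{D_\delta}\theta_u^n+o\Big(\int_{D_\delta}\omega_X^n\Big)\quad(\delta\to0).
\]
To prove it, I would apply the maximum principle (Lemma~\ref{lem: max-princ-big}) to $w=\max(u,\varphi_s-m+\delta)$ together with the standard Hermitian integration-by-parts argument of Ko{\l}odziej--Nguyen and Guedj--Lu. In that argument the torsion terms $\dc\omega_X$, $d\omega_X\wedge d^c\omega_X$ enter multiplied by $(\varphi_s-m+\delta-u)^+\le\delta$, so they are bounded by $C\delta\int_{D_\delta}\omega_X^n$ plus lower-order mixed terms. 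Since the singularities of $u$ and $\varphi_s$ are comparable to those of $\rho_0$, I would carry out these integrations by parts on the modification $\pi:Y\to X$ from \eqref{eq: decomposition-current}, where $\pi^*\theta+\dc\pi^*\rho_0$ splits as a Hermitian form plus $[E]$. There the potentials are bounded relative to the divisor and the Bedford--Taylor calculus applies, with boundary terms controlled because $D_\delta\Subset Y\setminus E$.

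\textbf{Step 4 (conclusion).} Combining Steps 2 and 3 gives
\[
\big(1-c(1-s)^{-n}\big)\int_{D_\delta}\theta_{\varphi_s}^n\ \le\ C\delta\int_{D_\delta}\omega_X^n .
\]
On the other hand, $\theta_{\varphi_s}^n\ge s^n\varepsilon_0^n\omega_X^n$ on $D_\delta$. For fixed $s$ and $\delta$ small enough, the left-hand side is at least a fixed positive multiple of $\int_{D_\delta}\omega_X^n>0$, which exceeds the right-hand side. This contradiction shows $m_s\le0$, and hence $u\ge v$ by Step 1.

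The main obstacle is Step 3, the error-controlled comparison inequality on the unbounded, big-form setting. What must be checked carefully is that the torsion errors really scale like $\delta\cdot\mathrm{Vol}(D_\delta)$ and do not involve the total masses of $u$ and $\varphi_s$ near the singular locus. If this direct route fails, I would instead first establish the bounded, Hermitian version on $Y$ with reference form $\omega_Y+t\omega$, and pass to the limit $t\to0$ as in Lemma~\ref{lem: eq-envelope}.
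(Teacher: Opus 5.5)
The paper gives no proof of this statement; it quotes it from \cite[Lemma~4.2]{BoucksomGuedjLu2025-volume}, so your argument has to stand on its own. Steps~1, 2 and~4 are a correct reduction. All of the content, however, sits in Step~3, which is asserted rather than proved, and your sketch of it does not work as stated.

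\emph{The error term.} In a Ko{\l}odziej--Nguyen type integration by parts, the torsion terms are $\dc\omega_X$ and $d\omega_X\wedge d^c\omega_X$ wedged with mixed currents $\theta_u^k\wedge\theta_{\varphi_s}^{j}$. There are also Cauchy--Schwarz terms with $d(\varphi_s-u)\wedge d^c(\varphi_s-u)$ against such currents. These mixed measures can be singular with respect to Lebesgue measure, so they are not $O(\delta\int_{D_\delta}\omega_X^n)$. In the bounded Hermitian case they are absorbed via $\omega\le\varepsilon^{-1}\omega_{(1-\varepsilon)\psi}$ and an induction over the mixed terms. That gives an error proportional to the Monge--Amp\`ere mass of the sublevel set, and your Step~4 would survive an error of that form. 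But carrying this out for unbounded potentials and a big, non-semipositive, non-closed $\theta$ is a theorem in its own right, not a routine check.

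\emph{The localization.} Your justification $D_\delta\Subset Y\setminus E$ is false in general. The hypotheses allow $u=\rho_0+O(1)$ near $\{\rho_0=-\infty\}$. Then $\varphi_s-u=(1-s)(v-u)+s(\rho_0-u)$ stays bounded there, so $D_\delta$ can accumulate on the polar set, which is exactly where the boundary terms are uncontrolled.

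You can bypass Step~3 entirely. What your perturbation really buys is boundedness relative to $\rho_0$, which reduces the problem to the bounded domination principle for a Hermitian form. Work in the setting you invoke, $\pi^*(\theta+\dc\rho_0)=\omega_Y+[E]$ with $\omega_Y$ Hermitian, and assume moreover that $\pi$ maps $Y\setminus E$ isomorphically onto $\Omega$.
\begin{itemize}
\item Put $\tilde u:=\pi^*u-\pi^*\rho_0$ and $\tilde v:=\pi^*v-\pi^*\rho_0$. On $Y\setminus E$ these are $\omega_Y$-psh with $\omega_Y+\dc\tilde u=\pi^*\theta_u$. The hypotheses give $\tilde u\ge -C$ and $\tilde v\le\tilde u+C$, and $\pi^*\varphi_s-\pi^*\rho_0=(1-s)\tilde v$.
\item If $\tilde u<(1-s)\tilde v$ at a point, then $\tilde u<(1-s)(\tilde u+C)$, so $s\tilde u<(1-s)C$. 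Hence both $\tilde u$ and $(1-s)\tilde v$ are $<C/s$ there.
\item Set $M:=C/s$, $U:=\min(\tilde u,M)$ and $V:=\min((1-s)\tilde v,M)$. These are bounded $\omega_Y$-psh functions extending across $E$, and $\{U<V\}\setminus E=\pi^{-1}\{u<\varphi_s\}$.
\item By plurifine locality, their Monge--Amp\`ere measures on this set are $\pi^*\theta_u^n$ and $\pi^*\theta_{\varphi_s}^n$, and neither charges $E$.
\item Your Step~2 argument works on all of $\{u<\varphi_s\}$, giving the inequality there with constant $c(1-s)^{-n}<1$. Proposition~\ref{Domination principle-1}, applied to the Hermitian form $\omega_Y$, then gives $U\ge V$, i.e.\ $u\ge\varphi_s$. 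Letting $s\to0$ concludes.
\end{itemize}
This needs neither the volume bound $\theta_{\varphi_s}^n\ge s^n\varepsilon_0^n\omega_X^n$ nor any new integration by parts. Your fallback points in this direction, but it omits precisely the truncation step that turns the unbounded problem on $X$ into a bounded one on $Y$. No limit in the reference form is needed.
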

\begin{corollary}[{\cite[Corollaries 4.3, 4.4]{BoucksomGuedjLu2025-volume}}]
\label{coro: domination-big}Let $\theta$ be a big (1,1) form. Assume $u,v\in\PSH(X,\theta)$ are such that $u=v+ O(1)$, and $O(1)+\min(u,v)\geq \rho_0$. \begin{enumerate}
    \item If $\theta_u^n\leq c\theta_v^n$, then $c\geq 1$.
    \item If $\theta_u^n\leq e^{\lambda(u-v)}\theta_v^n$ for some $\lambda>0$, then $u\geq v$. 
\end{enumerate}
    
\end{corollary}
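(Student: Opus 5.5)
Both assertions will be deduced from the domination principle for big forms, Proposition~\ref{Domination principle-big}, by comparing $u$ with vertical translates of $v$. The point is that translating by constants changes neither the Monge--Amp\`ere measure ($\theta_{v+t}^n=\theta_v^n$ for every $t\in\mathbb R$) nor the hypotheses. Indeed, if $u=v+O(1)$ and $\min(u,v)\geq\rho_0-O(1)$, then for every fixed $t$ we still have $v+t\leq u+O(1)$ and $\min(u,v+t)\geq \rho_0-O(1)$. So Proposition~\ref{Domination principle-big} can be applied to the pair $(u,v+t)$ for any $t$.

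For (1), I argue by contradiction and assume $c<1$. Fix $t>0$. Restricting the inequality $\theta_u^n\leq c\theta_v^n$ to the Borel set $\{u<v+t\}$ gives
\[
\mathbf 1_{\{u<v+t\}}\theta_u^n\leq c\,\mathbf 1_{\{u<v+t\}}\theta_{v+t}^n .
\]
Here we use that these are nonpluripolar measures defined in $\Omega$, so multiplying by indicator functions of Borel sets is legitimate. Proposition~\ref{Domination principle-big} then yields $u\geq v+t$ on $X$. Letting $t\to+\infty$ contradicts $u=v+O(1)$, since $v\not\equiv-\infty$. Hence $c\geq1$.

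For (2), fix $\varepsilon>0$. On the set $\{u<v-\varepsilon\}$ we have $e^{\lambda(u-v)}\leq e^{-\lambda\varepsilon}$, so
\[
\mathbf 1_{\{u<v-\varepsilon\}}\theta_u^n\leq e^{-\lambda\varepsilon}\,\mathbf 1_{\{u<v-\varepsilon\}}\theta_{v-\varepsilon}^n .
\]
Since $\lambda>0$, the constant $e^{-\lambda\varepsilon}$ lies in $[0,1)$. Proposition~\ref{Domination principle-big}, applied to the pair $(u,v-\varepsilon)$, which satisfies its hypotheses by the first paragraph, gives $u\geq v-\varepsilon$. Letting $\varepsilon\to0$ gives $u\geq v$.

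The only point requiring care is checking that the translated pairs satisfy the hypotheses of Proposition~\ref{Domination principle-big}, namely the bound $\min\geq\rho_0-O(1)$ and the one-sided comparison $v\leq u+O(1)$. Both follow immediately from the assumption $u=v+O(1)$, so I expect no genuine obstacle.
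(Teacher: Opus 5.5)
Your proof is correct and is the standard argument behind the cited Corollaries 4.3 and 4.4 of Boucksom--Guedj--Lu; the paper itself gives no separate proof. You apply Proposition~\ref{Domination principle-big} to the translated pairs $(u,v+t)$ for (1) and $(u,v-\varepsilon)$ for (2), using that translation changes neither $\theta_v^n$ nor the hypotheses. One cosmetic point: in (1), allow for $c<0$ by replacing $c$ with $\max(c,0)$, which is legitimate because $c\,\theta_v^n\leq 0=0\cdot\theta_v^n$ in that case.
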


\section{A priori estimates: from local to global}
We first establish uniform estimates for solutions to the complex Monge--Amp\`ere equation in the local setting.
\begin{theorem}\label{thm: estimate-local}
    Assume $\mu=fdV$ is a positive measure in $\Omega$ with $\mu(\Omega)\leq 1$, where
 $f\in L^w(\Omega,dV)$ with $w$ satisfying \hyperlink{conditionK}{Condition (K)}. Then there exists a unique bounded function $\varphi\in\mathcal{F}^a(\Omega)$ such that $(\dc \varphi)^n=\mu$. More precisely, $\sup_\Omega|\varphi|\leq C$ for $C>0$ depending only on $\Omega$, $dV$, $n$, $w$ and an upper bound
for $\|f\|_w$.
\end{theorem}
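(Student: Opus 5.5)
Existence and uniqueness come first. Since $f\in L^w\subset L^1$, the measure $\mu=f\,dV$ has finite mass and charges no pluripolar set. Theorem~\ref{thm: cegrell} therefore gives $\varphi\in\mathcal F^a(\Omega)$ with $(\dc\varphi)^n=\mu$. Uniqueness follows by applying Theorem~\ref{thm: local-comparison} in both directions. The remaining task is the uniform bound $\sup_\Omega|\varphi|\le C$, which I would prove by an auxiliary-function argument in the spirit of Guo--Phong, adapted to the psh setting through envelopes. To keep all objects bounded during the argument, I would first truncate: replace $f$ by $f_j=\min(f,j)$ and solve with bounded densities. These solutions are bounded by Kołodziej's $L^p$ theory, or more elementarily by comparison with $A(|z|^2-R^2)$. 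By Theorem~\ref{thm: local-comparison} they decrease to $\varphi$. It then suffices to get a bound on $\varphi_j$ independent of $j$.

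The core step is an energy/level-set estimate. For $s>0$ set $\Omega_s=\{\varphi<-s\}$ and $\phi(s)=\int_{\Omega_s}(-\varphi-s)\,f\,dV$ (or simply $\mu(\Omega_s)$). I would solve an auxiliary Dirichlet problem
\[
(\dc\psi_s)^n=\frac{\mathbf 1_{\Omega_s}\,(-\varphi-s)\,f}{A_s}\,dV,\qquad \psi_s\in\mathcal E_0,
\]
with $A_s$ the total mass of the numerator, again via Theorem~\ref{thm: cegrell}. Then I would compare $-\varphi-s$ with $\epsilon(-\psi_s)^{n/(n+1)}$ for a suitable $\epsilon\sim A_s^{1/(n+1)}$, as in Guo--Phong. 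Here their use of the ABP-type maximum principle must be replaced by the comparison principle. I would do this by taking the envelope $P(\varphi+s-\epsilon(-\psi_s)^{n/(n+1)})$ or, more simply, by noting that $u:=-\epsilon(-\psi_s)^{n/(n+1)}$ is psh with $(\dc u)^n\ge c_n\epsilon^n(-\psi_s)^{-n/(n+1)}(\dc\psi_s)^n$. Theorem~\ref{thm: envelope-local} and Lemma~\ref{lem: max-princ-local} would then be used on the contact set to conclude $-\varphi-s\le \epsilon(-\psi_s)^{n/(n+1)}$ on $\Omega_s$.

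This inequality gives exponential integrability: $\int_{\Omega_s}\exp\bigl(\beta(-\varphi-s)^{(n+1)/n}/A_s^{1/n}\bigr)dV\le C$. This follows from the uniform Skoda-type bound $\int_\Omega e^{-\beta\psi}dV\le C$ for $\psi\in\mathcal E_0$ with $\int(\dc\psi)^n\le1$, which holds in a hyperconvex domain. Next, the generalized Young inequality $ab\le w(a)+w^*(b)$ applied to $f$ and the exponential term bounds $A_s$ in terms of $\|f\|_w$ and $\mu(\Omega_s)$. Since $w(t)\asymp t\,h(\log(e+t))$, one gets, as in the Guo--Phong/Kołodziej iteration, an inequality of De Giorgi type:
\[
r\,\mu(\Omega_{s+r})\le C\,\mu(\Omega_s)\,h\bigl(\log(1/\mu(\Omega_s))\bigr)^{-1/n}\cdots.
\]
More precisely, the decay is of the form $r\le C\,\mu(\Omega_s)^{1/n}/h(\cdot)^{1/n}$ after normalizing. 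A standard iteration lemma, De Giorgi style with the choice $s_{k+1}=s_k+r_k$ and $\mu(\Omega_{s_{k}})\le 2^{-k}$, then shows that the total length $\sum r_k$ is controlled by $\int_1^\infty h^{-1/n}(t)\,dt<\infty$. This is exactly Condition~(K). We obtain $\Omega_{s_\infty}=\emptyset$ with $s_\infty\le C$, and since $\sup_\Omega\varphi\le 0$ this is the desired bound.

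The main obstacle is the comparison step replacing the ABP/PDE argument of Guo--Phong. The solutions and densities are only $L^w$, and $\psi_s$ is merely in $\mathcal E_0$. I would first try to verify the inequality $-\varphi-s\le\epsilon(-\psi_s)^{n/(n+1)}$ through Theorem~\ref{thm: local-comparison} applied on the open set $\Omega_s$, after the truncation to bounded $f_j$, where continuity of $\varphi_j$ near $\partial\Omega_s$ can be arranged by a small perturbation of $s$. Only if that fails would I pass to envelopes with Theorem~\ref{thm: envelope-local}. A secondary technical point is making the constant in the Young-inequality step depend only on $\|f\|_w$, using the $\Delta_2$-condition.
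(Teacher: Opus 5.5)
Your route differs from the paper's. It can be made to work, but two steps must be set up differently from what you wrote.

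\textbf{How the two routes compare.} The paper runs the level-set argument only for $L^p$ densities. Its Step~1 uses the envelope $P(-\chi(-\varphi_s))$, Lemma~\ref{lem: finite-integral}, and the De~Giorgi lemma with a power gain $\phi(s)^{1+\delta_0}$. It then reduces the Orlicz case to $L^p$:
\begin{itemize}
\item it solves $(\dc v)^n=w(F)\,dV$ and uses the uniform Skoda bound for $v$;
\item it builds the bounded psh function $\psi=-\gamma(-v)$ with $\gamma'\propto h(\alpha t)^{-1/n}$, so that $(\dc\varphi)^n\le(\dc\psi)^n+G\,dV$ with $G\in L^p$;
\item Lemma~\ref{lem: estimate} concludes.
\end{itemize}
Condition~(K) is used only to make $\gamma$ bounded. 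So no Orlicz-type iteration is ever needed, and the same reduction is reused on Hermitian manifolds. Your one-pass Guo--Phong argument instead puts Condition~(K) into the iteration, and that is where all the difficulty sits.

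\textbf{Repair 1: the comparison step.} Theorem~\ref{thm: local-comparison} does not fit here. It needs $\varphi+s\in\mathcal F^a(\Omega_s)$ and a measure inequality on all of $\Omega_s$, whereas you only get $(\dc u)^n\ge(\dc\varphi)^n$ on $\{\varphi+s<u\}$. Perturbing $s$ also does not produce continuity. None is needed. For your truncated solutions in $\mathcal E_0$:
\begin{itemize}
\item since $u=-\epsilon(-\psi_s)^{n/(n+1)}\le 0$, the set $\{\varphi+s<u\}$ lies in $\Omega_s$;
\item $\liminf_{\partial\Omega}(\varphi+s-u)\ge s>0$;
\item the Bedford--Taylor comparison principle on all of $\Omega$, applied to $u+\delta(|z|^2-R^2)$ for strictness, gives $u\le\varphi+s$ once $\epsilon^{n+1}=(\tfrac{n+1}{n})^nA_s$.
\end{itemize}

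\textbf{Repair 2: the De~Giorgi inequality.} The inequality you display is not what comes out. Young's inequality gives $\int_{\Omega_s}f\,h(z)\,dV\le C$ with $z=c(-\varphi-s)^{(n+1)/n}A_s^{-1/n}$. Since $A_s^{n/(n+1)}\simeq\int_{\Omega_s}z^{n/(n+1)}f\,dV$, splitting at $z=Z$ with $h(Z)=1/\phi(s)$ yields $r\,\phi(s+r)\lesssim Z\,\phi(s)^{1+1/n}$. The steps are then $r_k\lesssim Z_k2^{-k/n}$ with $h(Z_k)\approx 2^k$, not $h(\cdot)^{-1/n}$-type gains.

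For a general increasing $h$, both the tail bound in the split and the monotonicity used in the iteration need $h(z)z^{-n/(n+1)}$ to be increasing. Fix this as follows:
\begin{itemize}
\item replace $h$ by $\hat h(z)=z^{n/(n+1)}\inf_{t\ge z}h(t)t^{-n/(n+1)}\le h$, which still satisfies $\int_1^\infty\hat h^{-1/n}<\infty$ by a dyadic estimate;
\item with $Z_k$ the smallest $Z$ such that $\hat h(Z)\ge 2^k$, Abel summation gives $\sum_k Z_k2^{-k/n}\lesssim Z_0+\int_1^\infty\hat h^{-1/n}$.
\end{itemize}

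\textbf{A smaller point.} At the end, $\mu(\Omega_{s_\infty})=0$ gives $\varphi\ge -s_\infty$ only through the comparison principle, since $f$ may vanish on sets of positive measure.

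With these repairs your argument closes.
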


\begin{lemma}\label{lem: finite-integral}
     Assume $\mu=fdV$ is a positive measure in $\Omega$ with $\mu(\Omega)\leq 1$, $f\in L^p(\Omega,dV)$ for $p>1$. Then
     \[A_m(\mu):=\sup\left\{\int_\Omega|u|^m d\mu: u\in\mathcal{E}_0(\Omega),\int_\Omega(\dc u)^n\leq 1 \right\}<+\infty,\;\forall\, m>0.\]
\end{lemma}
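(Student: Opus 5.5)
The natural route is Hölder's inequality combined with the classical Cegrell--Kołodziej energy estimates for $\mathcal{E}_0(\Omega)$. Fix $m>0$ and let $u\in\mathcal{E}_0(\Omega)$ with $\int_\Omega(\dc u)^n\le 1$. Let $q=p/(p-1)$ be the conjugate exponent. Since $\mu=f\,dV$,
\[
\int_\Omega |u|^m\,d\mu\le \|f\|_{L^p(\Omega)}\Big(\int_\Omega |u|^{mq}\,dV\Big)^{1/q}.
\]
Thus it suffices to bound $\int_\Omega|u|^{s}\,dV$ uniformly over the class $\{u\in\mathcal{E}_0(\Omega):\int_\Omega(\dc u)^n\le1\}$, for every exponent $s>0$. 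The bound may depend only on $\Omega$, $n$ and $s$. No smallness is needed, since $dV$ is Lebesgue-type and $\Omega$ is bounded.

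To get this, I would use the volume--capacity comparison together with the capacity estimate for sublevel sets. First, the Cegrell--Kołodziej inequality gives, for $u\in\mathcal{E}_0(\Omega)$ and $t>0$,
\[
\mathrm{Cap}(\{u<-t\},\Omega)\le t^{-n}\int_\Omega(\dc u)^n\le t^{-n}.
\]
This follows from the comparison principle (Theorem \ref{thm: local-comparison}, or directly Bedford--Taylor), by comparing $u/t$ with the relative extremal function of the sublevel set. Second, Lebesgue measure is dominated by capacity exponentially: $\mathrm{Vol}(K)\le C_\Omega\exp\big(-\gamma\,\mathrm{Cap}(K,\Omega)^{-1/n}\big)$. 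This is the Alexander--Taylor/Zeriahi type estimate on a bounded hyperconvex domain. Combining the two gives
\[
\mathrm{Vol}(\{u<-t\})\le C_\Omega e^{-\gamma t}.
\]
Hence
\[
\int_\Omega|u|^s\,dV=s\int_0^\infty t^{s-1}\mathrm{Vol}(\{u<-t\})\,dt\le C(s,\Omega)<\infty.
\]
An alternative route is to use that $\{u\in\mathcal{E}_0:\int(\dc u)^n\le1\}$ is relatively compact in $L^1_{\rm loc}$. A uniform Skoda-type integrability $\int e^{-\gamma u}dV\le C$ then follows from Lelong numbers being bounded by $1$. I would keep this as a backup.

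The main obstacle is the uniform volume-capacity inequality near $\partial\Omega$. Its standard proof uses the global Siciak extremal function or a comparison with a ball containing $\Omega$. Since $\mathrm{Cap}(K,\Omega)\ge \mathrm{Cap}(K,B)$ for $\Omega\subset B$, the estimate on a large ball $B\supset\Omega$ suffices. I would cite it rather than reprove it. Alternatively, the exponential decay of $\mathrm{Vol}(\{u<-t\})$ can be obtained via Kołodziej's estimate. Either way, $A_m(\mu)\le \|f\|_p\,C(mq,\Omega)^{1/q}<\infty$ for every $m>0$.
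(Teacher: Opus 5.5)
Your proof is correct. The paper gives no argument of its own for this lemma; it only cites \cite[Lemma 3.2]{DL25-singularCY}, so what you provide is a self-contained replacement for that citation.

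Your chain is sound. H\"older's inequality reduces the problem to uniform bounds on $\int_\Omega(-u)^s\,dV$. The estimate $\mathrm{Cap}(\{u<-t\},\Omega)\le t^{-n}\int_\Omega(\dc u)^n$ combined with the volume--capacity inequality then gives exponential decay of $\mathrm{Vol}(\{u<-t\})$. Your treatment of the boundary is right: relative capacity decreases as the domain grows, so the volume--capacity estimate for subsets of a fixed compact part of a ball $B\Supset\Omega$ transfers to $\Omega$ in the correct direction. One small precision: the capacity bound rests on the integral form of the Bedford--Taylor comparison principle, applied to $u/t$ and a competitor $-1\le v\le 0$. It does not use the pointwise Theorem~\ref{thm: local-comparison}. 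Your phrase ``directly Bedford--Taylor'' already covers this.

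Within the paper an even shorter route is available. In Step~2 of the proof of Theorem~\ref{thm: estimate-local}, the authors quote \cite[Theorem B]{ahag2009partial} for a uniform bound $\|e^{-\alpha v}\|_{L^{p_0}(\Omega,dV)}\le C_\alpha$ over all $v\in\mathcal F(\Omega)$ of total mass at most one. Since $\mathcal E_0(\Omega)\subset\mathcal F(\Omega)$, this gives every $L^s(dV)$ bound at once, and the lemma follows from your H\"older step. Your decay estimate $\mathrm{Vol}(\{u<-t\})\le Ce^{-\gamma t}$ is equivalent to such exponential integrability for any exponent below $\gamma$. So your argument is in effect a proof of that input using only classical tools (Bedford--Taylor comparison and Alexander--Taylor/Zeriahi volume--capacity comparison) rather than Cegrell-class energy theory. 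The price is quoting the volume--capacity inequality.

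Two final remarks. The hypothesis $\mu(\Omega)\le 1$ is never used. Your backup route via $L^1_{\rm loc}$-compactness and uniform Skoda integrability would only control integrals over compact subsets of $\Omega$, so the capacity route is the one to keep.
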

\begin{proof}
    See \cite[Lemma 3.2]{DL25-singularCY}.
\end{proof}
\begin{lemma}\label{lem: estimate}
    Assume that $f\in L^p(\Omega, dV)$ for some $p>1$, and that $u\in\PSH(\Omega)$ is normalized so that
$\sup_\Omega u=0$ and $u=0$ on $\partial\Omega$. Suppose that there exists a bounded function $v\in\PSH(\Omega)$
normalized so that $\sup_\Omega v=0$ and $v=0$ on $\partial\Omega$, such that
$$(\dc u)^n\leq (\dc v)^n+fdV$$
then $u\geq v-C$, where $C>0$ depends on $n$, $p$, $\|f\|_p$.
\end{lemma}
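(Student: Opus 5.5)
The idea is to reduce to the comparison principle, Theorem~\ref{thm: local-comparison}, by building a subsolution out of $v$ and an auxiliary bounded function that absorbs the extra term $f\,dV$. By Theorem~\ref{thm: cegrell}, applied to the measure $f\,dV$, there is $\psi\in\mathcal{F}^a(\Omega)$ with $(\dc\psi)^n=f\,dV$ and zero boundary values. Since $f\in L^p$ with $p>1$, this measure is dominated by capacity in the Ko{\l}odziej sense, so $\psi$ is bounded with $\sup_\Omega|\psi|\le C_0(n,p,\|f\|_p,\Omega)$. The same bound should also follow from Lemma~\ref{lem: finite-integral} together with the standard capacity decay argument, and this is where the dependence on $\|f\|_p$ enters.

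Next I would set $w:=v+\psi$. This function is psh and bounded, tends to $0$ at $\partial\Omega$, and satisfies $v-C_0\le w\le v$. Expanding the mixed products and keeping only the two extreme terms gives
\[
(\dc w)^n\ \ge\ (\dc v)^n+(\dc\psi)^n=(\dc v)^n+f\,dV\ \ge\ (\dc u)^n .
\]
Here superadditivity holds because every mixed term $(\dc v)^k\wedge(\dc\psi)^{n-k}$ is a positive measure.

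To apply Theorem~\ref{thm: local-comparison} in the form $u\ge w$, I need $u\in\mathcal{F}^a(\Omega)$ and $w\in\mathcal{E}(\Omega)$. For $w$: it lies in $\mathcal{E}_0$, or at least in $\mathcal{F}$, since both $v$ and $\psi$ are bounded with zero boundary values and finite mass. The mass $\int(\dc v)^n$ is finite because $v$ is bounded with zero boundary values, as in Cegrell's theory. For $u$: the inequality $(\dc u)^n\le(\dc v)^n+f\,dV$ shows that $(\dc u)^n$ has finite total mass and charges no pluripolar set. Together with the zero boundary values, this should place $u$ in $\mathcal{F}^a(\Omega)$.

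If that membership is delicate, because $u$ is a priori only psh, I would fall back on uniqueness. Let $\tilde u\in\mathcal{F}^a$ solve $(\dc\tilde u)^n=(\dc u)^n$, which exists by Theorem~\ref{thm: cegrell}. Comparison then gives $\tilde u=u$ whenever $u\in\mathcal{E}$, and $u\in\mathcal{E}$ is implicit in the fact that $(\dc u)^n$ is defined at all. The comparison principle then yields $u\ge w\ge v-C_0$, so the claim holds with $C=C_0$.

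The main obstacle is the uniform bound on $\psi$ in terms of $n$, $p$ and $\|f\|_p$ only. I would obtain it by the standard argument: first the capacity estimate $\int_K f\,dV\le C\|f\|_p\,\mathrm{Cap}(K)^{1+\epsilon}$, then Ko{\l}odziej's $L^\infty$ lemma. Since $\Omega$ is fixed, dependence on $\Omega$ is implicit in these constants.
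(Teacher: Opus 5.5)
Your main line is the paper's proof. You solve $(\dc\psi)^n=f\,dV$ with $\psi\in\mathcal F^a(\Omega)$, bound $\psi$ in terms of $\|f\|_p$, put $w=v+\psi$, use $(\dc w)^n\ge(\dc v)^n+(\dc\psi)^n\ge(\dc u)^n$, and conclude $u\ge w\ge v-C$ by Theorem~\ref{thm: local-comparison}. The only difference is the source of the bound on $\psi$: the paper uses Step~1 of Theorem~\ref{thm: estimate-local}, while you use Ko{\l}odziej's capacity estimate. Either works.

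The problem lies in your extra verification of the hypotheses of Theorem~\ref{thm: local-comparison}, which the paper does not attempt.

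\textbf{Finite mass fails.} A bounded psh function with zero boundary values need not have finite Monge--Amp\`ere mass. On the unit ball, $v=-(1-|z|^2)^{1/2}$ is psh, $-1\le v\le 0$, and $v\to0$ at the boundary. Yet $(\dc v)^n$ has density comparable to $(1-|z|^2)^{-(n+2)/2}$, which is not integrable. Taking $u=v$ gives an admissible case of the lemma with $\int_\Omega(\dc u)^n=+\infty$. So finiteness of the mass of $(\dc u)^n$ does not follow from the inequality. With it go both $u\in\mathcal F^a(\Omega)$ and the construction of $\tilde u$ via Theorem~\ref{thm: cegrell}. Your claim that $w\in\mathcal E_0$ or $w\in\mathcal F$ fails as well.

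\textbf{The fallback is flawed independently.} Theorem~\ref{thm: local-comparison} applied to $\tilde u\in\mathcal F^a$ and $u\in\mathcal E$ yields only $\tilde u\ge u$, which is the wrong direction. The claim ``$\tilde u=u$ whenever $u\in\mathcal E$'' is false: take $u\equiv-1$, for which $\tilde u\equiv0$. That theorem never sees the boundary values of $u$.

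\textbf{How to repair it.} For $w$ nothing is lost. Every bounded negative psh function lies in $\mathcal E(\Omega)$, because the definition is local and near each point $w=\max(w,A\rho)\in\mathcal E_0(\Omega)$, with $\rho\in\mathcal E_0(\Omega)$ a negative exhaustion. For $u$, however, membership in $\mathcal F^a(\Omega)$ must be supplied. The paper uses it tacitly, and it holds in the only application (Step~2 of Theorem~\ref{thm: estimate-local}), where $u$ is Cegrell's solution in $\mathcal F^a$. So either add $u\in\mathcal F^a(\Omega)$ as a hypothesis, or use a comparison principle for a larger Cegrell class that genuinely exploits $u=0$ on $\partial\Omega$.
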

\begin{proof}
Let $\varphi\in\PSH(\Omega)\cap L^\infty(\Omega)$ be the solution to the Monge–Amp\`ere equation
\[(\dc \varphi)^n=fdV\;\;\text{in}\;\Omega\; \text{and}\; \varphi|_{\partial\Omega}=0. \]
We have a uniform bound for $\varphi$, which follows from Step 1 in Theorem~\ref{thm: estimate-local}. Consider now
$\psi=v+\varphi$. It is sufficient to show that $u\geq \psi$. Indeed, $\psi$ satisfies
$(\dc\psi)^n\geq (\dc v)^n+(\dc \varphi)^n=(\dc v)^n+fdV\geq (\dc u)^n.$
Thus, it follows from the Comparison Principle (Theorem~\ref{thm: local-comparison}) that $u\geq \psi$ in $\Omega$. 
\end{proof}

\begin{proof}[Proof of Theorem ~\ref{thm: estimate-local}]
    \textbf{Step 1:} $w(t)=t^p$ for some $p>1$. The existence of $\varphi\in\mathcal{F}(\Omega)$ satisfying $(\dc \varphi)^n=\mu$ follows from Theorem~\ref{thm: cegrell}. It suffices to prove that $\varphi$ is uniformly bounded.

    \textit{Step 1.1}  First, we assume that $\varphi\in\mathcal{E}_0(\Omega)$. We will show that $\sup_\Omega |\varphi|\leq C$ for a uniform constant $C>0$.
We set  $ \varphi_s=\varphi-\inf_\Omega\varphi-s$ for $0<s<-\inf_\Omega\varphi$ and $\Omega_s:=\{\varphi_s<0\}$. Fix $r>0$, we set $$A_s=\int_{\Omega_s}(-\varphi_s)^rd\mu.$$ If $A_s=0$, then $(\dc\varphi)^n=0$ on $\Omega_s$, and Theorem~\ref{thm: local-comparison}, applied to $\varphi$ and the constant $\inf_\Omega\varphi+s<0$, gives $\varphi\geq\inf_\Omega\varphi+s$, a contradiction. Otherwise, $A_s>0$. 

We set $\varepsilon=\left( \frac{n+r}{n}\right)^{\frac{n}{n+r}}A_s^{\frac{1}{n+r}}$. Let $\chi:\mathbb R \to\mathbb R^+$ be the  function defined by 
\begin{equation*}
    \chi(t)=\begin{cases}
        \varepsilon^{-\frac{n+r}{n}}t^{\frac{n+r}{n}},& t>0\\
        0,& t\leq 0,
    \end{cases}
\end{equation*}
which is an increasing convex function.  Then $\gamma(t)=\varepsilon t^{\frac{n}{n+r}}$ for $t>0$ and $\gamma(t)=0$ for $t\leq 0$, is the inverse function of $\chi$, which is an increasing concave function.

We set $\psi:=-\chi (-\varphi_s)$ and $u=P(\psi)$. Put $M_s=\chi'(s)$. Since $(-\varphi_s)_+\leq\min\{s,-\varphi\}$ and $\chi$ is convex, we have $M_s\varphi\leq\psi\leq0$. Thus $M_s\varphi\leq u\leq0$, so $u$ is a bounded psh function with zero boundary data.
Let $\mathcal C=\{u=\psi\}$ and $\mathcal C_+=\mathcal C\cap\Omega_s$. The measure $(\dc u)^n$ is concentrated on $\mathcal C$. Since $A_s>0$, the obstacle is negative somewhere, so $u\not\equiv0$ and the maximum principle gives $u<0$ on $\Omega$. Hence $\mathcal C\subset\Omega_s$, and $(\dc u)^n$ is concentrated on $\mathcal C_+$.
The function $v:=-\gamma(-u)$ satisfies
\[v\leq -\gamma(-\psi)=\min\{\varphi_s,0\}\leq \varphi_s \]
on $\Omega$. A direct computation shows that
\[\dc(-\gamma(-u))=\gamma'(-u)\dc u-\gamma''(-u)du\wedge d^c u \geq \gamma'(-u)\dc u,\]
hence \[(\dc v)^n\geq (\gamma'(-u))^n(\dc u)^n. \]
By the maximum principle (Lemma~\ref{lem: max-princ-local}), we have
\begin{align*}
    (\chi'(-\varphi_s))^{-n}(\dc u)^n&=(\chi'(-\varphi_s))^{-n}\mathbf{1}_{\mathcal{C}_+}(\dc u)^n\\
    &= (\gamma'(-u))^n \mathbf{1}_{\mathcal{C}_+}(\dc u)^n\\
    &\leq   \mathbf{1}_{\mathcal{C}_+}(\dc v)^n\leq\mathbf{1}_{\mathcal{C}_+}(\dc \varphi)^n.
\end{align*}
   Since $(\dc u)^n$ is concentrated on $\mathcal C_+$, it follows that
    $$(\dc u)^n\leq \mathbf{1}_{\mathcal C_+}(\chi'(-\varphi_s))^n(\dc\varphi)^n\leq A_s^{-1}\mathbf{1}_{\Omega_s}(-\varphi_s)^rfdV.$$
Since $u$ is a bounded psh function with zero boundary data by construction, by Lemma~\ref{lem: finite-integral}, for any $m>0$ there is $A_m>0$ depending on  $\Omega$, $dV$, $m$, $p$, and $\|f\|_p$ such that
\begin{equation}
    \int_{\Omega_s}(-u)^m fdV\leq A_m<+\infty.
\end{equation} 
By comparison, we have $u\leq \psi$, that is
\begin{equation}\label{eq: lem-key-32}
     \frac{n}{n+r}A_s^{-\frac{1}{n}}(-\varphi_s)^{\frac{n+r}{n}} \leq -u.
\end{equation}


Let $p^*=\frac{p}{p-1}$.
We apply the inequality above with $r=\frac{1}{p^*}$. If $-\inf_\Omega\varphi \leq1$, we are done. Otherwise, fix $s_0=1<-\inf_\Omega\varphi$ and work with $0<s\leq s_0$.
By the H\"older inequality, we have
\begin{equation*}
    A_s\leq \int_\Omega(-\varphi)_+^{\frac{1}{p^*}}fdV\leq \|f\|_{p}\|\varphi\|_{1}^{1/p^*}.
\end{equation*} 
We define $\phi(s):=\int_{\Omega_s}fdV$. Inequality~\eqref{eq: lem-key-32} and H\"older's inequality yield
\begin{align*}
    A_s&\leq \left(\frac{n+r}{n}\right)^{\frac{nr}{n+r}}\int_{\Omega_s} A_s^{\frac{r}{n+r}}(-u)^{\frac{n}{n+r}}fdV\\
    &\leq 2^{\frac{nr}{n+r}}A_s^{\frac{r}{n+r}}\|f\|_{p}^{1/q} \|u\|_{\frac{nrqp^*}{n+r}}^{\frac{nr}{n+r}}\phi(s)^{1-\frac{1}{q}}\\
     &\leq C(n,p,q,\|f\|_p)A_s^{\frac{r}{n+r}}\phi(s)^{1-\frac{1}{q}}.
\end{align*} Hence $A_s\leq C_0\phi(s)^{(1-\frac{1}{q})\frac{n+r}{n}}$.
On the other hand, we have
\[A_s\geq \int_{\Omega_s}(-\varphi_s)fdV\geq\int_{\Omega_{s-t}}(-\varphi_s)fdV \geq t \int_{\Omega_{s-t}}fdV.\] 
 We choose $q>\frac{n}{r}+1=np^*+1$ so that $\delta_0=(1-\frac{1}{q})\frac{n+r}{n}-1\in(0,\frac{1}{np^*})$.
We have for $0<t<s\leq s_0$,
\[t\phi(s-t)\leq C_0(\phi(s))^{1+\delta_0} \]
where $\delta_0\in(0,\frac{1}{np^*})$ and $C_0$ depend on $n$, $dV$, $p$ and $\|f\|_p$. The local comparison principle gives $\phi(s)>0$ for $0<s\leq s_0$: otherwise $\varphi\geq\inf_\Omega\varphi+s$, a contradiction.
By De Giorgi's lemma (see, e.g.,~\cite[Lemma 7]{Guo-Phong2024-estimates}), we have 
\[s_0\leq C_1(\phi(s_0))^{\delta_0} \] with $C_1=\frac{2C_0}{1-2^{-\delta_0}}.$ On the other hand, H\"older's inequality yields
\begin{align*}
    \phi(s_0)\leq \int_\Omega\frac{(-\varphi)^{1/p^*}}{(-\inf_\Omega\varphi-s_0)^{1/p^*}}fdV\leq \frac{\|f\|_p\|\varphi\|_{L^1(\Omega,dV)}^{1/p^*}}{(-\inf_\Omega\varphi-s_0)^{1/p^*}}.
\end{align*} Thus, it follows that
\[-\inf_\Omega\varphi\leq s_0+\frac{C_2\|\varphi\|_{L^1(\Omega,dV)}}{s_0^{p^*/\delta_0}} \]
where $C_2$ depends on $C_1$, $\delta_0$, $p$ and $\|f\|_p$. Since $s_0=1$ and $\int_\Omega(\dc\varphi)^n\leq1$, the uniform $L^1$ bound for this family in $\mathcal E_0(\Omega)$ (Lemma~\ref{lem: finite-integral}) gives the required estimate.
    
\medskip
    \textit{Step 1.2} In this step we show that $\varphi$ is bounded.

We solve $(\dc\varphi_j)^n=\min(f,j)dV$ with $\varphi_j\in\mathcal{E}_0(\Omega)$. Since $(\dc\varphi_j)^n\leq fdV$ our above
estimate shows that $\|\varphi_j\|_\infty\leq C$ for a uniform constant $C>0$. The comparison principle
also implies that $\varphi_j$ is decreasing and $\varphi\leq \varphi_j$, thus $u=\lim_j\varphi_j$ is bounded and $(\dc u)^n=\mu$. It then follows from \cite[Theorem 5.15]{cegrell2004general} that $u=\varphi$, completing the proof.

    \bigskip
\noindent   \textbf{Step 2:} $w$ satisfies the \hyperlink{conditionK}{condition (K)}. 
We write
\[
w(t)\asymp_{+\infty}t h(\log(1+t)),\qquad \int_1^\infty h(s)^{-1/n}\,ds<+\infty,
\]
where $h:[0,+\infty)\to(0,+\infty)$ is increasing. Choose $c_w>0$ and $T\geq1$ such that $w(t)\geq c_w t h(\log(1+t))$ for all $t\geq T$. Set
\[
L=2\max\{1,\|f\|_w\},\qquad F=L^{-1}f.
\]
By the definition of the Luxemburg norm, $\int_\Omega w(F)\,dV\leq1$.
By Theorem~\ref{thm: cegrell}, there exists $v\in\mathcal{F}(\Omega)$ such that
\begin{equation}
    (\dc v)^n=w(F)dV,\qquad \sup_\Omega v=0.
\end{equation}
Its total Monge--Amp\`ere mass is at most one. Hence, by~\cite[Theorem B]{ahag2009partial}, we can choose $p>1$, $\alpha>0$, and $C_\alpha>0$, depending only on the fixed domain and volume form, such that
\[
\|e^{-\alpha v}\|_{L^p(\Omega,dV)}\leq C_\alpha.
\]
Define
\[
\gamma(t)=\frac{(L/c_w)^{1/n}}{\alpha}\int_0^{\alpha t}h(s)^{-1/n}\,ds,\qquad t\geq0.
\]
Then $\gamma$ increases and is concave, $\gamma(0)=0$, and
\[
0\leq\gamma(t)\leq M:=\frac{(L/c_w)^{1/n}}{\alpha}\int_0^\infty h(s)^{-1/n}\,ds<+\infty.
\]
Set $\psi=-\gamma(-v)$, with value $-M$ on $\{v=-\infty\}$. Then $\psi\in\PSH(\Omega)\cap L^\infty(\Omega)$, $-M\leq\psi\leq0$, and $\sup_\Omega\psi=0$. Moreover,
\[
(\dc\psi)^n\geq\bigl(\gamma'(-v)\bigr)^n(\dc v)^n.
\]
To justify this inequality when $v$ is unbounded, put $v_j=\max\{v,-j\}$ and $\psi_j=-\gamma(-v_j)$. For bounded $v_j$, the chain inequality follows by smooth approximation of the concave function $\gamma$. Plurifine locality (\cite{bedford1987fine}) then gives
\[
(\dc\psi_j)^n\geq\mathbf{1}_{\{v>-j\}}\bigl(\gamma'(-v)\bigr)^n(\dc v)^n.
\]
Now $\psi_j\searrow\psi$ with $-M\leq\psi_j\leq0$, so the Bedford--Taylor convergence theorem (\cite[Theorem 3.18]{guedj2017degenerate}) applies on the left. The measures on the right increase to $\bigl(\gamma'(-v)\bigr)^n(\dc v)^n$, since $(\dc v)^n=w(F)dV$ does not charge the pluripolar set $\{v=-\infty\}$.

Set $E:={\{F\geq T\}\cap}\{\log(1+F)\geq -\alpha v\}$.
On $E$, the monotonicity of $h$ yields
\begin{align*}
(\dc\psi)^n
\geq {\frac{L}{c_w}}\frac{w(F)}{h(-\alpha v)}\,dV
\mathrel{{\geq}} L\frac{Fh(\log(1+F))}{h(-\alpha v)}\,dV
\geq LF\,dV=f\,dV.
\end{align*}
On $\Omega\setminus E$, we have $f=LF\leq {LT} e^{-\alpha v}$ almost everywhere. Combining the two estimates gives
\[
(\dc\varphi)^n\leq(\dc\psi)^n+GdV,\qquad
G:={LT} e^{-\alpha v},\qquad \|G\|_p\leq {LT} C_\alpha.
\]
Lemma~\ref{lem: estimate} therefore yields $\varphi\geq\psi-C\geq-M-C$. Since $\varphi\leq0$, this proves the desired uniform bound, with constants depending only on the fixed domain, $dV$, $n$, $w$, and an upper bound for $\|f\|_w$. This completes the proof. \end{proof}
From $L^\infty$ estimate in the local setting, we can establish the one in the context of Hermitian manifolds.

\begin{lemma}\label{lem: subsolution} Let $(X,\omega_X)$ be an $n$-dimensional compact Hermitian manifold.
    Assume that the weight function $w$ satisfies \hyperlink{conditionK}{Condition (K)}.
    Then there exists a uniform constant $m=m(X,\omega_X,n, dV,w)$ such that for all $0\leq f\in L^w(dV)$, we can find $u\in\PSH(X,\omega_X)$ with $-1\leq u\leq 0$ and
    \[(\omega_X+\dc u)^n\geq m\frac{f}{\|f\|_w}dV. \]
\end{lemma}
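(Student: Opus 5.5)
The plan is to glue local solutions given by Theorem~\ref{thm: estimate-local} with a partition-of-unity type argument, in the spirit of Kołodziej's construction of subsolutions. Cover $X$ by finitely many coordinate charts. In each chart take two concentric balls $B_j'\Subset B_j$ (hyperconvex domains), where the $B_j'$ still cover $X$. Choose a local potential $g_j$ with $\dc g_j\ge \omega_X$ on $B_j$; for instance $g_j=C|z|^2$, normalized so that $-1\le g_j\le 0$ up to scaling. All of these data depend only on $(X,\omega_X)$.

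\textbf{Step 1 (local solutions).} Put $F=f/\|f\|_w$, so that $\int_X w(F)\,dV\le 1$. Then $\|F\mathbf 1_{B_j}\|_{w}\le 1$ in each ball; after the coordinate volume comparison this holds up to a fixed constant. Note that $\int_{B_j}F\,dV\le C$ by Jensen's inequality, or simply from $w(t)\ge t-C$. Rescale to $\mu_j=\delta F\mathbf 1_{B_j}dV$ with mass at most $1$. Theorem~\ref{thm: estimate-local} gives $v_j\in\mathcal F^a(B_j)$ with $(\dc v_j)^n=\mu_j$ and $-C_0\le v_j\le 0$, where $C_0$ is uniform.

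\textbf{Step 2 (gluing).} I would try to make $v_j$ into a global $\omega_X$-psh function that is nonzero only near $B_j'$. Set $u_j=\max\{\varepsilon v_j + \eta\rho_j,\ \eta\rho_j - a\}$. Here $\rho_j$ is a smooth function equal to $0$ on $B_j'$ and strictly positive (or, better, less negative) near $\partial B_j$, arranged so that the first term is below the second near $\partial B_j$. Then $u_j$ extends by $\eta\rho_j-a$ outside $B_j$. Alternatively, and more cleanly: choose $\chi_j$ smooth, $\chi_j=0$ on $B_j'$, $\chi_j=-1$ near $\partial B_j$, with $\dc\chi_j\ge -K\omega_X$. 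Define $u_j=\max\{\varepsilon v_j,\ A\chi_j - \varepsilon C_0 \ldots\}$, where the constants are tuned so that $u_j$ equals $\varepsilon v_j$ on $B_j'$ and equals the smooth barrier near $\partial B_j$. The smooth barrier is $(\omega_X/N)$-psh once $\varepsilon$ and the constants are small relative to $1/K$. Hence $u_j$ is $(\omega_X/N)$-psh on $X$, where $N$ is the number of charts, and $-1/N\le u_j\le 0$.

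\textbf{Step 3 (summing).} Set $u=\sum_j u_j$. Then $u\in\PSH(X,\omega_X)$ and $-1\le u\le 0$. On $B_j'$ we have
\[
\omega_X+\dc u\ \ge\ \tfrac1N\omega_X+\dc u_j = \tfrac1N\omega_X+\varepsilon\,\dc v_j\ \ge\ \varepsilon\,\dc v_j ,
\]
using that each $\tfrac1N\omega_X+\dc u_k\ge 0$. Taking the $n$-th power, which is legitimate since all terms are positive currents of bounded potentials, gives
\[
(\omega_X+\dc u)^n\ge \varepsilon^n(\dc v_j)^n=\varepsilon^n\delta F\,dV \quad\text{on } B_j'.
\]
Since the $B_j'$ cover $X$, we obtain the claim with $m=\varepsilon^n\delta$.

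The main obstacle is Step 2. One must ensure that the max construction really coincides with $\varepsilon v_j$ on all of $B_j'$ and with the smooth barrier near $\partial B_j$. This requires the lower bound $v_j\ge -C_0$ to hold uniformly, which is exactly what Theorem~\ref{thm: estimate-local} provides. With $C_0$ uniform, we can pick $\varepsilon=c/C_0$ with $c$ depending only on the chart geometry, and the constants then close up.
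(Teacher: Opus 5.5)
Your route is the one the paper takes: it just invokes \cite[Lemma 2.1]{guedj2021quasi}, i.e.\ Ko{\l}odziej's gluing of uniformly bounded local Dirichlet solutions. Steps~1 and~3 are fine: Theorem~\ref{thm: estimate-local} supplies the uniform bound $-C_0\le v_j\le 0$, and $\omega_X+\dc u\ge\varepsilon\,\dc v_j$ on $B_j'$ gives the measure inequality. But Step~2, which is the only real content of the lemma once the local estimate is known, fails as written in both versions.

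\emph{First version.} Here $\rho_j$ enters both entries, so $\max\{\varepsilon v_j+\eta\rho_j,\eta\rho_j-a\}=\eta\rho_j+\max\{\varepsilon v_j,-a\}$. For $a>0$ this equals $\eta\rho_j+\varepsilon v_j$ near $\partial B_j$, where $v_j$ has zero boundary values. For $a\le 0$ the function $v_j$ drops out entirely. Either way nothing is glued.

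\emph{Second version.} The barrier $A\chi_j-\varepsilon C_0-\cdots$ is at most $-A<0$ near $\partial B_j$, while $\varepsilon v_j$ tends to $0$ there. So the maximum equals $\varepsilon v_j$ near the boundary, the opposite of what you need. No tuning of the constants helps as long as $A>0$ and $\chi_j$ decreases towards $\partial B_j$.

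The barrier has to \emph{increase} towards $\partial B_j$: it must be $\le-\varepsilon C_0$ on $B_j'$ and strictly positive near $\partial B_j$. Positivity beats $\varepsilon v_j$ there using only $v_j\le 0$. For instance, take $\rho_j\in C^\infty(X)$ with $0\le\rho_j\le 1$, $\rho_j=0$ on $B_j'$, $\rho_j=1$ on a neighborhood of $X\setminus B_j$, and $\dc\rho_j\ge -K\omega_X$, where $K\ge 1$ depends only on the cover. Put $\eta:=1/(NK)$, $\varepsilon:=\eta/(2C_0)$ and
\[
u_j:=\max\{\varepsilon v_j,\ \eta\rho_j-\eta/2\}-\eta/2\ \text{ on } B_j,\qquad u_j:=0\ \text{ on } X\setminus B_j .
\]
On $B_j'$ the barrier equals $-\varepsilon C_0\le\varepsilon v_j$, so $u_j=\varepsilon v_j-\eta/2$ there. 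Where $\rho_j=1$ the barrier equals $\eta/2>0\ge\varepsilon v_j$, so $u_j=0$ near $\partial B_j$ and the two definitions match. Thus $u_j$ is $\frac1N\omega_X$-psh with $-\frac1N\le u_j\le 0$, and your Step~3 goes through with $m=\varepsilon^n\delta$. Choosing $K,N,\eta$ from the geometry first, and only then $\varepsilon$ from the uniform $C_0$, is what makes $m$ uniform, as you anticipated.
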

\begin{proof}
The proof is almost identical to that of~\cite[Lemma 2.1]{guedj2021quasi}, which goes back to~\cite[Theorem 0.1]{kolodziej2005-measures}. See also~\cite[Lemma 2.3]{Fang25-continuity-hessian}.\end{proof}

\begin{theorem} \label{thm: estimate-hermit}
Let $0\leq f\in L^w$, with the weight function $w$ satisfying \hyperlink{conditionK}{Condition (K)}.
If $\varphi\in\PSH(X,\omega_X)\cap L^\infty(X)$ satisfies
    \begin{equation}\label{}
        (\omega_X+\dc\varphi)^n\leq fdV,\quad\sup_X\varphi=0,
    \end{equation} then $\textrm{Osc}(\varphi)\leq C$ for a constant $C>0$ depending only on $(X,\omega_X)$, $dV$,
$w$, and an upper bound for $\|f\|_w$.
\end{theorem}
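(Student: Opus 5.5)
The plan is to follow the Guedj--Lu domination strategy: combine the subsolution of Lemma~\ref{lem: subsolution} with the domination principle to force a uniform lower bound for $\varphi$. The subsolution will be produced from a density that is large exactly where $\varphi$ is very negative, using the exponential integrability of quasi-psh functions.

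\textbf{Step 1 (exponential integrability and the auxiliary density).} Since $\sup_X\varphi=0$ and $\PSH_0(X,\omega_X)$ is compact, there are uniform $\alpha>0$ and $C_\alpha$ depending only on $(X,\omega_X)$ with $\int_X e^{-2\alpha\varphi}dV\le C_\alpha$. I will use this to define the density
\[
g:=f\,e^{-\alpha\varphi}+e^{-\alpha\varphi},
\]
or a variant of it. The point is to bound $\|g\|_{w'}$ uniformly for some Young function $w'$ that still satisfies Condition~(K). Lemmas~\ref{lem: weak-weight} and~\ref{lem: weak-density} were designed for exactly this step. I take $w'=w_{\widetilde h}$ and replace the factor $e^{-\alpha\varphi}$ by $k(-\varphi)$, choosing $a$ so that $e^{a(-\varphi)}$ is integrable. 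Then \eqref{eq:master-k} gives
\[
\int_X w_{\widetilde h}\bigl(f\,k(-\varphi)\bigr)\,dV\le C_a\Bigl(1+\int_X w(f)\,dV+\int_X e^{-a\varphi}dV\Bigr)\le C.
\]
So I set $g:=(1+f)\,k(2-\varphi)$, which has uniformly bounded $\|g\|_{w_{\widetilde h}}$; here the $\Delta_2$ property is what converts a bound on $\int w(g)$ into a bound on the Luxemburg norm.

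\textbf{Step 2 (subsolution and comparison).} Lemma~\ref{lem: subsolution}, applied with the weight $w_{\widetilde h}$, gives $u\in\PSH(X,\omega_X)$ with $-1\le u\le0$ and
\[
(\omega_X+\dc u)^n\ge m'\,g\,dV\ge m'\,k(2-\varphi)\,f\,dV\ge m'\,k(2-\varphi)\,(\omega_X+\dc\varphi)^n,
\]
where $m'>0$ is uniform. Now fix a large level $s$ and compare $\varphi$ with $v:=u-s$ on the set $\{\varphi<u-s\}$. On that set $-\varphi>s$, so $k(2-\varphi)\ge k(s)$. Choosing $s$ so large that $m'k(s)>1$, which is possible since $k\to\infty$, we get
\[
\mathbf 1_{\{\varphi<v\}}(\omega_X+\dc\varphi)^n\le c\,\mathbf 1_{\{\varphi<v\}}(\omega_X+\dc v)^n,\qquad c=\bigl(m'k(s)\bigr)^{-1}<1.
\]
Proposition~\ref{Domination principle-1} then yields $\varphi\ge u-s\ge -1-s$. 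Here $\omega_X$ is Hermitian and hence non-collapsing.

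\textbf{Main obstacle.} Step~1 is where I expect the difficulty. The naive choice $g=fe^{-\alpha\varphi}$ does not lie in $L^w$ under the Orlicz hypothesis alone; one needs the splitting in Lemma~\ref{lem: weak-density}, which trades a slight loss of weight ($h\to\widetilde h$) against subexponential growth of $k$. One also has to check that $w_{\widetilde h}$ satisfies Condition~(K) and $\Delta_2$, which Lemma~\ref{lem: weak-weight} provides. If this route fails, I would instead use the simpler density $g=f+e^{-\alpha\varphi}\mathbf 1$, which lies in $L^2\subset L^{w}$, together with a rescaled subsolution. However, Step~2 genuinely needs the factor $k(-\varphi)\to\infty$ in front of $f$, so I would keep the $k$-construction.
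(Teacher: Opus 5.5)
Your argument is correct, but it takes a different route from the paper's. The paper uses neither Lemma~\ref{lem: weak-density} nor exponential integrability here. It takes from Lemma~\ref{lem: weak-weight} a weight $\widetilde w$ with $\widetilde w(t)/w(t)\to0$. This makes the tail $\mathbf{1}_{\{f>B\}}f$ small in $L^{\widetilde w}$, uniformly over $\|f\|_w\leq K$, and the paper splits $(\omega_X+\dc\varphi)^n\leq\mathbf{1}_{\{f>B\}}f\,dV+B\,dV$. It then builds two subsolutions from Lemma~\ref{lem: subsolution}: $u$ for the tail, and $v$ for $\mathbf{1}_{\{\varphi<-C\}}$, whose $\widetilde w$-norm is small for $C$ large by the uniform $L^1$ bound. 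Domination with constant $\frac12$ against $\frac{u+v}{2}-C$ then gives $\varphi\geq-1-C$.

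You instead absorb the depth of the sublevel set into the density. Applying \eqref{eq:master-k} with $t=1+f$, $s=2-\varphi$ and $a$ below the uniform Skoda exponent bounds $\int_X w_{\widetilde h}\bigl((1+f)k(2-\varphi)\bigr)dV$. So a single subsolution satisfies $(\omega_X+\dc u)^n\geq m'k(2-\varphi)(\omega_X+\dc\varphi)^n$, and the domination constant $(m'k(s))^{-1}$ drops below $1$ once $s$ is large. This is the device of Lemma~\ref{lem: key-big} (absorbing $k(2-\rho_0)$ into $f$), applied with $\varphi$ in place of $\rho_0$. The difference is that the domination principle, rather than a concave change of variables, exploits $k\to\infty$. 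You pay with heavier inputs: the subexponential $k$, and uniform exponential integrability of normalized $\omega_X$-psh functions. You gain a one-step argument with the explicit bound $\varphi\geq-1-s$ as soon as $m'k(s)>1$. The paper's splitting needs only $L^1$ compactness, and it is the template reused in Case~2 of Theorem~\ref{thm: estimate-big}.

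A few small corrections:
\begin{itemize}
\item The divergence $k(s)\to+\infty$ is not part of the statement of Lemma~\ref{lem: weak-density}. Indeed $k\equiv1$ already satisfies \eqref{eq:master-k}, since $\widetilde h\leq h$ eventually. You must therefore invoke the construction $k(s)=\min\{R_*(s/K)^\eta,e^{\sqrt s}\}$ from its proof.
\item Passing from a bound on $\int_X w_{\widetilde h}(g)\,dV$ to a bound on $\|g\|_{w_{\widetilde h}}$ uses convexity, not $\Delta_2$. The $\Delta_2$ condition is needed in the opposite direction, to bound $\int_X w(1+f)\,dV$ in terms of an upper bound for $\|f\|_w$.
\item Your fallback claim $L^2\subset L^w$ fails, for example, for $w(t)=t^p$ with $p>2$, but your proof does not use it.
\end{itemize}
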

\begin{proof} 
{The proof is similar to that of~\cite[Theorem 2.4]{Fang25-continuity-hessian}.
Fix $K\geq\max\{1,\|f\|_w\}$. By Lemma~\ref{lem: weak-weight},
we can choose a Young weight $\widetilde w$ satisfying
\hyperlink{conditionK}{Condition (K)} such that
$\frac{\widetilde w(t)}{w(t)}\to0$ as $t\to+\infty$.
Let $m>0$ be the constant in Lemma~\ref{lem: subsolution}
for the weight $\widetilde w$.
The $\Delta_2$-condition gives $\int_Xw(f)dV\leq A_K$ for a uniform
constant $A_K\geq1$, and, for every fixed $\varepsilon>0$,
$\frac{\widetilde w(t/\varepsilon)}{w(t)}\to0$ as $t\to+\infty$.
Taking $\varepsilon=2^{-n-1}m$, choose $B\geq1$, depending only on
$w$, $K$, and the fixed geometric data, such that
$\widetilde w(t/\varepsilon)\leq A_K^{-1}w(t)$ for $t>B$.
Then
\[
 \int_X\widetilde w\left(
 \frac{\mathbf{1}_{\{f>B\}}f}{\varepsilon}\right)dV\leq1,
 \qquad
 \|\mathbf{1}_{\{f>B\}}f\|_{\widetilde w}\leq \frac{1}{2^{n+1}}m.
\]
By Lemma~\ref{lem: subsolution}, applied with $\widetilde w$, we can
find $u\in\PSH(X,\om_X)$ such that $-1\leq u\leq0$ and
\[
 (\om_X+\dc u)^n\geq
 m\frac{\mathbf{1}_{\{f>B\}}f}
 {\|\mathbf{1}_{\{f>B\}}f\|_{\widetilde w}}dV.
\]
If $\mathbf{1}_{\{f>B\}}f=0$ almost everywhere, take $u=0$ instead.
We infer that
\[(\om_X+\dc\varphi)^n\leq \frac{1}{2^{n+1}}(\om_X+\dc u)^n+BdV. \]
Since normalized \(\omega_X\)-psh functions have a uniform \(L^1\) bound,
we can choose $C>0$ so large that $$\|\mathbf{1}_{\{\varphi<-C\}}\|_{\widetilde w}\leq m\frac{1}{2^{n+1}B}.$$ By Lemma~\ref{lem: subsolution}, applied with $\widetilde w$, there exists a function $v\in\PSH(X,\omega_X)$ such that $-1\leq v\leq 0$ and \[(\omega_X+\dc v)^n\geq m\frac{\mathbf{1}_{\{\varphi<-C\}}}{\|\mathbf{1}_{\{\varphi<-C\}}\|_{\widetilde w}}dV\geq \mathbf{1}_{\{\varphi<-C\}}2^{n+1}BdV. \]
If $\{\varphi<-C\}$ has zero volume, take $v=0$; the last measure inequality still holds.
 Then on the set $\{\varphi<\frac{u+v}{2}-C\}\subset\{\varphi<-C\}$, we have
 \begin{eqnarray*}
     (\omega_X+\dc\varphi)^n&\leq& \frac{1}{2^{n+1}}(\om_X+\dc u)^n+ \frac{1}{2^{n+1}}(\om_X+\dc v)^n\\
     &\leq& \frac{1}{2}\left(\omega_X+\dc\frac{u+v}{2} \right)^n.
 \end{eqnarray*} Then by the domination principle (Proposition~\ref{Domination principle-1}), $\varphi\geq \frac{u+v}{2}-C\geq -1-C$.}
\end{proof}

\begin{lemma}\label{lem: satbility0} Let $0\leq F\in L^w$, with the weight function $w$ satisfying \hyperlink{conditionK}{Condition (K)}.
Let $\lambda\geq 0$.
Let $u,v\in\PSH(X,\omega_X)\cap L^\infty(X)$ such that $\sup_Xu=-1$, $v\leq 0$ and
\[ (\omega_X+\dc u)^n=e^{\lambda u}FdV.\]
Then \[ \sup_X(v-u)_+\leq C\|(v-u)_+F\|_{w}^{\frac{1}{n+1}}, \]
    where  $C>0$ depends only on $\omega_X$, $dV$, $n$, $w$, $\|F\|_w$ and $\sup_X|v|$.

    In particular, if $0\geq v_j\in\PSH(X,\omega_X)\cap L^\infty(X)$ is uniformly bounded and 
    converges to $u$ in $L^1(dV)$, then {
    \[\sup_X\{v_j-u\}_+\to 0.\] If, in addition, $v_j\geq u$, then $v_j$ converges uniformly to $u$.}
\end{lemma}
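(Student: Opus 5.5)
The strategy is a stability argument in the spirit of Ko{\l}odziej, as adapted to Hermitian manifolds by Guedj--Lu, run through the domination principle with a subsolution built from Lemma~\ref{lem: subsolution}. Put $s:=\sup_X(v-u)_+$ and assume $s>0$, otherwise there is nothing to prove. Set $G:=(v-u)_+F$ and $\delta:=\|G\|_w$. The aim is $s\le C\delta^{1/(n+1)}$.

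\textbf{Step 1: a subsolution absorbing $G$.} By Lemma~\ref{lem: subsolution} (applied to $G$ with weight $w$), there is $\phi\in\PSH(X,\omega_X)$ with $-1\le\phi\le 0$ and
\[
(\omega_X+\dc\phi)^n\ge m\,\frac{G}{\delta}\,dV .
\]
For a small parameter $\varepsilon\in(0,1)$, to be fixed later, consider the competitor
\[
\psi:=(1-\varepsilon)v+\varepsilon\phi - a,
\]
where $a>0$ is a constant. Its Monge--Amp\`ere measure satisfies
\[
(\omega_X+\dc\psi)^n\ge \varepsilon^n(\omega_X+\dc\phi)^n\ge \varepsilon^n m\,\frac{G}{\delta}\,dV .
\]
On the set $\{u<\psi\}$ we have $v-u>a-\varepsilon\sup|v|-\varepsilon$, so $G\ge (v-u)F\ge (a/2)F$ once $\varepsilon$ is small compared with $a$. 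Since $u\le -1$, there $e^{\lambda u}\le e^{\lambda\psi}\le 1$. Hence on $\{u<\psi\}$,
\[
(\omega_X+\dc u)^n=e^{\lambda u}F\,dV\le F\,dV\le \frac{2\delta}{a\,m\,\varepsilon^n}(\omega_X+\dc\psi)^n .
\]

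\textbf{Step 2: domination.} Choose $\varepsilon$ with $\varepsilon^n= 4\delta/(a m)$, so the constant above is $1/2$. Proposition~\ref{Domination principle-1} then gives $u\ge\psi$ on $X$, that is,
\[
v-u\le \varepsilon(v-\phi)+a\le \varepsilon(\sup|v|+1)+a .
\]
Take $a=\delta^{1/(n+1)}$. Then $\varepsilon\simeq (\delta/a)^{1/n}=\delta^{1/(n+1)}$ as well, and so $s\le C\delta^{1/(n+1)}$. The side condition ``$\varepsilon$ small relative to $a$'' needs care; this is the main obstacle, because the inequality $v-u>a-O(\varepsilon)$ on the contact region must really give $v-u\gtrsim a$. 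I would first try to handle it by replacing $a$ with $2a$ and using $\varepsilon\lesssim a$, which holds for $\delta$ small. For $\delta$ large the estimate is trivial, since $s\le\sup|v|+\sup|u|$ and $\operatorname{Osc}u$ is bounded by Theorem~\ref{thm: estimate-hermit}, because $e^{\lambda u}F\le F$. Alternatively, one can fix $a=s/2$ directly: then on $\{u<\psi\}$ we get $v-u\ge s/2-C\varepsilon$, hence $s^{1+n}\lesssim\delta$. This version, which takes $a$ proportional to $s$, is the one I would write up, since it avoids the circularity.

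\textbf{Step 3: the consequences.} If $v_j\to u$ in $L^1$ with $\sup|v_j|$ bounded, then $(v_j-u)_+\to 0$ in $L^1$ and is uniformly bounded. Since $F\in L^w$, the uniform integrability of $F$ in the Orlicz sense ($\Delta_2$ together with dominated convergence) gives $\|(v_j-u)_+F\|_w\to 0$, so $\sup(v_j-u)_+\to0$. If moreover $v_j\ge u$, then $|v_j-u|=(v_j-u)_+$, and the convergence is uniform.
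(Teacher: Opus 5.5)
Your proposal is correct and follows essentially the paper's proof. Both arguments build a subsolution from Lemma~\ref{lem: subsolution}, take the competitor $(1-t)v+t\phi-\mathrm{const}$, apply Proposition~\ref{Domination principle-1} with constant $1/2$, balance the two parameters to obtain the exponent $\frac{1}{n+1}$, and finish the consequences with the same $\Delta_2$/dominated convergence argument. The one difference is technical: the paper builds the subsolution for $\mathbf 1_{\{u<v-\varepsilon\}}F$, whose Luxemburg norm is at most $\varepsilon^{-1}\|(v-u)_+F\|_w$, and subtracts an extra $t\sup_X|v|$ so that the competitor lies below $v$. That makes your side condition $\varepsilon\sup_X|v|\lesssim a$ automatic, though your dichotomy with $a=s/2$ (giving $s\lesssim\varepsilon$ in both branches) also works.
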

\begin{proof}
    The first statement follows almost verbatim from~\cite[Theorem 3.1]{Fang25-continuity-hessian}, in which we apply Lemma~\ref{lem: subsolution} instead. 
 {   To see this, put
\[
 D:=\big\|(v-u)_+F\big\|_{ w},
 \quad
 E_\varepsilon:=\{u<v-\varepsilon\},
 \quad
 G_\varepsilon:=\mathbf 1_{E_\varepsilon}F,\quad M=\sup_X|v|.
\]
By monotonicity and homogeneity of the Luxemburg norm,
\[
       \|G_\varepsilon\|_{ w}
       \leq \varepsilon^{-1}D.
\]
If $G_\varepsilon=0$, then $\mathbf1_{E_\varepsilon}\omega_u^n=0$.
Proposition~\ref{Domination principle-1}, applied to $u$ and
$v-\varepsilon$ with coefficient zero, gives $u\geq v-\varepsilon$.
Thus, $\sup_X(v-u)_+\leq\varepsilon$, which is stronger than the
estimate below. We may therefore assume $G_\varepsilon\ne0$.
Let $\psi_\varepsilon$ be a subsolution constructed in
Lemma~\ref{lem: subsolution} for $G_\varepsilon$, and set
\[
 t={ \left(
    \frac{2}{m}
    \|G_\varepsilon\|_{ w}
 \right)^{1/n}.}
\]
If $t\geq1$, the estimate $\sup_X(v-u)_+\leq Ct+\varepsilon$ follows directly
from the boundedness of $u$ and $v$. We may therefore assume that
$0<t<1$ and define
\[
       V_t:=(1-t)v+t\psi_\varepsilon-A_0t,
\]
where $A_0$ is chosen, depending only on $M$, so that $V_t\leq v$.
On the set $\{u<V_t-\varepsilon\}\subset E_\varepsilon$, we have
\begin{align*}
     (\omega_X+\dc{V_t})^n
 &\geq t^n(\omega_X+\dc{\psi_\varepsilon})^n\\
 &{\geq 2F\,dV}\geq 2(\omega_X+\dc u)^n.
\end{align*}
The domination principle therefore gives
$u\geq V_t-\varepsilon$. In either case,
\[
 \sup_X(v-u)_+
 \leq Ct+\varepsilon
 \leq C\varepsilon^{-1/n}D^{1/n}+\varepsilon.
\]
Choosing $\varepsilon:=D^{\frac{1}{n+1}}$ gives the desired estimate.

{It remains to show that $\|(v_j-u)_+F\|_w\to 0$ as $j\to+\infty$. Assume $\|v_j\|_{L^\infty}+\|u\|_{L^\infty}\leq M_0$. Fixing $\eta>0$, we have
\[
    0\leq w\left(\frac{(v_j-u)_+F}{\eta}\right)\leq w\left(\frac{M_0F}{\eta}\right)\in L^1(dV),
\]
since $F\in L^w$ and $w$ satisfies the $\Delta_2$-condition. As $v_j\to u$ in $L^1$, every subsequence admits a further subsequence converging to $u$ almost everywhere. The dominated convergence theorem applied to these subsequences therefore yields
\[
    \int_X w\left(\frac{(v_j-u)_+F}{\eta}\right)dV\longrightarrow 0.
\]
Thus $\|(v_j-u)_+F\|_w\leq\eta$ for all sufficiently large $j$. Since $\eta>0$ was arbitrary, we conclude that $\|(v_j-u)_+F\|_w\to 0$ as $j\to+\infty$.}
}
    \end{proof}

\begin{theorem}\label{thm: exist-hermit} 
Let $\lambda\geq 0$.
Let $0\leq  f\in L^w(dV)$ and $\int_Xf\,dV>0$, with $w$ satisfying \hyperlink{conditionK}{Condition (K)}.
 Then
    \begin{enumerate}
        \item for any $\lambda>0$ there exists a unique $\varphi_\lambda\in\PSH(X,\om_X)\cap L^\infty(X)$ such that 
        \begin{equation}
            (\omega_X+\dc\varphi_\lambda)^n=e^{\lambda\varphi_\lambda}fdV.
        \end{equation}
        \item there exist a unique constant $c=c(\om_X,f)>0$ and a $\varphi\in\PSH(X,\om_X)\cap L^\infty(X)$ with $\sup_X \varphi=-1$ such that
        \begin{equation}
            (\om_X+\dc\varphi)^n=cfdV.
        \end{equation}
    \end{enumerate} There exist constants $C_0,C_1>0$, depending only on $X$, $\omega_X$, $dV$, $n$, $w$, and upper bounds for $\|f\|_{1/n}^{-1}$ and $\|f\|_w$, but independent of $\lambda$, such that the solution in (2) and the solutions in (1) satisfy, respectively,
    \[
      \|\varphi\|_{L^\infty(X)}\leq C_0,
      \qquad
      \|\varphi_\lambda\|_{L^\infty(X)}
      \leq C_0+\frac{C_1}{\lambda}\quad(\lambda>0).
    \]
    Moreover, such bounded solutions  are continuous.
\end{theorem}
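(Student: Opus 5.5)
The plan is to reduce to the $L^p$ theory of Ko{\l}odziej--Nguyen by approximation, and to obtain all uniform bounds from Theorem~\ref{thm: estimate-hermit}. Truncate the density: set $f_j:=\min(f,j)+\frac1j$. These are bounded and positive, with $\|f_j\|_w\leq C(1+\|f\|_w)$ and $\int f_j^{1/n}dV\geq\int f^{1/n}dV$. For bounded densities, the Hermitian $L^p$ theory (\cite{kolodziej2015weak}, Tosatti--Weinkove for smooth approximants) provides continuous solutions. Thus we get $c_j>0$ and continuous $\varphi_j\in\PSH(X,\om_X)$ with $\sup_X\varphi_j=-1$ and $(\om_X+\dc\varphi_j)^n=c_jf_jdV$. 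For $\lambda>0$ we likewise get continuous $\varphi_{\lambda,j}$ solving $(\om_X+\dc\varphi_{\lambda,j})^n=e^{\lambda\varphi_{\lambda,j}}f_jdV$. If needed, these can also be produced by a continuity or Schauder fixed-point argument from the smooth case.

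\textbf{Uniform estimates.} Lemma~\ref{lem: nomalizing-constant} gives $c_j\leq C$, with $C$ depending on a lower bound for $\|f\|_{1/n}$. Theorem~\ref{thm: estimate-hermit}, applied to $\varphi_j+1$ with density $c_jf_j$, then gives $\mathrm{Osc}(\varphi_j)\leq C_0$, hence $\|\varphi_j\|_\infty\leq C_0+1$. A lower bound $c_j\geq c>0$ follows from comparing with the bounded solution $\varphi_j$ itself: since $f\in L^1$ with positive integral, we have $c_j\int f_j\,dV=\int(\om_X+\dc\varphi_j)^n$. Alternatively, use the domination principle against $v_-(\om_X)>0$, which holds for Hermitian $\om_X$ (Lemma~\ref{lem: monotonicity-volume}, compared with a small multiple of a K\"ahler-type lower form, or the known positivity in the Hermitian case).

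For $\varphi_{\lambda,j}$, let $M=\sup_X\varphi_{\lambda,j}$. Then $e^{\lambda\varphi_{\lambda,j}}\leq e^{\lambda M}$, so Theorem~\ref{thm: estimate-hermit} bounds the oscillation provided $e^{\lambda M}$ is controlled. To control $M$, compare with $\varphi_j$: the function $\varphi_j-\frac1\lambda\log c_j+\frac{\text{const}}\lambda$ is a sub/supersolution. Corollary~\ref{coro: domination-semipositive}(2) then yields
\[
\varphi_j+\tfrac{1}{\lambda}\log c_j-C_0\leq\varphi_{\lambda,j}\leq\varphi_j+\tfrac1\lambda\log c_j+C_0,
\]
which gives $\|\varphi_{\lambda,j}\|_\infty\leq C_0+C_1/\lambda$ with $C_1=|\log c|+|\log C|$. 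Uniqueness in both (1) and (2) follows from Corollary~\ref{coro: domination-semipositive}.

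\textbf{Passage to the limit and continuity.} I expect this step to be the main obstacle. By $L^1$-compactness we extract $\varphi_j\to\varphi$ and $c_j\to c$. Convergence of the Monge--Amp\`ere measures is not automatic for non-monotone sequences, and I will handle it with the stability Lemma~\ref{lem: satbility0}. Fix $j$, with $u=\varphi_j$ and $F=c_jf_j$ (uniformly bounded in $L^w$). For $v=\varphi_k$, the term $(\varphi_k-\varphi_j)_+c_jf_j$ has $L^w$-norm tending to zero once $\varphi_k\to\varphi$ in $L^1$, by the dominated convergence argument given there. To make this quantitative and symmetric, I will use the first estimate of Lemma~\ref{lem: satbility0} in both directions with the roles of $j,k$ exchanged. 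Lemma~\ref{lem: satbility0} only bounds $\sup(v-u)_+$ in terms of the $L^w$-norm of $(v-u)_+F$, so I will split
\[
\|(\varphi_k-\varphi_j)_+F_j\|_w\leq \|(\varphi_k-\varphi_j)_+F_k\|_w+C\|F_j-F_k\|_w,
\]
and note that $\|F_j-F_k\|_w\to0$ because $f_j\to f$ in $L^w$ by the $\Delta_2$-condition. This shows that $(\varphi_j)$ is Cauchy in $L^\infty$, provided $(\varphi_k-\varphi_j)_+F_k\to0$ in $L^w$; the latter follows from a.e.\ convergence along subsequences plus domination. The uniform limit $\varphi$ is then continuous, and Bedford--Taylor continuity under uniform convergence gives $(\om_X+\dc\varphi)^n=cfdV$. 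The same argument applies verbatim to $\varphi_{\lambda,j}$, with the density $e^{\lambda u}F$ as in Lemma~\ref{lem: satbility0}. The bounds pass to the limit. Finally, any bounded solution coincides with the constructed one by uniqueness, and is therefore continuous.
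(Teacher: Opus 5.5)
\textbf{Main gap: continuity of arbitrary bounded solutions of (2).} Your scheme is essentially the paper's: truncation, bounds from Theorem~\ref{thm: estimate-hermit} and Lemma~\ref{lem: nomalizing-constant}, two-sided comparison of $\varphi_{\lambda,j}$ with $\varphi_j+\lambda^{-1}\log c_j$ and $\varphi_j-\inf_X\varphi_j+\lambda^{-1}\log c_j$ via Corollary~\ref{coro: domination-semipositive}(2), and Lemma~\ref{lem: satbility0} used in both directions to get an $L^\infty$-Cauchy sequence. However, your last sentence fails for (2). Corollary~\ref{coro: domination-semipositive}(1) only pins down the constant $c$. It gives no uniqueness, even modulo constants, of bounded solutions of $(\omega_X+\dc v)^n=cf\,dV$, and nothing at hand provides this in the Hermitian setting. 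So you have not shown that an arbitrary bounded solution of (2) is continuous.

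The missing idea is to pass to $\lambda=1$. Let $v$ be bounded with $(\omega_X+\dc v)^n=cf\,dV$, and set $\widetilde f:=ce^{-v}f$, which lies in $L^w$ with positive integral. Then $v$ itself solves $(\omega_X+\dc v)^n=e^{v}\widetilde f\,dV$. This equation has a unique bounded solution by Corollary~\ref{coro: domination-semipositive}(2), and that solution is continuous by your construction in (1). Hence $v$ is continuous.

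\textbf{Further repairs.} Your lower bound on $c_j$ is not argued correctly. The identity $c_j\int_Xf_j\,dV=\int_X(\omega_X+\dc\varphi_j)^n$ is useless without a lower bound on the total mass. Moreover, $v_-(\omega_X)>0$ cannot come from comparison with a K\"ahler form, since $X$ need not carry one. Either cite Guedj--Lu for $v_-(\omega_X)>0$, or argue as the paper does: Lemma~\ref{lem: subsolution} gives $u$ with $(\omega_X+\dc u)^n\geq m\|f_j\|_w^{-1}f_j\,dV$, and Corollary~\ref{coro: domination-semipositive}(1) then yields $c_j\geq m/\|f_j\|_w$.

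Also, $\int_Xf_j^{1/n}dV\geq\int_Xf^{1/n}dV$ is false for $f_j=\min(f,j)+1/j$, since it fails on $\{f>j+1/j\}$. Monotone convergence still gives $\int_Xf_j^{1/n}dV\geq\frac{1}{2}\int_Xf^{1/n}dV$ for large $j$, which suffices.

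\textbf{Comparison with the paper's route.} You take the bounded-density case from Ko{\l}odziej--Nguyen's capacity-based $L^p$ theory, which is exactly what the paper avoids. The paper starts from Tosatti--Weinkove smooth solutions and runs the same stability argument; your splitting with $\|F_j-F_k\|_w\to0$ would let you do this directly from smooth approximants. For (1), with $f_j=\min(f,j)$, the paper uses that $\varphi_{\lambda,j}$ decreases in $j$ rather than stability. Your stability route for (1) also works, provided you renormalize $\sup_X\varphi_{\lambda,j}$ to $-1$ and absorb $e^{\lambda(\sup_X\varphi_{\lambda,j}+1)}$ into $F$.
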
  The result is first proved in~\cite{kolodziej2021continuous}. Inspired by \cite[Theorem C]{Fang25-continuity-hessian}, we present the proof which is based on the domination principle and the construction of subsolutions (Lemma~\ref{lem: subsolution}), avoiding the use of the theory of Monge--Amp\`ere capacities. This approach also applies to the case of uniformly non-collapsing forms, producing
new (relative) a priori bounds, as well as existence results; cf. Theorem~\ref{thm: exist-noncollapsing}.
\begin{proof}
   


    \textbf{Step 1: $f$ is bounded.} Assume $f\leq M-1$ for some $M>1$. 
    Let $f_j$ be a sequence of smooth functions such that $f_j\to f$ in $L^2(dV)$ and $f_j\leq M$. 

(1) It follows from~\cite{tosatti2010complex} that 
there exists $\varphi_j\in\PSH(X,\omega_X)\cap \mathcal{C}^\infty(X)$ such that
    \[(\omega_X+\dc\varphi_{\lambda,j})^n=e^{\lambda\varphi_{\lambda,j}}f_jdV. \]
    The bounded subsolution $u$ obtained in Lemma~\ref{lem: subsolution} satisfies
    \[(\omega_X+\dc u)^n\geq mdV\geq e^{\lambda(u+\lambda^{-1}\ln (mM^{-1}))}f_jdV. \]
    It follows that 
    \[(\omega_X+\dc \varphi_{\lambda,j})^n\leq e^{\lambda(\varphi_{\lambda,j}-u-\lambda^{-1}\ln (mM^{-1}))}(\omega_X+\dc u)^n, \]
    hence $\varphi_{\lambda,j}\geq u+\lambda^{-1}\ln(mM^{-1})\geq -1+ \lambda^{-1}\ln(mM^{-1})$ for all $j\in\mathbb N$. To obtain an upper bound, by~\cite{tosatti2010complex} choose smooth $\rho_j\in\PSH(X,\omega_X)$ and $b_j>0$ such that
    \[
    (\omega_X+\dc\rho_j)^n=b_jf_jdV,\qquad \sup_X\rho_j=0.
    \]
    Lemma~\ref{lem: nomalizing-constant} gives $b_j\leq B$, uniformly in $j$, since $\|f_j\|_{1/n}$ is bounded away from zero. Theorem~\ref{thm: estimate-hermit}, applied with the weight $t^2$, then gives $\rho_j\geq-A$, uniformly in $j$. The domination principle yields
    \[
    \varphi_{\lambda,j}\leq\rho_j-\inf_X\rho_j+\lambda^{-1}\log b_j
    \leq A+\lambda^{-1}\log B.
    \]
    By Hartogs' lemma, up to extracting a subsequence, $\varphi_{\lambda,j}$ converges in $L^1$ and almost everywhere to some $\varphi_\lambda\in\PSH(X,\omega_X)\cap L^\infty(X)$, and $a_{\lambda,j}:=\sup_X\varphi_{\lambda,j}$ converges to $a_\lambda:=\sup_X\varphi_\lambda$. Let
    $u_j:=\varphi_{\lambda,j}-a_{\lambda,j}$.  Then $\sup_Xu_j=0$ and 
    \[(\omega_X+\dc u_j)^n=e^{\lambda\varphi_{\lambda,j}}f_jdV,\] with $e^{\lambda\varphi_{\lambda,j}}f_j$ uniformly bounded for fixed $\lambda>0$. Applying Lemma~\ref{lem: satbility0} with $\lambda=0$ and the weight $t^2$ in both directions gives
    \[
    \|u_j-u_k\|_\infty\leq C\left[
    \|(u_j-u_k)_+e^{\lambda\varphi_{\lambda,k}}f_k\|_2^{\frac1{n+1}}
    +\|(u_k-u_j)_+e^{\lambda\varphi_{\lambda,j}}f_j\|_2^{\frac1{n+1}}\right]
    \xrightarrow{j,k\to+\infty}0,
    \]
    by dominated convergence. Since $a_{\lambda,j}\to a_\lambda$,
    $\varphi_{\lambda,j}$ converges uniformly to $\varphi_\lambda$ and 
    \[(\omega_X+\dc\varphi_\lambda)^n=e^{\lambda\varphi_\lambda}fdV. \] The uniqueness follows immediately.
 
Once (2) is established below, the stated estimates follow for both bounded and general $f$ as follows. Let $\varphi$ be a solution of (2), normalized by $\sup_X\varphi=0$, and set
 \[
 \psi_-:=\varphi+\lambda^{-1}\log c,\qquad
 \psi_+:=\varphi-\inf_X\varphi+\lambda^{-1}\log c.
 \]
 Then
 \[
 e^{\lambda\psi_-}f\leq cf\leq e^{\lambda\psi_+}f,
 \qquad (\omega_X+\dc\psi_\pm)^n=cf\,dV.
 \]
 The domination principle gives $\psi_-\leq\varphi_\lambda\leq\psi_+$, hence
 \[
 \|\varphi_\lambda\|_\infty\leq\|\varphi\|_\infty+\lambda^{-1}|\log c|.
 \]
 Lemma~\ref{lem: subsolution} and Corollary~\ref{coro: domination-semipositive} give $c\geq m/\|f\|_w$, while Lemma~\ref{lem: nomalizing-constant} gives the upper bound for $c$. Theorem~\ref{thm: estimate-hermit}, applied to $cf$, therefore yields constants $C_0,C_1>0$ with the stated dependencies such that $\|\varphi\|_\infty\leq C_0$ and $|\log c|\leq C_1$, independently of $\lambda$.

\medskip \noindent (2) By \cite{tosatti2010complex}, we can find $c_j>0$ and $\varphi_j\in\PSH(X,\omega_X)\cap\mathcal{C}^\infty$ such that 
\[(\omega_X+\dc\varphi_j)^n=c_jf_jdV,\quad\sup_X\varphi_j=-1. \]
By Lemma \ref{lem: subsolution} and the domination principle (Corollary~\ref{coro: domination-semipositive}), $c_j\geq m$ for all $j$. 
Since $\|f_j\|_{1/n}$ is uniformly bounded from below, it follows from Lemma~\ref{lem: nomalizing-constant} that $c_j$ is uniformly bounded from above. Hence $c_j\to c$.

It follows from Theorem~\ref{thm: estimate-hermit} that $\varphi_j\geq -C$ for $C> 0$ depending on $X$, $dV$, $n$, $w$, and $\|f\|_w$, for all $j$. Up to extracting  a subsequence, Hartogs' lemma  yields $\varphi_j$ converges in $L^1(dV)$ and a.e. to some $\varphi\in\PSH(X,\omega_X)\cap L^\infty(X)$.
 Lemma~\ref{lem: satbility0} shows that \[\sup_X|\varphi_j-\varphi_k|\leq C[\|(\varphi_j-\varphi_k)_+f_j\|_2^{\frac{1}{n+1}}+  \|(\varphi_j-\varphi_k)_+f_k\|_2^{\frac{1}{n+1}}]\xrightarrow{j,k\to+\infty}0.\]Therefore,
$\{\varphi_j\}_{j\in\mathbb{N}}$ is a Cauchy sequence in $\PSH(X,\omega_X)\cap \mathcal{C}^0(X)$, hence $\varphi_j$ converges uniformly to $\varphi\in \PSH(X,\omega_X)\cap \mathcal{C}^0(X)$. 
By the continuity of the
complex Monge–Amp\`ere operator, we have  
\[(\omega_X+\dc\varphi)^n=cfdV. \]
The uniqueness of $c$ follows from the domination principle (Corollary~\ref{coro: domination-semipositive}).

\bigskip
\textbf{Step 2: general case.} Set $f_j:=\min(f,j)$.

(1) By Step 1, we can find $\varphi_j\in\PSH(X,\omega_X)\cap \mathcal{C}^0(X)$ such that \[(\omega_X+\dc\varphi_{\lambda,j})^n=e^{\lambda\varphi_{\lambda,j}}f_jdV. \]
If $k\geq j$, then $f_k\geq f_j$ and
\[
 \omega_{\varphi_{\lambda,k}}^n
 \geq
 e^{\lambda(\varphi_{\lambda,k}
              -\varphi_{\lambda,j})}
 \omega_{\varphi_{\lambda,j}}^n.
\]
The domination principle (Corollary~\ref{coro: domination-semipositive}) gives
$\varphi_{\lambda,k}\leq\varphi_{\lambda,j}$. Thus
$\varphi_{\lambda,j}$ decreases to a bounded
$\omega_X$-psh function $\varphi_\lambda$. 
By Lemma~\ref{lem: subsolution}, there exists $-1\leq v\leq 0$, such that
\[
(\omega_X+dd^cv)^n\geq \frac{mf}{\|f\|_w}dV\geq e^{\lambda(v
+\lambda^{-1}\ln \frac{m}{\|f\|_w})}f_jdV.
\]
Since $$(\omega_X+\dc\varphi_{\lambda,j})^n\leq e^{\lambda\sup_X\varphi_1}fdV,$$
we have \[\varphi_{\lambda,j}\geq v+\lambda^{-1}\ln \frac{m}{\|f\|_w},\]
and hence by Theorem~\ref{thm: estimate-hermit}, $\|\varphi_{\lambda,j}\|_\infty\leq C$ independent of $j$.
By the continuity of the
complex Monge–Amp\`ere along monotonic sequences, we have  
\[(\omega_X+\dc\varphi)^n=e^{\lambda\varphi}fdV. \]
The uniqueness follows immediately from Corollary~\ref{coro: domination-semipositive}. 

Moreover, set $a_j:=\sup_X\varphi_{\lambda,j}$, $a:=\sup_X\varphi_\lambda$,
$u:=\varphi_\lambda-a$, and $v_j:=\varphi_{\lambda,j}-a_j$.
Since $a_j\to a$, $v_j\to u$ in $L^1(dV)$.
The functions $v_j$ are uniformly bounded, $\sup_Xu=0$, $v_j\leq0$, and
\[
 (\omega_X+\dc u)^n=e^{\lambda u}F\,dV,\qquad F:=e^{\lambda a}f.
\]
Lemma~\ref{lem: satbility0} therefore gives $\sup_X(v_j-u)_+\to0$. Hence
\[
 0\leq\sup_X(\varphi_{\lambda,j}-\varphi_\lambda)
 \leq a_j-a+\sup_X(v_j-u)_+
 \longrightarrow0.
\]
Hence $\varphi_{\lambda,j}\to\varphi_\lambda$ uniformly, and
$\varphi_\lambda$ is continuous.

(2)   By Step 1, we can find $c_j>0$ and  $\varphi_j\in\PSH(X,\omega_X)\cap L^\infty(X)$ such that \[(\omega_X+\dc\varphi_j)^n=c_jf_jdV,\quad\sup_X\varphi_j=-1.\]
As above, $c_j$ is uniformly bounded and away from zero; passing to a subsequence,
we may assume $c_j\to c$. Since
\[(\omega_X+\dc\varphi_j)^n\leq MfdV, \] Theorem \ref{thm: estimate-hermit} gives $\varphi_j\geq -C$ for $C>0$ depending on $X$, $dV$, $n$, $w$, and $\|f\|_w$, independent of $j$. 
Up to extracting  a subsequence, Hartogs' lemma  yields that $\varphi_j$ converges in $L^1(dV)$ and a.e. to some $\varphi\in\PSH(X,\omega_X)\cap L^\infty$. 

Applying Lemma~\ref{lem: satbility0} with $\lambda=0$ to $\varphi_j+1$ and $\varphi_k+1$, and using $c_jf_j\leq Mf$, we obtain
\[
\|\varphi_j-\varphi_k\|_\infty\leq C\left(
\|(\varphi_j-\varphi_k)_+f\|_w^{\frac{1}{n+1}}
+\|(\varphi_k-\varphi_j)_+f\|_w^{\frac{1}{n+1}}\right),
\]
where $C$ is independent of $j,k$. Choose $A>0$ such that $|\varphi_j|,|\varphi|\leq A$. For every $\eta>0$, the $\Delta_2$ condition gives
\[
0\leq w\!\left(\frac{|\varphi_j-\varphi|f}{\eta}\right)
\leq w\!\left(\frac{2Af}{\eta}\right)\in L^1(dV).
\]
Dominated convergence theorem and the definition of the Luxemburg norm imply $\||\varphi_j-\varphi|f\|_w\to0$. The triangle inequality therefore shows that the right-hand side of the stability estimate tends to zero as $j,k\to\infty$. Thus $\varphi_j$ converges uniformly to $\varphi\in\mathcal{C}^0(X)$. Since $c_jf_j\to cf$ in $L^1(dV)$, continuity of the complex Monge--Amp\`ere operator under uniform convergence yields
\[ (\omega_X+\dc\varphi)^n=cfdV.\]
Finally, $\sup_X\varphi=-1$, so replacing $\varphi$ by $\varphi+1$ gives the required normalization.
Moreover, let $v$ be any bounded solution of the equation 
\[
                     (\omega_X+\dc v)^n=cf\,dV.
\]
Then
\(
                     \widetilde f:=ce^{-v}f
\)
belongs to $L^w(dV)$. By the already proved case $\lambda=1$, there
is a continuous solution $u$ of
\[
             (\omega_X+\dc u)^n=e^u\widetilde f\,dV
             =e^{u-v}(\omega_X+\dc v)^n.
\]
The domination principle, applied in both directions, gives
$u=v$. Hence every bounded solution of the $\lambda=0$ equation is
continuous as well.
\end{proof}

 \section{Semipositive and uniformly non-collapsing forms}
Assume in this section that $\theta$ is semipositive. A natural idea is to use Hermitian form to approximate $\theta$.  We first need to establish the uniform estimate with $\|f\|_{_{\frac{1}{n}}}^{-1}$ replaced by $v_-(\theta)$.
 


\begin{theorem}\label{thm: estimate-noncollapsing}
    Assume that $\mu= fdV$ is a probability measure on $X$, and $0\leq f\in L^w(dV)$, where $w$ satisfies \hyperlink{conditionK}{Condition (K)}. Assume $\theta$ is a {\em semipositive} $(1,1)$ form on $X$ such that $v_-(\theta)>0$. Let $\omega$ be a Hermitian form such that $\theta\leq \omega\leq A\omega_X$ for some $A>0$.
    Then any solution $\varphi\in \PSH(X,\omega)\cap L^\infty(X)$ to  
    \[(\omega+\dc\varphi)^n=c\mu,\quad\sup_X\varphi=0 \] where $c>0$, satisfies $Osc(\varphi)\leq C$, for some uniform constant $C$ 
   depending only on $(X,\omega_X)$, $dV$, $n$, $A$,
$v_-(\theta)^{-1}$, $w$, and an upper bound for $\|f\|_w$.
\end{theorem}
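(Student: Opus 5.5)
The plan is to run the domination argument of Theorem~\ref{thm: estimate-hermit} with $\omega$ as reference form, keeping every constant uniform in $\omega\in[\theta,A\omega_X]$. The positivity input $\|f\|_{1/n}^{-1}$ is replaced by $v_-(\theta)$. Two ingredients have to be made uniform: a two-sided bound on the normalizing constant $c$, and a family of uniformly bounded $\omega$-psh subsolutions.

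\textbf{Step 1: two-sided bound on $c$.} Since $\mu$ is a probability measure, $c=\int_X(\omega+\dc\varphi)^n$.
\begin{itemize}
\item \emph{Lower bound.} By definition of $v_-$ and Lemma~\ref{lem: monotonicity-volume}, $c\geq v_-(\omega)\geq v_-(\theta)$.
\item \emph{Upper bound.} Since $\varphi\in\PSH(X,A\omega_X)$, we have $c\leq\int_X(A\omega_X+\dc\varphi)^n$. I would invoke the finiteness of the upper volume $v_+(A\omega_X)$ from Guedj--Lu, i.e.\ the uniform bound on Monge--Amp\`ere masses of bounded $A\omega_X$-psh functions. This gives $c\leq C(X,\omega_X,A)$.
\end{itemize}
Consequently $\|cf\|_w\leq C\|f\|_w$ is uniformly controlled.

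\textbf{Step 2: uniform $\omega$-subsolutions (the main obstacle).} What is needed is an analogue of Lemma~\ref{lem: subsolution} for $\omega$: for $0\leq g\in L^{\widetilde w}$ there should exist $u\in\PSH(X,\omega)$ with $-C_1\leq u\leq 0$ and
\[
(\omega+\dc u)^n\geq m\,\frac{g}{\|g\|_{\widetilde w}}\,dV,
\]
with $m,C_1$ depending only on $A$, $v_-(\theta)$, $w$ and the geometry. The difficulty is that Lemma~\ref{lem: subsolution} is built from local solutions patched with the strict positivity of $\omega_X$, and this positivity degenerates as $\omega\to\theta$.

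First attempt. Take the $A\omega_X$-psh subsolution $\psi=Au$ given by Lemma~\ref{lem: subsolution}, with $-A\leq\psi\leq 0$. Let $\varphi_g$ be the continuous solution from Theorem~\ref{thm: exist-hermit} of
\[
(\omega+\dc\varphi_g)^n=b\,\bigl(1+g/\|g\|_{\widetilde w}\bigr)\,dV,\qquad \sup_X\varphi_g=0.
\]
Step 1 applies to this equation and gives $b\geq v_-(\theta)/C$ and $b\leq C$ uniformly. The remaining point is a uniform lower bound for $\varphi_g$. For this I would compare $\varphi_g$ with the envelope $P_\omega(\min(\varphi_g,\psi+\text{const}))$ via Proposition~\ref{lem: GL23-19}(2) and the domination principle (Proposition~\ref{Domination principle-1}). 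The point of this comparison is that the right-hand side density stays bounded below by $b\geq v_-(\theta)/C$, and this replaces strict positivity of the form. If this circular step resists, the fallback is to obtain the bound for the auxiliary equation first, with density $\geq$ a constant. That case reduces to Guedj--Lu's uniform estimate for uniformly non-collapsing forms, whose constant depends only on $v_-(\theta)$ and $A$.

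\textbf{Step 3: domination argument.} With these subsolutions, I would repeat the proof of Theorem~\ref{thm: estimate-hermit} verbatim with $\omega$ in place of $\omega_X$.
\begin{itemize}
\item Choose $\widetilde w$ by Lemma~\ref{lem: weak-weight} and split $cf=\mathbf 1_{\{cf>B\}}cf+\mathbf 1_{\{cf\leq B\}}cf$.
\item Obtain $u$ from Step 2 dominating the first piece, and $v$ from Step 2 dominating $2^{n+1}B\,\mathbf 1_{\{\varphi<-C\}}$. The required smallness of $\|\mathbf 1_{\{\varphi<-C\}}\|_{\widetilde w}$ follows from the uniform $L^1$ bound for normalized $A\omega_X$-psh functions.
\item Since $v_-(\omega)>0$, $\omega$ is non-collapsing, so Proposition~\ref{Domination principle-1} applies on $\{\varphi<\frac{u+v}{2}-C\}$ and gives $\varphi\geq -C_1-C$.
\end{itemize}
All constants depend only on $(X,\omega_X)$, $dV$, $n$, $A$, $v_-(\theta)^{-1}$, $w$ and an upper bound for $\|f\|_w$. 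I expect Step 2 to be the only genuinely new difficulty.
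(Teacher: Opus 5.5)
The gap is in Step~2, and it is not a technicality. A uniformly bounded $\omega$-psh subsolution for an arbitrary density in $L^{\widetilde w}$, uniform over $\theta\leq\omega\leq A\omega_X$, amounts to a uniform lower bound for the normalized solution of $(\omega+\dc\varphi_g)^n=b\,g\,dV$. That is the theorem itself for the weight $\widetilde w$, so the step is circular.

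Neither of your escape routes breaks this circle.
\begin{itemize}
\item The comparison with $P_\omega(\min(\varphi_g,Au+\mathrm{const}))$ gives no lower bound on $\varphi_g$. First, $Au$ is $A\omega_X$-psh but not $\omega$-psh. Second, Proposition~\ref{lem: GL23-19}(2) concerns supersolutions: it needs \emph{upper} bounds on both Monge--Amp\`ere measures, whereas Lemma~\ref{lem: subsolution} only gives a lower bound for $Au$.
\item The fallback does not help either. A lower bound on the density plays no role in an $L^\infty$ estimate, which is governed by upper integrability. Moreover, $1+g/\|g\|_{\widetilde w}$ lies only in $L^{\widetilde w}$, not in any $L^p$.
\end{itemize}
A uniform $L^p$ estimate for non-collapsing forms would handle the bounded piece $2^{n+1}B\,\mathbf 1_{\{\varphi<-C\}}$ in your Step~3. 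It would not handle the Orlicz piece $\mathbf 1_{\{cf>B\}}cf$, which is exactly where Lemma~\ref{lem: subsolution} used the strict positivity of $\omega_X$.

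What is missing is the paper's mechanism, which uses only $v_-(\theta)$ and compactness.

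\emph{The $L^p$ case.} At each level $s$ the paper solves the auxiliary equation
\[
(\omega+\dc v_s)^n=c_sA_s^{-1}\mathbf 1_{\Omega_s}(-\varphi-s)^rf\,dV,\qquad \sup_Xv_s=0,
\]
which is possible by Theorem~\ref{thm: exist-hermit} since $\omega$ is Hermitian, and $c_s\geq v_-(\theta)$. Lemma~\ref{lem: key} (an envelope of $-\chi(-\varphi-s)$ plus domination) then gives $(-\varphi-s)^{(n+r)/n}\lesssim A_s^{1/n}(-v_s+\Lambda)$. No uniform bound on $v_s$ is needed, only the uniform $L^q$ norms of normalized $A\omega_X$-psh functions. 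H\"older's inequality and De Giorgi iteration then conclude.

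\emph{The general Orlicz case.} Here the paper does not build Orlicz subsolutions at all. It takes $v\in\mathcal E(X,\omega)$ with $(\omega+\dc v)^n=c_1c\,w(f)\,dV$, where $c_1\geq v_-(\theta)/(c\int_Xw(f)\,dV)$. It then forms the bounded concave transform $\psi=-\gamma(-v)$ with $\gamma'(t)=Kh(B+\alpha t)^{-1/n}$. Combined with Skoda's bound $\|e^{-\alpha v}\|_2\leq C_\alpha$, this gives
\[
\omega_\varphi^n\leq4^{-n}\omega_\psi^n+ce^Be^{-\alpha v}dV.
\]
The $L^2$ remainder is absorbed by a solution that the $L^p$ step bounds.

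Separately, in your Step~1 the finiteness of $v_+(A\omega_X)$ is, to my knowledge, not established on an arbitrary compact Hermitian manifold. The paper instead obtains a lower bound for $\int_Xf^{1/n}dV$, depending only on $w$ and $\|f\|_w$, from $\int_Xf\,dV=1$. It then applies Lemma~\ref{lem: nomalizing-constant}.
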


The proof follows the method from~\cite{DangZhangZhou26-estimate-big}.
We set the sub-level set $\Omega_s:
=\{\varphi<-s\}$ for $s>0$. 
Fix $r>0$ and set $A_s=\int_{\Omega_s}(-\varphi-s)^rfdV$. If $A_s=0$, then $\omega_\varphi^n=0$ on $\Omega_s$, and the domination principle (Proposition~\ref{Domination principle-1}), applied to $\varphi$ and the constant $-s$, gives $\varphi\geq-s$. We may therefore restrict the auxiliary construction to $A_s>0$.
We solve an auxiliary complex Monge--Amp\`ere equation 
\begin{equation}
    (\omega+\dc v_s)^n=c_s\frac{\mathbf{1}_{\Omega_s}(-\varphi-s)^r }{A_s}fdV,\; v_s\in\PSH(X,\omega)\cap L^\infty(X),\quad\sup_X v_s=0,
\end{equation} where $A_s=\int_{\Omega_s}(-\varphi-s)^rfdV$, $c_s>0$. The existence of solution $v_s$ follows from Theorem~\ref{thm: exist-hermit}. By \cite[Proposition 3.2]{guedj2022quasi2}, $c_s\geq v_-(\omega)\geq v_-(\theta)>0$.
We compare the auxiliary function with the solution.
\begin{lemma}\label{lem: key}
With the assumptions in Theorem~\ref{thm: estimate-noncollapsing} and $A_s>0$, we have on $\Omega_s$
    \begin{equation}\label{eq: key-equation}
       \frac{n}{n+r}\left(\frac{2cA_s}{c_s}\right)^{-\frac{1}{n}}
(-\varphi-s)^{\frac{n+r}{n}}\le -v_s+\frac{n}{n+r}\left(\frac{2cA_s}{c_s}\right)^{\frac{1}{r}}.
    \end{equation}
\end{lemma}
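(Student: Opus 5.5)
We compare $\varphi$ with an explicit competitor built from $v_s$ via a concave reparametrization, and conclude with the domination principle on the Hermitian form $\omega$. The construction mirrors Step 1.1 of Theorem~\ref{thm: estimate-local}, with one change: the boundary-data argument is replaced by a strict-inequality domination argument. This is why the factor $2$ and the additive constant appear in \eqref{eq: key-equation}.

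Set
\[
\varepsilon:=\Big(\tfrac{n+r}{n}\Big)^{\frac{n}{n+r}}\Big(\tfrac{2cA_s}{c_s}\Big)^{\frac{1}{n+r}},\qquad \gamma(t):=\varepsilon\, t^{\frac{n}{n+r}}\quad (t\ge 0).
\]
The function $\gamma$ is increasing and concave, with $\gamma'(t)=\varepsilon\frac{n}{n+r}t^{-\frac{r}{n+r}}$. Define
\[
\Lambda:=\tfrac{n}{n+r}\Big(\tfrac{2cA_s}{c_s}\Big)^{1/r},\qquad \psi:=-\gamma(-v_s+\Lambda)-s .
\]
Since $\gamma$ is concave increasing and $-v_s+\Lambda\ge\Lambda>0$, the function $\psi$ is bounded and $\omega$-psh. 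Indeed, the chain-rule computation from the local proof gives
\[
\omega+\dc\psi\ \ge\ \gamma'(-v_s+\Lambda)(\omega+\dc v_s)+\big(1-\gamma'(-v_s+\Lambda)\big)\omega .
\]
This is a positive form as long as $\gamma'\le1$, so the additive constant $\Lambda$ is chosen precisely to ensure $\gamma'(\Lambda)\le 1$; checking that the displayed choice gives this is a routine exponent computation. (Justifying the chain rule for bounded, not necessarily smooth, $v_s$ is done by smoothing $\gamma$, as in the paper.) It then follows that
\[
(\omega+\dc\psi)^n\ \ge\ \gamma'(-v_s+\Lambda)^n\, c_s\,\frac{\mathbf 1_{\Omega_s}(-\varphi-s)^r}{A_s}\, f\,dV .
\]

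The goal is $\varphi\ge\psi$ on $X$. After the routine algebra $(\gamma^{-1}(t))=\frac{n}{n+r}(\cdots)^{-1/n}t^{\frac{n+r}{n}}$, this inequality is exactly \eqref{eq: key-equation}: on $\Omega_s$ it reads $\gamma(-v_s+\Lambda)\ge -\varphi-s$. We apply Proposition~\ref{Domination principle-1} with $\theta=\omega$, which is Hermitian and hence non-collapsing. It suffices to show
\[
\mathbf 1_{\{\varphi<\psi\}}\,\omega_\varphi^n\le \tfrac12\,\mathbf 1_{\{\varphi<\psi\}}\,\omega_\psi^n .
\]
Since $\psi\le -s$, the set $\{\varphi<\psi\}$ lies in $\Omega_s$. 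On $\{\varphi<\psi\}$ we have $-\varphi-s>\gamma(-v_s+\Lambda)$, and therefore $-v_s+\Lambda<\gamma^{-1}(-\varphi-s)$. Because $\gamma'$ is decreasing, this gives $\gamma'(-v_s+\Lambda)\ge\gamma'(\gamma^{-1}(-\varphi-s))$. A direct computation with the chosen $\varepsilon$ shows
\[
\gamma'\big(\gamma^{-1}(t)\big)^n\,c_s\,t^r/A_s=2c
\]
for every $t>0$; this identity is what fixes $\varepsilon$. Consequently, on $\{\varphi<\psi\}$,
\[
\omega_\psi^n\ \ge\ 2c f\,dV=2\,\omega_\varphi^n .
\]
The domination principle then gives $\varphi\ge\psi$, which is the claim.

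The main obstacle is making the two normalizations compatible. The constant $\varepsilon$ must produce the exact factor $2c$ in the mass comparison. At the same time, the shift $\Lambda$ must keep $\gamma'(-v_s+\Lambda)\le1$ everywhere, so that $\psi$ stays $\omega$-psh; here we cannot use an envelope with zero boundary values as in the local case. I expect that verifying $\gamma'(\Lambda)\le 1$ with the stated $\Lambda$ is where the precise exponent $1/r$ in \eqref{eq: key-equation} comes from, and that this step needs care. If it fails, the fallback is to take $\psi:=P_\omega(\cdot)$, the envelope of $-\gamma^{-1}$-type obstacles, as in Step 1.1, and use Theorem~\ref{thm: envelope} together with Lemma~\ref{lem: max-princ} on the contact set.
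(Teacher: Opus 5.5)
Your proof is correct. The two computations you left as ``routine'' do hold. With $a:=2cA_s/c_s$ one has $\gamma'(\Lambda)=\frac{n}{n+r}\,\varepsilon\,\Lambda^{-\frac{r}{n+r}}=1$ exactly, so $\psi$ is $\omega$-psh and the fallback is unnecessary. Also $\gamma^{-1}(t)=\frac{n}{n+r}a^{-1/n}t^{\frac{n+r}{n}}$ and $\gamma'(\gamma^{-1}(t))^n=a\,t^{-r}$, which gives $\gamma'(\gamma^{-1}(t))^n c_s t^r/A_s=2c$. No smoothing of $\gamma$ is needed, since its argument stays in $[\Lambda,+\infty)$.

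Your route genuinely differs from the paper's, although it uses the same profile: your $t\mapsto\gamma(t+\Lambda)$ is precisely the paper's $\gamma$, and your $\gamma(\Lambda)$ is the paper's truncation level $C_\Lambda$. The paper builds an obstacle $-\chi(-\varphi-s)$ out of $\varphi$, with $\chi$ the truncated inverse of this profile. It takes the envelope $u=P_\omega(-\chi(-\varphi-s))$ and uses Theorem~\ref{thm: envelope}, the continuity of $\varphi$, and the maximum principle (Lemma~\ref{lem: max-princ}) on the contact set to get $\omega_u^n\le\frac12\omega_{v_s}^n$ on $\{u<v_s\}$. Domination then gives $v_s\le u\le-\chi(-\varphi-s)$.

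You run the comparison in the opposite direction. You push $v_s$ through the concave profile and compare the resulting explicit bounded $\omega$-psh function directly with $\varphi$, using a single application of Proposition~\ref{Domination principle-1}. This removes the envelope, the contact-set bookkeeping, Lemma~\ref{lem: max-princ}, and the need for continuity of $\varphi$. It is the pluripotential counterpart of the Guo--Phong auxiliary-function comparison, with the maximum principle replaced by domination.

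The paper's envelope formulation mirrors Step~1.1 of Theorem~\ref{thm: estimate-local}. There no auxiliary equation is solved, and the envelope itself is the comparison function with controlled Monge--Amp\`ere mass and zero boundary values. In the global lemma, $v_s$ has already been produced by Theorem~\ref{thm: exist-hermit}, so your direct comparison is the more economical argument.
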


\begin{proof}
    Set $$\varepsilon
=\left(\frac{n+r}{n}\right)^{\frac{n}{n+r}}
\left(\frac{2cA_s}{c_s}\right)^{\frac{1}{n+r}},\ \ 
\Lambda=\frac{n}{n+r}\left(\frac{2cA_s}{c_s}\right)^{\frac{1}{r}}$$ so that $\frac{\varepsilon n}{n+r}\Lambda^{-\frac{r}{n+r}}=1$.  
    Set $C_\Lambda:=\varepsilon\Lambda^{\frac{n}{n+r}}=\left(\frac{2cA_s}{c_s}\right)^{\frac{1}{r}}$. We introduce the continuous truncation 
\[ \chi(t) := 
\begin{cases} 0, & t\leq C_\Lambda,\\[1mm]\varepsilon^{-\frac{n+r}{n}}t^{\frac{n+r}{n}}-\Lambda , & t>C_\Lambda. 
\end{cases} \] 
The function $\chi$ is continuous, convex, and increasing. 
Let $\gamma:[0,+\infty)\to[C_\Lambda,+\infty)$ be the inverse of $\chi|_{[C_\Lambda,+\infty)}$, which is concave and increasing. By computations, we have
    \[ \gamma(t)=\varepsilon(t+\Lambda)^{\frac{n}{n+r}}\] $\gamma(0)=C_\Lambda$, $\gamma'(0)=1$ and $0<\gamma'\leq 1$.

Set $\widetilde\Omega_s:=\{C_\Lambda<-\varphi-s\}$.
We see that $\widetilde\Omega_s\subset\Omega_s$. We set $$\psi:= -\chi(-\varphi-s) \; \text{and}\; u:=P_\omega(\psi).$$ 
By Theorem~\ref{thm: exist-hermit}, $\varphi$, and hence $\psi$, is continuous.
Thus $u\leq\psi$ everywhere on $X$ and $u$ is bounded.
We denote by $\mathcal{C}=\{u=\psi\}$ the contact set. By Theorem~\ref{thm: envelope}, we have that $\omega_u^n$ is concentrated on the contact set $\mathcal{C}$. 

Set $v:=-\gamma(-u)$.
The identity $\gamma(\chi(t))=\max\{t,C_\Lambda\}$ and $u\leq\psi$
give
\[
 v\leq-\gamma(\chi(-\varphi-s))
 =\min\{\varphi+s,-C_\Lambda\}\leq\varphi+s,
\]
with equality on $\mathcal C_+:=\mathcal C\cap\widetilde\Omega_s$.
Next, we show that 
\begin{equation}\label{eq: active-contact} 
\{u<v_s\}\cap\mathcal C \subset\mathcal C_+. 
\end{equation} 
Indeed, let $x\in\mathcal C\setminus\mathcal C_+$. 
Then $-\varphi-s\leq C_\Lambda$, so $\chi(-\varphi-s)=0$ and hence $u(x)=\psi(x)=0$.  
Since $v_s\in\PSH(X,\omega)$ and $\sup_Xv_s=0$, 
we have $v_s\leq0$. 
Thus $u(x)\geq v_s(x)$, proving \eqref{eq: active-contact}. By computations, we have
\[ \omega_v^n \geq \bigl(\gamma'(-u)\bigr)^n\omega_u^n. \] 
 By \eqref{eq: active-contact}, the restriction of $\omega_u^n$ to $\{u<v_s\}$ is concentrated on $\mathcal C_+$. On $\mathcal C_+$ we have $\chi'(-\varphi-s)>0$, and, with the inverse derivative evaluated only on this set,
    \begin{equation}
        \begin{split}
        \mathbf 1_{\mathcal C_+} (\chi'(-\varphi-s))^{-n}\omega_u^n &= \mathbf 1_{\mathcal C_+} \bigl(\gamma'(-u)\bigr)^n\omega_u^n\\ &\leq \mathbf 1_{\mathcal C_+}\omega_v^n\\
        &\leq 1_{\mathcal C_+}\omega_\varphi^n,
        \end{split}
    \end{equation} where in the last inequality we used Lemma~\ref{lem: max-princ} since $v=\varphi+s$ on $\mathcal{C}_+$. Multiplying on $\mathcal C_+$ by $(\chi'(-\varphi-s))^n$ and using the preceding support property, we obtain on $\{u<v_s\}$,
    \begin{align*}
        \omega_u^n&\leq c_s(2cA_s)^{-1}\mathbf{1}_{\widetilde\Omega_s}(-\varphi-s)^{r}\omega_\varphi^n\\
        &\leq c_s(2cA_s)^{-1}\mathbf{1}_{\Omega_s}(-\varphi-s)^{r}cfdV \\
        &= \frac{1}{2}\omega_{v_s}^n.
    \end{align*}
 We apply the domination principle (Proposition~\ref{Domination principle-1}) to obtain
    $v_s\leq u $. Since $u\le\psi$, we have
$v_s\le\psi$. On $\widetilde\Omega_s$, this gives
\[
v_s
\le-\frac{n}{n+r}\left(\frac{2cA_s}{c_s}\right)^{-\frac{1}{n}}
(-\varphi-s)^{\frac{n+r}{n}}
+\frac{n}{n+r}\left(\frac{2cA_s}{c_s}\right)^{\frac{1}{r}}.
\]
On $\Omega_s\setminus\widetilde\Omega_s$, we have
\[
0<-\varphi-s\le\left(\frac{2cA_s}{c_s}\right)^{\frac{1}{r}},
\]
and therefore
\[
\frac{n}{n+r}\left(\frac{2cA_s}{c_s}\right)^{-\frac{1}{n}}
(-\varphi-s)^{\frac{n+r}{n}}\le\frac{n}{n+r}\left(\frac{2cA_s}{c_s}\right)^{\frac{1}{r}}.
\]
The desired inequality follows.
\end{proof}

\begin{proof}[Proof of Theorem~\ref{thm: estimate-noncollapsing}]
Before separating the two cases, we record a uniform upper bound for
the normalizing constant $c$.  Set
\[
 L:=2\max\{1,\|f\|_w\},\qquad F:=L^{-1}f.
\]
By the definition of the Luxemburg norm, $\int_Xw(F)\,dV\leq1$.
Since $w(t)/t$ is increasing and tends to $+\infty$, we can choose
$R>1$, depending only on $L$ and $w$, such that
$LR/w(R)\leq1/2$.  It follows that
\[
 \int_{\{F>R\}}f\,dV
 \leq L\frac{R}{w(R)}\int_Xw(F)\,dV\leq\frac12.
\]
Since $f\leq LR$ on $\{F\leq R\}$ and $\int_Xf\,dV=1$, we have
\[
 \int_Xf^{1/n}\,dV
 \geq\frac12(LR)^{-(n-1)/n}.
\] Lemma~\ref{lem: nomalizing-constant} yields that $c$ is uniformly bounded from above. While, Lemma~\ref{lem: monotonicity-volume} shows that $c\geq v_-(\theta)>0$.

  \textbf{Step 1:}  We first treat the case that $w(t)=t^p$; i.e., $f\in L^p(dV)$ for some $p>1$.

  Assume that $\theta\leq A\omega_X$ for $A>1$. Let $p^*$ be the conjugate exponent of $p$, i.e., $\frac{1}{p}+\frac{1}{p^*}=1$. We apply Lemma~\ref{lem: key} with $r=\frac{1}{p^*}$.
By the H\"older inequality, we have
\begin{equation*}
    A_s\leq \int_X(-\varphi)^{{1}/{p^*}}fdV\leq \|f\|_{p}\cdot\|\varphi\|_{1}^{1/p^*}.
\end{equation*} 
Since $\varphi$ belongs to the compact family of
$A\omega_X$-psh functions normalized by $\sup_X u=0$, its follows from ~\cite[Theorem 8.19]{dinew2019lectures} that their
$L^1(dV)$ norms are uniformly bounded, 
by a positive constant only depending on $dV$, $\theta$, $A,\omega_X$. Hence, since $c_s\geq v_-(\theta)$, we obtain $$\Lambda{=\frac{n}{n+r}(2c c_s^{-1}A_s)^{\frac{1}{r}}}\leq C\|f\|_p^{p^*
}.$$ 

Define $\phi(s):=\int_{\Omega_s}fdV$. Fix $q>1$ to be chosen later. 
 Lemma~\ref{lem: key} (with $r=\frac{1}{p^*}$) and H\"older's inequality yield
\begin{align*}
    A_s&\leq C_{n,r}\left(\frac{2c}{v_-(\theta)}\right)^{\frac{r}{n+r}}A_s^{\frac{r}{n+r}}\int_{\Omega_s}(-v_s+\Lambda)^{\frac{nr}{n+r}}fdV\\
    &\leq C_{n,r}\left(\frac{2c}{v_-(\theta)}\right)^{\frac{r}{n+r}}A_s^{\frac{r}{n+r}}\left(\int_{\Omega_s}(-v_s+\Lambda)^{\frac{nrq}{n+r}}fdV \right)^{1/q}\left(\int_{\Omega_s}fdV \right)^{1-1/q}\\
    &\leq C_{n,r}\left(\frac{2c}{v_-(\theta)}\right)^{\frac{r}{n+r}}A_s^{\frac{r}{n+r}}\|f\|_{p}^{1/q}\|(-v_s+\Lambda)\|_{\frac{nrqp^*}{n+r}}^{\frac{nr}{n+r}}\phi(s)^{1-\frac{1}{q}}\\
    &\leq C_{n,r}\left(\frac{2c}{v_-(\theta)}\right)^{\frac{r}{n+r}}A_s^{\frac{r}{n+r}}\|f\|_{p}^{1/q} (\|v_s\|_{\frac{nrqp^*}{n+r}}+\Lambda\|1\|_{\frac{nrqp^*}{n+r}})^{\frac{nr}{n+r}}\phi(s)^{1-\frac{1}{q}}\\
     &\leq C(c,v_-(\theta),n,p,q,\|f\|_p)A_s^{\frac{r}{n+r}}\phi(s)^{1-\frac{1}{q}},
\end{align*} where $C_{n,r}=\left( \frac{n+r}{n}\right)^{\frac{nr}{n+r}}$. 
Hence $A_s\leq C_0\phi(s)^{(1-\frac{1}{q})\frac{n+r}{n}}$.
On the other hand, for any $t\in [0,1]$, we have
\[A_s =\int_{\Omega_s}(-\varphi-s)_+^rfdV\geq t^r\phi(s+t)\geq t\phi(s+t).\] It follows that 
\[t\phi(s+t)\leq C_0\phi(s)^{(1-\frac{1}{q})\frac{n+r}{n}},\]
where we choose $q>\frac{n}{r}+1=np^*+1$ so that $\delta_0=(1-\frac{1}{q})\frac{n+r}{n}-1\in(0,\frac{1}{np^*})$. 
We observe that  
\[ \phi(s)\leq \frac{1}{s^{1/p^*}}\int_{\Omega_s}(-\varphi)^{\frac{1}{p^*}}fdV\leq \frac{C_1}{s^{1/p^*}}\xrightarrow{s\to\infty}0,\] so there exists $s_0>0$ such that $\phi(s_0)^{\delta_0}<\frac{1}{2C_0}$.
Since $\phi$ is non-increasing and right-continuous we can apply De Giorgi's iteration lemma (see, e.g., ~\cite[Lemma 2.4]{eyssidieux2009singular}) to obtain that $\phi(s)=0$ for all $s\geq S_\infty=s_0+\frac{2C_0\phi(s_0)^{\delta_0}}{1-2^{-\delta_0}}$. 

Therefore, $\omega_\varphi^n(\varphi<-S_\infty)=0$,
by the domination principle, $\varphi\geq -S_\infty$ on $X$.

\medskip \textbf{Step 2:} we treat the general case $w(t)\asymp_{+\infty}th(\log t)$, where $h$ satisfies the integrability condition $\int_1^{+\infty}h^{-1/n}(t)dt<+\infty$. 

\medskip
\textit{Step 2.1. Reduction to the $L^p$ density.} 
 By Theorem \cite[Corollary 5.9]{ALEHYANELuSalouf26-Hermitian}, there exists $v\in\mathcal{E}(X,\omega)$ and a unique $c_1>0$ such that $\int_X\omega_v^n\geq v_-(\omega)$ and
\begin{equation}
    (\omega+\dc v)^n=c_1 cw\circ f dV,\quad\sup_Xv=0,
\end{equation}  where $\mathcal{E}(X,\omega)=\{u\in\PSH(X,\omega): P_\omega(bu)\in\PSH(X,\omega),\,\forall\, b\geq 1\}$; cf~\cite[page 17]{ALEHYANELuSalouf26-Hermitian}.
By Lemma \ref{lem: monotonicity-volume}, $v_-(\omega)\geq v_-(\theta)$ it follows  that $$c_1\geq \frac{v_-(\theta)}{c\int_Xw(f)dV}>0.$$The $\Delta_2$-condition gives $\int_Xw(f)dV\leq L_w$ for a uniform constant $L_w\geq1$ depending only on $w$, $dV$, and an upper bound for $\|f\|_w$. Choose $a_w>0$ and $B_0>1$ such that $w(t)\geq a_wt h(\log t)$ for $t\geq e^{B_0}$.
Thanks to Skoda-Tian's integrability theorem, 
    we can choose $\alpha=\alpha(n,A\omega_X)/2$ so that $\|e^{-\alpha v}\|_2\leq C_\alpha$ uniformly, since $v\in\PSH(X,A\omega_X)$ and $\sup_Xv=0$. Put
\(
 K:=4\left(\frac{cL_w}{a_wv_-(\theta)}\right)^{1/n}.
\)
Choose $B>B_0$ so large that $Kh(B)^{-1/n}\leq1$, and define
\[
 \gamma(t):=\frac{K}{\alpha}
 \int_B^{B+\alpha t}h(s)^{-1/n}\,ds,\qquad t\geq0.
\]
The function $\gamma$ is increasing and concave, $\gamma(0)=0$,
$0<\gamma'\leq1$, and
\[
 0\leq\gamma(t)\leq
 M:=\frac{K}{\alpha}\int_B^{+\infty}h(s)^{-1/n}\,ds<+\infty.
\]
      We set $\psi:=-\gamma(-v)$, which is $\omega$-psh since
    \[\omega_\psi=\gamma'(-v)\omega_v+[1-\gamma'(-v)]\omega-\gamma''(-v)dv\wedge d^c v\geq \gamma'(-v)\omega_v. \]
   It follows that $$\omega^n_\psi\geq (\gamma'(-v))^n\omega_v^n.$$
   These inequalities are justified by smooth approximation of the concave function $\gamma$ and truncation: set $v_j=\max\{v,-j\}$ and $\psi_j=-\gamma(-v_j)$. Plurifine locality gives $\omega_{\psi_j}^n\geq\mathbf1_{\{v>-j\}}(\gamma'(-v))^n\omega_v^n$, and bounded decreasing Bedford--Taylor convergence gives the result as $j\to\infty$, with $\psi=-M$ on $\{v=-\infty\}$, which is not charged by $\omega_v^n$.
On $\{\log f<-\alpha v+B\}$, we have
\(\omega_\varphi^n\leq ce^B e^{-\alpha v}dV\). On the other hand, on $\{\log f\geq -\alpha v+B\}$, 
\[\omega_\psi^n\geq (\gamma'(-v))^n\omega_v^n\geq \left[\gamma'\left(\alpha^{-1}(\log f-B)\right)\right]^n c_1 cw(f)dV. \]
Thus on $\{\log f\geq -\alpha v+B\}$, 
{\[\omega_\psi^n\geq K^nc_1a_wcf\,dV\geq 4^{n}cfdV=4^{n}\omega_\varphi^n.\]} 
Therefore, we have
\[\omega_\varphi^n\leq 4^{-n}\omega_\psi^n+ ce^Be^{-\alpha v}dV. \]  

\medskip
\textit{Step 2.2. Conclusion.}
Set $g:=ce^Be^{-\alpha v}$. Set $V:=\int_XdV$ and $\eta:=\frac{v_-(\theta)}{2\cdot4^n}$. We choose $C>0$ uniformly so large that
\[\int_{\{\varphi<-C\}}gdV\leq\|g\|_2(\frac{\|\varphi\|_1}{C})^{1/2}\leq\eta.\]
This is possible since $\|\varphi\|_1$ is uniformly bounded. Set
\[\hat g:=\mathbf{1}_{\{\varphi<-C\}}g+\frac{\eta}{V}.\]
Then $\eta\leq \|\hat g\|_1\leq2\eta$ and
\[\left\|\frac{\hat g}{\|\hat g\|_1}\right\|_2\leq\eta^{-1}\bigl(\|g\|_2+\eta V^{-1/2}\bigr).\]
It follows from Theorem~\ref{thm: exist-hermit}
that there exist $m>0$ and $u\in\PSH(X,\omega)\cap L^\infty(X)$ which solve \[(\omega+\dc u)^n=m\frac{\hat g}{\|\hat g\|_1}dV,\quad\sup_X u=0. \]
    We observe that $m\geq v_-(\theta)$. Also $c$ is uniformly bounded from above.
    By Step 1, $u\geq -C_0$ where $C_0$ depends only on $A\omega_X$, $dV$, $n$, $v_-(\theta)^{-1}$ and $\|g\|_2$.
    {Moreover, $\frac{m}{\|\hat g\|_1}\geq \frac{v_-(\theta)}{2\eta}=4^n$, so
    \[\omega_u^n\geq4^n\mathbf{1}_{\{\varphi<-C\}}g\,dV.\]}
 Then on the set $\{\varphi<\frac{\psi+u}{2}-C\}\subset\{\varphi<-C\}$, we have
 \begin{align*}
     \omega_\varphi^n\leq 4^{-n}\om_\psi^n+ 4^{-n}\om_u^n\leq \frac{1}{2^n}\left({\omega}+\dc\frac{\psi+u}{2} \right)^n.
 \end{align*} Then by the domination principle {
 (Proposition~\ref{Domination principle-1})}, $\varphi\geq \frac{\psi+u}{2}-C$. 
This completes the proof.
\end{proof}

\begin{theorem}\label{thm: exist-noncollapsing}
    Let $\theta$ be a semipositive (1,1)-form such that $v_-(\theta)>0$. Let $0\leq f\in L^w(dV)$ be such that $\int_X fdV=1$, where the weight $w$ satisfies \hyperlink{conditionK}{Condition (K)}.
    Then
    \begin{enumerate}
        \item for any $\lambda>0$ there exists a unique $\varphi_\lambda\in\PSH(X,\theta)\cap L^\infty(X)$ such that 
        \begin{equation}
            (\theta+\dc\varphi_\lambda)^n=e^{\lambda\varphi_\lambda}fdV.
        \end{equation}
        \item there exist a unique constant $c=c(\theta,f)>0$ and a $\varphi\in\PSH(X,\theta)\cap L^\infty(X)$ such that $\sup_X\varphi=0$ and 
        \begin{equation}\label{eq: cmae-semi-noncollapsing}
            (\theta+\dc\varphi)^n=cfdV.
        \end{equation}
    \end{enumerate}
\end{theorem}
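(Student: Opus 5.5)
We approximate $\theta$ by the Hermitian forms $\omega_j:=\theta+2^{-j}\omega_X$, which decrease to $\theta$, and pass to the limit using estimates that are uniform in $j$. Throughout we may assume $\theta\le A\omega_X$ with $A\geq 1$, so that $\theta\le\omega_j\le (A+1)\omega_X$. By Lemma~\ref{lem: monotonicity-volume}, $v_-(\omega_j)\geq v_-(\theta)>0$ for every $j$.

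For (2), Theorem~\ref{thm: exist-hermit} provides continuous $\varphi_j\in\PSH(X,\omega_j)$ and constants $c_j>0$ with $(\omega_j+\dc\varphi_j)^n=c_jf\,dV$ and $\sup_X\varphi_j=0$. The constants satisfy
\[
v_-(\theta)\le c_j\le C .
\]
The lower bound holds because $c_j=\int_X(\omega_j+\dc\varphi_j)^n\geq v_-(\omega_j)$. The upper bound comes from Lemma~\ref{lem: nomalizing-constant}, using the lower bound on $\int_Xf^{1/n}dV$ recorded at the start of the proof of Theorem~\ref{thm: estimate-noncollapsing}. Theorem~\ref{thm: estimate-noncollapsing}, applied with $\omega=\omega_j$, then gives $\operatorname{Osc}(\varphi_j)\le C_0$ uniformly in $j$. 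Passing to a subsequence, we get $c_j\to c$, and by compactness $\varphi_j\to\varphi$ in $L^1$ with $\varphi\in\PSH(X,\theta)$ and $-C_0\le\varphi\le 0$. Hartogs' lemma gives $\sup_X\varphi=0$.

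The main obstacle is identifying $(\theta+\dc\varphi)^n$: $L^1$ convergence alone does not let us pass to the limit in the Monge--Amp\`ere operator. I would first try to replace $\varphi_j$ by a monotone sequence. Set
\[
\tilde\varphi_j:=\Bigl(\sup_{k\ge j}\varphi_k\Bigr)^*\in\PSH(X,\omega_j).
\]
This sequence decreases to $\varphi$ and is uniformly bounded. Each $\omega_k$-psh function with $k\ge j$ is $\omega_j$-psh, so Proposition~\ref{lem: GL23-19}(1), iterated over finite maxima, gives $(\omega_j+\dc\tilde\varphi_j)^n\ge (\inf_{k\ge j}c_k)f\,dV$. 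Combined with \eqref{eq:varying-background-convergence} this yields $(\theta+\dc\varphi)^n\ge cf\,dV$. For the reverse inequality, use $P_\theta(\min)$-type envelopes: Proposition~\ref{lem: GL23-19}(2) applied to $\min(\varphi_j,\dots,\varphi_k)$ produces a sequence increasing in $k$ and decreasing in $j$ whose $\theta$-Monge--Amp\`ere measures are $\le(\sup_{l\ge j}c_l)f\,dV$. Alternatively, compare total masses, using $\int_X(\theta+\dc\varphi)^n\le\liminf_j c_j$ obtained from the mass bounds via weak convergence. Either way one gets equality $(\theta+\dc\varphi)^n=cf\,dV$. Uniqueness of $c$ follows from Corollary~\ref{coro: domination-semipositive}(1), since $\theta$ is non-collapsing.

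For (1), take $\varphi$ and $c$ from (2) and set
\[
\psi_-:=\varphi+\lambda^{-1}\log c,\qquad \psi_+:=\varphi+C_0+\lambda^{-1}\log c .
\]
Then $e^{\lambda\psi_-}f\le cf\le e^{\lambda\psi_+}f$, so $\psi_-$ is a subsolution and $\psi_+$ a supersolution of the equation in (1). Solutions $\varphi_{\lambda,j}$ of the $\lambda$-equation for $\omega_j$ exist by Theorem~\ref{thm: exist-hermit}, and by the same sub/supersolution comparison (Corollary~\ref{coro: domination-semipositive}(2)) they are bounded uniformly in $j$. The limiting argument of (2), with the $\max$ and envelope inequalities of Proposition~\ref{lem: GL23-19} applied now with $\lambda>0$, yields a bounded $\theta$-psh solution $\varphi_\lambda$. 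Uniqueness follows from Corollary~\ref{coro: domination-semipositive}(2) applied in both directions.
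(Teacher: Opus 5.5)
\textbf{Part (2): the upper bound on $(\theta+\dc\varphi)^n$ is not established.} Your lower bound $(\theta+\dc\varphi)^n\ge cf\,dV$ via $\tilde\varphi_j=(\sup_{k\ge j}\varphi_k)^*$ is correct and matches the paper. The reverse inequality is the gap. Run with Proposition~\ref{lem: GL23-19}(2) at $\lambda=0$, your envelopes $P_\theta(\min_{j\le l\le k}\varphi_l)$ only give a bounded $\psi:=(\lim_jP_\theta(\inf_{k\ge j}\varphi_k))^*\le\varphi$ with $(\theta+\dc\psi)^n\le cf\,dV\le(\theta+\dc\varphi)^n$. (These envelopes decrease in $k$ and increase in $j$, not conversely.)

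None of the available domination statements turns this into equality. Proposition~\ref{Domination principle-1} and Corollary~\ref{coro: domination-semipositive}(1) need a constant strictly below $1$. Corollary~\ref{coro: domination-semipositive}(2) needs a factor $e^{\lambda(\psi-\varphi)}$ with $\lambda>0$. "Either way one gets equality" is therefore unsupported.

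The mass comparison does not work either. You only have $\varphi_j\to\varphi$ in $L^1$, so nothing gives $(\omega_j+\dc\varphi_j)^n\to(\theta+\dc\varphi)^n$. The masses of $(\omega_j+\dc\tilde\varphi_j)^n$ do converge to $\int_X(\theta+\dc\varphi)^n$, but they are not controlled by the $c_k$. For $k\ge j$, $(\omega_j+\dc\varphi_k)^n$ already exceeds $c_kf\,dV$ by the mixed terms, and there is no mass identity for maxima in the Hermitian setting. This lack of mass control is exactly the obstacle.

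The missing idea is the paper's exponential trick:
\begin{itemize}
\item Write $c_kf\,dV=e^{\varphi_k}(c_ke^{-\varphi_k}f)\,dV$ and apply Proposition~\ref{lem: GL23-19}(2) with $\lambda=1$. Then $p_{j,N}:=P_\theta(\min_{j\le k\le N}\varphi_k)$ satisfies $(\theta+\dc p_{j,N})^n\le e^{p_{j,N}}\max_{j\le k\le N}(c_ke^{-\varphi_k})f\,dV$.
\item Let $N\to\infty$ and then $j\to\infty$, using uniform bounds, Bedford--Taylor along monotone sequences, and $\inf_{k\ge j}\varphi_k\to\varphi$ a.e. This gives $(\theta+\dc\psi)^n\le e^{\psi-\varphi}cf\,dV\le e^{\psi-\varphi}(\theta+\dc\varphi)^n$.
\item Corollary~\ref{coro: domination-semipositive}(2) now applies and yields $\psi\ge\varphi$, hence $\psi=\varphi$ and $(\theta+\dc\varphi)^n=cf\,dV$.
\end{itemize}

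\textbf{Part (1): the upper bound for $\varphi_{\lambda,j}$ fails as stated.} Your envelope argument does close here, precisely because $\lambda>0$ supplies the factor $e^{\lambda(\psi-\varphi_\lambda)}$ that (2) lacks. The problem is the uniform upper bound. $\psi_+$ is a supersolution for $\theta$, not for $\omega_j$: $(\omega_j+\dc\psi_+)^n\ge(\theta+\dc\psi_+)^n=cf\,dV$ points the wrong way, so Corollary~\ref{coro: domination-semipositive}(2) on $\omega_j$ gives nothing. ($\psi_-$ does give the lower bound, since that inequality points the right way.)

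The fix is the paper's monotonicity. For $k>j$, $\varphi_{\lambda,k}$ is $\omega_j$-psh with $(\omega_j+\dc\varphi_{\lambda,k})^n\ge e^{\lambda\varphi_{\lambda,k}}f\,dV$. Domination on $\omega_j$ then gives $\varphi_{\lambda,k}\le\varphi_{\lambda,j}\le\sup_X\varphi_{\lambda,1}$. The sequence is therefore decreasing, and \eqref{eq:varying-background-convergence} together with dominated convergence identifies the limit directly, with no envelopes needed. The lower bound can come from Theorem~\ref{thm: estimate-noncollapsing}, as in the paper, or from $\psi_-$ once (2) is repaired.
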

\begin{proof}
    (1) Set $\omega_j:=\theta+2^{-j}\omega_X$. By Theorem~\ref{thm: exist-hermit}, 
    there exists $\varphi_j\in\PSH(X,\omega_j)\cap L^\infty$ such that
    \[(\omega_j+\dc\varphi_j)^n=e^{\lambda\varphi_j}fdV. \]
    For $j<k$, we have
    \[e^{-\lambda\varphi_j}(\omega_j+\dc\varphi_j)^n\leq e^{-\lambda\varphi_k}(\omega_j+\dc\varphi_k)^n , \]
    it follows from Corollary~\ref{coro: domination-semipositive} that $\varphi_j\geq \varphi_k$. Thus, $j\mapsto \varphi_j$ decreases. We infer that 
    \[(\omega_j+\dc\varphi_j)^n\leq e^{\lambda\sup_X\varphi_1}fdV. \]
 {Set $M_j:=\sup_X\varphi_j$ and $a_j:=\int_Xe^{\lambda\varphi_j}f\,dV$. By Lemma~\ref{lem: monotonicity-volume},
    \[
    v_-(\theta)\leq v_-(\omega_j)\leq a_j\leq e^{\lambda M_j},
    \qquad \lambda^{-1}\log v_-(\theta)\leq M_j\leq M_1.
    \]
    The densities $F_j:=a_j^{-1}e^{\lambda\varphi_j}f$ satisfy
    \[
    \int_XF_j\,dV=1,\qquad
    \|F_j\|_w\leq v_-(\theta)^{-1}e^{\lambda M_1}\|f\|_w.
    \]
    Choose $A>0$ such that $\omega_1\leq A\omega_X$. Applying Theorem~\ref{thm: estimate-noncollapsing} to $\varphi_j-M_j$, whose Monge--Amp\`ere measure is $a_jF_jdV$, implies that} $\textrm{Osc}(\varphi_j)\leq C$ for a constant $C>0$ depending on $X$, $dV$, $n$, {$A$,} $v_-(\theta)$, $w$, $\|f\|_w$ and $\lambda\sup_X\varphi_1$. Thus, it follows that $\varphi_j$ decreases to some $\varphi_\lambda\in\PSH(X,\theta)$, which is bounded below by the previous estimate and bounded above by $\sup_X\varphi_1$;
therefore, it is uniformly bounded.
{By \eqref{eq:varying-background-convergence} and dominated convergence,
\[
(\theta+\dc\varphi_\lambda)^n
=\lim_{j\to\infty}(\omega_j+\dc\varphi_j)^n
=e^{\lambda\varphi_\lambda}f\,dV
\]
in the weak sense.}
The uniqueness follows immediately from Corollary~\ref{coro: domination-semipositive}.

 If $\varphi$ is a solution of \eqref{eq: cmae-semi-noncollapsing}, then we set $\psi=\varphi{-}\inf_X\varphi+\lambda^{-1}\log c$. Hence, we have
 \[(\theta+\dc\psi)^n=cfdV\leq e^{\lambda\psi}fdV \]
The domination principle implies $\varphi_\lambda\leq \psi$, therefore $\sup_X\varphi_\lambda$ depends only on 
$\|\varphi\|_\infty$ and a positive lower bound for $\lambda$. Thus, $\|\varphi_\lambda\|_\infty$ does not depend on an upper bound $\lambda$.

\medskip \noindent (2) By Theorem~\ref{thm: exist-hermit}, we can find $c_j>0$ and $\varphi_j\in\PSH(X,\omega_j)$ such that 
\[(\omega_j+\dc\varphi_j)^n=c_jfdV,\quad\sup_X\varphi_j=-1. \]
It is easy to see that $c_j\geq v_-(\theta)$ for all $j$. Corollary~\ref{coro: domination-semipositive} shows that $j\mapsto c_j$ decrease to some $c=c(\theta,f)\in [v_-(\theta),c_1]$. Moreover, we have
\[(\omega_j+\dc\varphi_j)^n\leq c_1fdV,\quad\sup_X\varphi_j=-1. \]
It follows from Theorem~\ref{thm: estimate-noncollapsing} that $\varphi_j\geq -C$ for $C> 0$ depending on $X$, $dV$, $n$, $v_-(\theta)$, $c_1$, $w$, and $\|f\|_w$, for all $j$. Up to extracting  a subsequence, Hartogs' lemma  yields that $\varphi_j$ converges in $L^1(dV)$ and a.e. to some $\varphi\in\PSH(X,\theta)\cap L^\infty$. 
We retain the notation $\omega_j,c_j,\varphi_j$ for the subsequences.
We define 
\[\psi_j^+:=(\sup_{k\geq j}\varphi_k)^*,\quad \psi_j^{-}=P_\theta(\inf_{k\geq j}\varphi_k). \]
Observe that $\psi_j^+$ is a decreasing sequence that converges to $\varphi$, and $\psi_j^-$ is an increasing sequence {whose upper semicontinuously regularized limit $\psi:=(\lim_j\psi_j^-)^*$ belongs to $\PSH(X,\theta)\cap L^\infty(X)$; the convergence holds outside a pluripolar set}. In particular, by construction,
we have $\varphi\geq \psi$. We are going to show that $\varphi=\psi$. {For $k\geq j$, $\varphi_k\in\PSH(X,\omega_j)$ and
\[(\omega_j+\dc\varphi_k)^n\geq c_kf\,dV.\]
Applying Proposition~\ref{lem: GL23-19}(1) to finite maxima and then taking the increasing limit gives
\[(\omega_j+\dc\psi_j^+)^n\geq\inf_{k\geq j}c_kf\,dV=cf\,dV,\]
since $c_k\searrow c$. Fix $\ell$. For $j\geq\ell$, $\omega_\ell\geq\omega_j$, so
\[(\omega_\ell+\dc\psi_j^+)^n\geq cf\,dV.\]
Letting first $j\to\infty$ by the Bedford--Taylor convergence theorem (\cite[Theorem 3.18]{guedj2017degenerate}), and then $\ell\to\infty$ by the local expansion of the Monge--Amp\`ere operator, we obtain}
\[(\theta+\dc\varphi)^n\geq cfdV. \]
On the other hand, writing $(\omega_j+\dc\varphi_j)^n=e^{\varphi_j-\varphi_j}c_jfdV$, {set
\[p_{j,N}:=P_\theta\left(\min_{j\leq k\leq N}\varphi_k\right).\]
The repeated application of Proposition~\ref{lem: GL23-19} (2) gives
\[(\theta+\dc p_{j,N})^n\leq e^{p_{j,N}}\max_{j\leq k\leq N}(c_ke^{-\varphi_k})f\,dV.\]
Since $\varphi_k$ is uniformly lower bounded, the envelopes are uniformly bounded, and $p_{j,N}\searrow\psi_j^-$. Letting $N\to\infty$ by the Bedford--Taylor convergence theorem (\cite[Theorem 3.18]{guedj2017degenerate}) and dominated convergence yields}
\[(\theta +\dc\psi_j^-)^n\leq e^{\psi_j^- -\inf_{k\geq j}{\varphi_k}}\sup_{k\geq j}c_k fdV. \]
{Since $\psi_j^-\nearrow\psi$ outside a pluripolar set, $\inf_{k\geq j}\varphi_k\to\varphi$ almost everywhere, and $\sup_{k\geq j}c_k\to c$, the Bedford--Taylor convergence theorem (\cite[Theorem 3.23]{guedj2017degenerate}) and the dominated convergence give}
\[(\theta+\dc \psi)^n\leq e^{\psi-\varphi}cfdV\leq e^{\psi-\varphi}(\theta+\dc\varphi)^n. \]
Corollary~\ref{coro: domination-semipositive} yields $\varphi=\psi$. Therefore, $(\theta+\dc\varphi)^n=cfdV$.
{Hartogs' lemma gives $\sup_X\varphi=-1$; replacing $\varphi$ by $\varphi+1$ yields the required normalization. The uniqueness of $c$ follows from Corollary~\ref{coro: domination-semipositive} (1).

Finally, let $(c,\varphi)$ be the normalized solution obtained in (2), and set
\[
\psi_-:=\varphi+\lambda^{-1}\log c,\qquad
\psi_+:=\varphi-\inf_X\varphi+\lambda^{-1}\log c.
\]
Then $(\theta+\dc\psi_\pm)^n=cf\,dV$ and
\[e^{\lambda\psi_-}f\leq cf\leq e^{\lambda\psi_+}f.\]
Corollary~\ref{coro: domination-semipositive}(2) gives $\psi_-\leq\varphi_\lambda\leq\psi_+$, and hence
\[\|\varphi_\lambda\|_\infty\leq\|\varphi\|_\infty+\lambda^{-1}|\log c|.\]
Since $v_-(\theta)\leq c\leq c_1$ and Lemma~\ref{lem: nomalizing-constant} shows that $c_1$ is bounded from above, this estimate is uniform for $\lambda$ bounded away from zero and does not require an upper bound for $\lambda$.}
\end{proof}

\section{Big forms}
In this section, we assume $\theta$ is a real smooth {\em big} (1,1) form, i.e., there exists a $\theta$-psh function $\rho_0$ with {\em analytic singularities} satisfying $$\theta+\dc\rho_0\geq \varepsilon_0\omega_X$$ in the sense of currents for some $\varepsilon_0>0$.
We observe that $\rho_0$ is locally bounded in an open Zariski subset $\Omega:=X\setminus\{\rho_0=-\infty\}$. By subtracting a positive constant, we may assume that $\sup_X\rho_0=-1$.

We recall that (see~\cite[Theorem 8.19]{dinew2019lectures}) there exist $\alpha>0$ such that 
\begin{equation}
    \int_Xe^{-\alpha u}dV\leq C_\alpha ,\quad\forall\, u\in\PSH_0(X,\theta),
\end{equation} for $C_\alpha>0$ depending on $\alpha$, where $\PSH_0(X,\theta)=\{u\in\PSH(X,\theta):\sup_Xu=0\}$.

We prove the following lemma, generalizing the one in~\cite[Lemma 4.5]{BoucksomGuedjLu2025-volume}.

\begin{lemma}\label{lem: key-big} Let $w_h(t)\asymp_{+\infty}th(\log t)$ be a weight satisfying \hyperlink{conditionK}{Condition (K)} and \hyperlink{conditionI}{Condition~(I)}.
    Let $\psi\in\PSH(X,\theta)$ such that $0\geq \psi\geq \rho_0$. Let $0\leq f\in L^{w_h}(X,dV)$ be such that $\|f\|_{L^{w_h}}\leq 1$. Then there exists $v\in\PSH(X,\theta)$ with $\psi-1\leq v\leq \psi$ such that
    \begin{equation}
        (\theta+\dc v)^n\geq m f dV
    \end{equation} on $\Omega$, where $m>0$ depends only on $X$, $\omega_X$, $\theta$, $w_h$, $C_\alpha$, and $\sup_X\rho_0$.
\end{lemma}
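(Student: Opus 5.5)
The plan is to adapt the proof of \cite[Lemma 4.5]{BoucksomGuedjLu2025-volume}, replacing the $L^p$ integrability by the Orlicz pair $(w_h,w_{\widetilde h})$ of Lemmas~\ref{lem: weak-weight} and \ref{lem: weak-density}. The idea is to perturb $\psi$ by a bounded $\widetilde\omega$-psh subsolution for a suitably rescaled density, and to absorb the loss of positivity near the polar set of $\rho_0$ by convex combination with $\rho_0$. Concretely, the candidate is
\[
v=(1-t)\psi+t\bigl(\rho_0+u\bigr)\quad\text{or rather}\quad v=\psi+t\,\chi\circ(\rho_0,u),
\]
arranged so that $\theta+\dc v\geq t(\varepsilon_0\omega_X+\dc u)$ on $\Omega$. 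Here $u$ is an $\varepsilon_0\omega_X$-psh function with $-1\leq u\leq0$, obtained from Lemma~\ref{lem: subsolution} applied to $\varepsilon_0\omega_X$.

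\textbf{Step 1.} Weight $f$ by the singularity of $\rho_0$. Since $\rho_0\leq\psi$ and $\rho_0$ is unbounded, the natural choice is
\[
G:=f\,k(-\rho_0)\qquad\text{with a small weight}\qquad G:=f/k(-\rho_0)^{\ldots}.
\]
The precise form depends on how much mass is lost: writing $v=\psi+\text{(correction)}$, the convex combination $(1-t)\psi+t\rho_0$ gives positivity $t\varepsilon_0\omega_X$, which is independent of the singularity. So the Monge--Amp\`ere mass is really $t^n$ times that of the $u$-part. The main point is therefore to obtain $v\geq\psi-1$ on all of $X$, even though $\rho_0\to-\infty$.

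\textbf{Step 2.} Handle this by a dyadic decomposition
\[
E_j:=\{-j-1<\rho_0\leq -j\},\qquad j\geq1.
\]
On the band $E_j$ we use the combination $v_j=(1-t_j)\psi+t_j(\rho_0+j+u_j)$, with $t_j$ chosen so that $t_j(j+2)$ is summable, or at least so that $v_j\geq\psi-1$ after a $\max$-gluing. Here $u_j$ solves, via Lemma~\ref{lem: subsolution} or Theorem~\ref{thm: exist-hermit}, an equation with density proportional to $\mathbf 1_{E_j}f$, normalized in $L^{w_{\widetilde h}}$.

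Lemma~\ref{lem: weak-density} is the tool here. Taking $a<\alpha$ and using $\int e^{-\alpha\rho_0}dV\leq C_\alpha$, we get
\[
\int_X w_{\widetilde h}\bigl(f\,k(-\rho_0)\bigr)dV\lesssim 1+\int w_h(f)+C_\alpha,
\]
so $\|f k(-\rho_0)\|_{w_{\widetilde h}}$ is uniformly bounded. Consequently, on $E_j$ the density $f$ is controlled in $L^{w_{\widetilde h}}$ by roughly $k(j)^{-1}$. The subsolution $u$ given by Lemma~\ref{lem: subsolution} for the global density $F:=f\,k(-\rho_0)$ then yields, on $E_j$,
\[
(\varepsilon_0\omega_X+\dc u)^n\geq m\,k(j)f\,dV.
\]
Compensating with $t_j^n$ requires $t_j\approx k(j)^{-1/n}$.

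\textbf{Step 3.} Glue, taking the maximum construction
\[
v:=\sup_j\Bigl((1-t_j)\psi+t_j(\rho_0+j+u)\Bigr)^*\quad\text{intersected with}\quad\psi-1.
\]
This is better organized as a single concave reparametrization
\[
v:=\psi+\Gamma(\rho_0)+t_0u,\qquad \Gamma(s)=-\int_s^0\!\kappa,\qquad \kappa(s)\sim k(-s)^{-1/n}.
\]
Condition~(I) states exactly $\int^\infty k^{-1/n}<\infty$, so $\Gamma$ is bounded; rescaling makes $\Gamma\geq-1/2$, which gives $\psi-1\leq v\leq\psi$. The positivity is checked by mixing $\theta+\dc\psi\geq0$ with $\theta+\dc\rho_0\geq\varepsilon_0\omega_X$ using the weights $\kappa$. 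Then Proposition~\ref{lem: BGL25-213}(1), or direct Bedford--Taylor computation on $\Omega$, gives
\[
(\theta+\dc v)^n\geq c\,\kappa(\rho_0)^n(\varepsilon_0\omega_X+\dc u)^n\geq m f\,dV.
\]

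\textbf{Main obstacle.} The hard part is this last computation: making $\psi+\Gamma(\rho_0)+u$ genuinely $\theta$-psh while keeping the factor $\kappa(\rho_0)^n$ in front of $(\omega_X+\dc u)^n$. I would first try the convex combination $v=(1-\kappa)\psi+\kappa\rho_0+\ldots$ with $\kappa$ depending on $\rho_0$, handling the cross terms $d\rho_0\wedge d^c\rho_0$ through concavity of $\Gamma$.
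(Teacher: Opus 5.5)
\textbf{There is a genuine gap: the construction of $v$, which is the whole content of the lemma, is never carried out, and the candidates you write down do not work.}

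Your preparatory steps are right and match the paper. You boost the density to $g=k(2-\rho_0)f$. You bound $\|g\|_{w_{\widetilde h}}$ uniformly via Lemma~\ref{lem: weak-density} and $\int_X e^{-\alpha\rho_0}dV\le C_\alpha$. You take $u\in\PSH(X,\varepsilon_0\omega_X)$ with $-1\le u\le 0$ from Lemma~\ref{lem: subsolution}. You use Condition~(I) to keep the correction bounded.

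The candidates fail for the following reasons.
\begin{enumerate}
\item The function $\psi+\Gamma(\rho_0)+t_0u$ is not $\theta$-psh in general. Here $\Gamma''\ge0$, so the $d\rho_0\wedge d^c\rho_0$ term is harmless and is not the issue. The problem is that you count $\theta$ twice:
\[
\theta+dd^cv\ \ge\ \theta_\psi+\kappa(\rho_0)\theta_{\rho_0}-\kappa(\rho_0)\theta+t_0\,dd^cu .
\]
The terms $-\kappa(\rho_0)\theta$ and $t_0dd^cu\ge -t_0\varepsilon_0\omega_X$ are not absorbed where $\theta_\psi$ degenerates, e.g.\ for $\psi=V_\theta$.
\item Even ignoring this, the coefficient of $dd^cu$ is the constant $t_0$, while that of $\varepsilon_0\omega_X$ is the variable $\kappa(\rho_0)\to0$. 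So no factor $\kappa(\rho_0)^n(\varepsilon_0\omega_X+dd^cu)^n$ can be extracted.
\item A variable convex combination $(1-\kappa(\rho_0))\psi+\kappa(\rho_0)(\cdots)$ creates cross terms $d\kappa(\rho_0)\wedge d^c\psi$. No concavity in $\rho_0$ alone controls these.
\item The dyadic max-gluing would require $\sum_j j\,k(j)^{-1/n}<\infty$, which is stronger than Condition~(I). It also destroys $v\le\psi$.
\end{enumerate}

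\textbf{The missing idea} is to apply the concave reparametrization to the \emph{difference} $\psi-u_1$. Set $u_1:=\rho_0+u\in\PSH(X,\theta)$, so that $(\theta+dd^cu_1)^n\ge(\varepsilon_0\omega_X+dd^cu)^n\ge m'g/\|g\|_{w_{\widetilde h}}\,dV$. Define $v:=\psi-\gamma(\psi-u_1)$, with $\gamma(0)=0$ and $\gamma'(t)=c_\gamma k(1+t)^{-1/n}\le1$. Then
\[
\theta+dd^cv=(1-\gamma')\theta_\psi+\gamma'\theta_{u_1}-\gamma''\,d(\psi-u_1)\wedge d^c(\psi-u_1)\ \ge\ \gamma'(\psi-u_1)\,\theta_{u_1}.
\]
Because $\gamma''\le0$, the gradient term is a nonnegative square instead of an uncontrolled cross term. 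This is what makes a variable weight legitimate.

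Since $\rho_0\le\psi\le0$ and $-1\le u\le0$, one has $0\le\psi-u_1\le1-\rho_0$. As $k$ is increasing, $\gamma'(\psi-u_1)\ge c_\gamma k(2-\rho_0)^{-1/n}$, which exactly cancels the boost:
\[
(\theta+dd^cv)^n\ \ge\ c_\gamma^n\,m'\,\|g\|_{w_{\widetilde h}}^{-1}f\,dV .
\]
Finally, $0\le\gamma\le c_\gamma\int_1^\infty k^{-1/n}\le1$ gives $\psi-1\le v\le\psi$.
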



\begin{proof} If $f=0$ almost everywhere, take $v=\psi$. We may therefore assume that $\|f\|_{L^{w_h}}>0$. Let $k$ be the function constructed in Lemma \ref{lem: weak-density}.
Set
\(
g:=k(2-\rho_0)f.
\)
Since $k>0$, we have $\|g\|_{L^{w_{\widetilde h}}}>0$.
Lemma \ref{lem: weak-density} gives $g\in L^{w_{\widetilde h}}$ for some increasing function $\widetilde h$ with $$w_{\widetilde h}\asymp_{+\infty} t\widetilde h(\log t),\qquad\int_1^{+\infty}\frac{dt}{\widetilde h^{1/n}(t)}<+\infty,$$ and $\|g\|_{L^{w_{\widetilde h}}}\leq C_h$ for $C_h>0$ depending on $n$, $w_h$, and $C_\alpha$.
It follows from Lemma~\ref{lem: subsolution}  that there exists
$u\in \PSH(X,\varepsilon_0\omega_X)$ such that
$-1\leq u\leq 0$
and
\[
(\varepsilon_0\omega_X+dd^c u)^n\geq  m' \frac{g}{\|g\|_{L^{w_{\widetilde h}}}}\,dV,
\]
where $m'>0$ is uniformly bounded away from zero.
Set
$u_1:=\rho_0+u$.
Since
\(
\theta+dd^c\rho_0\geq \varepsilon_0\omega_X,
\)
we have
\[
\theta+dd^c u_1
=
\theta+dd^c\rho_0+dd^c u
\geq
\varepsilon_0\omega_X+dd^c u
\geq 0.
\]
Thus $u_1\in \PSH(X,\theta)$ and
\[
(\theta+dd^c u_1)^n\geq m g\,dV,
\] with $m=m'\|g\|_{{w_{\widetilde h}}}^{-1}$. 
As in the proof of \cite[Lemma 4.5]{BoucksomGuedjLu2025-volume}, we set
$$
v:=\psi-\gamma(\psi-u_1),$$ where $\gamma$ is a concave increasing function such that
\[\gamma'(t)=c_\gamma[k(1+t)]^{-1/n},\;\gamma(0)=0,\quad c_\gamma=\min\left\{\left( \int_1^\infty\frac{dt}{k(t)^{1/n}}\right)^{-1}, k(1)^{1/n}\right\}. \]
Observe that
\[
1
\leq
1+\psi-u_1
=
1-u+\psi-\rho_0
\leq
2-\rho_0,
\]
since $-1\leq u\leq 0$ and $\rho_0\leq \psi\leq 0$.
We compute
\begin{align*}
\theta+dd^c v
&=
\theta+dd^c\psi
-
\gamma'(\psi-u_1){dd^c(\psi-u_1)}     
-
\gamma''(\psi-u_1)d(\psi-u_1)\wedge d^c(\psi-u_1)
\\
&\geq
\left(
1-\frac{c_\gamma}{[ k(1+\psi-u_1)]^{1/n}} 
\right)
(\theta+dd^c\psi)
+
c_\gamma\frac{\theta+dd^c u_1}
     {[ k(1+\psi-u_1)]^{1/n}}
\\
&\geq
c_\gamma\frac{\theta+dd^c u_1}
     {[ k(2-\rho_0)]^{1/n}}.
\end{align*}
Therefore, $v\in \PSH(X,\theta)$ with $\psi-1\leq v\leq \psi$, and
\[
(\theta+dd^c v)^n
\geq
\frac{c_\gamma^n (\theta+dd^c u_1)^n}{k(2-\rho_0)}
\geq
\frac{c_\gamma^n m g}{k(2-\rho_0)}\,dV.
\]
This completes the proof.
\end{proof}

We establish the relative $L^\infty$-estimate.
\begin{theorem}\label{thm: estimate-big} Let $\lambda\geq 0$.
    Let $\theta$ be a real smooth big (1,1) form. Let $0\leq f\in L^w(X,dV)$, where $w$ is a weight satisfying \hyperlink{conditionK}{Condition (K)} and \hyperlink{conditionI}{Condition~(I)}.
    Let $\psi\in\PSH(X,\theta)$ such that  $0\geq \psi\geq \rho_0-O(1)$. If $\varphi\in\PSH(X,\theta) $ such that $\sup_X\varphi=0$ if $\lambda=0$, $\psi= \varphi+O(1)$ and 
    \[(\theta+\dc \varphi)^n\leq e^{\lambda\varphi} fdV \] on $\Omega$, then $\varphi\geq \psi- C$ for a constant $C>0$ only depending on $\omega_X$, $\theta$, $w$, $\lambda$, and an upper bound for $\|f\|_{w}$.
\end{theorem}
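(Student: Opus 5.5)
We argue by domination, mimicking the proof of Theorem~\ref{thm: estimate-hermit}, with subsolutions supplied by Lemma~\ref{lem: key-big} in place of Lemma~\ref{lem: subsolution}, and Proposition~\ref{Domination principle-big} in place of Proposition~\ref{Domination principle-1}. Normalize first. Replacing $\rho_0$ by $\rho_0-C'$ (which only changes the constants through $\sup_X\rho_0$), we may assume $0\geq\psi\geq\rho_0$. When $\lambda>0$, we bound $\sup_X\varphi$ from above: $\varphi\le\sup_X\varphi$ and the Monge--Amp\`ere mass bound give a bound for $e^{\lambda\sup\varphi}$. More simply, if $\sup_X\varphi=:M>0$, we use $(\theta+\dc\varphi)^n\le e^{\lambda M}f\,dV$ together with $\varphi-M\in\PSH_0(X,\theta)$, so that it suffices to treat the inequality $(\theta+\dc\varphi)^n\le K f\,dV$ with $\sup_X\varphi=0$. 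For $\lambda>0$, the case $M$ large is ruled out separately by comparing with a subsolution, as explained below. Dividing $f$ by $\max\{1,\|f\|_w\}$ we may also assume $\|f\|_w\le1$.

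\textbf{Main step.} Apply Lemma~\ref{lem: key-big} twice, always with the given $\psi$ as base. First, split $f=\mathbf 1_{\{f\le B\}}f+\mathbf 1_{\{f>B\}}f$. Because $w(t)/t\to\infty$, one can choose $B$ with $\|\mathbf 1_{\{f>B\}}f\|_w$ as small as needed, exactly as in Theorem~\ref{thm: estimate-hermit}. Lemma~\ref{lem: key-big} then yields $u$ with $\psi-1\le u\le\psi$ and
\[(\theta+\dc u)^n\ge 2^{n+1}K\,\mathbf 1_{\{f>B\}}f\,dV.\]
Second, the normalized functions $\varphi$ have uniformly bounded $L^1$ norm, so for $C$ large the set $\{\varphi<-C\}$ has small volume. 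Hence $\|\mathbf 1_{\{\varphi<\psi-C\}}\|_w$ is small, and Lemma~\ref{lem: key-big} gives $v$ with $\psi-1\le v\le\psi$ and
\[(\theta+\dc v)^n\ge 2^{n+1}KB\,\mathbf 1_{\{\varphi<\psi-C\}}dV.\]
On $\{\varphi<\tfrac{u+v}{2}-C\}\subset\{\varphi<\psi-C\}$ this gives
\[(\theta+\dc\varphi)^n\le \tfrac12\bigl(\theta+\dc\tfrac{u+v}{2}\bigr)^n.\]
Both $\varphi$ and $\tfrac{u+v}2$ are $\ge\rho_0-O(1)$ and differ by $O(1)$, since $\psi=\varphi+O(1)$. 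Proposition~\ref{Domination principle-big} therefore yields $\varphi\ge\tfrac{u+v}{2}-C\ge\psi-1-C$.

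For $\lambda>0$ without normalization, we compare $\varphi$ with $w:=v'+\lambda^{-1}\log m'$, where $v'$ is the subsolution from Lemma~\ref{lem: key-big} for $f$ itself. This gives $(\theta+\dc w)^n\ge e^{\lambda(w-\psi)}f\,dV\ge e^{\lambda(w-v'-1)}\cdots$. Corollary~\ref{coro: domination-big}(2) then directly gives $\varphi\ge v'-1+\lambda^{-1}\log m\ge\psi-C_\lambda$. This route may be cleaner than the normalization above, and it handles the $\lambda>0$ case entirely.

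\textbf{Expected obstacle.} The delicate point is the uniform smallness of $\|\mathbf 1_{\{\varphi<\psi-C\}}\|_{w}$. Here $\psi$ can be very singular, so we must use $\{\varphi<\psi-C\}\subset\{\varphi<-C\}$ and the uniform integrability coming from $\int e^{-\alpha\varphi}\le C_\alpha$. We also need to verify that the masses of the subsolutions are not charged by $X\setminus\Omega$, so that the inequalities holding on $\Omega$ suffice for the domination principle. This is fine because all the measures involved are non-pluripolar.
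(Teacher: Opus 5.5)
Your argument is essentially the paper's. For $\lambda>0$, the direct comparison of $\varphi$ with $v'+\lambda^{-1}\log m'$ via Corollary~\ref{coro: domination-big}(2) is exactly its Case~1. So the preliminary attempt to bound $\sup_X\varphi$ can be discarded, and the conclusion should read $\varphi\geq v'+\lambda^{-1}\log m'\geq\psi-1+\lambda^{-1}\log m'$. For $\lambda=0$, the two subsolutions from Lemma~\ref{lem: key-big} (one for $\mathbf{1}_{\{f>B\}}f$, one for the small sublevel set), combined through $\frac{u+v}{2}$ and Proposition~\ref{Domination principle-big}, are exactly its Case~2.

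Two points need correcting.

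First, do not normalize by shifting $\rho_0$ by the $O(1)$ constant in $\psi\geq\rho_0-O(1)$. That makes the constant of Lemma~\ref{lem: key-big}, hence $C$, depend on this $O(1)$, through $\sup_X\rho_0$ and the bound on $k(2-\rho_0)f$. The statement does not allow this dependence. Instead, replace $\psi$ by $\max\{\psi,\rho_0\}$, as the paper does. This function satisfies $\rho_0\leq\max\{\psi,\rho_0\}\leq 0$, is still $\varphi+O(1)$, and dominates $\psi$.

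Second, the smallness of $\|\mathbf{1}_{\{f>B\}}f\|_w$ does not follow from $w(t)/t\to+\infty$; that only makes the $L^1$-tail small. The weaker-weight device of Theorem~\ref{thm: estimate-hermit} is also not directly available here, since it would require Lemma~\ref{lem: key-big} for a weaker weight, for which Condition~(I) is not assumed. The paper obtains the smallness for the given $f$ from dominated convergence and the $\Delta_2$-condition. This makes $B$, and then $C$, depend on $f$ itself rather than only on $\|f\|_w$; the same is true of the paper's argument.
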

\begin{proof} {Assume $0\geq \psi\geq \rho_0$, otherwise we consider $\widehat\psi:=\max\{\psi,\rho_0\}$.}
We have $\|f\|_w>0$, since otherwise $(\theta+\dc\varphi)^n=0$, contradicting with Corollary~\ref{coro: domination-big} (1).

       \textbf{Case 1:} $\lambda>0$. 
    By Lemma~\ref{lem: key-big}, we can find $v\in\PSH(X,\theta)$ such that $\psi-1\leq v\leq \psi$ and
    \[(\theta+\dc v)^n\geq m\frac{f}{\|f\|_{w}}dV \] for $m>0$ depending on $\omega_X$, $\theta$, $p$, $C_\alpha$ and $\sup_X\rho_0$. Set $m'=m\|f\|_{w}^{-1}$.
    Since
    \[e^{-\lambda\varphi}(\theta+\dc\varphi)^n\leq m'^{-1}(\theta+\dc v)^n\leq e^{-\lambda (v+\lambda^{-1}\log m')}(\theta+\dc v)^n, \]
    Corollary~\ref{coro: domination-big} yields $\varphi\geq v+\lambda^{-1}\ln m'\geq \psi-1+\lambda^{-1}\ln m'$.

    \medskip \textbf{Case 2: $\lambda=0$}. We normalize $\sup_X\varphi=0$.
    The dominated convergence theorem implies that
    \begin{equation*}
        \lim_{t\to+\infty}\int_Xw(\mathbf{1}_{\{f>t\}}f)dV=\int_X\lim_{t\to+\infty}w(\mathbf{1}_{\{f>t\}}f)dV=0.
    \end{equation*} By the $\Delta_2$-condition, this implies
\(
\|\mathbf1_{\{f>t\}}f\|_w\xrightarrow{t\to+\infty}0.
\)
     Hence, there is $B>0$ so large that \[\|\mathbf{1}_{\{f>B\}}f\|_{w}\leq 2^{-n-1}m, \]
where $m>0$ is a common choice of the constants in Lemma~\ref{lem: key-big} for the weights $w$ and $t^2$. By Lemma~\ref{lem: key-big} again, we can find $\phi\in\PSH(X,\theta)$ such that $\psi -1\leq \phi\leq \psi$ and
\[(\theta+\dc\phi)^n\geq 2^{n+1}\mathbf{1}_{\{f>B\}}f\,dV. \]   
We infer that
\[(\theta+\dc\varphi)^n\leq 2^{-n-1}(\theta+\dc\phi)^n+BdV. \]
Since normalized \(\theta\)-psh functions have a uniform \(L^1\) bound,
we can choose $C>0$ so large that $$\|\mathbf{1}_{\{\varphi<-C\}}\|_{2}\leq m\frac{1}{2^{n+1}B}.$$ By Lemma~\ref{lem: key-big}, there exists a function $v\in\PSH(X,\theta)$ such that $\psi-1\leq v\leq \psi$ and \[(\theta+\dc v)^n\geq m\frac{\mathbf{1}_{\{\varphi<-C\}}}{\|\mathbf{1}_{\{\varphi<-C\}}\|_{2}}dV\geq \mathbf{1}_{\{\varphi<-C\}}2^{n+1}BdV. \]
If $\{\varphi<-C\}$ has zero volume, the last measure inequality still holds.
 Then on the set $\{\varphi<\frac{\phi+v}{2}-C\}\subset\{\varphi<-C\}$, we have
 \begin{eqnarray*}
     (\theta+\dc\varphi)^n&\leq& \frac{1}{2^{n+1}}(\theta+\dc \phi)^n+ \frac{1}{2^{n+1}}(\theta+\dc v)^n\\
     &\leq& \frac{1}{2}\left(\theta+\dc\frac{\phi+v}{2} \right)^n.
 \end{eqnarray*} Then by the domination principle (Proposition~\ref{Domination principle-big}), $\varphi\geq \frac{\phi+v}{2}-C\geq \psi-1-C$.
This completes the proof.
\end{proof}

Recall that there exists $\rho_0\in\PSH(X,\theta)$ such that $\theta+\dc\rho_0\geq \varepsilon_0\omega_X$ for some $\varepsilon_0>0$. Set $\Omega:=\{\rho_0>-\infty\}$. 
\begin{lemma}\label{lem: approximate}
    There exists a sequence $(\psi_j)$ of $(\theta-4^{-j}\varepsilon_0\omega_X)$-psh functions with solvable singularities such that $0\geq \psi_j\geq \rho_0$ and \[{
\left(\lim_{j\to\infty}
P_\theta\!\left(\inf_{k\ge j}\psi_k\right)\right)^*
=V_\theta.}\]
\end{lemma}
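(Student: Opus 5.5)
The plan is to combine Demailly's regularization (Theorem~\ref{thm: demailly-regularization}) applied to $V_\theta$ with a convex combination against $\rho_0$, which absorbs the small loss of positivity. Apply Theorem~\ref{thm: demailly-regularization} to $u=V_\theta\in\PSH(X,\theta)$. This gives a decreasing sequence $u_j\in\PSH(X,\theta+\varepsilon_j\omega_X)$ with solvable singularities, with $u_j\searrow V_\theta$ and $\varepsilon_j\searrow0$. Passing to a subsequence, we may assume $\varepsilon_j\leq 4^{-j}\varepsilon_0$. Since $V_\theta\leq0$ and the $u_j$ decrease to $V_\theta$, Hartogs' lemma gives $\sup_X u_j\to0$. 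Replacing $u_j$ by $u_j-\sup_Xu_j$, or by $u_j-\delta_j$ with $\delta_j\searrow0$ chosen so that $u_j-\delta_j\le 0$, keeps the sequence decreasing after a harmless modification, and the limit is still $V_\theta$.

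Next, fix $t_j:=2^{-j}$ and set
\[
\widetilde\psi_j:=(1-t_j)u_j+t_j\rho_0 .
\]
Then
\[
\theta+\dc\widetilde\psi_j\geq (1-t_j)(-\varepsilon_j\omega_X)+t_j\varepsilon_0\omega_X\geq (2^{-j}-4^{-j})\varepsilon_0\omega_X\geq 4^{-j}\varepsilon_0\omega_X,
\]
so $\widetilde\psi_j$ is $(\theta-4^{-j}\varepsilon_0\omega_X)$-psh. It has solvable singularities, since $\rho_0$ has analytic singularities; a common modification resolving both $u_j$ and $\rho_0$ exists by taking a resolution dominating both. Because $u_j\leq0$ and $\rho_0\le -1<0$, we get $\widetilde\psi_j\leq0$.

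For the lower bound, $u_j\geq V_\theta\geq\rho_0$ (recall $V_\theta\ge\rho_0$ as $\rho_0\le0$). This gives $\widetilde\psi_j\geq\rho_0$ only if the normalization shift $\delta_j$ does not spoil it. To avoid the issue, I would instead take $\psi_j:=\max(\widetilde\psi_j,\rho_0)$. This is still $(\theta-4^{-j}\varepsilon_0\omega_X)$-psh because $\rho_0$ is too, and it still has solvable singularities, since a maximum of functions with analytic singularities is handled after a further blow-up (or one may use a regularized maximum). It satisfies $\rho_0\le\psi_j\le0$.

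It remains to identify the limit. Set $\Phi_j:=P_\theta(\inf_{k\ge j}\psi_k)$. Since $\psi_k\le0$ is $\theta$-psh, $\psi_k\leq V_\theta$ and hence $\Phi_j\le V_\theta$. Moreover $\Phi_j$ is increasing in $j$, so $(\lim\Phi_j)^*\le V_\theta$. For the reverse inequality, we have $\psi_k\ge(1-t_k)u_k+t_k\rho_0-\delta_k\ge(1-t_k)V_\theta+t_k\rho_0-\delta_k$ for all $k$, using $u_k\ge V_\theta$. Because $t_k,\delta_k$ decrease, for $k\ge j$ this gives
\[
\inf_{k\ge j}\psi_k\geq (1-t_j)V_\theta+t_j\rho_0-\delta_j,
\]
where we use $\rho_0\le V_\theta$ so that the convex combination decreases as $t$ increases. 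The right-hand side is $\theta$-psh, so it lies below $\Phi_j$. Letting $j\to\infty$ yields $\lim\Phi_j\geq V_\theta$ on $\Omega$, and since $\Omega$ has pluripolar complement, $(\lim\Phi_j)^*\geq V_\theta$ everywhere. This gives the equality.

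The main obstacle I expect is the bookkeeping on solvable singularities: making sure that the convex combinations, the shifts, and the maximum with $\rho_0$ all preserve the solvable-singularities class. I would handle this by choosing, via Theorem~\ref{thm: demailly-regularization} and \cite[Proposition 3.12]{BoucksomGuedjLu2025-volume}, a single modification for each $j$ on which both $u_j$ and $\rho_0$ become divisorial, and by using a regularized maximum if needed.
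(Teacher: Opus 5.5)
Your overall scheme is the paper's: regularize $V_\theta$ by Demailly's theorem, take a convex combination with $\rho_0$ to recover the lost positivity, then run the two-sided comparison for the envelopes. The positivity computation, the bound $\Phi_j\le V_\theta$, and the lower bound $\inf_{k\ge j}\psi_k\ge(1-t_j)V_\theta+t_j\rho_0-\delta_j$ are all fine.

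The gap is the step $\psi_j:=\max(\widetilde\psi_j,\rho_0)$ and your justification that it keeps solvable singularities. In this paper that notion means that on some modification $\pi^*\psi_j=\sum a_\ell\log|s_\ell|+g$ with $g$ \emph{smooth}. This is exactly what is used later to write $\pi_j^*(\theta+dd^c\psi_j)=\omega_j+[E_j]$ with $\omega_j$ smooth (Theorem~\ref{thm: exist-big1}). Any such function is smooth outside a proper analytic subset of $X$. A maximum of two such functions, by contrast, is in general only Lipschitz across the real hypersurface where they cross, and that hypersurface lies in no proper analytic subset. So no further blow-up can repair it. A regularized maximum removes the crease, but it is not automatically of the form divisorial plus smooth near the common polar locus, and it also spoils the bound $\le0$ by $\eta$. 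As written, "handled after a further blow-up" is not a valid argument.

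The maximum is in fact unnecessary, for a reason you did not use. Since $\sup_X\rho_0=-1$, the function $\rho_0+1$ is a competitor in the envelope defining $V_\theta$, so $V_\theta\ge\rho_0+1$. Hence, as soon as $\delta_j\le1$ (true after discarding finitely many $j$),
\[
\widetilde\psi_j\ \ge\ (1-t_j)(V_\theta-\delta_j)+t_j\rho_0\ \ge\ (1-t_j)(\rho_0+1-\delta_j)+t_j\rho_0\ =\ \rho_0+(1-t_j)(1-\delta_j)\ \ge\ \rho_0 .
\]
So $\max(\widetilde\psi_j,\rho_0)=\widetilde\psi_j$ and you may simply take $\psi_j=\widetilde\psi_j$. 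This clearly has solvable singularities, via a common modification and a constant shift, and the rest of your argument goes through unchanged.

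The paper avoids the shift altogether. It passes to a subsequence with $\sup_X u_j\le 2^{-j}$ and lets the term $2^{-j}\rho_0\le-2^{-j}$ absorb it, giving $\psi_j\le(1-2^{-j})2^{-j}-2^{-j}=-4^{-j}<0$. The lower bound $\psi_j\ge(1-2^{-j})V_\theta+2^{-j}\rho_0\ge\rho_0$ then follows directly from $u_j\ge V_\theta$.

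Two minor points. First, $u_j-\sup_Xu_j$ need not be decreasing, but you never use monotonicity. Second, the inequality $2^{-j}-4^{-j}\ge4^{-j}$ requires $j\ge1$.
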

\begin{proof}
    See~\cite[Theorem 4.6]{BoucksomGuedjLu2025-volume}. We approximate $V_\theta$ by a
 sequence $(\psi_j)$ of $\theta$-psh functions with solvable singularities. 
We first apply Demailly's approximation
 to obtain a decreasing sequence
$
\widetilde{\psi}_j
    \in \PSH(X,\theta+2^{-j}\varepsilon_0\omega_X)$
converging to $V_\theta$. Since the convergence is decreasing and $X$ is compact,
we have $\sup_X\widetilde{\psi}_j\to\sup_XV_\theta=0$.
After passing to a subsequence and relabeling, we may also assume that
$\sup_X\widetilde{\psi}_j\leq2^{-j}$. 
We then set
\[
\psi_j
    :=
    (1-2^{-j})\widetilde{\psi}_j+2^{-j}\rho_0.
\]
Since $\rho_0\leq-1$, we have
\[
\psi_j\leq(1-2^{-j})2^{-j}-2^{-j}=-4^{-j}<0.
\] 
A direct computation gives
\[\theta+\dc\psi_j\geq 4^{-j}\varepsilon_0\omega_X. \]
Since
$\psi_j
    \geq
    (1-2^{-j})V_\theta+2^{-j}\rho_0
    \geq \rho_0$,
 we have
\[
P_\theta^*
\left(
    \inf_{k\geq j}\psi_k
\right)
\geq
(1-2^{-j})V_\theta+2^{-j}\rho_0
\nearrow V_\theta,
\qquad\text{as }j\to+\infty,
\] completing the proof.
\end{proof}

\begin{theorem}\label{thm: exist-big1}
    Let $\theta$ be a big (1,1)-form. Let $0\leq f\in L^w(X,dV)$ be such that $\int_XfdV=1$, where $w$ is a weight satisfying \hyperlink{conditionK}{Condition (K)} and \hyperlink{conditionI}{Condition~(I)}.
    Then for any $\lambda>0$, there exists a unique $\varphi_\lambda\in\PSH(X,\theta)$ such that $\varphi_\lambda=V_\theta+O(1)$ and
        \begin{equation}
            (\theta+\dc\varphi_\lambda)^n=e^{\lambda\varphi_\lambda}fdV.
        \end{equation} Moreover, $|\varphi_\lambda-V_\theta|\leq C$, for a constant $C>0$ depending on $X$, $\theta$, $\lambda$, $w$ and upper bounds for $\|f\|_{w}$ and $\|f\|_{1/n}^{-1}$.
\end{theorem}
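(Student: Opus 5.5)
The plan is to approximate $V_\theta$ by the functions $\psi_j$ of Lemma~\ref{lem: approximate}. We solve the equation on a modification where $\psi_j$ becomes divisorial, so that the reference form is genuinely Hermitian there. The estimates are then made uniform in $j$ using Lemma~\ref{lem: key-big} and Theorem~\ref{thm: estimate-big}, and we pass to the limit by envelope arguments as in the proof of Theorem~\ref{thm: exist-noncollapsing}(2).

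\textbf{Step 1: solving the approximate equations.} Fix $j$ and a modification $\pi_j:Y_j\to X$ such that $\pi_j^*(\theta+\dc\psi_j)=\beta_j+[E_j]$. Here $E_j$ is effective and $\beta_j\geq 4^{-j}\varepsilon_0\pi_j^*\omega_X$, so after a further perturbation of the decomposition \eqref{eq: decomposition-current} we may assume $\beta_j$ is Hermitian on $Y_j$. Writing $\pi_j^*\psi_j=\log|s_{E_j}|_{h}+g_j$ with $g_j$ smooth, we solve on $Y_j$, by Theorem~\ref{thm: exist-hermit}(1), the equation
\[(\beta_j+\dc u_j)^n=e^{\lambda(u_j+\pi_j^*\psi_j)}\pi_j^*(f\,dV).\]
The density $e^{\lambda\pi_j^*\psi_j}\pi_j^*f\,\pi_j^*dV/dV_{Y_j}$ lies in $L^w$, because $\psi_j\leq 0$ and the Jacobian is smooth. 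This is the point where some care with the Orlicz norm under pullback is needed: the Jacobian vanishes along the exceptional divisor, and $w$ satisfies $\Delta_2$. Pushing forward, $\varphi_j:=\psi_j+\pi_{j*}u_j$ is $\theta$-psh, satisfies $\varphi_j=\psi_j+O(1)$, and solves $(\theta+\dc\varphi_j)^n=e^{\lambda\varphi_j}f\,dV$ on $\Omega$. Both measures are non-pluripolar, so this identity holds globally.

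\textbf{Step 2: uniform bounds.} For the lower bound, Theorem~\ref{thm: estimate-big} with $\psi=\psi_j$ gives $\varphi_j\geq\psi_j-C$, with $C$ independent of $j$ because $0\geq\psi_j\geq\rho_0$. For the upper bound, let $\rho_j$ solve the $\lambda=0$ problem on $Y_j$ with constant $c_j$. By Lemma~\ref{lem: nomalizing-constant} and monotonicity, $c_j$ is bounded above by $c(A\omega_X,f)$, which is controlled by $\|f\|_{1/n}^{-1}$. Comparing $\varphi_j$ with $V_\theta+C'$ via Corollary~\ref{coro: domination-big}(2) then gives $\varphi_j\leq V_\theta+C$. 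The cleanest route is to compare $\varphi_j$ with $\sup$-normalized potentials: $\sup_X\varphi_j$ is bounded because $\int e^{\lambda\varphi_j}f\,dV$ is at most the total mass, which is bounded above. Any $\theta$-psh function with bounded supremum is at most $V_\theta+\sup$. Hence
\[\psi_j-C\leq\varphi_j\leq V_\theta+C\]
uniformly in $j$.

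\textbf{Step 3: limit and uniqueness.} Extract a subsequence with $\varphi_j\to\varphi$ in $L^1$ and set $\psi_j^+:=(\sup_{k\geq j}\varphi_k)^*$ and $\psi_j^-:=P_\theta(\inf_{k\geq j}\varphi_k)$. By Lemma~\ref{lem: approximate}, $\psi_j^-\geq P_\theta(\inf_{k\ge j}\psi_k)-C\nearrow V_\theta-C$, so every limit object lies within $O(1)$ of $V_\theta$. Proposition~\ref{lem: BGL25-213}(1) together with convergence of non-pluripolar products gives $\theta_\varphi^n\geq e^{\lambda\varphi}f\,dV$. Proposition~\ref{lem: BGL25-213}(2) gives $\theta_\psi^n\leq e^{\lambda\psi}f\,dV$ for $\psi=(\lim\psi_j^-)^*\leq\varphi$. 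Corollary~\ref{coro: domination-big}(2) then forces $\psi=\varphi$, which yields the equation together with $|\varphi-V_\theta|\leq C$. Uniqueness also follows from Corollary~\ref{coro: domination-big}(2).

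The main obstacle is the convergence of Monge--Amp\`ere measures of unbounded potentials along monotone sequences, which is needed in Step 3. Here I would rely on the non-pluripolar theory of \cite{BoucksomGuedjLu2025-volume}: it gives monotone continuity for potentials with fixed minimal singularity type $V_\theta+O(1)$, and it applies because all the sequences are squeezed between $V_\theta-C$ and $V_\theta+C$ on $\Omega$.
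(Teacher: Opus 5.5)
Your overall scheme matches the paper's: approximate by the $\psi_j$ of Lemma~\ref{lem: approximate}, solve on $Y_j$ and push down, get the uniform lower bound $\varphi_j\geq\psi_j-C$ from Theorem~\ref{thm: estimate-big}, and pass to the limit with envelopes via Proposition~\ref{lem: BGL25-213} and Corollary~\ref{coro: domination-big}. There is, however, a genuine gap in the upper bound for $\sup_X\varphi_j$ in Step~2.

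Your ``cleanest route'' needs a uniform upper bound for the total mass $\int_X(\theta+\dc\varphi_j)^n=\int_Xe^{\lambda\varphi_j}f\,dV$. You give no argument for it, and it is not available. In the Hermitian setting this mass depends on the potential, and each $\varphi_j$ comes from a different modification $Y_j$. This non-invariance of mass is exactly what the paper has to work around.

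Your first attempt fails as well. Lemma~\ref{lem: nomalizing-constant} concerns bounded potentials of a fixed semipositive form on $X$, not solutions on $Y_j$. Moreover, there is no lower bound on $\theta_{V_\theta}^n$ in terms of $f\,dV$ that would let you dominate $\varphi_j$ by $V_\theta+C'$.

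The missing idea is to integrate against a Gauduchon metric $\omega_G$, with $\omega_G^n=G\,dV$. The mixed Monge--Amp\`ere inequality gives, on $\Omega$,
\[
(\theta+\dc\varphi_j)\wedge\omega_G^{n-1}\geq e^{\lambda\sup_X\varphi_j/n}\,e^{\lambda(\varphi_j-\sup_X\varphi_j)/n}f^{1/n}G^{1-1/n}\,dV .
\]
Since $\dc\omega_G^{n-1}=0$, the left-hand side has total mass $\int_X\theta\wedge\omega_G^{n-1}$, independent of $j$. Uniform integrability of $f$ (bounded in $L^w$) and the uniform $L^1$ bound for sup-normalized $\theta$-psh functions bound $\int_Xe^{\lambda(\varphi_j-\sup_X\varphi_j)/n}f^{1/n}G^{1-1/n}\,dV$ from below by a multiple of $\|f\|_{1/n}^{1/n}$. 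This bounds $\sup_X\varphi_j$. It is also where the dependence on $\|f\|_{1/n}^{-1}$ in the statement comes from; that your argument never uses this quantity is a symptom of the gap.

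A secondary problem is in Step~1. The form $\beta_j=\pi_j^*(\theta+\dc\psi_j)-[E_j]$ is determined by $\psi_j$, and in general it is only semipositive, degenerating along the exceptional divisors. Any ``perturbation'' making it Hermitian therefore changes the potential, typically by adding $\delta\log|s_F|$ for an exceptional divisor $F$. This makes the potential more singular. For the new potential, neither $\psi_j\geq\rho_0$ (needed for a $j$-independent constant in Theorem~\ref{thm: estimate-big}) nor the envelope property of Lemma~\ref{lem: approximate} (needed in Step~3) is guaranteed.

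The paper avoids this by keeping $\omega_j$ semipositive and big on $Y_j$:
\begin{itemize}
\item it solves with the Hermitian forms $\omega_j+\varepsilon\eta_j$ using Theorem~\ref{thm: exist-hermit};
\item it obtains the barrier $b_j+\log m_j\leq u_{j,\varepsilon}\leq u_{j,1}$ from Lemma~\ref{lem: key-big} on $Y_j$ and Corollary~\ref{coro: domination-semipositive};
\item it lets $\varepsilon\searrow0$ along the resulting decreasing, uniformly bounded family.
\end{itemize}
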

\begin{proof} The argument is similar to that of \cite[Theorem 4.6]{BoucksomGuedjLu2025-volume} with minor modifications.
 For simplicity, we treat the case $\lambda=1$. Set $\mu=fdV$.
 Let $(\psi_j)_{j\in\mathbb N}$ be a sequence of $\theta$-psh functions obtained in Lemma~\ref{lem: approximate}. 

By resolving the singularities of $\psi_j$ and applying
\cite[Proposition 3.12]{BoucksomGuedjLu2025-volume},
we obtain a modification
\(
\pi_j:Y_j\rightarrow X
\)
and a decomposition
\[
\pi_j^*(\theta+dd^c\psi_j)
=\omega_j+[E_j],
\]
where $\omega_j$ is a smooth semipositive and big form on $Y_j$ and
$E_j$ is an effective $\mathbb R$-divisor. 
Indeed,
\(
\omega_j\geq 4^{-j}\varepsilon_0\pi_j^*\omega_X\geq0.
\)
Moreover, $\omega_j$ is big by the exceptional-divisor construction
in the proof of~\cite[Proposition 3.12]{BoucksomGuedjLu2025-volume}.

We consider the measure
\(
\nu_j
:=e^{\psi_j\circ\pi_j}\pi_j^*\mu.
\)
Fix a smooth positive volume form $dV_{Y_j}$ on $Y_j$.
Writing
\(
\pi_j^*dV=J_j\,dV_{Y_j}
\),
we have
\[
\nu_j=g_j\,dV_{Y_j},
\qquad
g_j=e^{\psi_j\circ\pi_j}
(f\circ\pi_j)J_j.
\]
Here $J_j$ is the nonnegative Jacobian density
associated with the chosen volume forms. Since $\psi_j\leq0$, we have
\[
0\leq g_j\leq(f\circ\pi_j)J_j.
\]
Moreover, $J_j$ is smooth and bounded on $Y_j$.
By the change-of-variables formula and the
$\Delta_2$ condition on $w$, we obtain
\(
g_j\in L^w(Y_j,dV_{Y_j}).
\)

Fix $j$ and a Hermitian form $\eta_j$ on $Y_j$.
By Lemma~\ref{lem: key-big}, there exist $b_j\in\PSH(Y_j,\omega_j)$ and $m_j>0$ such that
\[
-1\leq b_j\leq0,\qquad
(\omega_j+\dc b_j)^n\geq m_jg_j\,dV_{Y_j}.
\]
For $0<\varepsilon\leq1$, Theorem~\ref{thm: exist-hermit} provides
$u_{j,\varepsilon}\in\PSH(Y_j,\omega_j+\varepsilon\eta_j)\cap L^\infty(Y_j)$ satisfying
\[
(\omega_j+\varepsilon\eta_j+\dc u_{j,\varepsilon})^n
=e^{u_{j,\varepsilon}}g_j\,dV_{Y_j}.
\]
Corollary~\ref{coro: domination-semipositive}(2) yields
\[
b_j+\log m_j\leq u_{j,\varepsilon}\leq u_{j,1},
\]
and shows that $u_{j,\varepsilon}$ decreases as $\varepsilon\to 0$.
Thus
$\widetilde\varphi_j:=\displaystyle\lim_{\varepsilon \to 0}u_{j,\varepsilon}
\in\PSH(Y_j,\omega_j)\cap L^\infty(Y_j)$.
By the Bedford--Taylor convergence theorem  (see~\eqref{eq:varying-background-convergence}), we obtain
\[
(\omega_j+\dc\widetilde\varphi_j)^n
=e^{\widetilde\varphi_j+\psi_j\circ\pi_j}\pi_j^*\mu.
\]
The function $\psi_j\circ\pi_j+
 \widetilde\varphi_j$ is $\pi^*\theta$-psh since
 \[ \pi^*_j\theta+\dc (\pi_j\circ\psi_j+
 \widetilde\varphi_j)=\omega_j+\dc\widetilde\varphi_j+[E_j]\geq 0.\]
It descends to some $\varphi_j\in\PSH(X,\theta)$ such that $\varphi_j=\psi_j+O(1)$ since $\widetilde\varphi_j$ is bounded; cf.~\cite[Proposition 1.2.7]{boucksomthese}. On the other hand, the non-pluripolar
Monge--Ampere measure does not charge the
exceptional set, and neither does
$\mu=f\,dV$. 
Therefore, we obtain
\begin{equation}\label{eq:corrected-downstairs}
(\theta+dd^c\varphi_j)^n
=e^{\varphi_j}f\,dV\quad\text{on } X.
\end{equation}

To see that $\varphi_j$ is uniformly bounded from above, we pick a Gauduchon metric $\omega_G$ and write
$\omega_G^n=G\,dV$.
The mixed Monge--Amp\`ere inequalities (\cite[Lemma 1.9]{nguyen2016complex}) yield
\[
(\theta+dd^c \varphi_j)\wedge\omega_G^{n-1}
\geq
e^{\sup_X \varphi_j/n}
e^{(\varphi_j-\sup_X\varphi_j)/n}
f^{1/n}G^{1-1/n}\,dV.
\]
Integrating over $X$ and using $
dd^c\omega_G^{n-1}=0$, we obtain
\[
e^{\sup_X\varphi_j/n}
\int_X
e^{(\varphi_j-\sup_X\varphi_j)/n}
f^{1/n}G^{1-1/n}\,dV
\leq C.
\]
{Since $w(t)/t\to+\infty$, bounded subsets of $L^w$ are
uniformly integrable. Together with the uniform $L^1$ bound for
normalized $\theta$-psh functions, this allows us to choose $R>0$,
depending only on the fixed data and upper bounds for $\|f\|_w$ and
$\|f\|_{1/n}^{-1}$, such that
\[
\begin{aligned}
\int_Xe^{(\varphi_j-\sup_X\varphi_j)/n}f^{1/n}G^{1-1/n}\,dV
&\geq\frac{e^{-R/n}}{2}\int_Xf^{1/n}G^{1-1/n}\,dV\\
&\geq c\,e^{-R/n}\|f\|_{1/n}^{1/n},
\end{aligned}
\]
where $c>0$ depends only on $G$.}
Therefore, $\sup_X\varphi_j$ is uniformly bounded. 

By Theorem~\ref{thm: estimate-big},  $\psi_j-C\le \varphi_j $ for $C>0$ uniformly.
{After passing to a subsequence, $\varphi_j\to\varphi$ in $L^1(dV)$ and almost everywhere, with $\varphi=V_\theta+O(1)$. Set
\[
u_j^+:=(\sup_{k\geq j}\varphi_k)^*,\qquad
u_j^-:=P_\theta(\inf_{k\geq j}\varphi_k),
\]
and $u:=(\lim_j u_j^-)^*$. These envelopes are locally uniformly bounded on $\Omega$, since $\rho_0-C\leq u_j^\pm\leq C$; moreover, Lemma~\ref{lem: approximate} gives $V_\theta-C\leq u\leq\varphi\leq V_\theta+C$.
The envelope argument in the proof of Theorem~\ref{thm: exist-noncollapsing} (2) yields
\[
(\theta+\dc\varphi)^n\geq e^\varphi f\,dV,\qquad
(\theta+\dc u)^n\leq e^u f\,dV.
\]
Corollary~\ref{coro: domination-big} gives $u=\varphi$, hence $(\theta+\dc\varphi)^n=e^\varphi f\,dV$.}
\end{proof}


\begin{theorem}\label{thm: exist-big0}
     Let $\theta$ be a big (1,1)-form.  Let $0\leq f\in L^w(X,dV)$ be such that $\int_XfdV=1$, where $w$ is a weight satisfying \hyperlink{conditionK}{Condition (K)} and \hyperlink{conditionI}{Condition~(I)}.
    Then there exist $c>0$ and $\varphi\in\PSH(X,\theta)$ such that $\varphi=V_\theta+O(1)$ and  
        \begin{equation}
            (\theta+\dc\varphi)^n=cfdV \qquad\text{on}\; \Omega, 
        \end{equation} where the constant $c>0$ is uniquely determined by $X$, $f$, and $\theta$. 
        Moreover, $|\varphi-V_\theta|\leq C$, for a constant $C>0$ depending on $X$, $\theta$, $w$, and upper bounds for $\|f\|_{w}$ and $\|f\|_{1/n}^{-1}$.
\end{theorem}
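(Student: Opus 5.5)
We obtain the $\lambda=0$ equation as a limit $\lambda\to0$ of the equations in Theorem~\ref{thm: exist-big1}. For $\lambda>0$ let $\varphi_\lambda$ be the unique solution of $(\theta+\dc\varphi_\lambda)^n=e^{\lambda\varphi_\lambda}f\,dV$ with $\varphi_\lambda=V_\theta+O(1)$. Set $M_\lambda:=\sup_X\varphi_\lambda$ and $u_\lambda:=\varphi_\lambda-M_\lambda$. Then $\sup_Xu_\lambda=0$ and
\[
(\theta+\dc u_\lambda)^n=c_\lambda e^{\lambda u_\lambda}f\,dV,\qquad c_\lambda:=e^{\lambda M_\lambda}.
\]
The first goal is two-sided bounds on $c_\lambda$ that are uniform for $\lambda\in(0,1]$.

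For the upper bound we repeat the Gauduchon argument from the proof of Theorem~\ref{thm: exist-big1}. The mixed inequality gives $(\theta+\dc u_\lambda)\wedge\omega_G^{n-1}\geq c_\lambda^{1/n}e^{\lambda u_\lambda/n}f^{1/n}G^{1-1/n}dV$. Integrating, and using $\lambda\leq1$ together with the uniform integrability argument (with $u_\lambda$ normalized), we get $c_\lambda\leq C$ in terms of $\|f\|_w$ and $\|f\|_{1/n}^{-1}$.

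For the lower bound we compare with a subsolution. Lemma~\ref{lem: key-big} with $\psi=V_\theta$ gives $v$ with $V_\theta-1\leq v\leq V_\theta$ and $(\theta+\dc v)^n\geq m f/\|f\|_w\,dV$. If $c_\lambda$ were smaller than $m e^{-1}/\|f\|_w$, then $(\theta+\dc u_\lambda)^n\leq c_\lambda(\theta+\dc v)^n\,\|f\|_w/m$, and Corollary~\ref{coro: domination-big}(1) would force $c_\lambda\|f\|_w/m\geq1$ (since $e^{\lambda u_\lambda}\leq1$), a contradiction. This bound is also uniform.

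Next we need uniform relative bounds on $u_\lambda$. From above, $u_\lambda\leq V_\theta$ because $\sup u_\lambda=0$. From below, we apply Theorem~\ref{thm: estimate-big} with $\lambda=0$, $\psi=V_\theta$ and density $c_\lambda f\leq Cf$. This gives $u_\lambda\geq V_\theta-C$, and the constant is independent of $\lambda$ because the $\lambda=0$ case of that theorem does not involve $\lambda$. We then take $\lambda_j\to0$ with $c_{\lambda_j}\to c>0$ and $u_{\lambda_j}\to\varphi$ in $L^1$. The bounds pass to the limit, so $|\varphi-V_\theta|\leq C$ and $\sup_X\varphi=0$.

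The main obstacle is identifying the limiting equation, since the sequence is not monotone. Here we reuse the envelope scheme from Theorems~\ref{thm: exist-noncollapsing}(2) and \ref{thm: exist-big1}. Set $w_j^+=(\sup_{k\geq j}u_{\lambda_k})^*$ and $w_j^-=P_\theta(\inf_{k\geq j}u_{\lambda_k})$; all of these lie between $V_\theta-C$ and $V_\theta$. Proposition~\ref{lem: BGL25-213} together with monotone convergence on $\Omega$ gives $(\theta+\dc\varphi)^n\geq c f\,dV$, using $e^{\lambda_ku_{\lambda_k}}\to1$ a.e. because $\lambda_k u_{\lambda_k}\to0$ off the pluripolar set $\{V_\theta=-\infty\}$. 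For the reverse direction we write $c_\lambda e^{\lambda u}f=e^{u-u}\cdots$ as in the semipositive case. This yields $(\theta+\dc\psi)^n\leq e^{\psi-\varphi}cf\,dV$ for $\psi=(\lim w_j^-)^*$, with $\psi=\varphi+O(1)$. Corollary~\ref{coro: domination-big}(2) then gives $\psi\geq\varphi$, so $\psi=\varphi$ and the equation holds on $\Omega$.

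Uniqueness of $c$ follows from Corollary~\ref{coro: domination-big}(1), applied in both directions to two solutions with minimal singularities.
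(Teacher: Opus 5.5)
Your proposal is correct and follows the paper's proof essentially step by step. The steps match: solutions of Theorem~\ref{thm: exist-big1} with $\lambda\to0$ (the paper takes $\lambda=1/j$), the normalization $c_\lambda=e^{\lambda\sup_X\varphi_\lambda}$, an upper bound on $c_\lambda$ from the Gauduchon mixed inequality and a lower bound from the subsolution of Lemma~\ref{lem: key-big}, the relative estimate from Theorem~\ref{thm: estimate-big} with $\psi=V_\theta$, and the $\sup$ / $P_\theta(\inf)$ envelope argument closed by Corollary~\ref{coro: domination-big}. The factor $e^{-1}$ in your lower-bound step is superfluous, since the comparison directly gives $c_\lambda\geq m/\|f\|_w$.
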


\begin{proof}
The uniqueness of the constant $c$ follows from Corollary~\ref{coro: domination-big}.

For each $j\in\mathbb N^*$, by Theorem~\ref{thm: exist-big1}, we can find
$\varphi_j\in\PSH(X,\theta)$
such that $\varphi_j=V_\theta+O(1)$ and
\[
(\theta+\dc \varphi_j)^n
= e^{j^{-1}\varphi_j}f\,dV.
\]
Set
$v_j:=\varphi_j-\sup_X\varphi_j$
and
$f_j:=e^{j^{-1}v_j}f$.
We rewrite the above equation as
\[
(\theta+dd^c v_j)^n
=
c_j f_j\,dV,
\qquad
c_j:=e^{j^{-1}\sup_X\varphi_j}.
\]
Extracting and relabeling, by Hartogs' lemma, we may assume that
$v_j\rightarrow v\in\PSH(X,\theta)$
in $L^1$ and a.e. Hence
$e^{j^{-1}v_j}
\rightarrow 1$
{in } $L^1(X,dV)$. In particular, $f_j\to f$ almost everywhere and in $L^1(dV)$.
Let $\omega_G$ be a Gauduchon metric on $X$. We write
$\omega_G^n=h\,dV$.
The mixed Monge--Amp\`ere inequality gives
\[
(\theta+dd^c \varphi_j)\wedge\omega_G^{n-1}
\geq
e^{j^{-1}v_j/n}
c_j^{1/n}
f^{1/n}h^{1-1/n}\,dV.
\]
Integrating this inequality over $X$ yields an upper bound for $c_j\leq M$. The Lemma~\ref{lem: key-big} yields a lower bound for $c_j\geq m$.
Extracting a subsequence, we can therefore assume that $c_j\rightarrow c>0$. Moreover, $0\leq c_jf_j\leq Mf$.
Thus, we have
\[ (\theta+\dc v_j)^n\leq MfdV,\quad\sup_X v_j=0.\]
Applying Theorem~\ref{thm: estimate-big} with $\psi=V_\theta$, we obtain $v_j\geq V_\theta-C$ for a uniform constant $C>0$. 
Set
\[
v_j^+:=(\sup_{k\geq j}v_k)^*,\qquad
v_j^-:=P_\theta(\inf_{k\geq j}v_k),
\]
and $v':=(\lim_j v_j^-)^*$. Since $V_\theta-C\leq v_j^\pm\leq V_\theta$, these envelopes are locally uniformly bounded on $\Omega$. As in the proof of Theorem~\ref{thm: exist-noncollapsing}(2), with Proposition~\ref{lem: BGL25-213}, we have
\[
(\theta+\dc v)^n\geq cf\,dV,\qquad
(\theta+\dc v')^n\leq e^{v'-v}cf\,dV
\quad\text{on }\Omega,
\]
using $c_jf_j\to cf$ almost everywhere and $0\leq c_jf_j\leq Mf$.
Since $V_\theta-C\leq v'\leq v\leq V_\theta$, Corollary~\ref{coro: domination-big} gives $v'=v$, hence $(\theta+\dc v)^n=cf\,dV$.
\end{proof}
\begin{theorem}
     Let $\theta$ be a big (1,1)-form.  Let $0\leq f\in L^w(X,dV)$ be such that $\int_XfdV=1$, where $w$ is a weight satisfying \hyperlink{conditionK}{Condition (K)}.
   If there exist $c>0$, $C_0>0$ and $\varphi\in\PSH(X,\theta)$ such that 
        \begin{equation}{\varphi\geq \rho_0-C_0},\quad\text{and}\;
            (\theta+\dc\varphi)^n=cfdV \quad\text{on}\; \Omega, 
        \end{equation} then $\varphi$ is continuous on $\Omega$.
\end{theorem}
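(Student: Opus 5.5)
We prove continuity on $\Omega$ by exhibiting $\varphi$ as a locally uniform limit on $\Omega$ of continuous $\theta$-psh functions. The plan mirrors the continuity part of Theorem~\ref{thm: exist-hermit}: build continuous approximants upstairs on modifications where the reference form is genuinely semipositive and big, then prove a stability estimate of the type of Lemma~\ref{lem: satbility0}, weighted by the singularity of $\rho_0$.

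\textbf{Step 1: reduction to $\lambda=1$.} First, the problem is reduced to an equation with $\lambda=1$, exactly as at the end of the proof of Theorem~\ref{thm: exist-hermit}. Set $\widetilde f:=ce^{-\varphi}f$. Since $\varphi\leq \sup_X\varphi$ and $\varphi\geq\rho_0-C_0$, we have $e^{-\varphi}\leq Ce^{-\rho_0}$. With analytic singularities, $e^{-\epsilon\rho_0}$ lies in every $L^q$ for $\epsilon$ small. So after replacing $\rho_0$ by $(1-\delta)\rho_0+\delta V$, or simply working on compact subsets, this needs care: $\widetilde f$ need not lie in $L^w$ globally.

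We therefore localize instead. Fix $x_0\in\Omega$ and a small ball $B\Subset\Omega$ on which $\rho_0$, and hence $\varphi$, is bounded. There $\varphi$ is locally bounded and $\theta$ has a local potential $g$ with $\theta=\dc g$. Then $u:=g+\varphi$ is a bounded psh function on $B$ solving $(\dc u)^n=cf\,dV$ with $f\in L^w(B)$. The question is thus purely local: a bounded psh solution of $(\dc u)^n=f\,dV$ with $f\in L^w$, $w$ satisfying Condition (K), is continuous.

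\textbf{Step 2: local continuity via the Dirichlet problem.} On a ball $B'\Subset B$, we solve the Dirichlet problem with continuous boundary data. Approximate $u$ from above on $\partial B'$ by continuous functions $h_k\searrow u$, or better by a decreasing sequence of smooth psh $u_k\searrow u$ obtained by convolution. Let $U_k$ be the Bedford--Taylor solution of
\[(\dc U_k)^n=f_k\,dV \text{ on } B', \qquad U_k=u_k \text{ on } \partial B',\]
with $f_k=\min(f,k)$ smoothed. These $U_k$ are continuous. The key estimate is a local stability inequality: for bounded psh $v,u$ on $B'$ with $(\dc u)^n= f\,dV$, $f\in L^w$, and $v\leq u$ near $\partial B'$,
\[\sup_{B'}(v-u)_+\leq C\|(v-u)_+f\|_{w}^{1/(n+1)}.\]
This is proved verbatim as in Lemma~\ref{lem: satbility0}, using the local uniform estimate Theorem~\ref{thm: estimate-local} to produce bounded subsolutions (zero boundary values, $(\dc\psi_\varepsilon)^n= G_\varepsilon$, uniformly bounded) in place of Lemma~\ref{lem: subsolution}, and the comparison principle Theorem~\ref{thm: local-comparison} in place of the domination principle.

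\textbf{Step 3: convergence.} Applying the stability inequality to $u_k$ versus $u$ (with boundary handled by shrinking the ball and subtracting $\varepsilon(|z|^2-r^2)$-type corrections), together with dominated convergence for $\|(u_k-u)f\|_w$ as in Lemma~\ref{lem: satbility0}, gives $\sup_{B''}(u_k-u)\to0$. Hence $u$ is a uniform limit of continuous functions on $B''$, so $u$ is continuous there, and therefore $\varphi=u-g$ is continuous near $x_0$.

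\textbf{Main obstacle.} The main obstacle is the boundary behavior in the local stability argument. The comparison principle requires $\liminf(u-V)\geq0$ at $\partial B''$, while $u_k-u$ is only small in $L^1$, not uniformly, near the boundary. The plan is to compare $u$ with $\max(u_k-\varepsilon,\, u)$ glued appropriately: use $\widetilde u_k:=\max(u_k+\varepsilon(|z-x_0|^2-r^2)-\varepsilon,u)$. This function equals $u$ near $\partial B(x_0,r)$ and is $\geq u_k-2\varepsilon$ near $x_0$, so that the localized stability estimate applies with zero boundary discrepancy.
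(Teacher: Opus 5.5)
\textbf{The gap: the local statement you reduce to is false.} The localization at the end of Step~1 is where the argument breaks, so Steps~2--3 cannot be completed. A bounded psh function on a ball in $\mathbb{C}^n$, $n\geq 2$, whose Monge--Amp\`ere measure has $L^w$ density (even zero density) need not be continuous. Take a bounded subharmonic $v$ on the unit disc that is discontinuous at $0$, for instance $v=\max\{\sum_{k\geq1}\epsilon 2^{-k}\log(|z-e^{-k}|/2),-1\}$ with $\epsilon(2+\log 2)<1$. Then $v(e^{-k})=-1$ while $v(0)>-1$. The function $u(z_1,\dots,z_n):=v(z_1)$ is bounded, psh and discontinuous at the origin, and $(dd^c u)^n=0$.

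The theorem allows $f$ to vanish on open subsets of $\Omega$, and your Steps~2--3 only use the equation on a small ball $B\Subset\Omega$. So your argument would equally ``prove'' that this $u$ is continuous.

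\textbf{Your proposed fix is circular.} The same problem appears in your ``main obstacle'' paragraph. The function $\widetilde u_k=\max(u_k+\varepsilon(|z-x_0|^2-r^2)-\varepsilon,u)$ coincides with $u$ near $\partial B(x_0,r)$ only if $u_k-u\leq\varepsilon$ near that sphere. That means you would already need $u_k\to u$ uniformly near the boundary, which is exactly what you are trying to prove. Your localized stability inequality is plausible under a boundary hypothesis $v\leq u$ near $\partial B'$, but nothing in your argument supplies that hypothesis. (The Dirichlet solutions $U_k$ in Step~2 are also never used.)

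\textbf{What the paper does instead.} Continuity here is a global phenomenon: it comes from the domination principle on the compact manifold $X$.
\begin{enumerate}
\item The paper sets $\varphi_j:=P_\theta(h_j)$ with smooth $h_j\searrow\varphi$. These functions decrease to $\varphi$ and are continuous on $\Omega$, because by Lemma~\ref{lem: eq-envelope} their Monge--Amp\`ere measures have bounded densities.
\item Fix $\lambda\in(0,1)$. The density $\lambda^{-n}\mathbf{1}_{\{\varphi<\varphi_j-\lambda\}}f$ has $L^w$ norm tending to $0$. Hence Lemma~\ref{lem: subsolution} gives, for $j$ large, $u_{j,\lambda}\in\PSH(X,\varepsilon_0\omega_X)$ with $-1\leq u_{j,\lambda}\leq 0$ and $(\varepsilon_0\omega_X+dd^c u_{j,\lambda})^n\geq 2c\lambda^{-n}\mathbf{1}_{\{\varphi<\varphi_j-\lambda\}}f\,dV$.
\item Set $v_{j,\lambda}:=(1-\lambda)\varphi_j+\lambda(\rho_0+u_{j,\lambda})-B\lambda$ with $B>2+C_0$. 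Then $\{\varphi<v_{j,\lambda}\}\subset\{\varphi<\varphi_j-\lambda\}$, and on that set $(\theta+dd^c v_{j,\lambda})^n\geq 2(\theta+dd^c\varphi)^n$.
\item Proposition~\ref{Domination principle-big} then gives $\varphi\geq v_{j,\lambda}$ on all of $X$. Hence $\varphi_j-\varphi\leq(\sup_K|\rho_0|+1+B)\lambda$ on each compact $K\Subset\Omega$ for $j$ large, and letting $\lambda\to0$ gives uniform convergence on $K$.
\end{enumerate}
The term $\lambda\rho_0$ absorbs the unboundedness of $\varphi$ near $X\setminus\Omega$, and no boundary values are ever needed.
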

\begin{proof}
    We first show that there exists a sequence of functions $0\geq \varphi_j\in\PSH(X,\theta)\cap\mathcal{C}^0(\Omega)$ such that $\varphi_j\searrow \varphi$. Indeed, let $h_j$ be a decreasing sequence of smooth functions that converges to $\varphi$.  Set $\varphi_j=P_\theta(h_j)$. Then $\varphi_j\geq \varphi$ decreases to some $\psi\geq \varphi$. On the other hand $\varphi_j\leq h_j$ for all $j\in\mathbb{N}$. Taking the limit implies $\psi\leq \varphi$, hence $\varphi=\psi$. For each $j$, it follows from Lemma~\ref{lem: eq-envelope} that
\[(\theta+\dc\varphi_j)^n=\mathbf{1}_{\{\varphi_j=h_j\}}(\theta+\dc h_j)_+^n  \]
which has $L^\infty$ densities. By~\cite[Theorem 1.1]{dang2026regularity}, $\varphi_j$ is H\"older continuous on $\Omega$.

\medskip

Fix $\lambda\in (0,1)$.  
        {Set $f_{j,\lambda}:=\lambda^{-n}\mathbf{1}_{\{\f<\f_j-\lambda\}}f$.
        For every $s>0$, the $\Delta_2$-condition and dominated convergence give
        \[
        \int_X w(f_{j,\lambda}/s)\,dV
        =\int_{\{\f<\f_j-\lambda\}}w(\lambda^{-n}f/s)\,dV\longrightarrow0.
        \]
        Hence $\|f_{j,\lambda}\|_w\to0$.
        By Lemma~\ref{lem: subsolution}, for all sufficiently large $j$ there exists $u_{j,\lambda}\in\PSH(X,\varepsilon_0\omega_X)$ such that
        \[
        -1\leq u_{j,\lambda}\leq0,\qquad
        (\varepsilon_0\omega_X+\dc u_{j,\lambda})^n\geq2c f_{j,\lambda}\,dV.
        \]}
        We consider
		\[v_{j,\lambda}:=(1-\lambda)\f_j +\lambda(\rho_0+u_{j,\lambda} ) -B\lambda,\]
		for  $B>0$ to be chosen hereafter. 
		By assumptions, we have $\f_j\geq \f\geq \rho_0-C_0$. Hence on $\{\f<v_{j,\lambda}\}$,
		\[\f<\f_j-\lambda(\f_j-\rho_0)+\lambda\sup_X|u_{j,\lambda}|-B\lambda\leq \f_j-\lambda, \] where we have chosen $B>2+C_0$. 
        We compute
		\begin{equation}\label{eq: cont1}
		(\theta+\dc v_{j,\lambda})^n\geq\lambda^n(\theta+\dc(\rho_0+u_{j,\lambda}))^n\geq {2c\mathbf{1}_{\{\f<\f_j-\lambda\}}f\,dV}.
		\end{equation}
        {It follows from~\eqref{eq: cont1} that}
		\[ {2}(\theta+\dc\f)^n\leq (\theta+\dc v_{j,\lambda})^n \]
		on $\{ \f<v_{j,\lambda}\}$. 
        By the domination principle (Proposition~\ref{Domination principle-big}), $\f\geq v_{j,\lambda}$ on $X$.
         Letting $j\to +\infty$ we obtain
		\[ \liminf_{j\to+\infty} \inf_K(\f-\f_j)\geq -(\sup_K|\rho_0|+1+B)\lambda,\]
		for any compact set $K\Subset \Omega$. Letting $\lambda\to 0$ we have $\f_j\to \f$ uniformly on $K$. This completes the proof.
\end{proof}
\bibliographystyle{alpha}
	\bibliography{bibfile}	
	
\end{document}